\DeclareRobustCommand*{\onlyattoc}[1]{}
\newcommand*{\activateonlyattoc}{%
  \DeclareRobustCommand*{\onlyattoc}[1]{##1}%
}
\newcolumntype{L}[1]{>{\raggedright\let\newline\\\arraybackslash\hspace{0pt}}m{#1}}
\renewcommand{\bibname}{References}
\newcommand{\T}{\textstyle}
\newcommand{\SIR}{\mathrm{SIR}}
\newcommand{\esssup}{\mathop{\mathrm{ess~sup}}}
\newcommand{\essinf}{\mathop{\mathrm{ess~inf}}}
{\left\lbrace\begin{array}{@{}l@{}}}%
{\end{array}\right.}
\theoremstyle{plain}
\newtheorem {thm}{Theorem}[chapter]
\newtheorem {prop}[thm]{Proposition}
\newtheorem {claim}[thm]{Claim}
\newtheorem {lem}[thm]{Lemma}
\newtheorem {cor}[thm]{Corollary}
\theoremstyle{definition}
\newtheorem {defi}[thm]{Definition}
\newtheorem {ass}[thm]{Assumption}
\theoremstyle{remark}
\newtheorem {rem}[thm]{Remark}
\newtheorem {exam}[thm]{Example}
\newcommand\numberthis{\addtocounter{equation}{1}\tag{\theequation}}
\def\four{{\lbrace 1,\ldots,4 \rbrace}}
\def\d{\mathrm d}
\def\phi{\varphi}
\def\T{\T}
\def\VV|{{\vvvert}}
\def\V|{{\Vert}}
\newcommand{\tensor}{\mathop{\otimes}}
\def\Finite{0<\Fmin\leq\Fmax<\infty}
\def\ellmin{\ell_{\min}}
\def\ellmax{\ell_{\max}}
\def\Fmax{F_{\max}}
\def\Fmin{F_{\min}}
\def\alphaminus{\alpha_{-}}
\def\Fdelta{[\Fmin, \Fmax]_\delta}
\begin{document}

\begin{titlepage}
\titlehead{\vspace{-2.5cm}} 
\subject{\vspace*{-3cm}\centering
\large{Master's Thesis
					}}
\title{Highly dense mobile communication networks with random fadings}

\author{
				\textbf{András József Tóbiás}
				}

\publishers{\centering\includegraphics*[width=5cm]{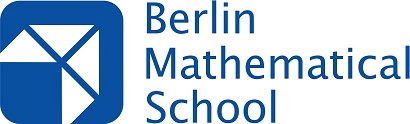} \vspace{50pt} \\ \includegraphics*[width=5cm]{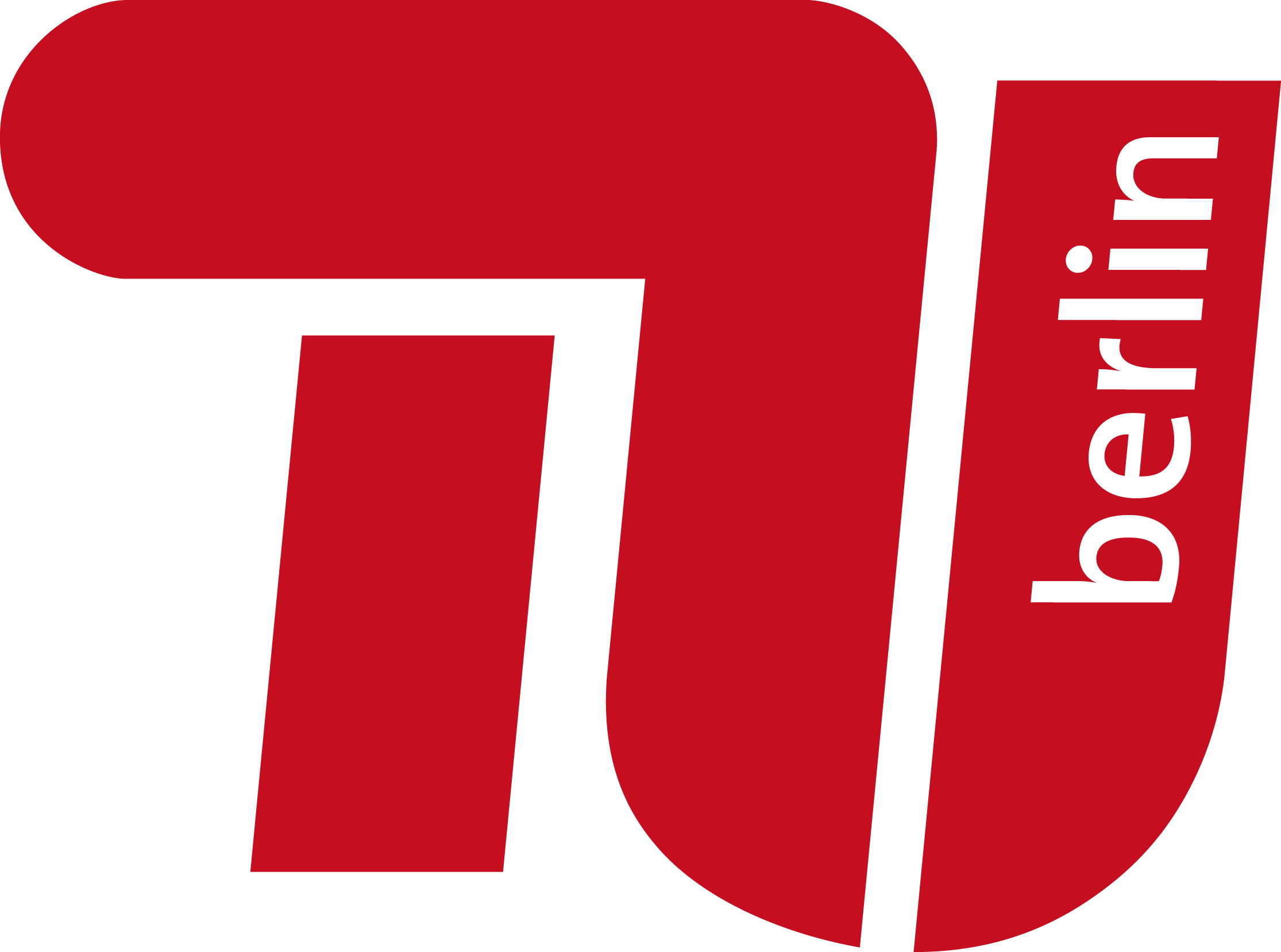} \\
		\large{Technische Universität Berlin \\
		Fakultät II \\
		Institut für Mathematik \\
		AG Stochastik und Finanzmatematik}
		\vspace{1cm}
		\begin{tabbing}
	\large{Supervisor:} \quad \quad \= \large{Prof.} \large{Dr. Wolfgang König}\\
	\large{Second reader:} \quad \> \= \large{~~~~~~~~Dr. Christian Hirsch}\\
		\end{tabbing}
		
		\vspace{-3cm}
}
\date{		Submitted on April 18, 2016 \\ Revised on \today}


\maketitle
\end{titlepage}
\pagenumbering{roman}
\setcounter{page}{1}
\pagestyle{scrheadings}
\color{white} u \color{black}
\vspace{200pt}

\section*{Erklärung}
\begin{flushleft}
Hiermit erkläre ich, dass ich die vorliegende Arbeit selbstständig und eigenhändig
sowie ohne unerlaubte fremde Hilfe und ausschließlich unter Verwendung
der aufgeführten Quellen und Hilfsmittel angefertigt habe. 
\vspace{5pt}

Die selbständige und eigenständige Anfertigung versichert an Eides statt:
\vspace{10pt}

Berlin, den 18. April 2016 
\end{flushleft}
\vspace{50pt}
András Tóbiás
$\vspace{3cm}$


\newpage
\section*{Acknowledgements}
I would like to thank my supervisor Wolfgang König for the opportunity to write a Master's thesis about this topic and to work together with his colleagues, and also for his support, advice and discussions about my thesis and RTG application.

I would like to thank Christian Hirsch for his ideas about how to proceed with my thesis, discussing about the topic, answering my questions many times and proofreading various parts of my writing.

I would like to thank the Berlin Mathematical School for funding my studies and providing me an office space and a computer pool for working in the BMS Lounge. I would like to thank my fellow students at BMS, especially Adrián Hinojosa Calleja, for interesting discussions. I would also like to thank Stanley Schade for proofreading the German summary of the thesis.

\newpage

\tableofcontents
\newpage
\thispagestyle{empty}
\color{white} . \color{black}
\newpage

\pagenumbering{arabic}
\setcounter{page}{1}

\pagestyle{scrheadings}

\chapter{Introduction}\label{Introduction}
\begin{quote}
He had always been enthralled by the methods of natural science, but the ordinary subject matter of that science had seemed to him trivial and of no import. And so he had begun by vivisecting himself, as he had ended by vivisecting others. Human life—that appeared to him the one thing worth investigating. \\
Oscar Wilde: \emph{The Picture of Dorian Gray,} Chapter 4. 
\end{quote}

In this Master's thesis, we consider a mathematical model for a wireless network. We investigate the large deviation asymptotics of interference in the high-density limit, when the number of users tends to infinity. In the same time, the communication area where the users are situated remains the same compact subset $W \subset \mathbb R^d$. The thesis is based on the current paper \cite{cikk} of C. Hirsch, B. Jahnel, H. P. Keeler and R. I. A. Patterson. Particular in my thesis, the main goal is to add \emph{random fadings} to the model. The fadings are positive random variables, which are interpreted as loudnesses of the users in the system. 

In the new model with random fadings, one has to encounter two sources of randomness. One is the spatial positions of the users, which also accounts for the number of users, who form a Poisson point process on $W$. The other is the realization of the fadings of the users. The main question is which one is the more dominant source of \emph{bad connection}. That is, if the interference in the system is too high, is it caused by spatial effects, i.e. that too many users gather at the same place, or fading effects, i.e. that some users are too loud? In particular, how do the configurations that exhibit too many users with unsatisfactory connection depend on the distribution of the fadings?

In order to answer these questions, we proceed as follows. First, in Chapter \ref{előzés}, we provide the reader with the most important preliminaries that are needed in order to start our own investigations. These contain basic notions and results about large deviations (following the book \cite{LDP} of Dembo and Zeitouni) and about Poisson point processes (following the book \cite{kingman} of Kingman). Moreover, we sketch the original model definition and main results of \cite{cikk}, in order to provide a comparison with our generalized model. 

In Chapter \ref{eleje}, we extend the model of this paper with independent and identically distributed (i.i.d.) random fadings, and we follow the large deviation approach of \cite[Sections 2--6]{cikk}. This way, we obtain results that are generalizations of the theorems of the original fading-free setting. One of these results implies that the probability of having an unlikely number of users with bad connection decays exponentially. The other result is that we obtain the most likely configuration that exhibits a certain number of users with bad connection (in other words, we will call these users \emph{frustrated}). The methods of \cite{cikk} can be generalized to this case as long as the fadings are bounded and bounded away from 0.

Knowing these general results, in Chapter \ref{effect} we investigate the effects coming from random fadings more delicately. In particular, in this chapter we encounter several mathematical problems that do not have an analogue from the fading-free setting of \cite{cikk}. In the first part of this chapter, we answer the questions above and several other ones. Some of them can be answered directly by the formulas obtained in Chapter \ref{eleje}, some other ones we can solve theoretically in special settings exhibiting certain symmetries, and for some other ones we use numerical computations and simulations to conjecture the answer. In the second part of Chapter \ref{effect}, we generalize the model of Chapter \ref{eleje} in two ways, where one can obtain analogous results about the frustration probabilities. On the one hand, we observe the fadings of a user do not have to be i.i.d., they can depend on the spatial position of the user. As long as the fading distribution of a given users is independent of both the spatial positions and the loudnesses of all other users, we can deduce analogues of the results of Chapter \ref{eleje} conveniently. On the other hand, we show that the approach of Chapter \ref{eleje} is still valid in the case when the fading value of the base station $o$, with which the users communicate in the model, is also random.

In Chapter \ref{summary}, we summarize and conclude the thesis. We also sketch several open questions and possible directions along which related research can be continued. 
\chapter{Preliminaries} \label{előzés}
In this chapter, we summarize the basic notions about large deviations and about Poisson point processes in order to start the own investigations of this thesis. We also set up some general notation, which we summarize in Section \ref{indexofnotations} in the Appendix. The last section of the chapter sums up the model description and the main results of the paper \cite{cikk}. This article serves as a basis for our work: after this chapter, we construct modified versions of the model of this paper, incorporating random fadings, which serve as loudnesses of the users.
\section{Large deviations} \label{ldpcske}
In this section, we enumerate the definitions and results that we will need for our large deviation (LD) approach, following the book of Dembo and Zeitouni \cite{LDP} directly. We only mention those ones that we will need in the following parts of the thesis, and we omit the proofs of the results.

Throughout this section, let $X$ be a Hausdorff topological space, and let $\mathcal B \subseteq \mathcal{P}(X)$ be a $\sigma$-algebra. Unless we explicitly say the opposite, $\mathcal B$ will be contain the Borel $\sigma$-algebra of $X$. We note that in many cases that one considers in probability theory, $X$ is a Polish space, i.e., the topology of $X$ can be induced by a metric that makes $X$ a separable, complete metric space.
\begin{defi} \index{rate function}
A \emph{rate function} is a lower semicontinuous mapping $I: X \to [0,\infty]$. I.e., for all $\alpha \in [0,\infty)$, the level set $\Psi_I(\alpha)=\lbrace x \in X \vert~I(x) \leq \alpha \rbrace$ is a closed subset of $X$. A \emph{good rate} function is a rate function for which all the level sets $\Psi_I(\alpha)$ are compact.
\end{defi}
Lower semicontinuity can be defined for general topological spaces $X$ this way, but if $X$ is a metric space, then lower semicontinuity can be checked on sequences. I.e., $I$ is lower semicontinuous if and only if $\liminf_{x_n \to x} I(x_n) \geq I(x)$ for all $x \in X$. A consequence of a rate function being good is that the infimum is achieved over closed sets. 

A function $f: X \to \mathbb R$ is called upper semicontinuous if $-f$ is lower semicontinuous. We will write l.s.c. for lower semicontinuous and u.s.c. for upper semicontinuous. \index{semicontinuous function}

In the following, for any set $A \in \mathcal B$, we write $A^o$ for the interior of $A$, $\overline{A}$ for the closure of $A$ and $A^c$ for the complement of $A$. The infimum of a function over the empty set is interpreted as $\infty$.
\begin{defi} \index{large deviation principle}
A family of probability measures $\lbrace \mu_\varepsilon \rbrace$ on $(X,\mathcal B)$ satisfies the \emph{large deviation principle} with a rate function $I$, if for all $A \in \mathcal B$
\begin{equation} \label{ldpdef}  -\inf_{x \in A^o} I(x) \leq \liminf_{\varepsilon \to 0} \varepsilon \log \mu_\varepsilon (A) \leq \limsup_{\varepsilon \to 0} \varepsilon \log \mu_\varepsilon (A) \leq -\inf_{x \in \overline{A}} I(x). \end{equation}
\end{defi}
The left and right hand sides of \eqref{ldpdef} are referred to as lower and upper bounds, respectively. We write LDP for large deviation principle. 
\begin{lem}
$\lbrace \mu_\varepsilon \rbrace$ on $(X,\mathcal B)$ satisfies the LDP with a rate function $I$ if and only if the following two conditions are satisfied:
\begin{enumerate}[(i)]
\item \text{(Lower bound)} For any open set $G \subseteq X$,
\[ \liminf_{\varepsilon \to 0} \varepsilon \log \mu_\varepsilon (G) \geq -\inf_{x \in G} I(x). \]
\item \text{(Upper bound)} For any closed set $F \subseteq X$,
\[ \limsup_{\varepsilon \to 0} \varepsilon \log \mu_\varepsilon (F) \leq -\inf_{x \in F} I(x). \]
\end{enumerate}
\end{lem}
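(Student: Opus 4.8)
The plan is to establish the claimed equivalence by proving the two implications separately, and the whole argument rests only on the monotonicity of the measures $\mu_\varepsilon$ together with the elementary set inclusions $A^o \subseteq A \subseteq \overline{A}$, valid for every $A \in \mathcal B$.

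First I would treat the forward implication, that the LDP in the sense of \eqref{ldpdef} yields (i) and (ii). This is immediate by specialization. If $G$ is open, then $G^o = G$, so the left inequality in \eqref{ldpdef} applied to $A = G$ reads $-\inf_{x \in G} I(x) \leq \liminf_{\varepsilon \to 0} \varepsilon \log \mu_\varepsilon(G)$, which is exactly the lower bound (i). Symmetrically, if $F$ is closed, then $\overline{F} = F$, and the right inequality of \eqref{ldpdef} applied to $A = F$ gives the upper bound (ii). No further work is needed for this direction.

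Next I would prove the reverse implication, that (i) and (ii) together imply \eqref{ldpdef} for an arbitrary $A \in \mathcal B$. For the lower bound, I use that $A^o$ is open and contained in $A$, whence $\mu_\varepsilon(A) \geq \mu_\varepsilon(A^o)$ for every $\varepsilon$; since $\varepsilon > 0$ and $\log$ is increasing, this passes to $\varepsilon \log \mu_\varepsilon(A) \geq \varepsilon \log \mu_\varepsilon(A^o)$, and taking the $\liminf$ and invoking (i) with $G = A^o$ yields $\liminf_{\varepsilon \to 0}\varepsilon \log \mu_\varepsilon(A) \geq -\inf_{x \in A^o} I(x)$. For the upper bound, I dually use $A \subseteq \overline{A}$ with $\overline{A}$ closed, so that $\varepsilon \log \mu_\varepsilon(A) \leq \varepsilon \log \mu_\varepsilon(\overline{A})$; taking the $\limsup$ and applying (ii) with $F = \overline{A}$ gives $\limsup_{\varepsilon \to 0}\varepsilon \log \mu_\varepsilon(A) \leq -\inf_{x \in \overline{A}} I(x)$. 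The central inequality $\liminf \leq \limsup$ is automatic, and assembling the three inequalities reproduces exactly \eqref{ldpdef}.

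There is no substantial obstacle in this proof; it is entirely a matter of bookkeeping. The only point requiring mild care is to apply the monotonicity of $\liminf$ and $\limsup$ in the correct direction — using the inclusion $A^o \subseteq A$ for the lower bound and $A \subseteq \overline{A}$ for the upper bound — and to recall that one may pass freely between the measure inequalities and the corresponding inequalities for $\varepsilon \log \mu_\varepsilon(\cdot)$ precisely because $\varepsilon$ is positive.
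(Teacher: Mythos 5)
Your proof is correct and is the standard argument (the paper itself omits the proof of this lemma, citing Dembo--Zeitouni, where exactly this specialization-plus-monotonicity argument appears). Nothing to add.
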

The following notion of exponential tightness is used in large deviation theory as a tool which makes it possible to conclude statements by proving weaker statements, see e.g. Proposition \ref{4.2.7}.
\begin{defi} \index{exponential tightness}
Suppose that the compact subsets of $X$ belong to $\mathcal B$. A family of probability measures $ \lbrace \mu_{\varepsilon} \rbrace$ is \emph{exponentially tight} if for all $\alpha>0$ there exists a compact subset $K_\alpha \subset X$ such that
\[ \limsup_{\varepsilon \to 0} \varepsilon \log \mu_\varepsilon (K_\alpha^c) < -\alpha. \] 
\end{defi}
For the first large deviation results that we describe here, we consider random variables that take only finitely many values in $\mathbb R^d$ for some $d \geq 1$. Let $\Sigma=\lbrace a_1, \ldots, a_{\vert \Sigma \vert} \rbrace$ be the \emph{underlying alphabet}, in which the independent and identically distributed (i.i.d.) random variables $Y_1,~Y_2,~\ldots$ take values, with $a_i \in \mathbb R^d$ for all $i \in \lbrace 1,\ldots,\vert \Sigma \vert \rbrace$, and $a_i \neq a_j$ if $i \neq j$. The next theorem can be seen as a simple \emph{level-2} large deviation result, in the case of finite alphabets, which means that it corresponds to the \emph{empirical measures} of the i.i.d. random variables $\lbrace Y_i \rbrace_{i \in \mathbb N}$.\footnote{ We write $\mathbb N=\lbrace 1,2,\ldots \rbrace$ and $\mathbb N_0=\lbrace 0,1,2,\ldots \rbrace$ About definitions of different levels of LDP, see \cite[Section 7]{level}.} For stating this theorem, we define the types and the relative entropy in this finite setting. Throughout this thesis, for any measurable space $(Z,\mathcal{B})$, let $\mathcal{M}(Z)$ denote the set of finite measures on $(Z,\mathcal{B})$, and $\mathcal{M}_1(Z)$ the set of probability measures on $(Z,\mathcal{B})$.\footnote{ If $X$ is any topological space and $\mathcal{B}$ is the Borel $\sigma$-algebra of $X$, then $\mathcal{M}(Z)$ is the set of finite Borel measures of $X$.}
\begin{defi}
The \emph{type} $L^{\mathbf y}_n$ if a finite sequence $\mathbf y=(y_1,\ldots,y_n) \in \Sigma^n$ is the empirical measure (law) induced by this sequence. Explicitly, $L^{\mathbf y}_n=\left( L^{\mathbf y}_n(a_1),\ldots,L^{\mathbf y}_n(a_{\vert \Sigma \vert}) \right)$ is the element of $\mathcal{M}_1(\Sigma)$ defined as \[ L^{\mathbf y}_n (a_i)=\sum_{j=1}^n \delta_{a_i}(y_j), \quad i=1,\ldots,\vert \Sigma \vert. \]
\end{defi}
We write $L_n^{\mathbf Y}$ for the \emph{random} type associated with the sequence $\mathbf Y=(Y_1,\ldots,Y_n)$.
\begin{defi}
Let $\nu=(\nu_1,\ldots,\nu_n),~\mu=(\mu_1,\ldots,\mu_n) \in \mathbb R^n$ be two vectors such that $\nu_i \geq 0$ and $\mu_i >0$ for all $i=1,\ldots,n$. The \emph{relative entropy} of a vector $\nu$ with respect to $\mu$ is defined as
\begin{equation} \label{diszkrétrelatíventrópia}\index{relative entropy!discrete} h(\nu \vert \mu)=\sum_{i=1}^{\vert \Sigma \vert} \nu(a_i) \log \frac{\nu(a_i)}{\mu(a_i)}-\nu(a_i)+\mu(a_i). \end{equation}
\end{defi}
Let now $n=\vert \Sigma \vert$ and let $\mu$ be a probability vector in $\mathbb R^n$. By applying Jensen's inequality to the convex function $x \log x$, it follows that the function $h(\cdot \vert \mu)$ is nonnegative. Moreover, $h(\cdot \vert \mu)$ is finite and continuous on the compact set $K=\lbrace \nu \in \mathcal{M}_1(\Sigma) \vert~\Sigma_\nu \subseteq \Sigma_\mu \rbrace$, because $x \log x$ is continuous on $[0,1]$. Here $\mathcal{M}_1(\Sigma)$ denotes the set of probability measures on $\Sigma$, and $\Sigma_\mu=\lbrace a_i : \mu(a_i)>0 \rbrace$ denotes the support of the probability vector $\mu$. Moreover, $h(\cdot \vert \mu)=\infty$ outside $K$, because of division by 0, and hence $h(\cdot \vert \mu)$ is a good rate function. Now we state the theorem.
\begin{thm}[Sanov] \index{Sanov's theorem}
For every set $A$ of probability vectors in $\mathcal{M}_1(\Sigma)$, we have
\[ -\inf_{\nu \in A^o} h(\nu \vert \mu) \leq \liminf_{n \to \infty} \frac{1}{n} \log \mathbb P_{\mu}(L_n^{\mathbf Y} \in A) \leq \limsup_{n \to \infty} \frac{1}{n} \log \mathbb P_{\mu}(L_n^{\mathbf Y} \in A) \leq -\inf_{\nu \in A} h(\nu \vert \mu). \]
\end{thm}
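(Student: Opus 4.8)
The plan is to prove this via the \emph{method of types}, which in the finite-alphabet setting is entirely combinatorial and avoids any appeal to convex duality. First I would record the two exact identities on which everything rests. For a sequence $\mathbf y \in \Sigma^n$, its probability under the product law factorizes as $\mathbb P_\mu(\mathbf Y = \mathbf y) = \prod_{i=1}^{\vert \Sigma \vert} \mu(a_i)^{n L_n^{\mathbf y}(a_i)}$, since only the count of each letter matters. Consequently, writing $\mathcal{T}_n \subseteq \mathcal{M}_1(\Sigma)$ for the finite set of types realizable by length-$n$ sequences, the probability of a fixed realizable type $\nu \in \mathcal{T}_n$ is
\[ \mathbb P_\mu(L_n^{\mathbf Y} = \nu) = \binom{n}{n\nu(a_1),\ldots,n\nu(a_{\vert \Sigma \vert})} \prod_{i=1}^{\vert \Sigma \vert} \mu(a_i)^{n\nu(a_i)}, \]
where the multinomial coefficient counts the sequences of type $\nu$.

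The second ingredient is Stirling's formula applied to this coefficient. I would establish the two-sided bound
\[ (n+1)^{-\vert \Sigma \vert}\, e^{n H(\nu)} \leq \binom{n}{n\nu(a_1),\ldots,n\nu(a_{\vert \Sigma \vert})} \leq e^{n H(\nu)}, \]
where $H(\nu) = -\sum_{i=1}^{\vert \Sigma \vert} \nu(a_i)\log\nu(a_i)$ is the Shannon entropy. Combining this with the probability formula yields, for every $\nu \in \mathcal{T}_n$,
\[ (n+1)^{-\vert \Sigma \vert}\, e^{-n h(\nu\vert\mu)} \leq \mathbb P_\mu(L_n^{\mathbf Y} = \nu) \leq e^{-n h(\nu\vert\mu)}, \]
because $H(\nu) + \sum_{i=1}^{\vert \Sigma \vert} \nu(a_i)\log\mu(a_i) = -\sum_{i=1}^{\vert \Sigma \vert} \nu(a_i)\log\frac{\nu(a_i)}{\mu(a_i)} = -h(\nu\vert\mu)$; here the extra terms $-\nu(a_i)+\mu(a_i)$ in the definition of $h$ sum to zero, as $\nu$ and $\mu$ are both probability vectors, so $h(\nu\vert\mu)$ reduces to the usual relative entropy. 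The final combinatorial fact is that the number of types grows only polynomially, $\vert \mathcal{T}_n \vert \leq (n+1)^{\vert \Sigma \vert}$, hence subexponentially.

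For the upper bound I would sum over types: $\mathbb P_\mu(L_n^{\mathbf Y} \in A) = \sum_{\nu \in A \cap \mathcal{T}_n} \mathbb P_\mu(L_n^{\mathbf Y} = \nu) \leq (n+1)^{\vert \Sigma \vert}\, e^{-n \inf_{\nu \in A} h(\nu\vert\mu)}$, so taking $\frac1n\log$ and $\limsup$ absorbs the polynomial factor and yields the bound over $A$ itself (not merely $\overline{A}$, a bonus of the finite setting). For the lower bound I would estimate the probability of $A$ from below by that of a single well-chosen type. Fixing any $\nu \in A^o$, I would use that $\mathcal{T}_n$ becomes dense in $\mathcal{M}_1(\Sigma)$ as $n\to\infty$ — each coordinate of a probability vector is matched by a type up to an error $O(1/n)$ — to produce types $\nu_n \in A^o \cap \mathcal{T}_n$ with $\nu_n \to \nu$. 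Then $\mathbb P_\mu(L_n^{\mathbf Y} \in A) \geq \mathbb P_\mu(L_n^{\mathbf Y} = \nu_n) \geq (n+1)^{-\vert \Sigma \vert}\, e^{-n h(\nu_n\vert\mu)}$, and passing to the $\liminf$ with $h(\nu_n\vert\mu)\to h(\nu\vert\mu)$ gives $-h(\nu\vert\mu)$; optimizing over $\nu \in A^o$ completes the lower bound.

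The main obstacle is the lower bound, specifically the approximation step and the continuity of $h(\cdot\vert\mu)$ used to pass from $\nu_n$ to $\nu$. One must take care at the boundary of the simplex: continuity of $x\log x$ at $x=0$ handles atoms with $\nu(a_i)=0$, while points $\nu$ charging the complement of $\Sigma_\mu$ have $h(\nu\vert\mu)=\infty$, making the bound trivially true. The density argument must also respect the openness of $A^o$, so that the approximants genuinely lie in $A^o$ for large $n$; this is automatic once $\nu_n \to \nu \in A^o$. The Stirling estimates themselves are routine, but they need to be stated uniformly over all types so that the polynomial prefactor is explicit and harmless after taking $\frac1n\log$.
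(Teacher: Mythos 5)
The paper gives no proof of this statement: Section \ref{ldpcske} explicitly omits all proofs and defers to Dembo and Zeitouni \cite{LDP}, where the finite-alphabet Sanov theorem is proved precisely by the method of types you describe. Your argument is correct and is essentially that standard proof — the exact type-probability formula, the two-sided multinomial bound, the polynomial count of types for the upper bound (which, as the paper notes, makes the closure unnecessary in the finite setting), and the approximation of a point of $A^o$ by realizable types supported in $\Sigma_\mu$ for the lower bound — so there is nothing to compare against in the thesis itself.
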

Note that in the case of alphabets, it is not necessary to apply the closure operator in the upper bound.

We continue with a large deviation result corresponding to empirical means of $\mathbb R$-valued i.i.d. random variables, which is therefore called level-1 LD result. Let $\lbrace X_i \rbrace_{i \in \mathbb N}$ be a sequence of i.i.d. random variables with common law $\mu \in \mathcal{M}_1(\mathbb R^d)$. Let us write $\tilde{S}_n=\frac{1}{n} \sum_{i=1}^{n} X_i$ and $\nu_n$ for the law of $\tilde{S}_n$, for $n \in \mathbb N$. Moreover, let us define the \emph{logarithmic moment generating function} associated with the law $\mu$ as
\[ \Lambda_{X_1}(\lambda)=\log \mathbb E \left[e^{\lambda X_1} \right]. \] We define the \emph{Fenchel--Legendre transform} of $\Lambda(\lambda)$ as
\[ \Lambda_{X_1}^\ast(x)=\sup_{\lambda \in \mathbb R} \lbrace \lambda x-\Lambda(\lambda) \rbrace. \] Then the following large deviation theorem holds.
\begin{thm}[Cramér's theorem] \label{nagycramér} \index{Cramér's theorem}
$\Lambda_{X_1}$ is a convex function and $\Lambda_{X_1}^\ast$ is a convex rate function. The sequence of measures $\lbrace \mu_n \rbrace$ satisfies the LDP with the convex rate function $\Lambda^\ast(\cdot)$, i.e.
\begin{enumerate}[(i)]
    \item For all closed set $F \subset \mathbb R$,
    \[ \limsup_{n \to \infty} \frac{1}{n} \log \mu_n(F) \leq -\inf_{x \in F} \Lambda_{X_1}^{\ast}(x). \]
    \item For all open set $G \subset \mathbb R$,
    \[ \liminf_{n \to \infty} \frac{1}{n} \log \mu_n(G) \geq -\inf_{x \in G} \Lambda_{X_1}^{\ast}(x). \]
\end{enumerate}
If $\lbrace \lambda: \Lambda(\lambda)<\infty \rbrace=\lbrace 0 \rbrace$, then $\Lambda^\ast$ is identically zero. If $\Lambda(\lambda)<\infty$ for some $\lambda>0$, then $\mathbb E[X_1]$ exists as an element of $[-\infty,\infty)$, and for all $x \geq \mathbb E[X_1]$ we have
\[ \Lambda_{X_1}^\ast(x)=\sup_{\lambda \geq 0} \lbrace \lambda x-\Lambda(\lambda) \rbrace.\]
\end{thm}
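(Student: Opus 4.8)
The plan is to prove the structural claims first, then the upper and lower bounds of the LDP separately; the upper bound is elementary while the lower bound rests on an exponential change of measure.

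\textbf{Structural properties.} I would first record that $\Lambda_{X_1}$ is convex: for $\theta \in (0,1)$ and $\lambda_1,\lambda_2 \in \mathbb R$, Hölder's inequality with conjugate exponents $1/\theta$ and $1/(1-\theta)$ applied to $\mathbb E[e^{(\theta\lambda_1+(1-\theta)\lambda_2)X_1}]$ yields $\Lambda_{X_1}(\theta\lambda_1+(1-\theta)\lambda_2) \le \theta\Lambda_{X_1}(\lambda_1)+(1-\theta)\Lambda_{X_1}(\lambda_2)$. Since $\Lambda_{X_1}^\ast$ is a pointwise supremum of the affine maps $x \mapsto \lambda x - \Lambda_{X_1}(\lambda)$, it is automatically convex and lower semicontinuous; taking $\lambda=0$ and using $\Lambda_{X_1}(0)=\log 1=0$ gives $\Lambda_{X_1}^\ast \ge 0$, so it is a convex rate function. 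The degenerate case $\{\lambda:\Lambda_{X_1}(\lambda)<\infty\}=\{0\}$ is then immediate: only $\lambda=0$ contributes to the supremum, whence $\Lambda_{X_1}^\ast \equiv 0$. For the final assertion, if $\Lambda_{X_1}(\lambda)<\infty$ for some $\lambda>0$ then $\mathbb E[X_1^+]<\infty$, so $\mathbb E[X_1]$ is well defined in $[-\infty,\infty)$; Jensen's inequality gives $\Lambda_{X_1}(\lambda)\ge\lambda\mathbb E[X_1]$, hence $\lambda x-\Lambda_{X_1}(\lambda)\le\lambda(x-\mathbb E[X_1])\le 0$ for every $\lambda<0$ and $x\ge\mathbb E[X_1]$, so negative $\lambda$ never beat $\lambda=0$ and the supremum may be restricted to $\lambda\ge 0$.

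\textbf{Upper bound.} By the open-set/closed-set reformulation of the LDP stated above it suffices to treat closed sets $F\subset\mathbb R$. I would first prove the half-line estimate: for $a\ge\mathbb E[X_1]$ and $\lambda\ge 0$, the exponential Markov inequality gives $\mathbb P(\tilde S_n\ge a)\le e^{-\lambda n a}\mathbb E[e^{\lambda X_1}]^n=e^{-n(\lambda a-\Lambda_{X_1}(\lambda))}$, and optimizing over $\lambda\ge 0$ produces $\tfrac1n\log\mathbb P(\tilde S_n\ge a)\le-\Lambda_{X_1}^\ast(a)$ by the restricted variational formula from the previous step; the symmetric estimate holds for $a\le\mathbb E[X_1]$. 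For a general closed $F$ I would use that $\Lambda_{X_1}^\ast$ is convex with minimum value $0$ at $\mathbb E[X_1]$, hence non-decreasing to the right and non-increasing to the left of the mean: splitting $F$ at the mean and applying the two half-line bounds at the endpoints of $F$ nearest the mean yields $\limsup_n\tfrac1n\log\mu_n(F)\le-\inf_{x\in F}\Lambda_{X_1}^\ast(x)$.

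\textbf{Lower bound.} Again it suffices to show, for every $x\in\mathbb R$ and $\delta>0$, that $\liminf_n\tfrac1n\log\mathbb P(|\tilde S_n-x|<\delta)\ge-\Lambda_{X_1}^\ast(x)$. The central tool is the \emph{Cramér tilt}: assuming the supremum defining $\Lambda_{X_1}^\ast(x)$ is attained at some $\eta$ with $\Lambda_{X_1}'(\eta)=x$ and $\Lambda_{X_1}(\eta)<\infty$, define the tilted law $\mathrm d\tilde\mu(y)=e^{\eta y-\Lambda_{X_1}(\eta)}\,\mathrm d\mu(y)$. Under $\tilde\mu$ the summands have mean $\Lambda_{X_1}'(\eta)=x$, so the weak law of large numbers gives $\tilde\mu^{\otimes n}(|\tilde S_n-x|<\delta)\to 1$. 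Undoing the tilt,
\[ \mathbb P(|\tilde S_n-x|<\delta)=\int_{\{|\tilde S_n-x|<\delta\}}e^{-\eta\sum_i y_i+n\Lambda_{X_1}(\eta)}\,\mathrm d\tilde\mu^{\otimes n}\ge e^{-n(\eta x+|\eta|\delta-\Lambda_{X_1}(\eta))}\,\tilde\mu^{\otimes n}(|\tilde S_n-x|<\delta), \]
since $\sum_i y_i\in(n(x-\delta),n(x+\delta))$ on the event. Taking $\tfrac1n\log$ and $\liminf$, the last factor contributes $0$ and $\eta x-\Lambda_{X_1}(\eta)=\Lambda_{X_1}^\ast(x)$, leaving $-\Lambda_{X_1}^\ast(x)-|\eta|\delta$; letting $\delta\to 0$ closes this case.

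\textbf{Main obstacle.} The delicate point is precisely the existence and handling of the tilting parameter $\eta$. When $x$ lies in the interior of the range $\{\Lambda_{X_1}'(\eta):\eta\in\mathrm{dom}\,\Lambda_{X_1}\}$ the minimizer exists and the argument goes through verbatim; the trouble is the boundary and exterior cases, namely when the supremum is not attained, when $\Lambda_{X_1}$ fails to be differentiable, or when $x$ exceeds the essential supremum of $\mu$. I would treat these by approximation, either tilting to a nearby attainable point and invoking the convexity and continuity of $\Lambda_{X_1}^\ast$, or, in the exterior case, noting that $\mathbb P(\tilde S_n\ge x)$ is governed by atoms of $\mu$ near its essential supremum and comparing directly with $\Lambda_{X_1}^\ast(x)$ there. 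Throughout, the integrability of the tilted law needed for the law of large numbers must be extracted from $\Lambda_{X_1}(\eta)<\infty$, and the scalar formulation of the two bounds (over sets in $\mathbb R$) should be reconciled with the possibly higher-dimensional law of the $X_i$.
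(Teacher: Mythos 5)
This theorem is stated in Section \ref{ldpcske} as a quoted preliminary from Dembo and Zeitouni, and the thesis explicitly omits all proofs in that section, so there is no proof of record to compare your proposal against. On its own merits, your outline is the standard textbook proof (Chernoff bound for the upper half-line estimates plus exponential tilting for the lower bound), and the structural claims, the degenerate case $\{\lambda:\Lambda_{X_1}(\lambda)<\infty\}=\{0\}$, the restriction of the supremum to $\lambda\ge 0$, and the upper bound via monotonicity of $\Lambda_{X_1}^\ast$ on either side of its minimum are all handled correctly (modulo the small point that when the mean does not exist one should split $F$ at an arbitrary minimizer of $\Lambda_{X_1}^\ast$ rather than at $\mathbb E[X_1]$).

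The one place where the plan is genuinely thin is the lower bound in the cases you flag as the ``main obstacle.'' When the supremum defining $\Lambda_{X_1}^\ast(x)$ is not attained — in particular when $\Lambda_{X_1}$ is finite only at $0$, so that $\Lambda_{X_1}^\ast\equiv 0$ and the lower bound asserts subexponential decay of $\mu_n(G)$ for \emph{every} nonempty open $G$ — there is no nearby attainable tilting point to approximate with, and the convexity/continuity of $\Lambda_{X_1}^\ast$ does not rescue the argument. The correct mechanism there is a direct use of the heavy tails of $\mu$ (e.g.\ forcing one summand into a translated window and controlling the rest by the law of large numbers, or the truncation argument of Dembo--Zeitouni), which produces at worst polynomial decay and hence the required rate $0$. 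Your remark about ``atoms of $\mu$ near its essential supremum'' addresses only the case where $x$ exceeds a finite essential supremum, not the unbounded-support case. So the proposal is a correct skeleton, but the boundary cases of the lower bound need a different argument than the approximation you describe.
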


The following notion of regular topological spaces will be important when we consider the LDP of probability measures on function spaces in Chapter \ref{eleje}. We note that all metric spaces are regular. 
\begin{defi} \index{regular topological space}
A Hausdorff topological space $X$ is called regular if for all closed sets $F \subset X$ and points $x \in X \setminus F$ there exist disjoint open sets $G,~H$ such that $F \subseteq G$ and $x \in H$.\footnote{ Note that assuming the Hausdorff condition is redundant here. It is sufficient to assume that the space is $T_1$, i.e., all singletons $\lbrace x \rbrace \subseteq X$ are closed. This, together with the separation of closed sets and single points, implies the Hausdorff condition.}
\end{defi}
\begin{lem}
A family of probability measures $\lbrace \mu_{\varepsilon} \rbrace$ on a regular topological space $X$ can have at most one rate function associated with its LDP.
\end{lem}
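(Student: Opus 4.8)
The plan is to show that if $I$ and $J$ are both rate functions governing the LDP of $\{\mu_\varepsilon\}$, then $I = J$. By the symmetry of the roles played by $I$ and $J$, it suffices to establish $J(x) \leq I(x)$ for every $x \in X$; the reverse inequality then follows by interchanging the two functions.

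First I would fix $x \in X$ and an arbitrary open neighborhood $G$ of $x$. Applying the lower bound of the LDP for the rate function $I$ to $G$, and using $x \in G$, gives
\[ -I(x) \leq -\inf_{y \in G} I(y) \leq \liminf_{\varepsilon \to 0} \varepsilon \log \mu_\varepsilon(G). \]
Since $G \subseteq \overline{G}$ and $\overline{G}$ is closed, monotonicity of the measures $\mu_\varepsilon$ together with the upper bound of the LDP for the rate function $J$ yields
\[ \limsup_{\varepsilon \to 0} \varepsilon \log \mu_\varepsilon(G) \leq \limsup_{\varepsilon \to 0} \varepsilon \log \mu_\varepsilon(\overline{G}) \leq -\inf_{y \in \overline{G}} J(y). \]
Chaining these two displays produces the key inequality $\inf_{y \in \overline{G}} J(y) \leq I(x)$, valid for \emph{every} open neighborhood $G$ of $x$.

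The main step — and the only place where regularity enters — is to upgrade this to $J(x) \leq I(x)$. I would argue by contradiction: suppose $J(x) > I(x)$ and pick $\alpha$ with $I(x) < \alpha < J(x)$. The level set $\Psi_J(\alpha) = \lbrace y : J(y) \leq \alpha \rbrace$ is closed by the lower semicontinuity of $J$, and $x \notin \Psi_J(\alpha)$ since $J(x) > \alpha$. By regularity of $X$, there exist disjoint open sets $G, H$ with $\Psi_J(\alpha) \subseteq G$ and $x \in H$. Disjointness forces $H \subseteq G^c$, and as $G^c$ is closed we get $\overline{H} \subseteq G^c \subseteq \Psi_J(\alpha)^c$, so $J(y) > \alpha$ for all $y \in \overline{H}$, whence $\inf_{y \in \overline{H}} J(y) \geq \alpha > I(x)$. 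Applying the key inequality with the neighborhood $H$ gives $\inf_{y \in \overline{H}} J(y) \leq I(x)$, a contradiction. Hence $J(x) \leq I(x)$, and by symmetry $I = J$.

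I expect the regularity step to be the crux of the argument: it is precisely what allows one to find an open neighborhood of $x$ whose closure avoids the closed level set $\Psi_J(\alpha)$, so that the $\liminf$–$\limsup$ sandwich can be localized tightly around $x$. Without the separation property, a neighborhood $G$ of $x$ might have $\overline{G}$ meeting the region where $J$ is small, and the bound $\inf_{y \in \overline{G}} J(y) \leq I(x)$ would fail to sharpen into the pointwise inequality $J(x) \leq I(x)$.
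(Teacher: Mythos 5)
Your argument is correct and complete: the sandwich $\inf_{y \in \overline{G}} J(y) \leq I(x)$ over open neighbourhoods $G$ of $x$, combined with the regularity-based separation of $x$ from the closed level set $\Psi_J(\alpha)$, is exactly the standard uniqueness proof (it is the argument of \cite[Lemma 4.1.4]{LDP}). The thesis itself states this lemma without proof, as it explicitly omits proofs of the quoted large deviation preliminaries, so there is nothing in the paper to compare against beyond noting that your use of regularity — producing a neighbourhood of $x$ whose \emph{closure} misses $\Psi_J(\alpha)$ — is precisely where the hypothesis is needed and is handled correctly.
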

Knowing the existence of an LDP on some topological space, the following theorem provides the existence of LDP of continuous transformations of the corresponding probability measures.
\begin{thm}[Contraction principle] \label{contractionprinciple} \index{contraction principle}
Let $X$ and $Y$ be Hausdorff spaces and $f: X \to Y$ a continuous function. Consider a good rate function $I: X \to [0,\infty]$. For each $y \in Y$, define
\[ I'(y)=\inf \lbrace I(x) \vert ~ x \in X,~ y=f(x) \rbrace,\]
where again the infimum taken over $\emptyset$ is defined as $\infty$. Then we have:
\begin{enumerate}[(i)]
\item $I'$ is a good rate function on $Y$.
\item If the family of probability measures $\lbrace \mu_\varepsilon \rbrace$ satisfies an LDP on $X$ with good rate function $I$, then $\lbrace \mu_\varepsilon \circ f^{-1}\rbrace$ satisfies an LDP on $Y$ with good rate function $I'$.
\end{enumerate}
\end{thm}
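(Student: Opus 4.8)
The statement to prove is the contraction principle, and the plan is to dispatch part (i) first, since part (ii) follows almost mechanically once the variational identity relating $I$ and $I'$ is in hand. For part (i) the key claim is that the level sets transform as $\Psi_{I'}(\alpha) = f(\Psi_I(\alpha))$ for every $\alpha \in [0,\infty)$. The inclusion $f(\Psi_I(\alpha)) \subseteq \Psi_{I'}(\alpha)$ is immediate from the definition of $I'$ as an infimum: if $I(x) \leq \alpha$, then $I'(f(x)) \leq I(x) \leq \alpha$. The reverse inclusion is where the goodness of $I$ enters, and it is the one real obstacle in the argument: I must show that whenever $I'(y) \leq \alpha$, the defining infimum is actually \emph{attained} at some $x \in f^{-1}(\lbrace y \rbrace)$ with $I(x) \leq \alpha$.

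To attain the infimum I would argue by compactness rather than by sequences, since $X$ is only assumed Hausdorff. Fix $y$ with $I'(y) \leq \alpha$ and consider, for each $n \in \mathbb N$, the set $C_n = \Psi_I(\alpha + 1/n) \cap f^{-1}(\lbrace y \rbrace)$. Each $C_n$ is nonempty, because $I'(y) < \alpha + 1/n$ forces some $x$ with $f(x) = y$ and $I(x) < \alpha + 1/n$; each is closed, being the intersection of the closed level set $\Psi_I(\alpha+1/n)$ (lower semicontinuity of $I$) with $f^{-1}(\lbrace y \rbrace)$ (continuity of $f$ and closedness of the singleton $\lbrace y \rbrace$ in the Hausdorff space $Y$); and all of them lie inside the compact set $\Psi_I(\alpha+1)$. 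The family $\lbrace C_n \rbrace$ is decreasing and hence has the finite intersection property, so compactness gives $\bigcap_n C_n \neq \emptyset$. Any $x$ in this intersection satisfies $f(x) = y$ and $I(x) \leq \alpha + 1/n$ for all $n$, whence $I(x) \leq \alpha$ and $y \in f(\Psi_I(\alpha))$, as required. With the identity $\Psi_{I'}(\alpha) = f(\Psi_I(\alpha))$ in place, goodness of $I'$ is then automatic: $\Psi_I(\alpha)$ is compact because $I$ is good, its continuous image $f(\Psi_I(\alpha))$ is compact, and a compact subset of the Hausdorff space $Y$ is closed; thus every level set of $I'$ is compact, which in particular makes $I'$ lower semicontinuous, so $I'$ is a good rate function.

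For part (ii) I would set $\nu_\varepsilon = \mu_\varepsilon \circ f^{-1}$ and verify the two bounds characterizing the LDP separately, relying on the elementary variational identity $\inf_{x \in f^{-1}(S)} I(x) = \inf_{y \in S} I'(y)$, valid for any $S \subseteq Y$ and obtained by grouping the infimum over $x$ according to the value $f(x)$. For the upper bound let $F \subseteq Y$ be closed; then $f^{-1}(F)$ is closed by continuity of $f$, so the upper bound of the LDP for $\lbrace \mu_\varepsilon \rbrace$ gives $\limsup_{\varepsilon \to 0} \varepsilon \log \nu_\varepsilon(F) = \limsup_{\varepsilon \to 0} \varepsilon \log \mu_\varepsilon(f^{-1}(F)) \leq -\inf_{x \in f^{-1}(F)} I(x) = -\inf_{y \in F} I'(y)$. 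For the lower bound let $G \subseteq Y$ be open; then $f^{-1}(G)$ is open, and the lower bound for $\lbrace \mu_\varepsilon \rbrace$ yields $\liminf_{\varepsilon \to 0} \varepsilon \log \nu_\varepsilon(G) \geq -\inf_{x \in f^{-1}(G)} I(x) = -\inf_{y \in G} I'(y)$. By the characterization of the LDP through the open lower bound and the closed upper bound, $\lbrace \mu_\varepsilon \circ f^{-1} \rbrace$ satisfies the LDP on $Y$ with good rate function $I'$. The entire difficulty is thus concentrated in the attainment argument of part (i); part (ii) merely transports the two bounds through $f^{-1}$ and uses nothing beyond continuity of $f$ and the regrouping of infima.
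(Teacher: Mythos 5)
Your proof is correct and is exactly the standard argument for the contraction principle (Dembo--Zeitouni, Theorem 4.2.1): the level-set identity $\Psi_{I'}(\alpha)=f(\Psi_I(\alpha))$ established via a nested-compact-sets attainment argument, followed by transporting the open/closed bounds through $f^{-1}$ with the regrouping of infima. The thesis states this result without proof, citing \cite{LDP}, so there is nothing in the paper to diverge from; no gaps.
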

Now, we state Varadhan's lemmas, which we will directly use in Section \ref{kettőkettő}.
Let $X$ be a regular topological space, $(Z_\varepsilon)$ be a family of random variables with values in $X$ defined on a probability space $(\Omega,\mathcal F,\mathbb P)$, with $\lbrace \mu_\varepsilon \rbrace= \lbrace \mathbb P \circ Z_\varepsilon^{-1} \rbrace$.
\begin{thm}[Varadhan] 
Suppose that $\lbrace \mu_\varepsilon \rbrace$ satisfies an LDP with a good rate function $I: X \to [0,\infty]$, and let $\Phi: X \to \mathbb R$ be any continuous function. Assume further either the tail condition
\begin{equation} \label{farkacska}  \lim_{M \to \infty} \limsup_{\varepsilon \to 0} \varepsilon \log \mathbb E \left( \exp(\Phi(Z_\varepsilon)/\varepsilon) \mathds 1 \lbrace \Phi(Z_\varepsilon) \geq M \rbrace \right)=-\infty, \end{equation}
or the following moment condition for some $\gamma>0$,
\begin{equation} \label{momentecske} \limsup_{\varepsilon \to 0} \varepsilon \log \mathbb E \left( \exp(\gamma \Phi(Z_\varepsilon) / \varepsilon )\right)<\infty. \end{equation}
Then \[ \lim_{\varepsilon \to 0} \varepsilon \log \mathbb E \left( \exp(\Phi(Z_\varepsilon)/\varepsilon) \right) = \sup_{x \in X} \lbrace \Phi(x)-I(x) \rbrace. \]
\end{thm}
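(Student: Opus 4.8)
The plan is to establish the identity by proving the two matching inequalities for the $\limsup$ and the $\liminf$ of $\varepsilon \log \mathbb{E}(\exp(\Phi(Z_\varepsilon)/\varepsilon))$ separately; throughout write $L := \sup_{x \in X}\{\Phi(x) - I(x)\}$ for the target value. For the lower bound I would fix an arbitrary $x_0 \in X$ and $\delta > 0$, and use continuity of $\Phi$ to choose an open neighborhood $G$ of $x_0$ on which $\Phi > \Phi(x_0) - \delta$. Restricting the expectation to $\{Z_\varepsilon \in G\}$ gives $\mathbb{E}(\exp(\Phi(Z_\varepsilon)/\varepsilon)) \ge e^{(\Phi(x_0)-\delta)/\varepsilon}\mu_\varepsilon(G)$; taking $\varepsilon \log$, sending $\varepsilon \to 0$, and applying the open-set lower bound (part (i) of the equivalent characterization of the LDP) yields $\liminf_\varepsilon \varepsilon\log\mathbb{E}(\exp(\Phi(Z_\varepsilon)/\varepsilon)) \ge \Phi(x_0) - \delta - \inf_{x\in G}I(x) \ge \Phi(x_0) - \delta - I(x_0)$. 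Since $\delta$ and $x_0$ are arbitrary, the $\liminf$ is at least $L$. This half uses only continuity of $\Phi$ and the open-set lower bound, neither goodness of $I$ nor the tail or moment conditions.

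The upper bound is where the work lies. First I would reduce the moment condition \eqref{momentecske} to the tail condition \eqref{farkacska}: by Chebyshev's inequality $\mathbb{P}(\Phi(Z_\varepsilon) \ge M) \le e^{-\gamma M/\varepsilon}\mathbb{E}(e^{\gamma\Phi(Z_\varepsilon)/\varepsilon})$, while Hölder's inequality bounds $\mathbb{E}(e^{\Phi(Z_\varepsilon)/\varepsilon}\mathbf{1}\{\Phi(Z_\varepsilon) \ge M\})$ by a product of $\mathbb{E}(e^{\gamma\Phi(Z_\varepsilon)/\varepsilon})^{1/\gamma}$ and a power of this tail probability; after taking $\varepsilon\log$ and $\limsup$ the result is dominated by $-(\gamma-1)M$ plus the finite quantity from \eqref{momentecske}, so letting $M \to \infty$ produces $-\infty$, i.e. \eqref{farkacska}. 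Hence it suffices to argue under the tail condition. I would then split $\mathbb{E}(e^{\Phi(Z_\varepsilon)/\varepsilon}) \le \mathbb{E}(e^{\Phi(Z_\varepsilon)/\varepsilon}\mathbf{1}\{\Phi < M\}) + \mathbb{E}(e^{\Phi(Z_\varepsilon)/\varepsilon}\mathbf{1}\{\Phi \ge M\})$; the second summand is driven to $-\infty$ by \eqref{farkacska} as $M \to \infty$, and since $\varepsilon\log(a_\varepsilon + b_\varepsilon) \le \varepsilon\log 2 + \max\{\varepsilon\log a_\varepsilon, \varepsilon\log b_\varepsilon\}$ it remains to bound the truncated piece by $L$ for each fixed $M$.

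For the truncated piece I would exploit that $I$ is a \emph{good} rate function. Fix $\delta > 0$ and a level $\alpha$, and cover the compact level set $\Psi_I(\alpha)$ as follows: for each $y \in \Psi_I(\alpha)$, lower semicontinuity of $I$ and continuity of $\Phi$ furnish an open neighborhood $A_y$ with $\inf_{x \in \overline{A_y}}I(x) \ge I(y) - \delta$ and $\sup_{x\in A_y}\Phi(x) \le \Phi(y) + \delta$; compactness extracts a finite subcover $A_{y_1}, \ldots, A_{y_N}$. On each piece the estimate $\mathbb{E}(e^{\Phi(Z_\varepsilon)/\varepsilon}\mathbf{1}\{Z_\varepsilon \in A_{y_i}\}) \le e^{(\Phi(y_i)+\delta)/\varepsilon}\mu_\varepsilon(\overline{A_{y_i}})$ combined with the closed-set upper bound gives $\limsup_\varepsilon \varepsilon\log \le \Phi(y_i) - I(y_i) + 2\delta \le L + 2\delta$. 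The complementary closed set $F := X \setminus \bigcup_i A_{y_i}$ satisfies $F \subseteq \{I > \alpha\}$, hence $\inf_{x\in F} I(x) \ge \alpha$, and using $\Phi < M$ on the truncated event one obtains $\limsup_\varepsilon \varepsilon\log\mathbb{E}(e^{\Phi(Z_\varepsilon)/\varepsilon}\mathbf{1}\{Z_\varepsilon \in F,\ \Phi < M\}) \le M - \alpha$. Choosing $\alpha \ge M - L$ makes this at most $L$, so the whole truncated piece is bounded by $L + 2\delta$; letting $\delta \to 0$ and then $M \to \infty$ finishes the upper bound.

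The main obstacle I anticipate is this final covering argument. Since $X$ need not be compact, the estimate must be localized to the compact level sets supplied by goodness of $I$, while the non-compact complement is absorbed into a closed set whose rate is at least $\alpha$ and then controlled by the closed-set upper bound. Making the three parameters $\delta$, $\alpha$, and $M$ cooperate — keeping the finite-cover contribution close to $L$ while simultaneously driving the complement and the heavy tail to $-\infty$ — is the delicate bookkeeping of the argument, and it is precisely here that compactness of $\Psi_I(\alpha)$ and the tail condition \eqref{farkacska} are indispensable, in contrast to the comparatively effortless lower bound.
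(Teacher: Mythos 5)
Your proposal is correct and is essentially the standard argument of Dembo--Zeitouni (Theorem 4.3.1 together with Lemmas 4.3.6 and 4.3.8 there); the thesis itself quotes this theorem from that book and explicitly omits the proof, so there is no in-paper proof to compare against. One point worth flagging: your H\"older/Chebyshev reduction of the moment condition \eqref{momentecske} to the tail condition \eqref{farkacska} produces the exponent $-(\gamma-1)M$, which tends to $-\infty$ only when $\gamma>1$; the statement as printed says ``for some $\gamma>0$,'' which is a typo for $\gamma>1$ (as in Dembo--Zeitouni), and your proof silently but correctly requires the stronger hypothesis. Two minor housekeeping items you could add for completeness: the choice of neighborhoods $A_y$ with $\overline{A_y}$ contained in the set where $I>I(y)-\delta$ uses that $X$ is regular (which the surrounding text does assume), and the degenerate case $\sup_x\{\Phi(x)-I(x)\}=-\infty$ (i.e.\ $I\equiv\infty$, empty level sets) should be dispatched separately, though it is trivial.
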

\begin{lem}
The moment condition \eqref{momentecske} implies the tail condition \eqref{farkacska}.
\end{lem}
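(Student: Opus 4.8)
The plan is to bound the truncated expectation in \eqref{farkacska} directly by the full exponential moment appearing in \eqref{momentecske}, exploiting that truncation to the event $\{\Phi(Z_\varepsilon) \ge M\}$ produces an exponentially small prefactor. I would carry this out under the assumption $\gamma > 1$, which is the relevant range: for $\gamma > 1$ the quantity $1-\gamma$ is negative, and this sign is exactly what converts the truncation into decay.

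First I would fix $M > 0$ and write, pointwise on the event $\{\Phi(Z_\varepsilon) \ge M\}$,
\[ \exp(\Phi(Z_\varepsilon)/\varepsilon) = \exp(\gamma \Phi(Z_\varepsilon)/\varepsilon)\,\exp\bigl((1-\gamma)\Phi(Z_\varepsilon)/\varepsilon\bigr). \]
Since $1 - \gamma < 0$ and $\Phi(Z_\varepsilon) \ge M$ on this event, the last factor is at most $\exp\bigl(-(\gamma-1)M/\varepsilon\bigr)$. Bounding the indicator by $1$ after this estimate and taking expectations yields
\[ \mathbb E\bigl( \exp(\Phi(Z_\varepsilon)/\varepsilon)\, \mathds 1\{\Phi(Z_\varepsilon) \ge M\} \bigr) \le \exp\bigl(-(\gamma-1)M/\varepsilon\bigr)\, \mathbb E\bigl( \exp(\gamma \Phi(Z_\varepsilon)/\varepsilon) \bigr). \]
An alternative route, should one prefer it, is to estimate $\mathbb P(\Phi(Z_\varepsilon) \ge M)$ by Markov's inequality and then apply Hölder with conjugate exponents $\gamma$ and $\gamma/(\gamma-1)$; both give the same bound.

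Next I would apply $\varepsilon \log(\cdot)$ to this inequality, obtaining
\[ \varepsilon \log \mathbb E\bigl( \exp(\Phi(Z_\varepsilon)/\varepsilon)\, \mathds 1\{\Phi(Z_\varepsilon) \ge M\} \bigr) \le -(\gamma-1)M + \varepsilon \log \mathbb E\bigl( \exp(\gamma \Phi(Z_\varepsilon)/\varepsilon) \bigr). \]
Taking $\limsup_{\varepsilon \to 0}$ and writing $C := \limsup_{\varepsilon \to 0} \varepsilon \log \mathbb E( \exp(\gamma \Phi(Z_\varepsilon)/\varepsilon))$, which is finite by the moment condition \eqref{momentecske}, the right-hand side is bounded by $C - (\gamma-1)M$. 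Finally I would let $M \to \infty$: because $\gamma - 1 > 0$, the term $(\gamma-1)M$ diverges, so the iterated limit equals $-\infty$, which is precisely the tail condition \eqref{farkacska}.

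The only delicate point is the role of the exponent: the whole argument hinges on $\gamma - 1$ being strictly positive, so that truncation at level $M$ costs a factor decaying like $\exp(-(\gamma-1)M/\varepsilon)$ and the outer limit in $M$ can be driven to $-\infty$. For $\gamma \le 1$ no such decay is available, since $\exp(\Phi/\varepsilon)$ may fail to be controlled by $\exp(\gamma\Phi/\varepsilon)$ on the tail; thus the substantive content of the lemma is the case $\gamma > 1$, and I would state the hypothesis accordingly.
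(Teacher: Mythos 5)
Your proof is correct, and it is the standard argument (the one in Dembo--Zeitouni, from which this lemma is quoted); the thesis itself states the lemma without proof, as it does for all the preliminaries in Section \ref{ldpcske}, so there is no in-paper proof to compare against. The factorization $\exp(\Phi/\varepsilon)=\exp(\gamma\Phi/\varepsilon)\exp((1-\gamma)\Phi/\varepsilon)$ on the event $\{\Phi(Z_\varepsilon)\ge M\}$, followed by $\varepsilon\log(\cdot)$, $\limsup_{\varepsilon\to 0}$ and $M\to\infty$, is exactly right. Your closing remark about the exponent is also a genuine catch: the implication holds only for $\gamma>1$, whereas the moment condition \eqref{momentecske} is stated in this thesis ``for some $\gamma>0$'' --- that is a typo relative to the source (Dembo--Zeitouni require $\gamma>1$), and for $\gamma\le 1$ the lemma as literally stated here is false, since the truncated integrand is then not dominated by the $\gamma$-th moment on the tail. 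So your version, with the hypothesis $\gamma>1$ made explicit, is the correct statement and the correct proof.
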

\begin{lem}[Varadhan's lemma, lower bound] \index{Varadhan's lemmas} \label{varadhanlower} If $\Phi: X \to \mathbb R$ is lower semicontinuous and the large deviations lower bound holds with rate function $I: X \to [0,\infty]$, then
\[ \liminf_{\varepsilon \to 0} \varepsilon \log \mathbb E \left( \exp(\Phi(Z_\varepsilon)/\varepsilon) \right) \geq \sup_{x \in X} \lbrace \Phi(x)-I(x) \rbrace. \] \end{lem}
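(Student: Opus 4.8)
The goal is to prove the lower bound in Varadhan's lemma, namely
\[ \liminf_{\varepsilon \to 0} \varepsilon \log \mathbb E \left( \exp(\Phi(Z_\varepsilon)/\varepsilon) \right) \geq \sup_{x \in X} \lbrace \Phi(x)-I(x) \rbrace, \]
where $\Phi$ is lower semicontinuous and only the large deviations \emph{lower} bound is assumed. The plan is to fix an arbitrary point $x_0 \in X$ and show that the left-hand side dominates $\Phi(x_0)-I(x_0)$; taking the supremum over $x_0$ at the end then yields the claim. If $I(x_0)=\infty$ the inequality is trivial, so I would assume $I(x_0)<\infty$.

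The core idea is to exploit the lower semicontinuity of $\Phi$ to trap $\Phi$ from below on a neighbourhood of $x_0$. Fix $\delta>0$. By l.s.c.\ of $\Phi$ at $x_0$, there is an open neighbourhood $G$ of $x_0$ on which $\Phi(x) > \Phi(x_0)-\delta$ (using the topological definition: the set $\{x : \Phi(x) > \Phi(x_0)-\delta\}$ is open and contains $x_0$). The next step is a restriction-and-lower-bound estimate: since $\exp(\Phi(Z_\varepsilon)/\varepsilon) \geq 0$, I restrict the expectation to the event $\{Z_\varepsilon \in G\}$ to get
\[ \mathbb E \left( \exp(\Phi(Z_\varepsilon)/\varepsilon) \right) \geq \mathbb E \left( \exp(\Phi(Z_\varepsilon)/\varepsilon) \mathds 1\lbrace Z_\varepsilon \in G \rbrace \right) \geq e^{(\Phi(x_0)-\delta)/\varepsilon}\, \mu_\varepsilon(G), \]
where the last step uses the uniform lower bound on $\Phi$ over $G$ together with $\mu_\varepsilon(G)=\mathbb P(Z_\varepsilon \in G)$.

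Now I take $\varepsilon \log(\cdot)$ of both sides, which gives
\[ \varepsilon \log \mathbb E \left( \exp(\Phi(Z_\varepsilon)/\varepsilon) \right) \geq \Phi(x_0)-\delta + \varepsilon \log \mu_\varepsilon(G), \]
and then take $\liminf_{\varepsilon \to 0}$. Applying the assumed large deviations lower bound to the open set $G$ yields $\liminf_{\varepsilon \to 0} \varepsilon \log \mu_\varepsilon(G) \geq -\inf_{x \in G} I(x) \geq -I(x_0)$, since $x_0 \in G$. Combining the estimates gives
\[ \liminf_{\varepsilon \to 0} \varepsilon \log \mathbb E \left( \exp(\Phi(Z_\varepsilon)/\varepsilon) \right) \geq \Phi(x_0)-\delta -I(x_0). \]
Since $\delta>0$ was arbitrary, I let $\delta \downarrow 0$ to obtain the bound with $\Phi(x_0)-I(x_0)$ on the right, and finally take the supremum over $x_0 \in X$.

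The argument is short and the only genuinely delicate point is the very first step: securing a neighbourhood $G$ of $x_0$ on which $\Phi$ exceeds $\Phi(x_0)-\delta$. In a metric space this is immediate from the sequential characterisation of l.s.c., but since the lemma is stated for a general (regular) topological space, I would use the open-sublevel characterisation of lower semicontinuity, i.e.\ that $\{x : \Phi(x) > c\}$ is open for every $c$, which is exactly what lets the topological argument go through without metrisability. Everything else is a routine monotonicity-of-expectation and $\log$-continuity chain; notably, this direction needs neither the goodness of $I$ nor any tail or moment condition, only the LDP lower bound, which is why it can be isolated as a separate lemma.
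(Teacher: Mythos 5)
Your proof is correct. The thesis states this lemma without proof, quoting it from Dembo and Zeitouni \cite{LDP}, and your argument is exactly the standard one given there: fix $x_0$ with $I(x_0)<\infty$, use the open superlevel set $G=\lbrace x : \Phi(x)>\Phi(x_0)-\delta\rbrace$ supplied by lower semicontinuity, restrict the expectation to $\lbrace Z_\varepsilon \in G\rbrace$, and apply the large deviations lower bound to the open set $G$. You are also right that this direction needs neither goodness of $I$ nor any tail condition, and your use of the open-superlevel characterisation of l.s.c.\ correctly avoids any appeal to metrisability.
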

\begin{lem}[Varadhan's lemma, upper bound] \label{varadhanupper} If $\Phi: X \to \mathbb R$ is upper semicontinuous for which the tail condition \eqref{farkacska} holds, and the large deviations upper bound holds with the good rate function $I: X \to [0,\infty]$, then
\[ \limsup_{\varepsilon \to 0} \varepsilon \log \mathbb E \left( \exp(\Phi(Z_\varepsilon)/\varepsilon) \right) \leq \sup_{x \in X} \lbrace \Phi(x)-I(x) \rbrace. \] \end{lem}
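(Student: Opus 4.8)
The plan is to follow the classical two-stage argument: first reduce to a function that is bounded above using the tail condition, and then prove the bound for such functions by a finite covering of a compact level set of $I$. For the reduction, fix $M$ and note that on the event $\{\Phi(Z_\varepsilon) < M\}$ one has $\Phi = \Phi \wedge M$, so that $\mathbb E(\exp(\Phi(Z_\varepsilon)/\varepsilon)\mathds 1\{\Phi(Z_\varepsilon) < M\}) \le \mathbb E(\exp((\Phi \wedge M)(Z_\varepsilon)/\varepsilon))$. I would split the full expectation into this piece plus the tail piece $\mathbb E(\exp(\Phi(Z_\varepsilon)/\varepsilon)\mathds 1\{\Phi(Z_\varepsilon) \ge M\})$, and use the elementary identity $\limsup_\varepsilon \varepsilon\log(a_\varepsilon + b_\varepsilon) = \max(\limsup_\varepsilon \varepsilon \log a_\varepsilon,\, \limsup_\varepsilon\varepsilon\log b_\varepsilon)$. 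The tail condition \eqref{farkacska} forces the contribution of the second piece to tend to $-\infty$ as $M \to \infty$. Since $\Phi \wedge M$ is again upper semicontinuous and $\sup_x\{(\Phi\wedge M)(x) - I(x)\} \le \sup_x\{\Phi(x) - I(x)\}$, it suffices to prove the assertion for an upper semicontinuous $\Phi$ bounded above by some constant $M_0$.

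For the main step, write $q = \sup_x\{\Phi(x) - I(x)\}$ and fix $\delta > 0$ and a large $\alpha > 0$. The level set $K = \Psi_I(\alpha)$ is compact because $I$ is good. For each $x \in K$ I would use upper semicontinuity of $\Phi$ together with lower semicontinuity of $I$ (the defining property of a rate function) to produce an open neighbourhood on which simultaneously $\Phi \le \Phi(x) + \delta$ and $I \ge I(x) - \delta$; here the regularity of $X$ is essential, since it lets me shrink the neighbourhood to an open set $A_x$ whose closure $\overline{A_x}$ still satisfies both inequalities, so that $\sup_{\overline{A_x}}\Phi - \inf_{\overline{A_x}} I \le \Phi(x) - I(x) + 2\delta \le q + 2\delta$. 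Extracting a finite subcover $A_{x_1},\dots,A_{x_k}$ of $K$ and setting $U = \bigcup_i A_{x_i}$, I would bound $\mathbb E(\exp(\Phi(Z_\varepsilon)/\varepsilon)\mathds 1_{A_{x_i}}) \le \exp((\sup_{\overline{A_{x_i}}}\Phi)/\varepsilon)\,\mu_\varepsilon(\overline{A_{x_i}})$ and apply the large deviations upper bound on each closed set $\overline{A_{x_i}}$; the max rule for finite sums then yields a $\limsup$ of at most $q + 2\delta$ for the integral over $U$. The complement $U^c$ is closed and contained in $\{I > \alpha\}$, so the upper bound gives $\limsup_\varepsilon \varepsilon\log\mu_\varepsilon(U^c) \le -\alpha$, whence the integral over $U^c$, being at most $\exp(M_0/\varepsilon)\mu_\varepsilon(U^c)$, contributes at most $M_0 - \alpha$. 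Combining the two regions gives $\limsup_\varepsilon\varepsilon\log\mathbb E(\exp(\Phi(Z_\varepsilon)/\varepsilon)) \le \max(q + 2\delta,\, M_0 - \alpha)$; letting $\alpha \to \infty$ and then $\delta \to 0$ completes the proof.

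I expect the main obstacle to be the neighbourhood construction in the main step, namely arranging a single neighbourhood on which $\Phi$ is controlled from above and $I$ from below and, crucially, keeping both estimates valid on the closure so that the large deviations upper bound, which is stated only for closed sets, can be applied. This is exactly where both the goodness of $I$ (for the compact level set and the finite subcover) and the regularity of $X$ (for passing to closures) enter; without the joint use of \emph{both} semicontinuities one would only obtain the weaker quantity $\sup_{\overline{A_x}}\Phi - \inf_{\overline{A_x}} I$, which can strictly exceed $q$. The treatment of the non-compact part of $X$ via the closed complement $U^c$ is the other point requiring care, and it is handled cleanly by choosing $\alpha$ large relative to the upper bound $M_0$.
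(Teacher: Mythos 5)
Your argument is correct and is essentially the standard proof of Varadhan's upper bound from Dembo--Zeitouni \cite{LDP}: the truncation of $\Phi$ via the tail condition \eqref{farkacska}, the finite cover of the compact level set $\Psi_I(\alpha)$ built from the two semicontinuities together with the regularity of $X$ (so that the closures $\overline{A_{x_i}}$ still carry both estimates and the large deviations upper bound for closed sets applies), and the disposal of the closed complement $U^c$ by letting $\alpha \to \infty$ are exactly the intended steps. Note that the thesis states this lemma as a quoted preliminary from \cite{LDP} and explicitly omits its proof, so there is no in-paper argument to compare against; your proof faithfully reproduces the one in the cited source.
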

The following result \cite[Exercise 4.2.7]{LDP} will be used to derive large deviation principles for sums of independent random variables in this thesis, similarly as in \cite{cikk}.
\begin{prop} \label{4.2.7}
Let $ \lbrace X_\varepsilon \rbrace$ and $ \lbrace Y_\varepsilon \rbrace$ be two collections of random variables defined on a probability space $(\Omega, \mathcal F, \mathbb P)$ with values in the separable, regular topological space $X$ equipped with its Borel $\sigma$-algebra $\mathcal B_X$ such that for all $\varepsilon>0$, $X_\varepsilon$ is independent of $Y_\varepsilon$. Suppose that $\lbrace \mu_\varepsilon \rbrace=\lbrace \mathbf P \circ X_\varepsilon^{-1} \rbrace $ satisfies an LDP with the good rate function $I_X(\cdot)$ and $\lbrace \nu_\varepsilon \rbrace=\lbrace \mathbf P \circ Y_\varepsilon^{-1} \rbrace $ satisfies an LDP with the good rate function $I_Y(\cdot)$, moreover both $\lbrace \mu_\varepsilon \rbrace$ and $\lbrace \nu_\varepsilon \rbrace$ are exponentially tight. Then, for any continuous $F: X \times X \to X$, the family of laws $\lbrace \mathbb P \circ (F(X_\varepsilon, Y_\varepsilon))^{-1} \rbrace$ satisfies an LDP with good rate function \[ I(z)=\inf_{\lbrace (x,y) :~ z=F(x,y) \rbrace} I_X(x)+I_Y(y). \]
\end{prop}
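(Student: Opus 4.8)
The plan is to reduce the statement to the contraction principle (Theorem~\ref{contractionprinciple}): I would first show that the joint laws $\lbrace \mathbb P \circ (X_\varepsilon, Y_\varepsilon)^{-1} \rbrace$ on $X \times X$ satisfy an LDP with the good rate function $J(x,y) = I_X(x) + I_Y(y)$, and then push this forward through $F$. Two preliminary observations are in order. Because $X$ is separable, the Borel $\sigma$-algebra of $X \times X$ is the product $\sigma$-algebra $\mathcal B_X \otimes \mathcal B_X$, so on rectangles the independence of $X_\varepsilon$ and $Y_\varepsilon$ gives the factorization $\mathbb P((X_\varepsilon,Y_\varepsilon) \in G \times H) = \mu_\varepsilon(G)\,\nu_\varepsilon(H)$; this is where separability is used. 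Moreover $J$ is a good rate function, being lower semicontinuous as a sum of such functions and having level sets $\lbrace J \leq \alpha \rbrace \subseteq \Psi_{I_X}(\alpha) \times \Psi_{I_Y}(\alpha)$ that are closed subsets of a compact product, hence compact.

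The lower bound and exponential tightness for the joint family are the routine parts. For an open $O \subseteq X \times X$ and a point $(x_0,y_0) \in O$, separability guarantees a basic rectangle $G \times H \subseteq O$ with $x_0 \in G$, $y_0 \in H$; factorization and the two individual lower bounds then give $\liminf_{\varepsilon \to 0} \varepsilon \log \mathbb P((X_\varepsilon,Y_\varepsilon) \in O) \geq -I_X(x_0) - I_Y(y_0)$, and taking the supremum over $(x_0,y_0) \in O$ yields the lower bound. For exponential tightness, given $\alpha$ I take the compacts $K^X_\alpha, K^Y_\alpha$ supplied by the individual tightness and use the inclusion $(K^X_\alpha \times K^Y_\alpha)^c \subseteq ((K^X_\alpha)^c \times X) \cup (X \times (K^Y_\alpha)^c)$ together with the elementary identity $\limsup_{\varepsilon \to 0} \varepsilon \log(a_\varepsilon + b_\varepsilon) = \max\lbrace \limsup_{\varepsilon \to 0} \varepsilon \log a_\varepsilon,~ \limsup_{\varepsilon \to 0} \varepsilon \log b_\varepsilon \rbrace$.

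The main obstacle is the upper bound, which I would first prove on compact sets and then extend to all closed sets using the exponential tightness just established. Fix a compact $C$ and any $\alpha < \inf_{(x,y) \in C} J(x,y)$. For each $(x_0,y_0) \in C$, lower semicontinuity of $I_X$ and $I_Y$ provides open neighbourhoods on which these functions stay close to $I_X(x_0)$ and $I_Y(y_0)$; using regularity of $X$ I can shrink them to rectangles $G \times H$ whose closures still satisfy $\inf_{\overline G} I_X + \inf_{\overline H} I_Y > \alpha$. Applying factorization on the closed rectangle $\overline G \times \overline H$ and the individual upper bounds bounds $\limsup_{\varepsilon \to 0} \varepsilon \log \mathbb P((X_\varepsilon,Y_\varepsilon) \in \overline G \times \overline H)$ by $-\alpha$. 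Covering $C$ by finitely many such rectangles and applying the max-identity for sums of exponentials yields $\limsup_{\varepsilon \to 0} \varepsilon \log \mathbb P((X_\varepsilon,Y_\varepsilon) \in C) \leq -\alpha$, and letting $\alpha \uparrow \inf_C J$ gives the compact upper bound. The delicate points here are the regularity-based passage from open rectangles to their closures, so that the single-family upper bound (valid for closed sets) applies, and keeping the sum of the two infima above $\alpha$, which is precisely where lower semicontinuity of $I_X$ and $I_Y$ is needed.

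With the joint LDP for $\lbrace \mathbb P \circ (X_\varepsilon,Y_\varepsilon)^{-1} \rbrace$ and its good rate function $J$ in hand, I would finish by invoking the contraction principle (Theorem~\ref{contractionprinciple}) with the continuous map $F: X \times X \to X$. Since $\mathbb P \circ (F(X_\varepsilon,Y_\varepsilon))^{-1} = (\mathbb P \circ (X_\varepsilon,Y_\varepsilon)^{-1}) \circ F^{-1}$, the principle yields an LDP for $\lbrace \mathbb P \circ (F(X_\varepsilon,Y_\varepsilon))^{-1} \rbrace$ with good rate function $I(z) = \inf_{\lbrace (x,y):~ z = F(x,y) \rbrace} (I_X(x) + I_Y(y))$, as claimed.
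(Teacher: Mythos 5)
The paper does not prove this statement: it is quoted verbatim as \cite[Exercise 4.2.7]{LDP} in a preliminaries section that explicitly omits proofs, so there is no in-paper argument to compare against. Your proposal is correct and is the standard intended solution of that exercise: establish the joint LDP for $(X_\varepsilon,Y_\varepsilon)$ with rate $I_X(x)+I_Y(y)$ (lower bound and exponential tightness via factorization over rectangles; upper bound first on compacts, using lower semicontinuity and regularity to pass to closed rectangles, then on all closed sets via exponential tightness) and conclude by the contraction principle. The only cosmetic slip is attributing the existence of a basic open rectangle inside an open set of $X\times X$ to separability rather than to the definition of the product topology; separability is instead what you correctly use to identify $\mathcal B_{X\times X}$ with $\mathcal B_X\otimes\mathcal B_X$ so that the factorization of the joint law is meaningful.
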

The general \emph{level-2} large deviation principles correspond to the empirical measures of sequences of i.i.d. random variables. To be more precise, let $\Sigma$ be a Polish space and $\lbrace Y_1,Y_2,\ldots \rbrace$ be a sequence of independent, $\Sigma$-valued random variables, identically distributed according to the same law $\mu \in \mathcal{M}_1(\Sigma)$. Then the \emph{empirical law} of $(Y_1,\ldots,Y_n)$ is given as
\[ L_n^{\mathbf Y}=\sum_{i=1}^n \delta_{Y_i} \in \mathcal{M}_1(\Sigma). \]
Let us write $M(\Sigma)$ for the set of all finite signed measures on $\Sigma$. Note that by a standard measure-theoretic result, namely the Hahn decomposition theorem, for all $\nu \in M(\Sigma)$, $\exists! ~\nu_+,~\nu_- \in \mathcal{M}(\Sigma)$ such that $\nu=\nu_+-\nu_-$. Then the \emph{weak topology}\index{weak topology} on $M(\Sigma)$ is generated by the collection
\[ U_{\phi,x,\delta}=\lbrace \nu \in M(\Sigma) \vert~\vert \langle \phi, \nu \rangle-x \vert < \delta \rbrace, \] using the conventional notation $\langle \phi, \nu \rangle=\int_\Sigma \phi \d \nu$, where $x \in \mathbb R$, $\delta>0$, $\phi \in B(\Sigma)$, and $B(\Sigma)$ is the set of all bounded measurable functions on $\Sigma$. The convergence w.r.t. the restriction of this topology to $\mathcal{M}_1(\Sigma)$ is equivalent to the weak convergence of probability measures\index{weak convergence of measures}. The level-2 LDP is established in a topology on $\mathcal{M}_1(\Sigma)$ that is finer than the weak topology. This is called the $\tau$-topology, which is generated by the collection
\[ W_{\phi,x,\delta}=\lbrace \nu \in \mathcal{M}_1(\Sigma) \vert~\vert \langle \phi, \nu \rangle-x \vert < \delta \rbrace, \]
where again $x \in \mathbb R$, $\delta>0$ and $\phi \in B(\Sigma)$. Further, for $\phi \in B(\Sigma)$, we define $p_\phi: \mathcal{M}_1(\Sigma) \to \mathbb R$ as $p_\phi(\nu)=\langle \phi, \nu \rangle$. The cylinder $\sigma$-algebra on $\mathcal{M}_1(\Sigma)$, denoted by $\mathcal{B}^{cy}$, is defined as the $\sigma$-algebra generated by $( p_\phi)_{\phi \in B(\Sigma)}$. Furthermore, for a finite Borel measure $\nu \in \mathcal{M}(\Sigma)$, we define the relative entropy of $\nu$ with respect to $\mu \in \mathcal{M}(\Sigma)$ as follows
\begin{equation} \label{relatíventrópia}\index{relative entropy}  h(\nu \vert \mu)=\begin{cases} \int_\Sigma f \log f \d \mu-\nu(\Sigma)+\mu(\Sigma) \quad \text{ if } f=\frac{\d \nu}{\d \mu} \text{ exists } \\ \infty \quad \text{ otherwise. }\end{cases} \end{equation} Using these notations, we state Sanov's theorem, the level-2 LDP result, which we use in Section \ref{kettőkettő}.
\begin{thm}[Sanov's theorem] \label{nagysanov}\index{Sanov's theorem}
The empirical measures $L_n^{\mathbf Y}$ satisfy an LDP on the measure space $(\mathcal{M}_1(\Sigma), \mathcal B^{cy})$ with respect to the $\tau$-topology with the good, convex rate function $h(\cdot \vert \mu)$.
\end{thm}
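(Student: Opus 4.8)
The plan is to establish the large deviation upper and lower bounds separately, after first identifying the candidate rate function through a duality formula and then verifying goodness. The computation that drives everything is the exponential moment identity: by the independence and common law $\mu$ of the $Y_i$, for every $\phi \in B(\Sigma)$ one has $\mathbb E[ e^{n \langle \phi, L_n^{\mathbf Y} \rangle} ] = ( \int_\Sigma e^{\phi}\,\d\mu )^{n}$, so that $\frac1n \log \mathbb E[ e^{n \langle \phi, L_n^{\mathbf Y}\rangle} ] = \Lambda(\phi)$ with $\Lambda(\phi) := \log \int_\Sigma e^{\phi}\,\d\mu$. This singles out the Legendre-type candidate $\Lambda^\ast(\nu) := \sup_{\phi \in B(\Sigma)} \{ \langle \phi, \nu \rangle - \Lambda(\phi) \}$. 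The first genuine step is the Gibbs variational identity $h(\nu \vert \mu) = \sup_{\phi \in B(\Sigma)} \{ \langle \phi, \nu \rangle - \log \int_\Sigma e^{\phi}\,\d\mu \}$ for $\nu \in \mathcal M_1(\Sigma)$, whose inequality ``$\le$'' follows by inserting the near-optimal tilt $\phi = \log \frac{\d\nu}{\d\mu}$ (truncated where it is unbounded) and ``$\ge$'' by Jensen's inequality applied to $e^{\phi}$ under $\mu$. Because $\Lambda^\ast$ is a supremum of the $\tau$-continuous affine maps $\nu \mapsto \langle \phi, \nu \rangle - \Lambda(\phi)$, it is automatically convex and $\tau$-lower semicontinuous, settling the convexity claim and the fact that $h(\cdot \vert \mu)$ is a rate function.

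For the upper bound I would first dispatch the basic $\tau$-closed half-spaces $\{ \nu : \langle \phi, \nu \rangle \ge c \}$: Markov's inequality combined with the moment identity yields $\mathbb P( \langle \phi, L_n^{\mathbf Y} \rangle \ge c ) \le e^{-n(c - \Lambda(\phi))}$, which is the desired bound with rate $c - \Lambda(\phi)$, and finite intersections follow by optimizing over $\phi$. (Equivalently, one may feed the finite-dimensional projections $\nu \mapsto (\langle \phi_1, \nu \rangle, \ldots, \langle \phi_k, \nu \rangle)$ into Cramér's theorem, Theorem \ref{nagycramér}, or invoke the finite-alphabet Sanov theorem on measurable partitions.) The delicate point is to pass from these basic sets to an arbitrary $\tau$-closed $F$: the $\tau$-topology is neither metrizable nor second countable, so $F$ cannot be covered by countably many basic sets. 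Instead I would intersect $F$ with a level set $\Psi_{\Lambda^\ast}(\alpha)$, cover that set --- compact once goodness is available --- by finitely many basic neighborhoods on which $\Lambda^\ast > \alpha - \delta$, invoke the finite sub-additivity of $\limsup \frac1n \log$, and let $\delta \to 0$ and $\alpha \to \infty$.

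For the lower bound I would run the classical change-of-measure (tilting) argument. Fixing a $\tau$-open $G$ and $\nu \in G$ with $h(\nu \vert \mu) < \infty$, I first reduce, by approximation, to $\nu \ll \mu$ with density $f = \frac{\d\nu}{\d\mu}$ bounded and bounded away from $0$. Drawing the $Y_i$ i.i.d. from $\nu$, the weak law of large numbers gives $\langle \phi, L_n^{\mathbf Y} \rangle \to \langle \phi, \nu \rangle$ in $\nu^{\otimes n}$-probability for each of the finitely many $\phi$ cutting out a basic neighborhood of $\nu$ inside $G$, so $\nu^{\otimes n}(L_n^{\mathbf Y} \in G) \to 1$. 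Writing $\mathbb P(L_n^{\mathbf Y} \in G) = \mathbb E_\nu[ \mathds 1\{ L_n^{\mathbf Y} \in G \}\, e^{-n \langle \log f,\, L_n^{\mathbf Y} \rangle} ]$ and restricting to the event $\{ \langle \log f, L_n^{\mathbf Y} \rangle \le h(\nu \vert \mu) + \delta \}$ --- of $\nu^{\otimes n}$-probability tending to $1$, since $\langle \log f, \nu \rangle = \int_\Sigma f \log f\,\d\mu = h(\nu \vert \mu)$ --- produces $\liminf_n \frac1n \log \mathbb P(L_n^{\mathbf Y} \in G) \ge - h(\nu \vert \mu) - \delta$, and $\delta \to 0$ concludes.

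The remaining and most demanding ingredient is goodness, i.e.\ the $\tau$-compactness of the level sets $\{ \nu : h(\nu \vert \mu) \le \alpha \}$, which is precisely what the upper-bound covering step consumes. I would derive it from the fact that a bound $h(\nu \vert \mu) \le \alpha$ forces a uniform-integrability estimate on the densities $\frac{\d\nu}{\d\mu}$, and combine this tightness with the already-established $\tau$-lower semicontinuity of $h(\cdot \vert \mu)$. I expect this compactness, together with the transfer of the half-space bounds to general $\tau$-closed sets in a non-metrizable topology, to be the genuine obstacle; the moment identity, the duality formula, and the tilting lower bound are comparatively routine.
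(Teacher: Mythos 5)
The thesis itself contains no proof of this theorem: it appears in the preliminaries, where the author explicitly omits all proofs and refers to Dembo and Zeitouni \cite{LDP}, so there is no in-paper argument to compare yours against. Judged on its own terms, your outline follows the standard Donsker--Varadhan route: the exponential moment identity, the variational formula $h(\nu\vert\mu)=\sup_{\phi\in B(\Sigma)}\{\langle\phi,\nu\rangle-\log\int_\Sigma e^{\phi}\,\d\mu\}$ on $\mathcal{M}_1(\Sigma)$, Chebyshev bounds on half-spaces, and tilting for the lower bound. The lower bound, the convexity and $\tau$-lower semicontinuity of the rate function, and the goodness argument via uniform integrability of the densities combined with Dunford--Pettis are all sound in outline.

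The genuine gap is in your upper bound for a general $\tau$-closed set $F$. The decomposition you propose --- handle $F\cap\Psi_{\Lambda^\ast}(\alpha)$ by a finite cover of the compact level set, then let $\alpha\to\infty$ --- leaves the contribution of $F\setminus\Psi_{\Lambda^\ast}(\alpha)$ unaccounted for, and that contribution cannot be controlled: whenever $\mu$ is non-atomic, the empirical measure $L_n^{\mathbf Y}$ is purely atomic, hence singular with respect to $\mu$, so $h(L_n^{\mathbf Y}\vert\mu)=\infty$ almost surely and $\mathbb P(L_n^{\mathbf Y}\notin\Psi_{\Lambda^\ast}(\alpha))=1$ for every $\alpha$; exponential tightness in the $\tau$-topology is likewise unavailable. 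The standard repair is structurally different: one establishes the LDP for the finite-dimensional vectors $(\langle\phi_1,L_n^{\mathbf Y}\rangle,\ldots,\langle\phi_k,L_n^{\mathbf Y}\rangle)$ via Cram\'er's theorem and then invokes the Dawson--G\"artner projective limit theorem. Its upper bound exploits the fact that if a $\tau$-closed $F$ is disjoint from the compact level set $\Psi(\alpha)$, which is an intersection of preimages of finite-dimensional level sets, then by a finite-intersection argument the separation already occurs at some finite collection of $\phi$'s, so the finite-dimensional upper bound applies to all of $F$ at once --- no splitting of $F$ and no tightness estimate is needed. Without this device (or an equivalent one) your upper bound does not close.
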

Hence, the contraction principle\index{contraction principle} (Theorem \ref{contractionprinciple}) implies that Sanov's theorem is also true w.r.t. the weak topology instead of the $\tau$-topology.
\section{Poisson point processes} \label{kingmanke}
The next main source of prerequisite for this thesis is the theory of Poisson point processes. In order to introduce this theory, in this section we follow the book of Kingman \cite{kingman}. However, sometimes in our setting it will be useful to think about the Poisson point processes as random measures, especially when the domain of the process is an infinite-dimensional path space, e.g. in the setting of \cite{cikk}. For this purpose, we start with the notion of \emph{Poisson random measures}, according to the definitions in \cite[Section 2.2]{lévyprocesses}, which corresponds to a slightly more general setting than Kingman's definition of Poisson process. In this section we omit the proofs, but our Chapter \ref{eleje} contains some examples of typical calculations on Poisson processes.
\begin{defi} \label{PRM}\index{Poisson random measure}
Let $(S,\mathcal S, \nu)$ be an arbitrary $\sigma$-finite measure space, and $(\Omega, \mathcal{F}, \mathbb P)$ a probability space. Let $N: \mathcal S \to \lbrace 0,1,2,\ldots \rbrace \cup \lbrace \infty \rbrace$ be such that the family $\lbrace N(A): A \in \mathcal{S} \rbrace$ are random variables defined on $(\Omega, \mathcal{F}, \mathbb P)$. Then $N$ is called a \emph{Poisson random measure} (PRM) on $S$ with intensity $\nu$ if 
\begin{enumerate}[(i)]
\item for any $n \in \mathbb N$, for mutually disjoint $A_1, A_2,\ldots,A_n \in \mathcal{S}$ the random variables $N(A_1),\ldots,N(A_n)$ are independent,
\item for any $A \in \mathcal{S}$, $N(A)$ is Poisson distributed with parameter $\nu(S) \in [0,\infty]$, where a Poisson(0) distributed random variable is defined to be almost surely equal to 0, and a Poisson($\infty$)-distributed random variable is defined to be almost surely equal to $\infty$, 
\item $\mathbb P$-almost surely, $N$ is a measure.
\end{enumerate}
\end{defi}
Now it is easy to see that if for some $s \in S$ we have that $\nu(\lbrace s \rbrace)=\varepsilon>0$, then 
\[ \mathbb P(N(\lbrace x \rbrace) \geq 2) = 1-\mathrm{e}^{-\varepsilon}-\varepsilon \mathrm{e}^{-\varepsilon}>0, \]
while if $\nu$ is non-atomic, i.e., if for all $s \in S$ we have $\nu(\lbrace s \rbrace)=0$, then for all $s \in S$ we have that $\mathbb P(N(\lbrace x \rbrace) \geq 2) \leq \mathbb P(N(\lbrace x \rbrace) \geq 1)=0$. 

Now, we consider the setting when $\nu$ is non-atomic, and we assume that $\mathcal S$ is large enough to distinguish individual points, which can be ensured by assuming that the diagonal
\[ D=\lbrace (x,x) \vert ~ x \in S \rbrace \subseteq S \times S \] is a measurable subset of $S \times S$, i.e., $D \in \mathcal{S} \tensor \mathcal{S}$. In this case we can think about the Poisson random measure $N$ as a random set of points. That is, there exists a countable random subset $\Pi=\lbrace X_i \vert~ i=1,~\ldots,~N(S) \rbrace \subseteq S$ such that for all $A \in \mathcal S$ we have
\begin{equation} \label{prmppp} N(A)=\sharp \lbrace \Pi \cap A \rbrace, \end{equation} where throughout this thesis, $\sharp$ denotes cardinality of a countable set, writing $\infty$ for countably infinite. Indeed, the following theorem \cite[p.~23]{kingman} provides the existence in this case:
\begin{thm}[Existence Theorem]\index{Existence Theorem}
Let $(S,\mathcal S, \nu)$ and $D$ be as in Definition \ref{PRM}. If $\nu$ is non-atomic and $D \in \mathcal S \tensor \mathcal S$, then there exists a PRM on $S$ with intensity $\nu$.
\end{thm}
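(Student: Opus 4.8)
The plan is to reduce the $\sigma$-finite case to the finite-mass case, construct the latter explicitly as a Poisson-randomised i.i.d.\ sample, and then recover the general statement by superposition. Suppose first that $\nu(S)=m\in(0,\infty)$ (the case $m=0$ being trivial, with $N\equiv 0$). On an auxiliary probability space I take $K\sim\mathrm{Poisson}(m)$ and, independent of $K$, an i.i.d.\ sequence $(X_i)_{i\ge 1}$ with common law $\nu/m$, and define $N(A)=\sum_{i=1}^{K}\mathds 1_A(X_i)=\sharp\lbrace i\le K:X_i\in A\rbrace$. Property (iii) of Definition \ref{PRM} is immediate, since $N$ is almost surely a finite sum of Dirac masses. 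For (i) and (ii), fix mutually disjoint $A_1,\ldots,A_n\in\mathcal S$, set $A_0=S\setminus\bigcup_j A_j$ and $p_j=\nu(A_j)/m$. Conditionally on $K=k$, the vector $(N(A_0),\ldots,N(A_n))$ is multinomial with parameters $(k;p_0,\ldots,p_n)$; averaging over the Poisson law of $K$ is the standard \emph{Poissonisation/splitting} computation, which shows that $N(A_0),\ldots,N(A_n)$ are independent with $N(A_j)\sim\mathrm{Poisson}(\nu(A_j))$. This establishes both the independence (i) and the marginal distributions (ii) in the finite case.

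This is where the two hypotheses enter. The coincidence event $\lbrace X_i=X_j\rbrace$ is measurable precisely because the diagonal satisfies $D\in\mathcal S\tensor\mathcal S$, and since the law of $(X_i,X_j)$ is $(\nu\tensor\nu)/m^2$, we get
\[ \mathbb P(X_i=X_j)=\frac{(\nu\tensor\nu)(D)}{m^2}=\frac{1}{m^2}\int_S \nu(\lbrace x\rbrace)\,\nu(\d x)=0 \]
by non-atomicity of $\nu$. Hence the points $X_1,\ldots,X_K$ are almost surely pairwise distinct, so $N$ is a simple counting measure and admits the random-set representation \eqref{prmppp}.

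For a general $\sigma$-finite $\nu$, I write $S=\bigsqcup_{j\in\mathbb N}S_j$ with $\nu(S_j)<\infty$ for each $j$ (disjointify a countable covering), take independent PRMs $N_j$ of intensity $\nu(\,\cdot\cap S_j)$ from the finite case on a product probability space, and set $N=\sum_j N_j$. Property (iii) holds as a countable sum of measures. For (ii), $N(A)=\sum_j N_j(A)$ is a sum of independent $\mathrm{Poisson}(\nu(A\cap S_j))$ variables, which is $\mathrm{Poisson}(\nu(A))$ when $\nu(A)<\infty$ and almost surely $\infty$ when $\nu(A)=\infty$, matching the $\mathrm{Poisson}(\infty)$ convention of Definition \ref{PRM}. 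For (i), the double array $\lbrace N_j(A_i)\rbrace_{i,j}$ is independent across $i$ for each fixed $j$ (finite case) and across $j$ (construction), hence jointly independent, so the block sums $N(A_1),\ldots,N(A_n)$ over disjoint $A_i$ are independent.

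I expect the one genuinely computational point to be the splitting identity of the first paragraph, namely turning a Poisson-mixed multinomial into a product of independent Poissons; everything else is bookkeeping. The secondary subtlety is propagating independence through the countable superposition and correctly handling $\nu(A)=\infty$, for which a generating-function (or monotone-class) argument is the cleanest way to confirm that the joint law of $(N(A_1),\ldots,N(A_n))$ factorises and has the prescribed Poisson marginals.
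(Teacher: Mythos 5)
Your proposal is correct; note that the thesis itself gives no proof of this statement, citing it directly from Kingman's book, and your construction (Poisson-randomised i.i.d.\ sampling from $\nu/\nu(S)$ in the finite-mass case, with the splitting identity yielding independence and the Poisson marginals, followed by disjoint decomposition and superposition for the $\sigma$-finite case) is exactly the standard argument of that source. You also correctly isolate the one place where non-atomicity and $D\in\mathcal S\tensor\mathcal S$ are used, namely to make $\lbrace X_i=X_j\rbrace$ measurable and null so that the random-set representation \eqref{prmppp} holds.
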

In this case, according to \cite[p.~11]{kingman} we call $\Pi$ from \eqref{prmppp} a \emph{Poisson process}\index{Poisson process!on general state space} on $S$ with \emph{intensity measure} $\nu$.\footnote{ It is also valid to call the random measure $N$ a Poisson process, but in the wireless communication literature it is more common to consider the random set of points. This is often interpreted as the set of users in the wireless network, see Section \ref{explanation}.} Note that for all $A \in \mathcal S$ we have $\nu(A)=\mathbb E [N(A)]$. Instead of 'mean measure' one can equivalently use 'intensity' or 'intensity measure'.\footnote{ The original definition of Poisson process in \cite[Section 2.1]{kingman} does not include the $\sigma$-finiteness of the mean measure $\nu$, but since the Existence Theorem requires this condition, we include this assumption in the definition of Poisson process in this thesis.}

A \emph{Poisson point process}\index{Poisson point process} is a special case of a Poisson random measure, the case when $S$ is a topological space with Borel $\sigma$-algebra $\mathcal S$, and $\nu$ is a measure on $S$, which is not only $\sigma$-finite but also locally finite, see e.g. \cite[Section 1.1]{stochgeom}. When $S=\mathbb R^d$, this automatically implies that $\nu(K)<\infty$, and therefore $\mathbb P$-a.s. $N(K)<\infty$, for any bounded $K \subset \mathbb R^d$, and this turns out to be the suitable setting in the case of wireless networks for communication with static users (see Section \ref{Anfang}). A Poisson point process on $\mathbb R^d$ is called \emph{homogeneous} if the mean measure $\mu$ has a constant density w.r.t. the $d$-dimensional Lebesgue measure, i.e., $\mu(\d x)=\lambda \d x$ for some $\lambda \geq 0$. A homogeneous Poisson process on $[0,\infty)$ with intensity $\lambda \geq 0$ is just the usual Poisson process with intensity $\lambda$.

Having established these definitions, in the rest of this section we follow \cite{kingman} directly. We describe the behaviour of general Poisson processes under various transformations: restriction, superposition, mapping, colouring (which is also called thinning), marking and random displacement, which will be used later on in this thesis. We also state Campbell's theorem about real-valued sums over Poisson processes.

We start with the Restriction Theorem which implies that the restrictions of a Poisson process to a measurable subset of the state space is still a Poisson process.
\begin{thm}[Restriction Theorem] \label{restriction}\index{Restriction Theorem}
Let $\Pi$ be a Poisson process with mean measure $\mu$ on $S$, and let $S_1 \subseteq S$ be a measurable subset. Then the random countable set
$\Pi_1=\Pi \cap S_1$
can be regarded either as a Poisson process on $S$ with mean measure
$\mu_1(A)=\mu(A \cap S_1)$
or as a Poisson process on $S_1$ with intensity $\mu \vert_{S_1}$, where $\mu \vert_{S_1}$ denotes the restriction of $\mu$ to $S_1$.
\end{thm}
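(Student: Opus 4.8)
The plan is to verify the defining axioms of a Poisson random measure (Definition \ref{PRM}) directly for the restricted set $\Pi_1$, working through its associated counting measure. First I would introduce the counting measure $N_1$ of $\Pi_1$ via $N_1(A)=\sharp\lbrace \Pi_1 \cap A \rbrace$ and record the single key identity $N_1(A)=N(A \cap S_1)$ for every $A \in \mathcal S$, where $N$ is the Poisson random measure of $\Pi$. This holds because a point of $\Pi$ lies in $\Pi_1 \cap A$ precisely when it lies in $\Pi \cap (A \cap S_1)$. This identity reduces every statement about $\Pi_1$ to a statement about $N$ evaluated on subsets of $S_1$.

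Next I would check the two distributional axioms. For property (ii), since $A \cap S_1 \in \mathcal S$, applying the axiom for $N$ gives that $N_1(A)=N(A \cap S_1)$ is Poisson distributed with parameter $\mu(A \cap S_1)=\mu_1(A)$, which is exactly the claimed mean measure. For the independence property (i), I would observe that if $A_1,\ldots,A_n \in \mathcal S$ are mutually disjoint, then $A_1 \cap S_1,\ldots,A_n \cap S_1$ are mutually disjoint as well; hence $N_1(A_1)=N(A_1 \cap S_1),\ldots,N_1(A_n)=N(A_n \cap S_1)$ are independent by the independence axiom applied to $N$. The almost-sure measure property (iii) is inherited immediately, since the restriction of a measure to a measurable subset is again a measure.

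Before invoking the Existence Theorem I would also confirm that $\mu_1$ is again $\sigma$-finite, and locally finite in the point-process setting: because $\mu_1(A)=\mu(A \cap S_1) \leq \mu(A)$, any exhausting sequence of finite-measure sets for $\mu$ serves equally for $\mu_1$, so $\mu_1$ inherits $\sigma$-finiteness, and local finiteness follows in the same way when $S$ is a topological space. This secures the first interpretation of $\Pi_1$, namely as a Poisson process on $S$ with mean measure $\mu_1$.

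Finally, for the second interpretation—as a process on the smaller state space $S_1$ with intensity $\mu\vert_{S_1}$—I would restrict attention to the trace $\sigma$-algebra $\mathcal S_1=\lbrace A \cap S_1 : A \in \mathcal S \rbrace$. For $A \in \mathcal S_1$ one has $A \subseteq S_1$, whence $N_1(A)=N(A)$ is Poisson with parameter $\mu(A)=\mu\vert_{S_1}(A)$, and disjointness within $S_1$ is already disjointness in $S$, so the independence axiom transfers verbatim. I do not expect a genuine obstacle here: the proof is essentially a direct reduction to the axioms already established for $N$, and the only points requiring care are the bookkeeping of the identity $N_1(A)=N(A \cap S_1)$ and the preservation of $\sigma$-finiteness, both of which are elementary.
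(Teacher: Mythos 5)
Your proposal is correct and is essentially the standard argument: the paper states this theorem without proof (it is quoted from Kingman's book in a preliminaries section where proofs are explicitly omitted), and the reduction via the identity $N_1(A)=N(A\cap S_1)$ followed by a direct check of the PRM axioms is exactly how the cited source proves it. The only point worth flagging is that you should also note $N_1(A)=\sharp\lbrace \Pi_1\cap A\rbrace$ by construction, so that the verified random measure really is the counting measure of the random set $\Pi\cap S_1$ as the theorem asserts; this is immediate from your key identity.
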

Next, we present the Superposition Theorem, which ensures that the sum of independent Poisson processes on the same space is still a Poisson process.
\begin{thm}[Superposition Theorem]  \label{superposition}\index{Superposition Theorem}
Let $\Pi_1,\Pi_2,\ldots$ be a countable collection of independent Poisson processes on $S$, and let $\mu_n$ denote the mean measure of $\Pi_n$ for all $n \in \mathbb N$. Then their superposition
$\Pi=\sum_{i=1}^{\infty} \Pi_i$
is a Poisson process on $S$ with mean measure $\mu=\sum_{i=1}^{\infty} \mu_i$.\footnote{ Hence, if $\mu$ is also locally finite, then $\Pi$ is a Poisson point process. Similarly, in the following transformation theorems for Poisson processes in this section it is always true that whenever all the conditions of the theorem are satisfied and the image measure is locally finite, the image process will not only be a Poisson process but also a Poisson point process.}
\end{thm}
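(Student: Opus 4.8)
The plan is to verify the two defining properties of a Poisson process from Definition \ref{PRM} for the superposition, working throughout with the counting measures $N_i(A)=\sharp\{\Pi_i\cap A\}$ and their sum $N(A)=\sum_{i=1}^{\infty}N_i(A)$. I would first dispose of the easy structural point: since a countable sum of measures is again a measure, $\Pi=\sum_i\Pi_i$ is $\mathbb P$-a.s. a measure, so property (iii) of Definition \ref{PRM} holds automatically, and it remains to check the independence over disjoint sets (i) and the Poisson marginal law (ii).

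For property (ii), fix $A\in\mathcal S$. I would establish that a countable sum of independent Poisson variables is Poisson with the summed parameter via probability generating functions. For $z\in[0,1)$ and each partial sum $S_m=\sum_{i=1}^m N_i(A)$, independence and the Poisson law give $\mathbb E[z^{S_m}]=\prod_{i=1}^m e^{\mu_i(A)(z-1)}=\exp((z-1)\sum_{i=1}^m\mu_i(A))$. Since the $N_i(A)$ are nonnegative integers, $S_m\uparrow S:=N(A)$ a.s., so $z^{S_m}\to z^{S}$ monotonically and boundedly; dominated convergence then yields $\mathbb E[z^{N(A)}]=\exp((z-1)\mu(A))$ with $\mu(A)=\sum_i\mu_i(A)$. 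This is exactly the generating function of a Poisson($\mu(A)$) variable; when $\mu(A)=\infty$ the right-hand side vanishes on $[0,1)$, which correctly encodes $N(A)=\infty$ a.s. and matches the Poisson($\infty$) convention of Definition \ref{PRM}.

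For property (i), let $A_1,\ldots,A_n$ be mutually disjoint. The key observation is that the doubly-indexed family $\{N_i(A_j):i\in\mathbb N,\ 1\le j\le n\}$ consists of mutually independent random variables: for fixed $i$ the variables $N_i(A_1),\ldots,N_i(A_n)$ are independent by the Poisson-process property of $\Pi_i$ applied to the disjoint $A_j$, and across distinct $i$ everything is independent because the $\Pi_i$ are independent. Each count $N(A_j)=\sum_i N_i(A_j)$ is then a measurable function of the $j$-th column of this array, and distinct columns are disjoint subfamilies of an independent family, so $N(A_1),\ldots,N(A_n)$ are independent. Together with (ii) this shows $\Pi$ is a Poisson process with mean measure $\mu$.

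The main obstacle is the passage from a finite to a countably infinite superposition: the finite case is immediate from the fact that a sum of finitely many independent Poisson variables is Poisson, but in the infinite case one must justify the interchange of limit and expectation in the generating-function computation and correctly handle the degenerate case $\mu(A)=\infty$. The monotonicity of the integer-valued partial sums is what makes both the limiting argument and the independence-of-columns argument clean, and no local finiteness of $\mu$ is needed, which is precisely why the conclusion is phrased for Poisson processes rather than Poisson point processes (cf. the accompanying footnote).
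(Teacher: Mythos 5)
Your proof is correct. Note that the thesis itself does not prove this statement: Section \ref{kingmanke} explicitly omits all proofs and quotes the Superposition Theorem from Kingman's book, so there is no in-paper argument to compare against; your generating-function computation for property (ii) (with the monotone passage $S_m\uparrow N(A)$ justifying the limit and the degenerate case $\mu(A)=\infty$ handled via the vanishing of $\mathbb E[z^{N(A)}]$ on $(0,1)$) and the grouping argument for property (i) (independence of the $\sigma$-algebras $\sigma(\Pi_i)$ combined with within-process independence over disjoint sets gives mutual independence of the array $\{N_i(A_j)\}$, and the columns generate independent $\sigma$-algebras) together constitute the standard and complete proof. The one place where your treatment is lighter than Kingman's is that you interpret $\Pi=\sum_i\Pi_i$ purely as a sum of counting measures, so that $N(A)=\sum_i N_i(A)$ holds by definition; Kingman additionally proves a disjointness lemma showing that independent Poisson processes are almost surely pairwise disjoint, which is needed if one wants to identify the superposition with the \emph{union} of the random point sets rather than merely the sum of the associated measures. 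Since the theorem as stated here writes $\Pi$ as a sum, your reading is legitimate, but it is worth being aware that the set-theoretic interpretation requires that extra step.
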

We continue with the Mapping Theorem, which claims that the Poisson property is preserved under \emph{deterministic} transformation of the points of a Poisson process, in case that the image measure is non-atomic.
\begin{thm}[Mapping Theorem] \label{mapping}\index{Mapping Theorem}
Let $\Pi$ be a Poisson process on $(S,\mathcal S)$ with mean measure $\mu$ and let $(T,\mathcal T)$ be another measurable space. If $f: S \to T$ is a measurable function such that the image measure $\mu^\ast=\mu \circ f^{-1}$ has no atoms, then $f(\Pi)$ is a Poisson process on $T$ with mean measure $\mu^\ast$.
\end{thm}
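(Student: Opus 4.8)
The plan is to pass from the random set of points to its counting measure and argue at the level of measures, which makes all three axioms of Definition \ref{PRM} easy to track. Let $N$ denote the counting measure of $\Pi$, so that $N(A) = \sharp(\Pi \cap A)$ for $A \in \mathcal S$, and define the pushforward $N^\ast(B) = N(f^{-1}(B))$ for $B \in \mathcal T$. I would first verify that $N^\ast$ is a Poisson random measure on $T$ with intensity $\mu^\ast$, and only afterwards argue that $f(\Pi)$, viewed as a random \emph{set} of points, has $N^\ast$ as its counting measure. This final identification is the only place where the non-atomicity hypothesis is actually used.

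Checking the axioms of Definition \ref{PRM} for $N^\ast$ is essentially bookkeeping. Since $f$ is measurable, $f^{-1}(B) \in \mathcal S$ for every $B \in \mathcal T$, so each $N^\ast(B) = N(f^{-1}(B))$ is a well-defined random variable. For axiom (ii), $N(f^{-1}(B))$ is Poisson distributed with parameter $\mu(f^{-1}(B)) = (\mu \circ f^{-1})(B) = \mu^\ast(B)$, exactly as required. For axiom (i) I would use the observation that preimages of disjoint sets are disjoint: if $B_1,\ldots,B_n \in \mathcal T$ are mutually disjoint then so are $f^{-1}(B_1),\ldots,f^{-1}(B_n)$ in $S$, since $x \in f^{-1}(B_i) \cap f^{-1}(B_j)$ would force $f(x) \in B_i \cap B_j = \emptyset$. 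Hence $N(f^{-1}(B_1)),\ldots,N(f^{-1}(B_n))$ are independent by the independence axiom for $N$, and this independence transfers verbatim to $N^\ast$. Axiom (iii) is immediate, as the pushforward of a ($\mathbb P$-almost surely) measure under a measurable map is again a measure.

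The genuinely delicate step — and the one that uses that $\mu^\ast$ is non-atomic — is to show that $f(\Pi)$ is a \emph{simple} point process, i.e. that almost surely no two distinct points of $\Pi$ are mapped by $f$ to a common point of $T$. Without this, the cardinality $\sharp(f(\Pi) \cap B)$ of the image \emph{set} would strictly undercount $N^\ast(B)$ whenever $f$ glues two points of $\Pi$ together, and the identification of $f(\Pi)$ with $N^\ast$ would break down. The computation recorded just after Definition \ref{PRM} gives $\mathbb P(N^\ast(\{t\}) \geq 2) = 0$ for each individual $t \in T$, but the main obstacle is that this pointwise statement cannot be promoted to ``no double point anywhere'' by a naive union bound over the uncountably many $t$. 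The clean resolution is to note that $N^\ast$ is a Poisson random measure with non-atomic intensity $\mu^\ast$ and then invoke the mechanism of the Existence Theorem (using that the diagonal $D = \{(t,t) : t \in T\}$ is measurable): such a measure is realised by a genuine random countable set without repeated points, so $N^\ast$ is the counting measure of a simple point process, which is precisely $f(\Pi)$. Combining this with the verified axioms, $f(\Pi)$ is a Poisson process on $T$ with mean measure $\mu^\ast$.
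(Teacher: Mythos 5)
The thesis itself gives no proof of this theorem: Section \ref{kingmanke} states explicitly that proofs of the Poisson-process results are omitted and defers to Kingman. Measured against the standard argument that the paper implicitly relies on, your architecture is exactly right: push the counting measure forward, check that the axioms of Definition \ref{PRM} transfer verbatim (preimages of disjoint sets are disjoint, and $\mu(f^{-1}(B))=\mu^\ast(B)$), and isolate the real content in the simplicity of $f(\Pi)$. You also correctly diagnose why the pointwise statement $\mathbb P(N^\ast(\lbrace t \rbrace)\geq 2)=0$ cannot be upgraded by a union bound.

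The one step I would not accept as written is the last inference. From ``the Existence Theorem produces \emph{some} PRM with intensity $\mu^\ast$ realised as a set without repeated points'' you conclude ``so $N^\ast$ is the counting measure of a simple point process.'' Two random measures with the same law need not both be almost surely simple unless simplicity is an event measurable with respect to the $\sigma$-algebra on which that law is determined (the one generated by the evaluations $\nu\mapsto\nu(B)$); that measurability is a genuine small lemma, needing $D\in\mathcal T\tensor\mathcal T$, and you have not supplied it. The clean repair is a direct second-moment computation that bypasses the Existence Theorem: by the two-point (factorial moment) version of Campbell's theorem,
\[ \mathbb E\, \sharp \lbrace (x,y)\in \Pi\times\Pi :~ x\neq y,~ f(x)=f(y) \rbrace = \iint_{S\times S} \mathds 1\lbrace f(x)=f(y)\rbrace\, \mu(\d x)\,\mu(\d y) = \int_S \mu^\ast(\lbrace f(y)\rbrace)\,\mu(\d y)=0, \]
the last equality being precisely the non-atomicity of $\mu^\ast$, and the measurability of $\lbrace (x,y): f(x)=f(y)\rbrace$ being where $D\in\mathcal T\tensor\mathcal T$ enters. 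Hence $f$ is a.s.\ injective on $\Pi$, so $N^\ast(B)=\sharp(f(\Pi)\cap B)$ and your verification of the PRM axioms completes the proof. A final caveat worth recording: your route through the Existence Theorem tacitly needs $\mu^\ast$ to be $\sigma$-finite, which does not follow from $\sigma$-finiteness of $\mu$; the computation above, applied on sets of finite $\mu^\ast$-measure, sidesteps this as well.
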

After deterministic transformations of Poisson processes, we turn to random transformations. A generalization of the Restriction Theorem for the randomized setting is the Colouring Theorem.
\begin{thm}[Colouring Theorem] \label{colouring}\index{Colouring Theorem}
Let $\Pi$ be a Poisson process on $S$ with mean measure $\mu$. Let the points of $\Pi$ be randomly coloured by $k$ colours, the probability that a point receives the $i$th colour being $p_i$ (such that $p_i \geq 0$, $\sum_{i=1}^k p_i=1$), and the colours of different points of $\Pi$ being independent (of one another and of the position of the points). Let $\Pi_i$ the set of the points that have the $i$th colour. Then $\Pi_1,\ldots,\Pi_k$ are independent Poisson processes on $S$, and $\Pi_i$ has mean measure $\mu_i=p_i \mu$ for all $i \in \lbrace 1,\ldots,k \rbrace$.
\end{thm}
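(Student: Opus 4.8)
The plan is to reduce the whole statement to one elementary identity, the \emph{splitting property} of the Poisson distribution, and then to promote that identity from finite-dimensional count distributions to a statement about the point processes $\Pi_1,\dots,\Pi_k$ themselves. Throughout I write $N,N_1,\dots,N_k$ for the counting measures of $\Pi,\Pi_1,\dots,\Pi_k$.

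First I would fix a single measurable set $A$ with $\mu(A)<\infty$ and compute the joint law of the colour counts in $A$. Conditionally on $N(A)=n$, the $n$ points of $\Pi$ lying in $A$ are coloured independently, so the vector $(N_1(A),\dots,N_k(A))$ is multinomially distributed with parameters $n$ and $(p_1,\dots,p_k)$; since $N(A)$ itself is Poisson with parameter $\mu(A)$, writing $n=n_1+\dots+n_k$ gives
\[ \mathbb P\big(N_1(A)=n_1,\dots,N_k(A)=n_k\big)=\mathrm e^{-\mu(A)}\frac{\mu(A)^n}{n!}\cdot\frac{n!}{n_1!\cdots n_k!}\,p_1^{n_1}\cdots p_k^{n_k}=\prod_{i=1}^k \mathrm e^{-p_i\mu(A)}\frac{(p_i\mu(A))^{n_i}}{n_i!}, \]
where I have used $\sum_i p_i=1$ to split $\mathrm e^{-\mu(A)}=\prod_i \mathrm e^{-p_i\mu(A)}$ and to rewrite $\mu(A)^n=\prod_i\mu(A)^{n_i}$. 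This single identity shows at once that each $N_i(A)$ is Poisson with parameter $p_i\mu(A)$ and that $N_1(A),\dots,N_k(A)$ are independent.

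Next I would extend this to finitely many \emph{disjoint} sets $A_1,\dots,A_m$. Because $\Pi$ is a Poisson process, the counts $N(A_1),\dots,N(A_m)$ are independent, and because the colours of distinct points are assigned independently while the points falling in distinct $A_j$ are themselves distinct, the multinomial splittings inside the different $A_j$ are independent of one another. Combining this with the single-set computation shows that the entire array $\{N_i(A_j):1\le i\le k,\ 1\le j\le m\}$ consists of mutually independent random variables with $N_i(A_j)\sim\mathrm{Poisson}(p_i\mu(A_j))$. Two conclusions then drop out: fixing $i$ and varying $j$ shows that $N_i$ assigns independent Poisson counts with the correct means to disjoint sets, which is exactly the defining property (Definition \ref{PRM}) of a Poisson process with mean measure $p_i\mu$; and the factorization of the array across the index $i$ yields the joint independence of $\Pi_1,\dots,\Pi_k$.

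I expect the main difficulty to lie not in this computation but in the measure-theoretic bookkeeping at its two ends. At the outset one must check that the random colouring is well defined and jointly measurable, so that $N_i(A)$ is genuinely a random variable; and at the end one must argue that agreement of all finite-dimensional count distributions on a generating $\pi$-system of sets determines both the individual laws of the $\Pi_i$ and their joint independence. The latter is the standard fact that a point process is characterized by its finite-dimensional distributions (equivalently its void probabilities or its Laplace functional), upgraded to independence by a monotone-class argument over the cylinder $\sigma$-algebra. Finally, the case $\mu(A)=\infty$ is dispatched by exhausting $S$ with an increasing sequence of sets of finite measure, which is possible by the $\sigma$-finiteness of $\mu$, and passing to the limit.
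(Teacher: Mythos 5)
The paper does not prove this theorem: Section \ref{kingmanke} states explicitly that proofs of the Poisson-process preliminaries are omitted, and the Colouring Theorem is quoted from Kingman's book. So there is no in-paper argument to compare against; judged on its own, your proof is correct and is the standard direct argument. The Poisson--multinomial splitting identity for a single set $A$ of finite measure is computed correctly, the extension to a finite family of disjoint sets correctly uses that the colourings inside disjoint sets involve disjoint collections of points (so that, conditionally on $\Pi$, the multinomial splittings factorize, and the outer expectation factorizes by independence of $N(A_1),\dots,N(A_m)$), and the upgrade from finite-dimensional count distributions to the process-level statement via a monotone-class argument over a generating $\pi$-system is the right closing step; the $\sigma$-finite exhaustion handles $\mu(A)=\infty$, and condition (iii) of Definition \ref{PRM} is inherited automatically since each $N_i$ is the counting measure of a countable random subset of $\Pi$. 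A slicker alternative, consistent with the toolkit the paper assembles in the same section, is to view the colour of each point as a mark in $\lbrace 1,\dots,k\rbrace$ with kernel $p(x,\lbrace i\rbrace)=p_i$: the Marking Theorem (Theorem \ref{marking}) makes $\Pi^\ast$ a Poisson process on $S\times\lbrace 1,\dots,k\rbrace$ with mean measure $\mu\tensor(p_1,\dots,p_k)$, and the Restriction Theorem applied to the disjoint slices $S\times\lbrace i\rbrace$ immediately yields that the $\Pi_i$ are Poisson with mean $p_i\mu$ and independent. Your route is more elementary and self-contained; the marking route avoids redoing the splitting computation but presupposes the (logically later) Marking Theorem.
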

The set $\Pi_i$ is often called a \emph{thinning}\index{thinning} of $\Pi$ with survival probability $p_i$, see e.g. \cite[Section 1.3.2]{stochgeom}. The interpretation for this is that one can consider i.i.d. Bernoulli random variables $\lbrace I^i_X \rbrace_{ X \in \Pi}$ with common survival probability $p_i$, and then $\Pi_i$ equals $\lbrace X \in \Pi \vert I_X^i = 1 \rbrace$ in distribution, for all $i=1,\ldots,k$.

Generalizing the idea of colouring, we set up the notation of \emph{marked Poisson processes}\index{Poisson point process!marked}. Let $\Pi$ be a Poisson process on $S$ with mean measure $\mu$. Suppose that to each point $X$ of the random countable set $\Pi$, we associate a random variable $m_X$ (the \emph{mark} of $X$) taking values in some measurable space $(M, \mathcal{M})$. The distribution of $m_X$ may depend on the realization of $X$ but not on the other points of $\Pi$, and the marks of different points are independent. More precisely, there exists a probability kernel\index{probability kernel} $p(\cdot, \cdot)$ such that for all $X \in \Pi$ and $x \in S$, the conditional distribution of the mark $m_X$ knowing $X=x$ is given by $p(x,\cdot)$. For an example, the colours of $m_X$ of $X \in \Pi$ from the Colouring Theorem are marks taking values in $\lbrace 1,\ldots,k \rbrace$. In this case, the marks of any two points of $\Pi$ are identically distributed.

In the general case, if $m_X$ denotes the mark of $X \in \Pi$, then the pair $(X,m_X)$ can be considered as a random point in the product space $S \times M$. The totality of points $X^\ast$ forms a random countable subset
\begin{equation} \label{márkázás} \Pi^\ast= \lbrace (X, m_X) \vert~X \in \Pi \rbrace \end{equation}
of $S \times M$. The Marking Theorem below states that $\Pi^\ast$ is a Poisson process on the product space $S \times M$.


\begin{thm}[Marking Theorem] \label{marking}\index{Marking Theorem}
The random subset $\Pi^\ast$ is a Poisson process on $S \times M$ with mean measure $\mu^\ast$ given by
\begin{equation} \label{márkamérték} \mu^\ast(C)=\iint\limits_{(x,m) \in C} \mu(\d x) p(x,\d m). \end{equation}
\end{thm}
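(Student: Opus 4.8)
The plan is to characterise the law of $\Pi^\ast$ through its Laplace functional and to exploit the two-step randomisation structure: first the Poisson process $\Pi$ is drawn, and then, \emph{conditionally on} $\Pi$, the marks are assigned independently through the kernel $p$. Recall that a point process $\Phi$ with points $\{Y_i\}$ is Poisson with mean measure $\Lambda$ if and only if, for every bounded measurable $g\geq 0$,
\[ \mathbb E\Big[\exp\Big(-\sum_i g(Y_i)\Big)\Big]=\exp\Big(-\int (1-\mathrm e^{-g})\,\d\Lambda\Big); \]
this characterisation is standard and follows from Definition \ref{PRM} together with Campbell-type computations. I would therefore compute the left-hand side for $\Pi^\ast$ and show it equals the right-hand side with $\Lambda=\mu^\ast$ as in \eqref{márkamérték}, after which uniqueness of the Laplace functional finishes the proof. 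Before starting I would note that $\mu^\ast(S\times M)=\mu(S)$ because each $p(x,\cdot)$ is a probability measure, so $\mu^\ast$ is $\sigma$-finite and the statement is well-posed.

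The computation proceeds as follows. Fix a bounded measurable $g:S\times M\to[0,\infty)$. Conditioning on $\sigma(\Pi)$ and using that, given $\Pi$, the marks $(m_X)_{X\in\Pi}$ are independent with $m_X$ distributed according to $p(X,\cdot)$, the conditional Laplace functional factorises over the points:
\[ \mathbb E\Big[\exp\Big(-\sum_{X\in\Pi} g(X,m_X)\Big)\,\Big|\,\Pi\Big]=\prod_{X\in\Pi}\int_M \mathrm e^{-g(X,m)}\,p(X,\d m)=\exp\Big(-\sum_{X\in\Pi}h(X)\Big), \]
where $h(x):=-\log\int_M \mathrm e^{-g(x,m)}\,p(x,\d m)\geq 0$ is measurable. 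Taking expectations over $\Pi$, which is Poisson with mean $\mu$, the exponential formula gives $\mathbb E[\exp(-\sum_{X\in\Pi}h(X))]=\exp(-\int_S(1-\mathrm e^{-h(x)})\,\mu(\d x))$. Finally, since $p(x,\cdot)$ integrates to $1$, one has $1-\mathrm e^{-h(x)}=\int_M(1-\mathrm e^{-g(x,m)})\,p(x,\d m)$, and Tonelli's theorem identifies
\[ \int_S\big(1-\mathrm e^{-h(x)}\big)\,\mu(\d x)=\iint_{S\times M}\big(1-\mathrm e^{-g(x,m)}\big)\,\mu(\d x)\,p(x,\d m)=\int_{S\times M}\big(1-\mathrm e^{-g}\big)\,\d\mu^\ast, \]
which is exactly the Poisson Laplace functional with the mean measure $\mu^\ast$ from \eqref{márkamérték}.

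The main obstacle is not the algebra but making the conditioning and the passage to the infinite product rigorous. I would need to justify that, given $\Pi$, the family of marks can be realised as a genuinely independent family with the prescribed kernel laws, so that the conditional expectation factorises, and that $h$ is jointly measurable so that $\sum_{X\in\Pi}h(X)$ is a well-defined random variable to which the Poisson exponential formula applies. When $\mu(S)=\infty$ the product runs over countably many points and its convergence is precisely what the exponential formula controls; the Tonelli step then requires only nonnegativity of the integrand, which holds. An alternative, more hands-on route would be to verify the defining properties of Definition \ref{PRM} directly, namely that $\Pi^\ast(C)$ is Poisson distributed with parameter $\mu^\ast(C)$ and that counts over disjoint sets are independent, by the same conditioning applied to indicator-type $g$; this essentially reduces to the Colouring Theorem (Theorem \ref{colouring}) when the kernel is simple, but handling a general position-dependent kernel $p(x,\cdot)$ is what makes the functional approach cleaner.
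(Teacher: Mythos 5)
Your proposal is correct, but note that the thesis does not actually prove this statement: the Marking Theorem is quoted from Kingman's book in the preliminaries chapter, where the author explicitly omits all proofs. Your Laplace-functional argument is the standard proof and is essentially the one in the cited source (Kingman works with probability generating functionals rather than Laplace functionals, which is the same computation): condition on $\Pi$, factorise over the independent marks, and apply the exponential formula $\mathbb E\bigl[\exp(-\sum_{X\in\Pi}h(X))\bigr]=\exp\bigl(-\int_S(1-\mathrm e^{-h})\,\d\mu\bigr)$, which is exactly Campbell's theorem (Theorem \ref{campbell}) with $\theta=-1$. The only point I would tighten is the closing step: rather than appealing abstractly to ``uniqueness of the Laplace functional,'' it is cleaner to specialise your identity to $g=\sum_{j}t_j\mathds 1\lbrace C_j\rbrace$ for disjoint measurable $C_1,\ldots,C_n\subseteq S\times M$ and $t_j\geq 0$; the resulting joint Laplace transform of $(\Pi^\ast(C_1),\ldots,\Pi^\ast(C_n))$ factorises into Poisson Laplace transforms with parameters $\mu^\ast(C_j)$, which verifies conditions (i) and (ii) of Definition \ref{PRM} directly (condition (iii) is automatic since $\Pi^\ast$ is the counting measure of a countable random set). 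The conditional independence and the kernel law of the marks given $\Pi$ are part of the construction of the marked process described just before the theorem, so no further justification is needed there.
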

Now let us consider the special case when $S$ is a Polish space, $M=\mathbb R$ and $p(x,\cdot)=\zeta(\cdot)$ for some fixed probability distribution $\zeta$ on $\mathbb R$, i.e. conditional on the realization of $\Pi$ the marks are i.i.d.\index{Poisson point process!marked!with i.i.d. marks}\index{Poisson point process!marked!separable construction} Then, since $\Pi$ is a.s. countable, it is sufficient to have countably many independent instances of $\zeta$-distributed random variables in order to construct a product probability space where the marked Poisson process $\Pi^\ast$ is defined. In particular, if the points of $\Pi$ are located in a Polish space $S$, and also the marks take values in a Polish space $M$, then also $\Pi^\ast$ can be defined on a separable space. Indeed, assume that the Poisson random measure $N(\cdot)=\sharp \lbrace \Pi \cap \cdot \rbrace$ is defined on the separable probability space $(\Omega_0, \mathcal F_0, \mathbb P_0)$, with $\mathbb P_0 \circ N^{-1}(\cdot)$ distributed as Poisson($\mu(\cdot)$) satisfying the PRM conditions. Let $(\Omega,\mathcal F,\mathbb P)$ be a separable probability space on which a $\zeta$-distributed real-valued random variable is defined. Then, by the Ionescu-Tulcea theorem (see \cite[p.~31--32]{ionescutulcea}), we have that an i.i.d. sequence of $\zeta$-distributed random variables is defined on the probability space $(\Omega_1,\mathcal F_1,\mathbb P_1)=(\Omega^{\mathbb N}, \mathcal{F}^{\tensor \mathbb N}, \mathbb P^{\tensor \mathbb N})$. Now consider the product probability space \[ (\Omega_2,\mathcal{F}_2,\mathbb P_2)=(\Omega_0 \times \Omega_1, \mathcal{F}_0 \tensor \mathcal{F}_1, \mathbb P_0 \tensor \mathbb P_1), \] which is still a separable space, but it is large enough to determine the distribution of the marked Poisson process, by writing 
\[ \mathbb P_2(A, B_1 \times B_2 \times \ldots)=\sum_{n \in \mathbb N} \underbrace{\mathbb P_0(N(A)=n)}_{=\mu(A)} \prod_{k=1}^n \zeta(B_k),\quad \forall A \in \mathcal S,~\forall B_1,B_2,\ldots \in \mathcal{F} \] 
for the generators of the $\sigma$-algebra $\mathcal{F}_0 \tensor \mathcal{F}_1^{\tensor \mathbb N}$. In particular, this construction shows that marked Poisson can be seen as a process taking values on $(S \times M)^\mathbb N=\lbrace a=\lbrace a_n \rbrace: \mathbb N \to S \times M \rbrace$, which is separable.\footnote{ In more detail, under the event $\sharp \Pi < \infty$, we can interpret the random finite subset $\Pi^\ast=\lbrace (s_1,m_1),\ldots,(s_n,m_n) \rbrace$ as an infinite sequence $\Pi^\ast=\lbrace (s_1,m_1),\ldots,(s_n,m_n), (s_n,m_n),(s_n,m_n),\ldots \rbrace$. This interpretation yields a surjective map $\phi$ from $(S \times M)^{\mathbb N} \cup (\bigcup_{n=1}^\infty (S \times M)^n)$ to $(S \times M)^\mathbb N$. 
The law of $\Pi^\ast$ under $\mathbb P_2$, which is the same as the law of $\phi(\Pi^\ast)$ under $\mathbb P_2 \circ \phi^{-1}$ and the latter process takes values in the Polish space $(S \times M)^\mathbb N$.}

Probability kernels also play an important rôle in the Displacement Theorem, which is a generalization of the Mapping Theorem to the case when the displacement of the points of a Poisson process is random. This theorem requires the state space $S$ of the Poisson process $\Pi$ to be equal to $\mathbb R^d$ for some $d \geq 1$. 
\begin{thm}[Displacement Theorem] \label{displacement}\index{Displacement Theorem}
Let $\Pi$ be a Poisson process on $\mathbb R^d$ with intensity function $\lambda(\cdot)$. Assume that the points of $\Pi$ are randomly displaced, in such a way that the displacements of different points are independent, and suppose that the distribution of the displaced position of a point $\Pi$ at $X=x$ has probability density $\varrho(x,\cdot)$. Then the displaced points form a Poisson process $\Pi'$ with rate function $\lambda'$ given by
\[ \lambda'(y)=\int_{\mathbb R^d} \lambda(x) \varrho(x, y) \mathrm d x. \]
In particular, if $\lambda(x)$ is constant $\lambda$ and $\varrho(x,y)$ is a function of $x-y$, then $\lambda'(y)=\lambda$ for all $y$.  
\end{thm}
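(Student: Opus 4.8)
The plan is to realize the random displacement as the composition of a marking step followed by a deterministic projection, so that the theorem follows by combining the Marking Theorem (Theorem \ref{marking}) with the Mapping Theorem (Theorem \ref{mapping}). Recall that $\Pi$ being a Poisson process on $\mathbb R^d$ with intensity function $\lambda(\cdot)$ means its mean measure is $\mu(\d x) = \lambda(x)\,\d x$. First I would mark each point $X \in \Pi$ with its own displaced position, viewed as a mark $m_X$ with values in $M = \mathbb R^d$. By hypothesis the displacements of distinct points are independent, and the displaced position of a point located at $x$ has law $p(x,\d y) = \varrho(x,y)\,\d y$; since $\varrho(x,\cdot)$ is a probability density, $p(\cdot,\cdot)$ is a genuine probability kernel of the kind required by the Marking Theorem. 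Hence the set $\Pi^\ast = \lbrace (X,m_X) \vert~ X \in \Pi \rbrace$ is a Poisson process on $\mathbb R^d \times \mathbb R^d$ whose mean measure, by \eqref{márkamérték}, is
\[ \mu^\ast(C) = \iint\limits_{(x,y) \in C} \lambda(x)\,\d x\,\varrho(x,y)\,\d y. \]

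Next I would apply the Mapping Theorem to the projection $f : \mathbb R^d \times \mathbb R^d \to \mathbb R^d$, $f(x,y) = y$, which is continuous and hence measurable. The displaced point process $\Pi'$ is exactly $f(\Pi^\ast)$, so it only remains to compute the image measure $\mu^\ast \circ f^{-1}$. For a Borel set $A \subseteq \mathbb R^d$ one has $f^{-1}(A) = \mathbb R^d \times A$, and by Tonelli's theorem (everything is nonnegative)
\[ \mu^\ast \circ f^{-1}(A) = \int_{\mathbb R^d} \int_A \lambda(x)\varrho(x,y)\,\d y\,\d x = \int_A \Big( \int_{\mathbb R^d} \lambda(x)\varrho(x,y)\,\d x \Big)\,\d y, \]
which exhibits $\lambda'(y) = \int_{\mathbb R^d} \lambda(x)\varrho(x,y)\,\d x$ as a density of the image measure and thereby identifies the claimed rate function.

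The only hypothesis of the Mapping Theorem that genuinely needs verification — and the step I expect to be the sole subtlety — is that the image measure $\mu^\ast \circ f^{-1}$ be non-atomic. But this is immediate from the display above: the image measure is absolutely continuous with respect to Lebesgue measure on $\mathbb R^d$, so every singleton receives measure zero. Finally, for the special case $\lambda(x) \equiv \lambda$ and $\varrho(x,y) = g(x-y)$, the substitution $u = x-y$ together with the normalization $\int_{\mathbb R^d} g(u)\,\d u = 1$ (inherited from the fact that each $\varrho(x,\cdot)$ is a probability density) gives $\lambda'(y) = \lambda \int_{\mathbb R^d} g(x-y)\,\d x = \lambda$ for every $y$, completing the argument.
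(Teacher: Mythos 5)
Your argument is correct: the thesis states the Displacement Theorem without proof (Section \ref{kingmanke} explicitly omits the proofs of the quoted preliminaries), and your factorization of the random displacement into a marking step (Theorem \ref{marking}) followed by the deterministic projection $(x,y) \mapsto y$ (Theorem \ref{mapping}) is precisely the proof given in the cited source, Kingman's book. The computation of the image measure via Tonelli, the verification of non-atomicity through absolute continuity with respect to Lebesgue measure, and the change of variables in the homogeneous special case are all handled correctly.
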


Campbell's theorem indicates the way how sums (integrals) over Poisson processes can be computed. Before stating the theorem, we recall that a Poisson random variable $X$ with parameter $\lambda$ has characteristic function \[ \mathbb E[\exp(i \theta X)]=\exp \left( \lambda (e^{i \theta}-1) \right) .\]
\begin{thm}[Campbell's theorem] \label{campbell}\index{Campbell's theorem} Let $\Pi$ be a Poisson process on $(S,\mathcal S)$ with intensity $\mu$, and let $f: ~ S \to \mathbb R$ be measurable. Then the sum
\[ \Sigma=\sum_{X \in \Pi} f(X) \]
is absolutely convergent with probability 1 if and only if \[ \int_S \min (\vert f(x) \vert, 1) \mu(\d x) < \infty. \] If this condition holds, then we have
\[ \mathbb E[\mathrm{exp}(\theta \Sigma)]=\exp \left(\int_S e^{\theta f(x)}-1 \mu(\d x) \right)  \]
for any complex $\theta$ for which the right hand side converges, in particular always if $\theta$ is purely imaginary. Moreover, 
\begin{equation} \label{Windows3.11} \mathbb E(\Sigma)=\int_S f(s) \mu(\d s), \end{equation}
in the sense that the expectation exists if and only if the integral converges, and then they are equal. If \eqref{Windows3.11} converges, then \[ \mathrm{Var}(X)=\int_S f^2(s) \mu(\d s), \]
finite or infinite.
\end{thm}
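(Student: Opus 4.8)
The plan is to prove every assertion first for simple functions, where the two defining properties of a Poisson random measure in Definition \ref{PRM} do all the work, and then to pass to general measurable $f$ by monotone approximation. For an indicator $f=\mathds 1_A$ one has $\Sigma=N(A)$, which is Poisson$(\mu(A))$, so the displayed characteristic function is exactly the recalled identity $\mathbb E[\exp(i\theta N(A))]=\exp(\mu(A)(e^{i\theta}-1))$. For a simple function $f=\sum_{j=1}^n c_j\mathds 1_{A_j}$ with the $A_j$ pairwise disjoint, property (i) of Definition \ref{PRM} makes $N(A_1),\dots,N(A_n)$ independent, so the characteristic function factorises into $\prod_j\exp(\mu(A_j)(e^{i\theta c_j}-1))$, which is precisely $\exp(\int_S(e^{i\theta f}-1)\,\d\mu)$ since the integrand vanishes off $\bigcup_j A_j$. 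This settles the formula on simple functions for purely imaginary $\theta$, and the same factorisation with real $\theta\le 0$ gives the Laplace-transform version $\mathbb E[e^{\theta\Sigma}]=\exp(\int_S(e^{\theta f}-1)\,\d\mu)$.

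Next I would treat the convergence criterion. For $f\ge 0$ choose simple $f_n\uparrow f$; then $\Sigma_n=\sum_{X\in\Pi}f_n(X)\uparrow\Sigma$ pointwise, allowing the value $+\infty$. Applying the simple-function identity with $\theta=-1$ and letting $n\to\infty$, bounded convergence on the left and monotone convergence of $\int_S(1-e^{-f_n})\,\d\mu$ on the right yield $\mathbb E[e^{-\Sigma}]=\exp(-\int_S(1-e^{-f})\,\d\mu)$. Hence $\Sigma<\infty$ almost surely if and only if the right-hand exponent is finite, and because $1-e^{-t}$ lies between constant multiples of $\min(t,1)$ on $[0,\infty)$, this is equivalent to $\int_S\min(f,1)\,\d\mu<\infty$. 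For signed $f$, absolute convergence of $\Sigma$ is convergence of $\sum_{X\in\Pi}|f|(X)$, so applying the nonnegative case to $|f|$ gives the stated criterion $\int_S\min(|f|,1)\,\d\mu<\infty$. Under this condition the characteristic and Laplace formulae extend from simple functions to $f$ by dominated convergence, for those complex $\theta$ for which the right-hand side converges.

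Finally, for the moments I would read off $\mathbb E[\Sigma]$ and $\mathrm{Var}(\Sigma)$ either by differentiating the moment generating function at $0$ or, more robustly, directly on simple functions and then passing to the limit. For $f=\sum_j c_j\mathds 1_{A_j}$ one has $\mathbb E[\Sigma]=\sum_j c_j\mu(A_j)=\int_S f\,\d\mu$ using $\mathbb E[N(A_j)]=\mu(A_j)$, and, since the $N(A_j)$ are independent with $\mathrm{Var}(N(A_j))=\mu(A_j)$, also $\mathrm{Var}(\Sigma)=\sum_j c_j^2\mu(A_j)=\int_S f^2\,\d\mu$. Monotone approximation for $f\ge 0$, and then the splitting $f=f^+-f^-$, promotes both identities to general $f$, with the equality of expectation and integral understood in the stated sense that one exists exactly when the other converges.

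The main obstacle is bookkeeping rather than any single deep idea: one must justify every interchange of the almost-sure sum over $\Pi$ with the limit in $n$ and with the integral against $\mu$, which requires the right monotone or dominating control at each stage (monotone convergence for nonnegative $f$, dominated convergence once absolute summability is in hand). Care is also needed to pin down the convergence region of the complex moment generating function and to verify the elementary two-sided comparison $c_1\min(t,1)\le 1-e^{-t}\le c_2\min(t,1)$ that converts the analytic finiteness condition into the clean $\min(|f|,1)$ integrability criterion.
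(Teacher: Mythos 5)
The paper does not prove this statement: Section \ref{kingmanke} explicitly says that proofs in that chapter are omitted, and Campbell's theorem is quoted from Kingman's book \cite{kingman}. So there is no in-paper argument to compare against; what you have written is essentially the standard Kingman proof (indicators, then simple functions via independence of $N(A_1),\dots,N(A_n)$, then monotone approximation), and in outline it is the right argument.

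There is one genuine gap in your treatment of the convergence criterion. From $\mathbb E[e^{-\Sigma}]=\exp\bigl(-\int_S(1-e^{-f})\,\d\mu\bigr)$ with $f\ge 0$, finiteness of the exponent only gives $\mathbb E[e^{-\Sigma}]>0$, hence $\mathbb P(\Sigma<\infty)>0$ --- not $\mathbb P(\Sigma<\infty)=1$, which is what the theorem asserts (a random variable equal to $\infty$ with probability $1/2$ also has positive expectation of $e^{-\Sigma}$). The ``only if'' direction is fine: an infinite exponent forces $\mathbb E[e^{-\Sigma}]=0$ and hence $\Sigma=\infty$ a.s. To close the ``if'' direction, either apply your identity with $\theta=-t$ and let $t\downarrow 0$, using dominated convergence ($1-e^{-tf}\le 1-e^{-f}\le\min(f,1)$ for $t\le 1$) to get $\mathbb E[e^{-t\Sigma}]\to 1$ while $\mathbb E[e^{-t\Sigma}]\uparrow\mathbb P(\Sigma<\infty)$; or argue directly by splitting $S$ into $\{f>1\}$ (which carries finite $\mu$-mass, hence a.s.\ finitely many points of $\Pi$) and $\{f\le 1\}$ (where $\mathbb E[\Sigma]=\int_{\{f\le 1\}}f\,\d\mu<\infty$ already forces a.s.\ finiteness). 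A second, smaller point: when you promote the variance formula from $f\ge 0$ to signed $f$ via $f=f^+-f^-$, variances do not subtract; you need that the sums over $\{f>0\}$ and $\{f<0\}$ are independent (disjoint supports, property (i) of Definition \ref{PRM}) so that the variances add to $\int_S (f^+)^2\,\d\mu+\int_S(f^-)^2\,\d\mu=\int_S f^2\,\d\mu$. Both repairs are routine, but as stated these two steps do not quite go through.
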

Finally, we present a simple but very useful result about the homogeneous Poisson process on the positive real half-line, which will also appear in Section \ref{ocsú}.
\begin{thm}[Law of Large Numbers for Poisson Processes] \label{poissonlln}\index{Poisson law of large numbers}
Let $\Pi$ be a Poisson process of constant rate $\lambda \geq 0$ on $(0,\infty)$. Then the number $N(t)$ of points of $\Pi$ in $(0,t]$ satisfies
\[ \lim_{t \to \infty} \frac{N(t)}{t}=\lambda \]
almost surely.
\end{thm}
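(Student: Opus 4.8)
The plan is to reduce the statement to the classical strong law of large numbers (SLLN) for i.i.d. random variables, exploiting the independence and Poisson structure guaranteed by Definition \ref{PRM}. First I would dispose of the degenerate case $\lambda=0$: here the intensity measure vanishes, so $N(t)=0$ almost surely for every $t$, and the claimed limit $N(t)/t=0=\lambda$ holds trivially. From now on assume $\lambda>0$.

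The key observation is that the counts over the disjoint unit intervals $(k-1,k]$, $k \in \mathbb N$, are independent by property (i) of Definition \ref{PRM}, and each is Poisson distributed with parameter $\lambda$ by property (ii), since the interval has length $1$ and the process is homogeneous of rate $\lambda$. Writing $Y_k=N(k)-N(k-1)=N((k-1,k])$, the random variables $Y_k$ are therefore i.i.d. with $\mathbb E[Y_1]=\lambda<\infty$, and $N(n)=\sum_{k=1}^n Y_k$ for every integer $n$. Applying the classical SLLN for i.i.d. integrable random variables yields
\[ \frac{N(n)}{n}=\frac{1}{n}\sum_{k=1}^n Y_k \xrightarrow[n \to \infty]{} \lambda \qquad \text{almost surely}. \]

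It remains to pass from integer times to arbitrary real $t$, which I would do by a sandwiching argument based on the monotonicity of $t \mapsto N(t)$. For $t \geq 1$ set $n=\lfloor t \rfloor$; then $N(n) \leq N(t) \leq N(n+1)$, whence
\[ \frac{N(n)}{n}\cdot\frac{n}{n+1} \leq \frac{N(t)}{t} \leq \frac{N(n+1)}{n+1}\cdot\frac{n+1}{n}. \]
As $t \to \infty$ we have $n \to \infty$, and both $n/(n+1)$ and $(n+1)/n$ tend to $1$, so by the convergence along the integers the outer bounds converge almost surely to $\lambda$. The squeeze theorem then gives $N(t)/t \to \lambda$ almost surely, as desired.

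I do not expect any serious obstacle: the Poisson and independence properties are immediate from Definition \ref{PRM}, and the only genuine analytic input is the classical SLLN. The one point demanding mild care is the interpolation step, where the almost-sure convergence along the integers, valid on a single event of full probability, must be combined with the monotone sandwich; since a countable intersection of full-probability events again has full probability, this causes no difficulty. An alternative route would be to work with the arrival times $T_n$, whose interarrival gaps are i.i.d. $\mathrm{Exp}(\lambda)$ with mean $1/\lambda$, deduce $T_n/n \to 1/\lambda$ from the SLLN, and then invert via the inequalities $T_{N(t)} \leq t < T_{N(t)+1}$; this yields the same conclusion but requires first identifying the interarrival law.
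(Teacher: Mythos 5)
Your argument is correct. The paper itself gives no proof of this statement: it appears in the preliminaries (Section \ref{kingmanke}), where the author explicitly omits proofs and cites Kingman's book, so there is nothing to compare against line by line. Your route — independence and Poisson distribution of the unit-interval increments $N((k-1,k])$ from Definition \ref{PRM}, the classical SLLN along integers, and the monotone sandwich $N(\lfloor t\rfloor)\leq N(t)\leq N(\lfloor t\rfloor+1)$ to interpolate — is the standard proof and is complete, including the degenerate case $\lambda=0$ and the remark that the full-probability events can be intersected.
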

\section{Main results of \cite{cikk}} \label{explanation}\index{fading!-free case}
According to the Introduction, our goal in this thesis is to generalize the approach of the paper \cite{cikk} into a setting when the users have random fadings. In this section, we summarize the model definition and the main results from \cite[Section 1]{cikk}. We will see that this original model allows the users to move in the communication area along time. However, we omit mobility from our extended model defined in Section \ref{Anfang}, because it is usually hard to handle concrete examples where the effect of mobility can be detected, both analytically and numerically, even in the fading-free case, cf. \cite[Section 7]{cikk}. In this section we also prove Lemma \ref{deltadiscretization} about spatial discretization, which has been tacitly used in \cite{cikk}.

The paper investigates the effect of \emph{relaying} in a compact communication area in the asymptotic setting of a high density of mobile users. The users are given by a Poisson point process\index{Poisson point process} $X^\lambda$ of trajectories with intensity function $\lambda \overline{\mu}(\cdot)$, where $\lambda>0$. We assume that the distribution of the initial points of trajectories is absolutely continuous w.r.t. the Lebesgue measure. Moreover, $\overline{\mu}$ is assumed to be a finite Borel measure on the set of Lipschitz continuous trajectories $\mathcal{L}=\mathcal{L}_{J_1}(I,W)$, with Lipschitz parameter $J_1$, from the interval $I=[0,T)$ to the window $W=[-r,r]^d \subseteq \mathbb R^d$ for some integer $r \geq 1$. 

For the network model, the paper follows a classical approach based on the \emph{signal-to-interference ratio} (SIR). Here $\ell: (0,\infty) \to [0,\infty)$ denotes the \emph{path-loss function}\index{path-loss!function}, which is a Lipschitz continuous function with parameter $J_2$, which describes the decay of the signal strength over distance. Additionally, the ability of a receiver to decode a message is reduced by interference generated by \emph{all} users. 

Formally, we let $X_{i,t}$ denote the $i$th trajectory in $X^\lambda$ at time $t$, and introduce the empirical measures\index{empirical measure}
\[ L_{\lambda}=\frac{1}{\lambda} \sum_{X_i \in X^\lambda} \delta_{X_i}, \quad \and \quad L_{\lambda,t}=\frac{1}{\lambda} \sum_{X_i \in X^\lambda} \delta_{X_{i,t}} \]
respectively as a random element of $\mathcal{M}(\mathcal{L})$, the set of finite Borel measures on $\mathcal{L}$, and in $\mathcal{M}(W)$, the space of finite Borel measures on $W$. As in Section \ref{ldpcske}, let $\mathcal{M}(X)$ be the set of finite Borel measures of the topological space $X$. 
Now, we define the SIR of a message transmitted by $\xi \in W$ and measured, at the same time $t \in I$, at a receiver at $\eta \in W$ as follows
\[ \SIR_\lambda(\xi,~\eta,~ L_{\lambda,t})=\frac{\ell(\vert \xi-\eta \vert)}{\sum_{X_i \in X^\lambda} \ell(\vert X_i - \eta \vert)}. \]\index{SIR}
The denominator of this fraction is called the \emph{interference}\index{interference} at $\eta$. Note that the interference contains even the signal strength coming from the corresponding transmitter, and that the model does not include noise. The reasons for these decisions are explained in Footnote \ref{STIR} on page \pageref{STIR}. 

Then, a connection between\index{connected and disconnected users} $\xi, \eta \in W$ is useful only if
$\SIR_{\lambda}(\xi,~\eta,~L_{\lambda,t}) \geq c$ with some preliminarily given value $c$.
In particular, if the threshold $c$ is given by $c=\lambda^{-1} c'$, then the above requirement can be re-expressed as $\SIR(\xi,~\eta,~L_{\lambda,t}) \geq c')$, where $\SIR(\xi,~\eta,~L_{\lambda,t})=\lambda \SIR_\lambda (\xi,~\eta,~L_{\lambda,t})$.

The paper conducts level-2 large deviation analysis of certain frustration events. In particular, it shows that the most likely option for a rare event to occur can be described by a certain finite Borel measure $\nu \in \mathcal{W}$ that describes the asymptotic configuration of users under conditioning on the rare event. Aiming to prove this, one extends the definition of SIR to arbitrary $\nu \in \mathcal{M}(W)$ and also writes
\[ \SIR(\xi,~\eta,~\nu)=\frac{\ell(\vert \xi-\eta \vert)}{\int_W \ell(x -\eta \vert) \nu(\d x)} \]
for any $\xi,\eta \in W$. In order to keep the model flexible, it is assumed that the \emph{quality of service} (QoS)\index{quality of service (QoS)} of the direct link between $\xi$ and $\eta$ is given by
\[D(\xi,~\eta,~L_{\lambda,t})=g(\SIR(\xi,~\eta,~L_\lambda)),\]
where $g: [0,\infty) \to [0,\infty)$ is a Lipschitz continuous function that is strictly increasing on $[0,\varrho_+)$ and constant equal to $c_+$ on $[\varrho_+,\infty)$ for some $\varrho_+, c_+ >0$. One can easily see that $D(\xi,~\eta,~L_{\lambda,t})=c_+$ if $L_{\lambda,t}(W) \leq \beta_o=\min \lbrace 1, \varrho_+^{-1} \ell_{\min} \ell_{\max}^{-1} \rbrace$, where $\ell_{\min}=\min_{\xi,\eta \in W} \ell(\vert \xi-\eta \vert)$, $\ell_{\max}=\max_{\xi,\eta\in W} \ell(\vert \xi-\eta \vert)$. As SIR, also $D$ is defined for general $\nu \in \mathcal{M}(W)$ via $D(\xi,~\eta,~\nu)=g(\SIR(\xi,~\eta,~\nu))$, and $D(\xi,~\eta,~\nu)=c_+$ if $\nu(W)=0$. For instance, possible choices of $g$ include $g(r)=\min \lbrace r, K \rbrace$ or $g(r)=\min \lbrace \log(1+r), K \rbrace$ for some fixed $K>0$.

If a message is sent out from a user $\xi$ to a user $\eta$ routing via a relay $\zeta$, then the quality of the relayed message depends on both the SIR from $\xi$ to $\zeta$ and the SIR from $\zeta$ to $\eta$. The assumption of the paper is that message transmissions are successful if the SIR of both links are above the certain threshold. In other words, the assumption is that the QoS when relaying from $\xi$ to $\eta$ via $\zeta$ can be expressed as
\[ \Gamma(\xi,~\zeta,~\eta,~L_{\lambda,t})=\min \left\lbrace D(\xi,~\zeta,~L_{\lambda,t}), D(\zeta,~\eta,~L_{\lambda,t}) \right\rbrace. \]
In the following we introduce several characteristics that describe the QoS in a relay setting. In the uplink scenario\index{means of communication}\index{means of communication!uplink!direct}, the destination of a message sent from $X_i \in X^\lambda$ via a relay $X_j \in X^\lambda$ is the origin $o$. Under an optimum relay decision, the QoS for the relayed uplink\index{means of communication!uplink!relayed} communication is defined as
\begin{equation} \label{egy} R(X_{i,t},~o,~L_{\lambda,t})=\max \lbrace D(X_{i,t},~o,~L_{\lambda,t}), \max_{X_{j,t} \in X^\lambda_t} \Gamma(X_{i,t},~X_{j,t},~o,~L_{\lambda,t}) \rbrace. \end{equation}
In other words, in \eqref{egy} the user has the possibility to connect to the base station also directly, but if there exists any user such that relaying via this user offers a better connection, then relaying leads to a higher QoS. Similarly, in the downlink scenario, when messages are sent out from $o$ to a user $X_i \in X^\lambda$, and relaying is again possible, the QoS for the relayed downlink communication\index{means of communication!downlink!direct}\index{means of communication!downlink!relayed} can be expressed as
\[ R(o,~X_{i,t},~L_{\lambda,t})=\max \lbrace D(o,~X_{i,t},~L_{\lambda,t}), \max_{X_{j,t} \in X^\lambda_t} \Gamma(o,~X_{j,t},X_{i,t},~L_{\lambda,t}) \rbrace. \] 
Extending the definition of $R$ further to arbitrary finite Borel measures $\nu \in \mathcal{M}(W)$ we write
\[ R(\xi,~\eta,~\nu)=\max \lbrace D(\xi,~\eta,~\nu),~\nu\text{-}\esssup_{\zeta \in W} \Gamma(\xi,~\zeta,~\eta,~\nu) \rbrace\]
for any given $\xi,~\eta \in W$. Here $\nu\text{-}\esssup$ means essential supremum w.r.t. $\nu$. 

Let $\pi_t: \mathcal{L} \to W$, $x \mapsto x_t$ denote the projection at time $t \in I$. Then for $\overline{\nu} \in \mathcal{M}(\mathcal{L})$ we write $\overline{\nu}_t=\overline{\nu} \circ \pi_t^{-1}$, and for $x,y \in \mathcal{L}$ we define the trajectory of QoS
\[ \overline{\SIR}(x,~y,~\overline{\nu})=(\overline{\SIR}(x_t,~y_t,~\overline{\nu}_t))_{t \in I}, \] 
and similarly $\overline{D}$ and $\overline{R}$. Then $\overline{\SIR}$, $\overline{D}$ and $\overline{R}$ are elements of the space of bounded measurable functions $\mathcal{B}=\mathcal{B}(I, [0,\infty))$ equipped with the supremum norm and the associated Borel $\sigma$-algebra.

The object of interest of the paper is the point process of users $X_i \in X^\lambda$ who are \emph{frustrated}\index{frustration events} due to unsatisfactory $\SIR$. In order to describe the number of frustrated users, we define the following rescaled measure for the uplink
\[ L_{\lambda}^{\text{up}} [\tau]=\frac{1}{\lambda} \sum_{X_j \in X^\lambda} \delta_{X_j} \tau(\overline{R}(X_j,~o,~L_\lambda)), \]
where $\tau: \mathcal{B} \mapsto [0,\infty)$ is a bounded and measurable function. In particular, $L_{\lambda}^{\text{up}} [\tau] \in \mathcal{M}(\mathcal{L})$. More generally, if $\overline{\nu} \in \mathcal{M}(\mathcal{L})$, then $\overline{\nu}^{\text{up}}[\tau]$ is defined as an element of $\mathcal{M}(\mathcal{L})$ via
\[ \frac{\mathrm d \overline{\nu}^{\text{up}}[\tau]}{\d \nu}(x)=\tau(\overline{R}(x,~o,~\overline{\nu})).\]
It is also important to consider those users who have a bad QoS for direct uplink communication separately, because if a large number of users has to communicate via a small number of relays, then communication on full bandwidth cannot be guaranteed. 
For a general $\overline{\nu} \in \mathcal{M}(\mathcal{L})$, the quantity $\overline{\nu}^{\text{up-dir}}[\tau]$ of users who have bad QoS w.r.t. direct communication with the base station is defined via
\[ \frac{\mathrm d \overline{\nu}^{\text{up-dir}}[\tau]}{\d \nu}(x)=\tau(\overline{D}(x,~o,~\overline{\nu})).\]
For the downlink one can now define
\[ \frac{\mathrm d \overline{\nu}^{\text{do}}[\tau]}{\d \nu}(x)=\tau(\overline{R}(o,~x,~\overline{\nu}))\] and analogously for $\overline{\nu}^{\text{do-dir}}[\tau]$. We write $\boldsymbol \tau=(\tau_1,\ldots,\tau_4)$,
\[ L_\lambda[\boldsymbol \tau]=(L_\lambda^{\text{up}}[\tau_1],~L_\lambda^{\text{up-dir}}[\tau_2],~L_\lambda^{\text{do}}[\tau_3],~L_\lambda^{\text{do-dir}}[\tau_4]), \quad \overline{\nu}[\boldsymbol \tau]=(\overline{\nu}^{\text{up}}[\tau_1],~\overline{\nu}^{\text{up-dir}}[\tau_2]),~\overline{\nu}^{\text{do}}[\tau_3],~\overline{\nu}^{\text{do-dir}}[\tau_4]). \]

Let $\mathcal{K}=\mathcal{K}(I,W)$ denote the space of measurable trajectories with values in $W$, equipped with the supremum norm. The objects of interest of the paper are random variables $F(L_\lambda[\boldsymbol \tau])$, where $\tau_i: \mathcal{B} \to [0,\infty),~i \in \four$ is assumed to be decreasing in the sense that for all $\gamma,~\gamma' \in \mathcal{B}$ with $\gamma_t \leq \gamma'_t$ for all $t \in I$, we have $\tau(\gamma) \geq \tau(\gamma')$. Moreover, $F: \mathcal{M}(K)^4 \to [-\infty,\infty)$ is assumed to be increasing in the sense that for all $\overline{\nu},~\overline{\nu}' \in \mathcal{M}(\mathcal{K})$ with $\overline{\nu} \leq \overline{\nu}'$ we have $F(\overline{\nu}) \leq F(\overline{\nu}')$. Here we write $\overline{\nu} \leq \overline{\nu}'$ if $\overline{\nu}(A) \leq \overline{\nu}'(A)$ for all measurable $A \subset \mathcal{K}$. We also put $\overline{\nu} < \overline{\nu}'$ if $\overline{\nu} \leq \overline{\nu}'$ and $\overline{\nu} \neq \overline{\nu}'$.

The most important example is the following: define $F_{\mathbf b}: \mathcal{M}(\mathcal{K})^4 \to [-\infty,\infty)$, 
\begin{equation} \label{kisFfading} (\overline{\nu}_i)_{i \in \lbrace 1,\ldots,4 \rbrace} \mapsto \begin{cases} 0 \quad \quad \text{if } \overline{\nu}_i(\mathcal{K}') > b_i, \forall i \in \lbrace 1,\ldots,4\rbrace \\ -\infty \> \text{ otherwise} \end{cases} \end{equation}
for some $\mathbf b \in \mathbb R^4$, and $\tau_{a,c}: \mathcal{B} \to [0,\infty)$, 
\begin{equation} \label{kistaufading}
\gamma \mapsto \mathds 1 {\lbrace \int_0^T \mathds 1{\lbrace \gamma_t < c \rbrace} \mathrm{d}t>a \rbrace}.
\end{equation}
Then $F_{\mathbf b}$ and $\tau_{a,c}$ are measurable. In particular, for $\boldsymbol \tau_{\mathbf a, \mathbf c}=({\tau}_{a_i,c_i})_{i \in \four}$ we have
\[ \mathbb E (\exp(\lambda F_{\mathbf b} (L_\lambda [\boldsymbol \tau_{\mathbf a, \mathbf c}])))=\mathbb P(L_\lambda[\boldsymbol \tau_{\mathbf a, \mathbf c}](\mathcal L)>\mathbf b), \]
where for vectors $\mathbf a=(a_1,\ldots,a_4),~\mathbf b=(b_1,\ldots,b_4) \in \mathbb R^4$, we write $\mathbf a < \mathbf b$ if $a_i < b_i$ for all $i \in \four$. This describes the probability of the event that more than $\lambda b_i$ users experience a QoS of less than $c_i$ for a period of time longer than $a_i$ for all $i \in \four$.

To prove the frustration results, it is convenient to consider functions $F: \mathcal{M}(K)^4 \to [-\infty,\infty)$ that are compatible with triadic discretizations\index{discretization} of $\mathcal K$. A triadic discretization is choosen to ensure that spatially, the origin is a centre of a sub-cube of $W$, and that $W$ is a union of sub-cubes of the form $\zeta+[-\delta r, \delta r]^d$ with $\zeta \in 2 \delta r \mathbb Z^d$ for some $\delta \in \mathbb B=\lbrace 3^{-m} \vert m \geq 1 \rbrace$. The space and time discretizations are
\[ W_\delta=\delta 2r \mathbb Z^d \cap W \quad \text{and} \quad I_\delta=\delta T \left( \mathbb Z + 1/2 \right) \cap I. \]
The discretized path space is the space $\Pi_\delta=W_\delta^{I_\delta}$ of functions mapping from $I_\delta$ to $W_\delta$. The space of bounded measures $\mathcal{M}(\Pi_\delta)$ can be identified with $[0,\infty)^{\Pi_\delta}$. We define two operations relating $\Pi_\delta=W_\delta^{I_\delta}$ to the continuous path space $\mathcal K$. First, we discretize $x \in \mathcal{K}$ by evaluating $x$ at discrete times $t \in I_\delta$ and spatially moving $x$ to the centres of sub-cubes, denoting the discretized path $\varrho(x) \in \Pi_\delta$ by 
\[ \varrho: \mathcal{K} \rightarrow \Pi_\delta, \quad x \mapsto \lbrace \varrho(x_t) \rbrace_{t \in I_\delta},\]
where $\varrho(x_t)$ denotes the shift of $x \in W$ to the nearest sub-cube centre in $W_\delta$. For $\overline{\nu} \in \mathcal{M}(\mathcal{K})$, the mappings $\varrho$ induce an image measure
$\overline{\nu}^{\varrho}=\overline{\nu} \circ \varrho^{-1} \in \mathcal{M}(\Pi_\delta).$
Second, a discretized path $z \in \Pi_\delta$ can be embedded into $\mathcal K$ as a step function. That is,
\[ \imath: \Pi_\delta \rightarrow \mathcal{K}, \quad u \mapsto \left\lbrace \sum_{i=0}^{\delta^{-1}T-1} \mathds 1 {[i\delta T,(i+1)\delta T]}(t)~u_{(2i+1)\delta T/2~} \right\rbrace_{t \in I_\delta}. \]
Then, for $\overline{\nu} \in \mathcal{M}(\Pi_\delta)$, the mapping $\imath$ induces an image measure 
$\overline{\nu}^{\imath}=\overline{\nu} \times \imath^{-1} \in \mathcal{M}(\mathcal{K}). $

For $\delta \in \mathbb B$, we call a function $F: \mathcal{M}(K)^4 \to [-\infty,\infty)$ is $\delta$-discretized if $F((\overline{\nu})^{\varrho})^{\imath})=F(\overline{\nu})$ holds for all $\overline{\nu} \in \mathcal{M}(\mathcal{K})^4$. E.g., the functions $F_{\mathbf b}$ defined in \eqref{kisFfading} are $\delta$-discretized for all $\delta \in \mathbb B$. The next lemma shows consistence of the triadic discretizations, which makes it useful to discretize $W$ this way. This lemma was tacitly used in \cite{cikk}, and it is easy to see that it remains true if users have random fadings.

\begin{lem} \label{deltadiscretization}\index{discretization!$\delta$-discretized function}
Let $F: \mathcal{M}(\mathcal{K})^4 \to [-\infty,\infty)$ be $\delta$-discretized for some $\delta \in \mathbb B$. Then $F$ is $\delta$-discretized for all $\mathbb B \ni \delta'<\delta$.
\end{lem}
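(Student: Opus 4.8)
The plan is to reduce the statement to a single pointwise identity relating the fine and coarse discretization maps, and then to read off the measure-level statement by a short formal manipulation. Throughout I write $\delta = 3^{-m}$ and $\delta' = 3^{-m'}$ with $m' > m$, and I set $P_\delta = \imath \circ \varrho$ at the coarse level and $P_{\delta'} = \imath \circ \varrho$ at the fine level, regarded as maps $\mathcal K \to \mathcal K$ (I suppress the scale in the symbols $\varrho,\imath$). With this notation $((\overline{\nu})^\varrho)^\imath = \overline{\nu} \circ P_\delta^{-1}$, so that $F$ being $\delta$-discretized means exactly
\[
F(\overline{\nu} \circ P_\delta^{-1}) = F(\overline{\nu}) \qquad \text{for all } \overline{\nu} \in \mathcal{M}(\mathcal K)^4,
\]
the pushforward understood componentwise.

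First I would record the two combinatorial consequences of triadic refinement. Because $3^{m'-m}$ is odd, every coarse spatial centre $\zeta \in 2\delta r\mathbb Z^d$ is again a fine centre in $2\delta' r\mathbb Z^d$, and the coarse sub-cube $\zeta + [-\delta r,\delta r]^d$ is an exact disjoint union of $3^{(m'-m)d}$ fine sub-cubes, one of which is centred at $\zeta$ itself. The same holds in time: each coarse centre $\delta T(i+1/2)$ coincides with some fine centre $\delta' T(j+1/2)$, since $3^{m'-m}(2i+1)=2j+1$ has an integer solution (both sides being odd), and each coarse time cell is a union of fine time cells. Fixing the half-open convention for ``nearest centre'' consistently at both scales makes $\varrho$ well defined everywhere and makes this nesting exact.

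The key step is then the identity
\[
\varrho \circ \imath \circ \varrho = \varrho \qquad \text{as maps } \mathcal K \to \Pi_\delta,
\]
where the inner pair $\imath\circ\varrho$ is taken at the fine level $\delta'$ and the outer $\varrho$ at the coarse level $\delta$; applying $\imath$ at level $\delta$ on the left this reads $P_\delta \circ P_{\delta'} = P_\delta$. To verify it, fix $x \in \mathcal K$ and a coarse time centre $s \in I_\delta$. Since $s$ is also a fine centre, the fine step function $\imath(\varrho(x))$ evaluated at $s$ equals the fine spatial shift of $x_s$; this shift lies in the same coarse sub-cube as $x_s$ (spatial nesting), so applying the coarse spatial shift returns $\varrho(x_s)$ at the coarse level, which is precisely the value of $\varrho(x)$ at $s$. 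Pushing forward measures, this yields $(\overline{\nu} \circ P_{\delta'}^{-1}) \circ P_\delta^{-1} = \overline{\nu} \circ (P_\delta \circ P_{\delta'})^{-1} = \overline{\nu} \circ P_\delta^{-1}$ for every $\overline{\nu}$.

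Finally I would conclude in three lines: applying the $\delta$-discretization hypothesis to the measure $\overline{\nu} \circ P_{\delta'}^{-1}$ gives $F(\overline{\nu} \circ P_{\delta'}^{-1}) = F((\overline{\nu} \circ P_{\delta'}^{-1})\circ P_\delta^{-1})$; the composition identity rewrites the right-hand side as $F(\overline{\nu} \circ P_\delta^{-1})$; and a second use of the hypothesis, applied to $\overline{\nu}$ itself, turns this into $F(\overline{\nu})$. Hence $F(\overline{\nu} \circ P_{\delta'}^{-1}) = F(\overline{\nu})$ for all $\overline{\nu}$, i.e.\ $F$ is $\delta'$-discretized, and no induction is needed. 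The main obstacle is entirely the bookkeeping behind the displayed identity $\varrho\circ\imath\circ\varrho=\varrho$: one must check that fine sub-cells nest inside coarse sub-cells with centres preserved both in space and in time, which is exactly where the base-$3$ (odd) structure enters and where an even refinement would fail. Once that identity is established the measure-theoretic deduction is immediate.
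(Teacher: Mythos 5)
Your proposal is correct and follows essentially the same route as the paper's proof: both reduce the claim to the pointwise identity $\varrho_\delta\circ\imath_{\delta'}\circ\varrho_{\delta'}=\varrho_\delta$ (equivalently $P_\delta\circ P_{\delta'}=P_\delta$), justify it by the triadic nesting of fine sub-cubes and centres inside coarse ones, push it forward to measures, and then apply the $\delta$-discretization hypothesis twice. Your explicit checks of the time-centre alignment via the oddness of $3^{m'-m}(2i+1)$ and of the boundary convention are slightly more careful than the paper's one-line justification, but they do not change the argument.
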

\begin{proof}
Let $\delta', \delta \in \mathbb B$, $\delta'<\delta$, and assume that $F$ is $\delta$-discretized. Then for all $\overline{\nu} \in \mathcal{M}(\mathcal{K})^4$ we have $F((\overline{\nu}^{\varrho_\delta})^{\imath_\delta})=F(\overline{\nu})$. Thus, it suffices to show that $(((\overline{\nu}^{\varrho_{\delta'}})^{\imath_{\delta'}})^{\varrho_{\delta}})^{\imath_{\delta}}=(\overline{\nu}^{\varrho_{\delta}})^{\imath_{\delta}}$ holds for all $\overline{\nu} \in \mathcal{M}(\mathcal{K})^4$, because then we have $F((\overline{\nu}^{\varrho_{\delta'}})^{\imath_{\delta'}})=F((((\overline{\nu}^{\varrho_{\delta'}})^{\imath_{\delta'}})^{\varrho_{\delta}})^{\imath_{\delta}})=F((\overline{\nu}^{\varrho_{\delta}})^{\imath_{\delta}})=F(\overline{\nu})$. In particular, it suffices to prove that $\varrho_\delta(\imath_{\delta'}(\varrho_{\delta'}(\xi)=\varrho_{\delta}(\xi)$ holds for all $\xi \in W$. But this is true because we have $\delta=3^{-m}$ and $\delta'=3^{-n}$ for some $n >m$, and hence the sub-cube centres $x \in W_\delta$ are also contained in $W_{\delta'}$, and thus for all $t \in I_\delta$, the points $\imath_{\delta'}(\varrho_{\delta'} (x_t)=\varrho_{\delta'}(x_t)$ and $x_t$ are in the same sub-cube w.r.t. $\delta$-discretization. 
\end{proof}

The main result \cite[Theorem 1.1]{cikk} is a level-2 large deviation result about the quantities $F(L_\lambda[\boldsymbol \tau])$. Our Theorem \ref{1.1} extends it to settings with random fadings. We define relative entropy $h$ via \eqref{relatíventrópia}\index{relative entropy}. 
\begin{thm} \label{régi 1-es}\index{main results!of \cite{cikk}}
Let $\tau_i: \mathcal{B} \to [0,\infty)$, for $i \in \four$, be bounded, measurable and decreasing functions that map trajectories $\gamma$ to 0 if $\gamma_t \geq {c}_+$ for all $t \in I$. Further, let $F: \mathcal{M}(\mathcal{K})^4 \to [-\infty,\infty) $ be an increasing function that is $\delta$-discretized for some $\delta \in \mathbb B$, bounded from above, and maps the vector of zero measures to $-\infty$. If the $\tau_i \circ \imath$ are u.s.c. as functions on $[0,\infty)^{I_\delta}$ and $\overline{\nu} \mapsto F(\overline{\nu}^{\imath})$ is u.s.c. as a function on $\mathcal{M}(\Pi_\delta)^4$, then we have
\[ \limsup_{\lambda \to \infty} \frac{1}{\lambda} \log \mathbb E \exp (\lambda F(L_\lambda[\boldsymbol \tau])) \leq -\inf\limits_{\overline{\nu} \in \mathcal{M}(\mathcal L)} \left\lbrace h(\overline \nu \vert \overline{\mu}) - F(\overline{\nu}[\boldsymbol \tau]) \right\rbrace, \] 
while if the $\tau_i \circ \imath$ are l.s.c. on $[0,\infty)^{I_\delta}$ and $\overline{\nu} \mapsto F(\overline{\nu}^{\imath})$ is l.s.c. on $\mathcal{M}(\Pi_\delta)^4$, then 
\[ \liminf_{\lambda \to \infty} \frac{1}{\lambda} \log \mathbb E \exp (\lambda F(L_\lambda[\boldsymbol \tau])) \geq -\inf\limits_{\overline{\nu} \in \mathcal{M}(\mathcal L)} \left\lbrace h(\overline \nu \vert \overline{\mu}) - F(\overline{\nu}[\boldsymbol \tau]) \right\rbrace. \] 
\end{thm}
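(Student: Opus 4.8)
The plan is to obtain both bounds from a single large deviation principle for the empirical measure $L_\lambda=\tfrac1\lambda\sum_{X_i\in X^\lambda}\delta_{X_i}$ together with the two halves of Varadhan's lemma. Since $X^\lambda$ is a Poisson point process of intensity $\lambda\overline\mu$, I would first note that, conditioned on its (Poisson-distributed) number of trajectories, $L_\lambda$ is an empirical measure of i.i.d.\ samples from $\overline\mu/\overline\mu(\mathcal L)$; combining Sanov's theorem (Theorem \ref{nagysanov}) for those samples with the fluctuations of the Poisson count yields an LDP for $L_\lambda$ on $\mathcal M(\mathcal L)$ in the $\tau$-topology, with the good rate function $h(\cdot\mid\overline\mu)$ from \eqref{relatíventrópia} (this is precisely the statement used in \cite{cikk}). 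Writing $\Phi(\overline\nu)=F(\overline\nu[\boldsymbol\tau])$, the quantity to be computed is exactly the Varadhan variational value $\sup_{\overline\nu\in\mathcal M(\mathcal L)}\{\Phi(\overline\nu)-h(\overline\nu\mid\overline\mu)\}=-\inf_{\overline\nu}\{h(\overline\nu\mid\overline\mu)-F(\overline\nu[\boldsymbol\tau])\}$, so it suffices to check the hypotheses of the upper half (Lemma \ref{varadhanupper}) and the lower half (Lemma \ref{varadhanlower}) for $\Phi$, with $\varepsilon=1/\lambda$.

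The integrability requirement is the easy ingredient. Because $F$ is bounded from above by assumption and each $\tau_i$ is bounded, $\Phi$ is bounded above, so $\exp(\lambda\Phi(L_\lambda))$ is uniformly bounded; hence the tail condition \eqref{farkacska} (equivalently \eqref{momentecske}) holds trivially, as for $M$ larger than the upper bound the truncating indicator is identically zero. The whole difficulty is therefore concentrated in the \emph{semicontinuity} of $\Phi$: upper semicontinuity for the $\limsup$-bound, lower semicontinuity for the $\liminf$-bound.

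Establishing this semicontinuity is where I would spend the effort, and I expect it to be the main obstacle. The idea is to factor $\Phi$ through the finite space $\Pi_\delta=W_\delta^{I_\delta}$, on which $\mathcal M(\Pi_\delta)\cong[0,\infty)^{\Pi_\delta}$ is finite-dimensional. Using that $F$ is $\delta$-discretized, one writes $\Phi(\overline\nu)=F\big(((\overline\nu[\boldsymbol\tau])^{\varrho})^{\imath}\big)$, exhibiting $\Phi$ as the composition of $g\colon\overline\nu\mapsto(\overline\nu[\boldsymbol\tau])^{\varrho}\in\mathcal M(\Pi_\delta)^4$ with $F(\cdot^{\imath})$, the latter being u.s.c.\ (resp.\ l.s.c.)\ and increasing on the finite-dimensional cone by hypothesis. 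A monotone semicontinuous function composed with a coordinatewise semicontinuous map is again semicontinuous of the same type (here the monotonicity of $F$ is used to pass from a coordinatewise limit point $\ell\le g(\overline\nu)$ to $F(\ell^\imath)\le F(g(\overline\nu)^\imath)$), so it remains to control each coordinate
\[ \overline\nu\longmapsto(\overline\nu^{\mathrm{up}}[\tau_1])^{\varrho}(\{z\})=\int_{\varrho^{-1}(z)}\tau_1\big(\overline R(x,o,\overline\nu)\big)\,\overline\nu(\d x),\qquad z\in\Pi_\delta, \]
and its analogues for $\tau_2,\tau_3,\tau_4$. Here I would proceed in three stages: the interference $\eta\mapsto\int_W\ell(|x-\eta|)\,\overline\nu(\d x)$ is $\tau$-continuous in $\overline\nu$ because $\ell$ is bounded and continuous on the compact window, whence $\SIR$ and $D=g(\SIR)$ are continuous; the genuinely delicate objects, the relayed quantity $\overline R$ with its essential supremum over relays and the threshold-type, discontinuous weights $\tau_i$, are handled precisely by the assumed one-sided continuity of $\tau_i\circ\imath$ on $[0,\infty)^{I_\delta}$ together with the monotonicity of the $\tau_i$; and passing the resulting semicontinuity of the integrand $(x,\overline\nu)\mapsto\tau_i(\overline R(x,o,\overline\nu))$ through both the integral and the push-forward $(\cdot)^{\varrho}$ is legitimate because in the $\tau$-topology $\overline\nu\mapsto\overline\nu(A)$ is continuous for every measurable $A$. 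This joint control of a variable integrand against a variable measure is the crux of the argument.

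Once $\Phi$ is shown to be u.s.c.\ under the first set of hypotheses and l.s.c.\ under the second, Lemmas \ref{varadhanupper} and \ref{varadhanlower} applied to $(Z_{1/\lambda})=(L_\lambda)$ yield the $\limsup$- and the $\liminf$-bound respectively, and the variational identity recorded in the first paragraph rewrites each of them as the claimed $-\inf_{\overline\nu\in\mathcal M(\mathcal L)}\{h(\overline\nu\mid\overline\mu)-F(\overline\nu[\boldsymbol\tau])\}$, completing both halves of the theorem.
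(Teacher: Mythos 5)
Your overall architecture (an LDP for the empirical measure with rate $h(\cdot\,\vert\,\overline\mu)$, followed by the two halves of Varadhan's lemma, with the integrability condition trivial because $F$ and the $\tau_i$ are bounded above) is the right skeleton, and your reduction to the discretized space $\Pi_\delta$ via the $\delta$-discretized property of $F$ is also the route the paper takes. But there is a genuine gap at the exact point you flag as "the crux" and then dispose of in one clause: the map $\overline\nu\mapsto F(\overline\nu[\boldsymbol\tau])$ is \emph{not} lower semicontinuous under the stated hypotheses, so the $\liminf$-half of the theorem cannot be obtained by a direct application of Lemma \ref{varadhanlower}. The relayed quality of service $\overline R(x,o,\overline\nu)$ involves a $\overline\nu$-essential supremum over relay locations; when the mass of $\overline\nu$ at a site used as a relay tends to zero, that relay disappears in the limit and $\overline R$ drops discontinuously, so $\overline R$ is only l.s.c.\ in $\overline\nu$ (cf.\ Lemma \ref{3.6} and the discussion preceding it). Composing an l.s.c.\ quantity with the \emph{decreasing} functions $\tau_i$ reverses the direction: the integrand $\tau_i(\overline R(x,o,\overline\nu))$ is u.s.c.\ in $\overline\nu$, not l.s.c., no matter how regular $\tau_i\circ\imath$ is. Your monotone-composition argument therefore delivers the upper bound but fails for the lower bound, and the hypotheses of Lemma \ref{varadhanlower} are simply not satisfied by $\Phi$.

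The missing idea is the sprinkling construction (the analogue of Section \ref{sprinkle} and Lemmas \ref{4.1}--\ref{4.2} in the fading setting). One replaces $\overline R$ by the l.s.c.\ approximations $R_\varepsilon$ obtained by damping the relaying term with the factor $\min\{1,\varepsilon^{-1}\overline\nu(\{y\})\}$, applies Varadhan's lower bound to the resulting continuous functional, and then shows—by slightly increasing the Poisson intensity from $\lambda$ to $(1+\varepsilon_1)\lambda$ and conditioning on the event that every quasi-empty discretized site is actually empty—that passing between the true functional and its approximation costs at most $e^{-\sqrt{\varepsilon_0}\lambda}$, which vanishes on the exponential scale as $\varepsilon_0\to0$. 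Without this step (or some substitute for it) the lower bound is unproved. A secondary, more cosmetic issue is that the exact factorization $\Phi(\overline\nu)=F(((\overline\nu[\boldsymbol\tau])^{\varrho})^{\imath})$ does not make $\overline\nu[\boldsymbol\tau]$ itself a function of $\overline\nu^{\varrho}$: the QoS depends on the continuum measure, which is why the paper only obtains the two-sided sandwich of Proposition \ref{2.1} with $(1\pm\varepsilon)$ factors and must then control the resulting entropy perturbations (Corollary \ref{3.12}, Lemma \ref{5.1}) in the limit $\delta\to0$; your sketch silently assumes an exact reduction to the discrete level.
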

As a special case of Theorem \ref{régi 1-es}, the rate of decay of frustration probabilities\index{frustration probabilities!rate function} $\mathbb P(L_\lambda[\boldsymbol \tau_{\mathbf a, \mathbf c}](\mathcal L)>\mathbf b)$ has been computed, where $\boldsymbol \tau_{\mathbf a, \mathbf c}$ was defined in \eqref{kistaufading}. Let us write $[0,\mathbf T)=[0,T )^4$ and $[0,\mathbf c_+)=[0,c_+)^4$.
\begin{cor}
Let $\mathbf a \in [0,\mathbf T)$, $\mathbf b \in \mathbb R^4$ and $\mathbf c \in [0,\mathbf c_+)$. Then we have
\[ \lim_{\lambda \to \infty} \frac{1}{\lambda} \log \mathbb P (L_{\lambda} [\boldsymbol \tau_{\mathbf{c}}](\mathcal{L})>\mathbf b)= - \inf\limits_{\overline{\nu} \in \mathcal{L}:~\overline{\nu}[\boldsymbol \tau_{\mathbf a, \mathbf c}] (\mathcal{L}) > \mathbf b} h(\overline{\nu} \vert \overline{\mu}). \]
\end{cor}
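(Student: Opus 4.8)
The plan is to deduce the corollary from Theorem~\ref{régi 1-es} applied to the explicit pair $F=F_{\mathbf b}$ and $\boldsymbol\tau=\boldsymbol\tau_{\mathbf a,\mathbf c}$, exploiting the identity $\mathbb E\exp(\lambda F_{\mathbf b}(L_\lambda[\boldsymbol\tau_{\mathbf a,\mathbf c}]))=\mathbb P(L_\lambda[\boldsymbol\tau_{\mathbf a,\mathbf c}](\mathcal L)>\mathbf b)$ recorded above. First I would verify the standing hypotheses: each $\tau_{a_i,c_i}$ is bounded, measurable and decreasing, and since $c_i<c_+$ it sends any $\gamma$ with $\gamma_t\geq c_+$ for all $t$ to $0$; and $F_{\mathbf b}$ is increasing, bounded above by $0$, $\delta$-discretized for every $\delta\in\mathbb B$, and (in the nondegenerate range $\mathbf b>0$) maps the vector of zero measures to $-\infty$. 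Since $-F_{\mathbf b}(\overline{\nu}[\boldsymbol\tau])$ equals $0$ when $\overline{\nu}[\boldsymbol\tau](\mathcal L)>\mathbf b$ and $+\infty$ otherwise, the variational quantity $\inf_{\overline{\nu}}\{h(\overline{\nu}\mid\overline{\mu})-F_{\mathbf b}(\overline{\nu}[\boldsymbol\tau])\}$ collapses to $\inf\{h(\overline{\nu}\mid\overline{\mu}):\overline{\nu}[\boldsymbol\tau_{\mathbf a,\mathbf c}](\mathcal L)>\mathbf b\}$, the right-hand side of the corollary.

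For the lower bound I would note that the strict inequalities built into $F_{\mathbf b}$ and $\tau_{a,c}$ render both relevant maps lower semicontinuous. Indeed $\{\overline{\nu}:\overline{\nu}_i(\Pi_\delta)>b_i\ \forall i\}$ is open in $\mathcal M(\Pi_\delta)^4\cong([0,\infty)^{\Pi_\delta})^4$, so $\overline{\nu}\mapsto F_{\mathbf b}(\overline{\nu}^{\imath})$ is l.s.c.; and on $[0,\infty)^{I_\delta}$ the count $z\mapsto\sharp\{t:z_t<c\}$ is a finite sum of indicators of open sets, hence l.s.c., which makes $\tau_{a,c}\circ\imath=\mathds 1\{\delta T\,\sharp\{t:z_t<c\}>a\}$ the indicator of an open set. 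The l.s.c. branch of Theorem~\ref{régi 1-es} then gives directly $\liminf_{\lambda\to\infty}\lambda^{-1}\log\mathbb P(\cdots)\geq-\inf\{h(\overline{\nu}\mid\overline{\mu}):\overline{\nu}[\boldsymbol\tau_{\mathbf a,\mathbf c}](\mathcal L)>\mathbf b\}$.

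The upper bound needs more care, since those same strict inequalities make $F_{\mathbf b}$ and $\tau_{a,c}$ fail to be upper semicontinuous. I would pass to the non-strict majorants $\tau^{\mathrm{usc}}_{a,c}(\gamma)=\mathds 1\{|\{t:\gamma_t\leq c\}|\geq a\}$ and $F^{\mathrm{usc}}_{\mathbf b}$ (defined with $\overline{\nu}_i(\mathcal L)\geq b_i$), which are u.s.c. for exactly the reason the originals were l.s.c. and dominate them pointwise, $\tau_{a,c}\leq\tau^{\mathrm{usc}}_{a,c}$ and $F_{\mathbf b}\leq F^{\mathrm{usc}}_{\mathbf b}$. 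Since $\overline{\nu}\mapsto\overline{\nu}[\tau]$ is monotone in $\tau$ and $F_{\mathbf b}$ is increasing, this yields $F_{\mathbf b}(L_\lambda[\boldsymbol\tau_{\mathbf a,\mathbf c}])\leq F^{\mathrm{usc}}_{\mathbf b}(L_\lambda[\boldsymbol\tau^{\mathrm{usc}}_{\mathbf a,\mathbf c}])$, so the u.s.c. branch of Theorem~\ref{régi 1-es} (applied to $F^{\mathrm{usc}}_{\mathbf b}$ and $\boldsymbol\tau^{\mathrm{usc}}_{\mathbf a,\mathbf c}$, which satisfy the same standing hypotheses) gives $\limsup_{\lambda\to\infty}\lambda^{-1}\log\mathbb P(\cdots)\leq-\inf\{h(\overline{\nu}\mid\overline{\mu}):\overline{\nu}[\boldsymbol\tau^{\mathrm{usc}}_{\mathbf a,\mathbf c}](\mathcal L)\geq\mathbf b\}$.

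It remains to reconcile the two infima, and this is where I expect the main obstacle. Writing $J(\mathbf a,\mathbf b,\mathbf c)=\inf\{h(\overline{\nu}\mid\overline{\mu}):\overline{\nu}[\boldsymbol\tau_{\mathbf a,\mathbf c}](\mathcal L)>\mathbf b\}$ for the strict rate and $\widetilde J$ for the non-strict one, the two bounds read $\liminf\geq-J$ and $\limsup\leq-\widetilde J$ with $\widetilde J\leq J$ automatic, so the limit exists and equals $-J$ precisely when $\widetilde J=J$. From the pointwise comparison $\tau^{\mathrm{usc}}_{a,c}\leq\tau_{a-\varepsilon,c+\varepsilon}$ one gets the sandwich $J(\mathbf a-\varepsilon\mathbf 1,\mathbf b-\varepsilon'\mathbf 1,\mathbf c+\varepsilon\mathbf 1)\leq\widetilde J(\mathbf a,\mathbf b,\mathbf c)\leq J(\mathbf a,\mathbf b,\mathbf c)$, so it suffices to prove that $J$ is continuous in its parameters on the admissible range $\mathbf a\in[0,\mathbf T)$, $\mathbf c\in[0,\mathbf c_+)$ and then let $\varepsilon,\varepsilon'\downarrow 0$. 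The delicate point is exactly this continuity: $J$ is monotone in each argument, but because $\tau$ is an indicator the frustration functionals are discontinuous, so I would establish it by perturbing near-minimizers — scaling a near-optimal $\overline{\nu}$ by $(1+\delta)$, which changes $h(\cdot\mid\overline{\mu})$ continuously while monotonically raising the interference and hence the frustration counts — combined with the Lipschitz continuity of $\ell$ and $g$ to control how the QoS trajectories $\overline R(\cdot,o,\overline{\nu})$ shift, thereby turning non-strict feasibility into strict feasibility at arbitrarily small entropic cost.
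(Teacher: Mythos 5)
Your lower bound is correct and is exactly the paper's route: the strict inequalities make $\overline{\nu}\mapsto F_{\mathbf b}(\overline{\nu}^{\imath})$ and $\tau_{a,c}\circ\imath$ l.s.c., and the l.s.c. branch of Theorem \ref{régi 1-es} applies directly. The upper bound, however, has a genuine gap in the reconciliation step. Your sandwich $J(\mathbf a-\varepsilon,\mathbf b-\varepsilon',\mathbf c+\varepsilon)\leq\widetilde J(\mathbf a,\mathbf b,\mathbf c)\leq J(\mathbf a,\mathbf b,\mathbf c)$ only closes if $J$ is right-continuous in $\mathbf c$ (and left-continuous in $\mathbf a,\mathbf b$) at the given parameters. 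But $J$ is merely monotone in $c$, and the corollary is asserted for \emph{every} $\mathbf c\in[0,\mathbf c_+)$, not only at continuity points; moreover the decreasing family of feasible sets for $J(\mathbf a-\varepsilon,\mathbf b-\varepsilon',\mathbf c+\varepsilon)$ shrinks to the \emph{non-strict} set as $\varepsilon\downarrow0$, so even with goodness of the rate function the limit of these infima is $\widetilde J$ again, not $J$ — the argument is circular. The paper (see the proof of the fading analogue, Proposition \ref{6.1} and Lemma \ref{6.2}) never perturbs $\mathbf a$ or $\mathbf c$ at all. Instead it converts non-strict into strict feasibility by scaling the \emph{measure}: by the strict monotonicity of the QoS in the reference measure (Lemma \ref{3.1}(ii),(iv), equivalently $\SIR(\cdot,\cdot,(1+\varepsilon)\nu)<c\Leftrightarrow\SIR(\cdot,\cdot,\nu)<(1+\varepsilon)c$), a measure with $\overline{\nu}[\boldsymbol\tau^{\mathrm{usc}}](\mathcal L)\geq\mathbf b$ yields, after multiplication by a factor $>1$, one with $\cdot[\boldsymbol\tau](\mathcal L)>\mathbf b$, and the entropic cost of this scaling is controlled \emph{explicitly} by the linear-perturbation formula $h(a\nu\vert\mu)=ah(\nu\vert\mu)+a\log a\,\nu(\mathcal L)+(1-a)\mu(\mathcal L)$ (Lemma \ref{3.11}, Corollary \ref{3.12}), which vanishes as the factor tends to $1$. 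You gesture at this scaling idea in your last sentence, but it is the entire content of the step, and the strictness claims it must deliver ($\geq a$ into $>a$, $\geq b_i$ into $>b_i$) are exactly the delicate points you leave unargued.

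A second gap: when some $b_i=0$ your u.s.c. relaxation is vacuous, since every measure satisfies $\overline{\nu}_i(\mathcal L)\geq0$; then $\widetilde J=0$ and your upper bound carries no information, while $J$ can be strictly positive (cf.\ the classification in Section \ref{kalsszikus}). In addition $F^{\mathrm{usc}}_{\mathbf b}$ then fails the hypothesis of Theorem \ref{régi 1-es} that $F$ map the vector of zero measures to $-\infty$. This is precisely why the paper introduces the worst-QoS functionals $\Phi'$ and their u.s.c. regularizations $\Phi'_{\varepsilon}$ together with the closed sets $C'_i(\mathbf b,\mathbf c,\varepsilon)$: for $b_i=0$ the event is replaced by the nonvacuous closed condition $\pi_i(\Phi'_{\varepsilon}(\mathbf c,\nu))=1$, and the sprinkling event $\mathbf E^{\ast}_{\varepsilon_0}$ guarantees (at subexponential cost, Lemma \ref{3.10}) that occupied sites carry mass at least $\varepsilon_0$, so that the regularization is exact on the relevant configurations. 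Your proposal needs an analogous device to say anything in the $b_i=0$ case.
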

Finally, the paper concludes that probabilities of frustration events that are unlikely w.r.t. the a priori measure $\mu$ decay at an exponential speed.
\begin{cor} \index{frustration probabilities!exponential decay}
Let $\mathbf a \in [0,\mathbf T),~\mathbf b \in \mathbb R^4,~\mathbf c \in [0,\mathbf c+)$. If $((1+\varepsilon)\overline{\mu})[\boldsymbol \tau_{\mathbf a, \mathbf c}] (\mathcal{L}) \leq \mathbf b$ for some $\varepsilon>0$, then
\[ \limsup_{\lambda \to \infty} \frac{1}{\lambda} \log \mathbb P(L_\lambda[\boldsymbol \tau_{\mathbf a,\mathbf c}](\mathcal L) > \mathbf b ) < 0. \]
\end{cor}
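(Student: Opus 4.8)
The plan is to feed the exact large-deviation rate supplied by the preceding Corollary into an argument about the geometry of the rate function, exploiting that the frustration functional is monotone and grows super-linearly under scaling; the buffer $\varepsilon>0$ will be precisely what pushes the reference measure $\overline\mu$ strictly into the ``safe'' region. Write $\Psi(\overline\nu):=\overline\nu[\boldsymbol\tau_{\mathbf a,\mathbf c}](\mathcal L)\in\mathbb R^4$, let $\Psi_i$ be its $i$th coordinate, and set $A:=\lbrace\overline\nu\in\mathcal M(\mathcal L):\Psi(\overline\nu)>\mathbf b\rbrace$. By the preceding Corollary,
\[ \limsup_{\lambda\to\infty}\tfrac1\lambda\log\mathbb P\bigl(L_\lambda[\boldsymbol\tau_{\mathbf a,\mathbf c}](\mathcal L)>\mathbf b\bigr)=-\inf_{\overline\nu\in A}h(\overline\nu\mid\overline\mu), \]
so it suffices to prove $\inf_A h(\cdot\mid\overline\mu)>0$. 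The function $h(\cdot\mid\overline\mu)$ is the rate function appearing in Theorem \ref{régi 1-es}; as in Sanov's theorem (Theorem \ref{nagysanov}) it is good and lower semicontinuous, and its unique zero is $\overline\mu$. Hence, if $\inf_A h=0$, I can pick $\overline\nu_n\in A$ with $h(\overline\nu_n\mid\overline\mu)\to0$; these eventually lie in the compact sublevel set $\lbrace h\le1\rbrace$, so a subsequence converges to some $\overline\nu_\ast$ with $h(\overline\nu_\ast\mid\overline\mu)\le\liminf h(\overline\nu_n\mid\overline\mu)=0$, i.e. $\overline\nu_\ast=\overline\mu\in\overline A$. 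Therefore it is enough to \emph{separate} $\overline\mu$ from $A$, i.e. to show $\overline\mu\notin\overline A$.

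Two structural facts drive the separation. First, $\overline\nu\mapsto\Psi(\overline\nu)$ is non-decreasing for the measure order: enlarging $\overline\nu$ increases the interference, hence lowers $\overline{\SIR}$, and thus $\overline D$ and $\overline R$, and since each $\tau_{a_i,c_i}$ is decreasing this enlarges the frustration sets and their masses. Second, and crucially, $\Psi$ grows super-linearly under scaling: writing the frustration set as $A_i(\overline\nu)=\lbrace x:\tau_{a_i,c_i}(\overline R(x,o,\overline\nu))=1\rbrace$, so that $\Psi_i(\overline\nu)=\overline\nu(A_i(\overline\nu))$, for $s\ge1$ one has $A_i(s\overline\mu)\supseteq A_i(\overline\mu)$ and hence $\Psi_i(s\overline\mu)=s\,\overline\mu(A_i(s\overline\mu))\ge s\,\overline\mu(A_i(\overline\mu))=s\,\Psi_i(\overline\mu)$. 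Taking $s=1+\varepsilon$ and using the buffer hypothesis $\Psi((1+\varepsilon)\overline\mu)\le\mathbf b$ gives, for every coordinate $i$ with $b_i>0$,
\[ \Psi_i(\overline\mu)\le\tfrac{1}{1+\varepsilon}\,\Psi_i((1+\varepsilon)\overline\mu)\le\tfrac{b_i}{1+\varepsilon}<b_i. \]
This strict slack is exactly what $\varepsilon>0$ buys: with $\varepsilon=0$ one would obtain only $\Psi_i(\overline\mu)\le b_i$, and $\overline\mu$ could sit on the boundary $\partial A$.

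Now fix a coordinate $i_0$ with $b_{i_0}>0$. The functional $\overline\nu\mapsto\Psi_{i_0}(\overline\nu)$ is upper semicontinuous in the topology carrying the LDP — this is the semicontinuity of $\overline\nu\mapsto F_{\mathbf b}(\overline\nu^{\imath})$ together with that of $\tau_{a_{i_0},c_{i_0}}\circ\imath$ assumed in Theorem \ref{régi 1-es}, read off on the $\delta$-discretisation. Consequently $\lbrace\overline\nu:\Psi_{i_0}(\overline\nu)<b_{i_0}\rbrace$ is open, contains $\overline\mu$ by the strict inequality just obtained, and is disjoint from $A$ (any $\overline\nu\in A$ satisfies $\Psi_{i_0}(\overline\nu)>b_{i_0}$). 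This open set separates $\overline\mu$ from $A$, so $\overline\mu\notin\overline A$, whence $\inf_A h>0$ and the asserted strict negativity of the $\limsup$ follows.

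I expect the main obstacle to be the borderline case in which no coordinate satisfies $b_i>0$. The hypothesis already forces $\mathbf b\ge0$ (since $\Psi\ge0$), and in this remaining case $\mathbf b=0$ with $\Psi((1+\varepsilon)\overline\mu)=0$, so the scaling inequality produces no strict slack and the neighbourhood argument collapses. Here I would switch to a quantitative estimate: writing $h(\overline\nu\mid\overline\mu)=\int g(f)\,\mathrm{d}\overline\mu$ with $g(t)=t\log t-t+1$ and $f=\mathrm{d}\overline\nu/\mathrm{d}\overline\mu$, and using $g(t)\ge c(\varepsilon)\,(t-(1+\varepsilon))_+$ for all $t$, where $c(\varepsilon)=\inf_{t>1+\varepsilon}g(t)/(t-(1+\varepsilon))>0$, one gets $h(\overline\nu\mid\overline\mu)\ge c(\varepsilon)\int(f-(1+\varepsilon))_+\,\mathrm{d}\overline\mu$; that is, $h$ is bounded below by a constant times the mass by which $\overline\nu$ exceeds $(1+\varepsilon)\overline\mu$. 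It then remains to show that producing \emph{any} frustration (so that $\overline\nu\in A$) forces this excess mass to be bounded below by a positive constant, which is the genuinely delicate point: it must draw on the Lipschitz continuity of $\ell$ and $g$ to control the modulus of continuity of $\Psi$ in $\overline\nu$, despite the indicators $\tau_{a,c}$ themselves being discontinuous.
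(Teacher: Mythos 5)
Your overall skeleton --- rewrite the $\limsup$ as $-\inf_A h(\cdot\vert\overline\mu)$ with $A=\lbrace\overline\nu:\overline\nu[\boldsymbol\tau_{\mathbf a,\mathbf c}](\mathcal L)>\mathbf b\rbrace$, extract a weakly convergent minimizing subsequence via goodness and lower semicontinuity of the relative entropy, identify its limit as $\overline\mu$, and contradict the buffer hypothesis through monotonicity under the scaling $\overline\nu\mapsto(1+\varepsilon)\overline\nu$ --- is the same skeleton as the proof given in this thesis for the analogous Corollary \ref{1.3} (Section \ref{egykettőegyhárom}), and your inequality $\Psi_i((1+\varepsilon)\overline\mu)\ge(1+\varepsilon)\Psi_i(\overline\mu)$ is a correct packaging of the monotonicity in Lemma \ref{3.1}. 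There are, however, two genuine gaps.

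The step on which your separation of $\overline\mu$ from $\overline A$ hinges --- that $\overline\nu\mapsto\Psi_{i_0}(\overline\nu)$ is upper semicontinuous, so that $\lbrace\Psi_{i_0}<b_{i_0}\rbrace$ is an open neighbourhood of $\overline\mu$ disjoint from $A$ --- is false for the functionals actually appearing in the corollary. The maps $\tau_{a,c}$ are indicators of the \emph{open} QoS events $\lbrace \gamma:\int_0^T\mathds 1\lbrace\gamma_t<c\rbrace\,\d t>a\rbrace$ and are therefore lower, not upper, semicontinuous, so $\overline\nu\mapsto F_{\mathbf b}(\overline\nu^{\imath})$ and $\boldsymbol\tau_{\mathbf a,\mathbf c}\circ\imath$ satisfy only the l.s.c. alternative of Theorem \ref{régi 1-es}; this is stated explicitly at the start of Section \ref{egykettőegyhárom}. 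Concretely, a sequence $\overline\nu_k\to\overline\mu$ can push a mass larger than $b_{i_0}$ of users to QoS values just \emph{below} $c_{i_0}$ while under $\overline\mu$ those users sit at or above $c_{i_0}$, so $\limsup_k\Psi_{i_0}(\overline\nu_k)$ may exceed $\Psi_{i_0}(\overline\mu)$ and your open set need not be open. The paper's proof repairs exactly this point: it replaces $\tau_c$ by the u.s.c. version $\tau^{\text{usc}}_c=\mathds 1\lbrace\cdot\le c\rbrace$, uses the \emph{strict} monotonicity of Lemma \ref{3.1}(ii),(iv) to absorb the change from $<$ to $\le$ inside the $\varepsilon$-buffer, passes to the discretized space where the needed upper semicontinuity of $\nu\mapsto G(\nu,\boldsymbol\tau^{\text{usc}}_{\mathbf c})$ actually holds, and invokes Proposition \ref{6.1} (whose proof requires the sprinkling construction of Section \ref{sprinkle}) to get the upper large-deviation bound for the resulting closed event. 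Your proposal supplies no substitute for this machinery.

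Second, you correctly observe that your neighbourhood argument collapses when $b_i=0$, but the replacement you sketch (bounding $h$ below by the excess mass over $(1+\varepsilon)\overline\mu$) is abandoned at exactly the point you call ``genuinely delicate'', and it is not clear it can be closed: weak convergence of a minimizing sequence to $\overline\mu$ does not force a positive lower bound on that excess mass. The paper treats the zero coordinates by a separate device, the regularized worst-QoS indicator $\Phi'_{\kappa_\delta}$, which takes values in a discrete set and is u.s.c. on the discretized measure space; this is an essential part of the proof rather than an edge case, since $b_i=0$ is the case governing the existence of any frustrated user at all.
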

\chapter{Incorporating i.i.d. fading} \label{eleje}
\section{Description of the model with random fadings} \label{Anfang}
In the following, we work out a modified version of the model described in Section \ref{explanation}, adding \emph{random fadings} to the model, interpreted as \emph{loudnesses} of the users.. We introduce a substantial amount of new notation\footnote{ This notation will be consistent with the one introduced in Section \ref{explanation}, with the following quantities redefined. The probability measure $\mathbb P$ and the expectation operator $\mathbb E$ will correspond to the fading variables, not to the Poisson point process $X^\lambda$. Furthermore, random fadings increase the dimension of the model by one. Therefore, the functions $\SIR$, $D$, $\Gamma$ and $R$ will be redefined, having new domains. Moreover, in Section \ref{discretization} we introduce a new notation instead of $\nu [\tau]$ from Section \ref{explanation} for the rescaled measures, in order to emphasize that they depend on the same measure in two ways. Finally, it will be convenient to write $\varrho_1$ instead of $\varrho$ for the spatial discretization operator, cf. Section \ref{discretization}.}, which we summarize in the Appendix of the thesis, in Section \ref{indexofnotations}. We omit mobility from the original model before extending it with random fadings, see the reason for it in the beginning of Section \ref{explanation}. 
\index{mobility of users} 

Hence, we let $X^\lambda$ be a Poisson point process on $W$, with intensity $\lambda \mu$, where $W=[-r,r]^d$ for some integer $r \geq 1$ and $\mu$ is a finite Borel measure on $W$ and $\lambda>0$. We assume that $\mu$ is absolutely continuous with respect to the Lebesgue measure restricted to $W$. However, similarly to the fading-free case, this does not imply that the support of $X^\lambda$ is equal to the whole $W$, it can be as well any closed subset of $W$ with non-empty interior. In particular, for any bounded open subset $U \subset \mathbb R^d$, $\overline{U}$ is a possible support of $X^\lambda$, since such closures are compact and contained in $[-r,r]^d$ for $r \in \mathbb N$ sufficiently large. Similarly to the fading-free setting, we assume that the \emph{path-loss function}\index{path-loss!function} is a Lipschitz continuous function $\ell: [0,\infty) \to (0,\infty)$ with parameter $J_2 \geq 0$. Thus, the propagation of signal strength depends only on the Euclidean distance of points in $W$. Therefore it is reasonable to assume that the straight line segment between two users $X_i,X_j \in X^\lambda$ is a possible route to transmit a message between them. In other words, even if the support of $X^\lambda$, which is the set of possible user locations, is not convex, we may assume that messages can be transferred along straight lines in the whole convex hull of the support of $X^\lambda$. This convex hull is contained in $W$ by construction.

For each user $X_i \in X^\lambda$, we have a real-valued, positive fading variable $F_{X_i}$, which represents the loudness of the user $x$, in such a way that knowing the realization of $X^\lambda$, the fading random variables\index{random fading} $\lbrace F_{X_i} \vert~ X_i \in X^\lambda \rbrace$ are independent and identically distributed (i.i.d.). That is, according to Section \ref{kingmanke}, $\mathbf X^\lambda=\lbrace (X_i, F_{X_i}) \vert~X_i \in X^\lambda \rbrace$ is a marked Poisson point process\index{Poisson point process!marked} with points in $W$ and marks in $(0,\infty)$. By the Marking Theorem\index{Marking Theorem} (Theorem \ref{marking}), $\mathbf X^\lambda$ is a Poisson point process on the enlarged state space $\mathbf W=W \times (0,\infty)$. 

In the scenarios we consider in this thesis, the distribution of the fading variables does not depend on $\lambda$. Thus, we write $F_0$\index{fading!variable} for an arbitrary random variable that equals $F_{X_i}$ in distribution, for any point $X_i \in X^\lambda$ and for any $\lambda>0$. If $F_0$ is defined on the probability space $(\Omega,\mathcal{F},\mathbb{P})$, let $\zeta=\mathbb P \circ F_0^{-1}$ denote the distribution of $F_0$. Using the notation $L_{\lambda}=\frac{1}{\lambda} \sum_{X_i \in X^\lambda} \delta_{X_i}$ for the empirical measure from Section \ref{explanation}, and noting that now $L_{\lambda}$ is a random element of the set of finite measures $\mathcal{M}(W)$ instead of $\mathcal{M}(\mathcal{L})$ since we have no mobility of users, we have that the empirical measure of the marked Poisson point process, given as 
\begin{equation} \label{vastagL}\index{empirical measure} \mathbf L_{\lambda}= \frac{1}{\lambda} \sum_{X_i \in X^\lambda} \delta_{(X_i,F_{X_i})} \end{equation} is a random element of the set of Borel measures of $\mathbf W$. The randomness comes from two sources, one is the realization of the Poisson point process $X^\lambda$, the other is the realization of the fading variables $\lbrace F_{X_i} \vert~X_i \in X^\lambda \rbrace$, knowing $X^\lambda$. According to the Marking Theorem\index{probability kernel}, for all $x \in W$ the conditional distribution of $F_x$ knowing $x \in X^\lambda$ is given by the probability kernel \[ p(x,\cdot)=\zeta(\cdot),~x \in X^\lambda. \] In Section \ref{kernel} we generalize this setting to cases when $p(x,\cdot)$ also depends on $x \in W$.


In order to determine large deviation properties for the marked Poisson point process $\mathbf X^\lambda$ in the high-density limit (i.e., as $\lambda \to \infty$)\index{high-density limit}, we define the measure 
\begin{equation} \label{újmérték} \mu' (\d x, \d u)=\mu(\d x)p(x,\d u)=\mu(\d x) \zeta(\d u), \quad x \in W,~u \in (0,\infty) \end{equation}
which is the intensity of the marked Poisson point process $\lambda \mathbf L^\lambda$, according to Section \ref{kingmanke}. 

We use the following notations for probability measures. $\mathbb E$ means expectation w.r.t. $\mathbb P$, the probability measure which governs $F_0$. Moreover, 
$\mathbb E_2$ means expectation w.r.t. the product probability measure $\mathbb P_2$ given by the marked Poisson point process. Thus, the expression "$\mathbb P_2$-almost surely" is equivalent to "$\tensor\limits_{X_i \in X^\lambda(\omega)} p(X_i,\cdot)$-almost surely for all \emph{fixed realization} of the Poisson point process $X^\lambda$ on $W$". Since both the random countable set $X^\lambda$ is finite for $\mathbb P_2$-almost all realizations of $X^\lambda$ and $F_0$ takes values in $(0,\infty)$, we have that $\mathbf L_{\lambda}$ is an element of $\mathcal{M}(\mathbf W)$ for these realizations of $X^\lambda$, independently of the exact values of $\lbrace F_{X_i} \vert~X_i \in X^\lambda \rbrace$. Note that this notion of $\mathbb P_2$ is in accordance with the separable construction\index{Poisson point process!marked!separable construction} of a marked Poisson process with i.i.d. marks from Section \ref{kingmanke}. In particular, for $\lambda>0$, $\mathbb P_2 \circ (\mathbf X^\lambda)^{-1}$ is the law of the Poisson point process $\mathbf X^\lambda$ on $\mathcal{M}(\mathbf W)$, which endowed with the metric generated by the weak topology is a metric space, hence also a regular topological space. Thus, e.g. Varadhan's lemmas from Section \ref{ldpcske} can be applied to functionals on this space with suitable semicontinuity and integrability properties, in particular to suitable functionals of $\mathbf L_\lambda$.

Furthermore, we introduce the notations $F_{\min}=\mathrm{ess~inf} (F_0) \in [0,\infty)$, $F_{\max}=\mathrm{ess~sup}(F_0) \in (0,\infty]$. Now one can easily see that if $0<\Fmin \leq \Fmax <\infty$, then the sum of the fading variables is $\mathbb P_2$-almost surely finite. In order to be able to generalize the arguments coming from the fading-free setting, we will have to assume that $0<\Fmin \leq \Fmax<\infty$.

Having defined the fading variables, he \emph{random path-loss}\index{path-loss!random} is given by \[ L((X_i,F_{X_i}), x)=\ell(\vert X_i-x) \vert)F_{X_i}\text{,~ for }X_i \in X^\lambda,~x \in W. \]

We introduce the following simplified notation. If $X$ is a Hausdorff topological space, $\nu \in \mathcal{M}(X)$ and $f: X \to \mathbb R$ is measurable, then we write 
\begin{equation} \label{shorthand} \nu(f(\cdot)) = \int_X f(x) \nu(\d x). \end{equation}
Using this notation, in the spirit of Section \ref{explanation}, in the new model the SIR (signal-to-interference ratio) of the transmitter $(X_i,F_{X_i}) \in \mathbf X^\lambda \subset \mathbf W$, measured at the same time at the receiver $x \in W$ can be given as
\begin{equation} \label{sir}\index{SIR}
\SIR_\lambda((X_i,F_{X_i}),~x,~\mathbf L_{\lambda})=\frac{\ell(\vert X_i-x \vert )F_{X_i}}{\sum_{X_j \in X^\lambda}\ell(\vert X_j-x \vert) F_{X_j}}=\frac{L((X_i,F_{X_i}), x)}{\lambda \mathbf L_{\lambda}(L(\cdot, x))}.
\end{equation}
where the $\SIR$ depends on the fading variables through the points that they are attached to. The expression in the denominator of \eqref{sir}, denoted by $\mathcal{I}_\lambda$, is called the \emph{interference}\index{interference}\index{total received power}\footnote{ \label{STIR}\index{STIR}Since the signal of interest is not subtracted from the sum of the signal strengths of all users in the denominator, following the conventional notation (cf. e.g. \cite[p.~2--3]{KB}) we should call the quotient in \eqref{sir} STIR (i.e., signal-to-total received power ratio) instead of SIR, the quantity in the denominator \emph{total received power}, and the difference between the denominator and the numerator would be referred to as interference. However, in the limit as $\lambda \to \infty$, it makes no difference whether we include the signal of interest in the denominator or not, see \cite[Section 1]{cikk}. For the same reason, our model does not include noise. We will always use the notation of \cite{cikk}, writing SIR instead of STIR.}, see e.g. \cite{BKK} for a model that includes fading variables but where the users are static. One can easily see that the quantities $\SIR_\lambda((X_i,F_{X_i}),~x,~\mathbf L_{\lambda})$ ($X_i \neq x$) are all well-defined and positive for all $X_i \in X^\lambda$, $x \in W$, by the positivity of $\ell$ and $F_0$. Let us use the notations $\ell_{\min}=\min\limits_{\xi,\eta \in W} \ell(\vert \xi-\eta \vert)$, $\ell_{\max}=\max\limits_{\xi,\eta\in W} \ell(\vert \xi-\eta \vert)$ from Section \ref{explanation}, then by continuity of the positive function $\ell$ on the compact set $W$, we have that $0 < \ellmin \leq \ell_{\max}<\infty$. If $\mathbb E [F_0]<\infty$, it is immediate by the finiteness of $\mu(W)$ that for almost all realizations of $X^\lambda$, knowing the realization the interference has finite expectation. If even $0<\Fmin \leq \Fmax <\infty$, then even for every realization of $X^\lambda$ such that $1 \leq X^\lambda(W) < \infty$, i.e. when there exists a $\SIR_\lambda$ quantity in the system to encounter, we have that the following inequalities hold $\mathbb P_2$-almost surely
\begin{multline} \label{korlátka} X^\lambda(W) \ell_{\min} \Fmin \leq \mathcal{I}_\lambda \leq X^\lambda(W) \ell_{\max} \Fmax;~~\ell_{\min} \Fmin \leq L((X_i,F_{X_i}),\eta) \leq \ell_{max} \Fmax, \end{multline}
the latter one for all $X_i \in X^\lambda$ and $\eta \in W$, and this shows that for all $X_i,~X_j\in X^\lambda$ ($i \neq j$), $\SIR_\lambda((X_i,F_{X_i}),~X_j,~\mathbf{L}_{\lambda})$ is $\mathbb P_2$-almost surely bounded away from 0 (i.e., bounded from below by some $\varepsilon>0$). Clearly, since the denominator in \eqref{sir} is a sum of positive terms, and one of these equals the numerator, the $\SIR_\lambda$ takes values in $(0,1]$ for all possible pairs of points if $0 < X^\lambda(W) < \infty$, and on the $\mathbb P_2$ nullset $\lbrace \vert X^\lambda \vert = \infty \rbrace$ we can set all the $\SIR$ quantities to be equal to 0. Note that in \eqref{sir} the transmitter has to be an element of the Poisson point process of users, since it has to exhibit a fading value, but the receiver can be any point of $W$.

As in Section \ref{explanation}, we say that a transmission from $(X_i, F_{X_i}) \in \mathbf X^\lambda$ to $x \in W$ is useful if and only if\index{connected and disconnected users} $\SIR_\lambda((X_i,F_{X_i}),~x,~\mathbf L_{\lambda}) \geq \rho$, where usually we consider $\rho=\rho_0 \lambda^{-1}$ for a positive constant $\rho_0$. If that is the case, then usefulness of this transmission is equivalent to $\SIR((X_i,F_{X_i}),~x,~\mathbf L_{\lambda}) \geq \rho_0$, where we use the definition
\begin{equation} \label{sirkán} \SIR((X_i,F_{X_i}),~x,~\mathbf L_{\lambda})=\lambda \SIR_\lambda((X_i,F_{X_i}),~x,~\mathbf L_{\lambda})=\frac{\ell(\vert X_i-x \vert )F_{X_i}}{\frac{1}{\lambda} \sum_{X_j \in X^\lambda}\ell(\vert X_j-x \vert) F_{X_j}}. \end{equation}


Now we extend the definition of SIR to a deterministic setting, depending neither on the Poisson random measure $\lambda L_{\lambda}$ nor on the fading variables. Instead, we consider the points of $W$ with various deterministic loudnesses. Our large deviation results for frustration probabilities will use this general definition of SIR, similarly the main results of \cite[Section 1.2]{cikk} that we described in Section \ref{explanation}.

Let us introduce the notation \[ \nu(\ell(\vert \cdot-\eta \vert\centerdot):=\int\limits_{\mathbf W} \ell(\vert x-\eta \vert) r \nu(\mathrm{d} r,\mathrm{d}x)\] for $\nu \in \mathcal{M}(W)$. The SIR between transmitter $(\xi,s) \in \mathbf W $ and receiver $(\eta,u) \in \mathbf W$ w.r.t. an arbitrary finite measure $\nu \in \mathcal{M}(\mathbf W)$ is defined as\index{SIR!w.r.t. arbitrary measure}
\begin{equation} \label{végretudjukmiasir} \SIR((\xi,s),~(\eta,u),~\nu)=\frac{\ell (\vert \xi-\eta \vert) s}{\int\limits_{\mathbf{W}} \ell(\vert x-\eta \vert) ~r ~\nu(\mathrm{d} r,\mathrm{d}x)}=\frac{\ell (\vert \xi-\eta \vert) s}{\nu(\ell(\vert \cdot-\eta \vert )\centerdot)}, \end{equation} where on the r.h.s. we used the short hand notation \eqref{shorthand} for an integration in two variables. Since $\ell$ is positive and $W$ is compact, $W \times W \to \mathbb R$, $(x,y) \mapsto \ell(|x-y|)$ is bounded away from 0. Thus, if $\nu(\mathbf W)>0$, then both the numerator and the denominator of $\SIR$ in \eqref{végretudjukmiasir} are strictly positive, and thus the quotient is defined as an element of $[0,\infty)$ if we let it be equal to 0 if the denominator is infinite. Moreover, if 
\begin{equation} \label{elegy} \int\limits_{W \times [\Fmin, \Fmax]} u \nu(\mathrm{d} x,\mathrm{d}u)<\infty, \end{equation} then
$\SIR((\xi,s),~(\eta,u),~\nu)$ is strictly positive for all $(\xi,s),(\eta,u) \in \mathbf W$. However, even if the case $\Finite$, it is in general not true that this SIR quantity is an element of $(0,1]$ for all $(\xi,s),~(\eta,u) \in \mathbf W$, which can be seen e.g. in the setting described in Section \ref{I'm a simulant}. The notion \eqref{végretudjukmiasir} of $\SIR$ generalizes \eqref{sirkán}, which can be now written as a random variable in accordance with \eqref{végretudjukmiasir} as
\begin{equation} \label{tessékhátnemlátodhogyezisasir} \SIR((X_i,F_{X_i}),~x,~\mathbf{L}_{\lambda})=\SIR((X_i,F_{X_i}),~(x,u),~\mathbf{L}_{\lambda})) \end{equation}
for any $u \in (0,\infty)$, in particular also for $u=F_{x}$ if $x \in X^\lambda$. In this sense, the condition \eqref{elegy} is a generalization of the condition $\mathbb E[F_0]<\infty$ for satisfying \eqref{korlátka}. 

The characteristics of the quality of service (QoS)\index{quality of service (QoS)} can be defined analogously to \cite[Section 1.1]{cikk}. More precisely, we write $D((\xi,s),~(\eta,u),~\nu)=g(\SIR(\xi,s),~(\eta,u),~\nu))$ for the QoS of the direct link between $(\xi,s)$ and $(\eta,u)$, for a general $\nu \in \mathcal{M}(\mathbf W)$, where $g: [0,\infty) \to [0,\infty)$ is a Lipschitz continuous function, strictly increasing on $[0,\tilde{\varrho}_+)$ and constant with value $\tilde{c}_+$ on $[\tilde{\varrho}_+,\infty)$, for some $\tilde{c}_+,~\tilde{\varrho}_+ ~>0$. About possible choices of $g$, see Section \ref{explanation}. Furthermore, we set $D((\xi,s),~(\eta,u),~\nu)=\tilde{c}_+$ if $\nu(\mathbf W)=0$. In particular, one can use \eqref{tessékhátnemlátodhogyezisasir} for defining $D((X_i,F_{X_i}),~x,~\mathbf L_{\lambda})=D((X_i,F_{X_i}),~(x,u),~\mathbf L_{\lambda})$ for $X_i \in X^\lambda$, $x \in W$ and for an arbitrary fading value $u \in (0,\infty)$. That is, $D((X_i,F_{X_i}),~x,~\mathbf L_{\lambda})=g(\SIR((X_i,F_{X_i}),~x,~\mathbf L_{\lambda}))$ if $L_\lambda(W) >0$ and $D((X_i,F_{X_i}),~x,~\mathbf L_{\lambda})=\tilde{c}_+$ otherwise. Now one can easily see that in the case $\nu=\mathbf L_{\lambda}$ and $0<\Fmin\leq\Fmax<\infty$, we have 
\begin{equation} \label{kisbecslés} D((X_i,F_{X_i}),~ x,~ \mathbf L_{\lambda})=\tilde{c}_+ \quad \text{ if } L_{\lambda}(W) \leq \beta'_0=\min \lbrace 1, \frac{\ell_{\min} F_{\min}}{\tilde{\varrho}_+~ \ell_{\max} \Fmax} \rbrace. \end{equation} We also define 
\begin{equation} \label{gamma} \Gamma((X_i,~F_{X_i}),~(X_j, F_{X_j}),~x,~\mathbf L_{\lambda})=\min \lbrace D((X_i,~F_{X_i}),~X_j,~ \mathbf L_{\lambda}),~D((X_j, F_{X_j}),~x,~ \mathbf L_{\lambda}) \rbrace \end{equation} for the QoS of the connection from $X_i \in X^\lambda$ to $x \in W$ when routing via the relay $X_j \in X^\lambda$. 

With our definition of SIR, $D$ and $\Gamma$, in the following we set up the notations for relayed communication, analogously to \cite[p.~4]{cikk}, with fading variables included. In the uplink scenario\index{means of communication}\index{means of communication!uplink!direct}, messages are sent out from $X_i \in X^\lambda$ to the base station\index{base station}\index{origin, see \emph{base station}}, which is situated at the origin $o$ of $\mathbb R^d$, either directly or routing via a relay $X_j \in X^\lambda$, under an optimal relay decision, thus the QoS for the relayed uplink communication can be written as\index{means of communication!uplink!relayed}
\[ R((X_i,F_{X_i}),~o,~ \mathbf L_{\lambda})=\max \lbrace D((X_{i},F_{X_i}),~o,~\mathbf L_{\lambda}), \max\limits_{X_j \in X^\lambda} \Gamma((X_{i},F_{X_i}),~(X_{j},F_{X_j}),~o,~\mathbf L_{\lambda}) \rbrace. \] In order to define this quantity, it has not been necessary to have a loudness value $F_o$ at the origin (we recall that the receiver does not have to be a point of $X^\lambda$ in \eqref{sir}). However, in case of downlink communication\index{means of communication!downlink!direct}\index{means of communication!downlink!relayed}, when the origin sends a message to $X_i \in X^\lambda$, either directly or routing via a relay $X_j \in X^\lambda$, the situation is different. Then we intend to introduce the formula \eqref{downlinke} below, after defining $F_o$\index{fading!of the base station}. In the simplest scenario described by Assumption \ref{szamár}, we will assume that $\mathbb E [F_0]< \infty$ and we set $F_o$ to be a constant that is mapped to itself by all triadic discretizations that are defined in Section \ref{discretization}. These are analogues of the discretizations from Section \ref{explanation} extended by the extra fading dimension. About how the choice of the deterministic value $F_o$ influences the behaviour of the system, see Section \ref{Fernsehturm}. 
A more complex scenario is the one of Assumption \ref{csacsi}, where we extend the probability space by a random variable $F_o$, see also Section \ref{Fernsehturm} about this.

If we have defined $F_o$ (deterministic or random), then, in case of downlink communication, the QoS can be expressed as
\begin{equation} \label{downlinke} R((o,F_o),~X_i,~\mathbf L_{\lambda,t})=\max \lbrace D(o,~(X_i,F_{X_i})~\mathbf L_\lambda), \max\limits_{X_j \in X^\lambda} \Gamma((o,F_o),~(X_j,F_{X_j}),~X_i),~\mathbf L_{\lambda})  \rbrace. \end{equation}
Also, we can extend the definition of $R$ to an arbitrary $\nu \in \mathcal{M}(\mathbf{W})$ by setting
\[
R((\xi,s),~(\eta,u),~\nu)=\max \lbrace D((\xi,s),~(\eta,u),~\nu),~\nu\text{-}\esssup\limits_{(\sigma,v) \in \mathbf{W}} \Gamma((\xi,s),~(\sigma,v),~(\eta,u),~\nu) \rbrace,
\]
where $\nu\text{-}\esssup$ means essential supremum w.r.t. $\nu$. Note that $R$ is finite for all transmitters and receivers in $\mathbf W$ because $g \vert_{[\tilde{\varrho}_+,\infty)} \equiv \tilde{c}_+ < \infty$. 
Finally, we mention that $o$ need not be the origin of $\mathbb R^d$, any element of $W$ can serve as a location for the base station\index{base station}. We will still stick to $o$ being the origin, similarly to \cite{cikk}, because this simplifies the notation. The results of Section \ref{kijelentés} can easily be generalized to the case when $o$ is not the origin. However, in some special settings of Chapter \ref{effect}, we use radial symmetry of the intensity measure, where it is required that the base station is located in the origin.

\section{Frustration events and the discretized setting} \label{discretization}
In this section, we follow \cite[Section 1.2]{cikk}. As in that scenario, point processes of the users are frustrated\index{frustration events} if they experience a bad QoS. For a bounded and measurable function $\tau:\mathcal [0,\infty)	\to [0,\infty)$, we define the rescaled random measure associated to the point process of users as
\[
G(\mathbf L_{\lambda},~\tau,~{\mathrm{up}})=\frac{1}{\lambda} \sum\limits_{X_j \in X^\lambda} \delta_{X_j} \tau(R(X_j,~o,~\mathbf{L}_\lambda)).
\]
In particular, $G(\mathbf L_{\lambda},~\tau,~{\mathrm{up}}) \in \mathcal{M}(\mathbf W)$. Also, for a general $\nu \in \mathcal{M}(\mathbf W)$ we define $G(\nu,~\tau,~{\mathrm{up}})$ as an element of $\mathcal{M}(\mathbf W)$ via
\begin{equation} \label{tautológia}
\frac{\mathrm{d} G(\nu,~\tau,~{\mathrm{up}})}{\mathrm{d} \nu}(x,s)=\tau(R((x,s),~(o,F_o),~\nu)).
\end{equation}
In other words, this means
\[ G(\nu,~\tau,~{\mathrm{up}})(\mathbf A)=\int_{\mathbf A}\tau(R((x,s),~(o,F_o),~\nu)) ~\nu(\d x, \d s) \]
for all $\nu$ measurable sets $\mathbf A$.

The property that $\nu$ appears both in the integrand and in the integrator measure on the right hand side may lead to discontinuities of the function $\nu \mapsto G(\nu,~\tau,~{\mathrm{up}})$, see Section \ref{pajti}.

\cite[Section 1.2]{cikk} indicates that when many users are connected to the base station $o$ via the same relay, then communication on full bandwith cannot be guaranteed\label{bandwidth}. In other words, the system may suffer from a small throughput even if many users are connected. This observation is also valid if users have random fadings. Hence, it is important to consider the random measure with respect to the users who have bad QoS w.r.t. direct communication with the base station, which we define as
\[
G(\mathbf L_{\lambda},~\tau,~{\text{up-dir}})=\frac{1}{\lambda} \sum\limits_{X_j \in X^\lambda} \delta_{X_j} \tau(D(X_j,~o,~\mathbf{L}_\lambda)).
\]
Also for ${\nu} \in \mathbf W$, we define $G(\nu,~\tau,~{\text{up-dir}})$ via
\[ \frac{\mathrm{d} G(\nu,~\tau,~{\text{up-dir}})}{\mathrm{d} \nu}(x,s)=\tau(D((x,s),~(o,F_o),~\nu)). \]
For the downlink we define 
\begin{equation} \label{do}
\frac{\mathrm{d} G(\nu,~\tau,~{\text{do}})}{\mathrm{d} \nu}(x,s)=\tau(R((o,F_o),~(x,s),~\nu)),
\end{equation} 
and $G(\nu,~\tau,~\text{do--dir})$ analogously. We define the vector of the four above quantities as
\[
G(\nu,~\boldsymbol \tau) =(G(\nu,~\tau_1,~{\text{up}}),~G(\nu,~\tau_2,~{\text{up-dir}}), G(\nu,~\tau_3,~{\text{do}}), G(\nu,~\tau_4,~{\text{do-dir}}) ).
\]
where $\boldsymbol \tau=(\tau_1,\tau_2,\tau_3,\tau_4)$.

Analogously the fading-free setting, we are interested in random variables $F(G(\mathbf L_\lambda,~ \boldsymbol \tau))$, where $F: \mathbf W^4 \to [-\infty, \infty)$ and $\tau_i:[0,\infty) \to [0,\infty)$, $i=1,\ldots,4$, have to satisfy some appropriate monotonicity conditions. That is, $\tau_i$ is assumed to be decreasing, and $F$ is assumed to be increasing in the sense that for all $\nu,~\nu' \in \mathcal{M}(\mathbf W)$ with $\nu \leq \nu'$ we have $F(\nu) \leq F(\nu')$. Here, as in the fading-free case, $\nu \leq \nu'$ means that $\nu(A) \leq \nu'(A)$ for all measurable $A \subseteq \mathbf W$. We also write $\nu < \nu'$ if $\nu \leq \nu'$ and $\nu \neq \nu'$.

The standard examples for such $F$ and $\tau$ are also analogous to the fading-free setting. We define the measurable functions $F_{\mathbf b}: \mathcal{M}(\mathbf W) \to [-\infty,\infty)$,
\begin{equation} \label{Ffading} 
(\nu_i)_{i \in \lbrace 1,\ldots,4 \rbrace} \mapsto \begin{cases} 0 \quad \quad \text{if } \nu_i(\mathbf W) > b_i, \forall i \in \lbrace 1,\ldots,4\rbrace \\ -\infty \> \text{ otherwise} \end{cases}
\end{equation}
for some $\mathbf b \in \mathbb R^4$ and $\tau_{c}: [0,\infty) \to [0,\infty)$,
\begin{equation} \label{taufading}
\gamma \mapsto \begin{cases} 1 \quad \> \text{if } \gamma<c, \\ 0 \quad \> \text{otherwise}, \end{cases}
\end{equation}
i.e., $\tau_c(x)=\mathds 1 {\lbrace x<c\rbrace}$, and thus \eqref{Ffading} is the mobility-free analogue of \eqref{kisFfading} with random fadings, while \eqref{taufading} is the analogue of \eqref{kistaufading}.

Then, if $\mathbb E [F_0]<\infty$, then for $\boldsymbol \tau_{\mathbf{c}}=(\tau_{c_i})_{i=1,\ldots,4}$ we have
\begin{equation} \label{másodikvégesvárhatóérték} \mathbb E_2 \exp(\lambda F_\mathbf b (G(\mathbf L_\lambda,~ \boldsymbol{\tau_{c}}))) = \mathbb P_2 (G(\mathbf L_\lambda,~ \boldsymbol \tau_{\mathbf c})(\mathbf W) > \mathbf b), \end{equation}
where, as in Section \ref{explanation}, we write $\mathbf a<\mathbf b$ for vectors $\mathbf a=(a_1,\ldots,a_4),~\mathbf b=(b_1,\ldots,b_4) \in \mathbb R^4$ if $a_i<b_i,~\forall i=1,\ldots,4$. Thus \eqref{másodikvégesvárhatóérték} describes the probability that the average number of users who experience a QoS of at most $c_i$ is more than $\lambda b_i$, for all $i \in \lbrace 1,\ldots,4 \rbrace$. 

In the following, we set up the notation for discretizing $\mathbf W$. We use the notation $\mathbb B=\lbrace 3^{-m}: ~m \geq 1 \rbrace$ from Section \ref{explanation}. Let us assume that the fading value $F_o$ of the base station $o$ is fixed. Analogously to the fading-free setting, we choose the triadic discretization to ensure that the origin $(o,F_o)$ is a center of a sub-cube and $\mathbf W$ is an union of sub-cubes of the form \[ \Lambda_\delta(\upsilon,s)=(\upsilon,s) + \left([-\delta r, \delta r]^d \times \left[ -\delta (\Fmax-\Fmin), \delta(\Fmax-\Fmin) \right] \right), \] 
with $\upsilon \in \delta 2r \mathbb Z^d$ and $s \in F_o+\delta 2r \mathbb Z$. To make these quantities well-defined, we will assume the following throughout this chapter:
\begin{ass} \label{szamár}\index{Assumption \ref{szamár} (i.i.d. fadings)}
It holds that $\Finite$, and the origin is defined as $(o,F_o)$, where $F_o=\frac{\Fmin+\Fmax}{2}$. Further we write $\mathbf W=W \times [\Fmin, \Fmax]$ (instead of $\mathbf W=W \times (0,\infty)$).
\end{ass}
The latter sentence of this assumption will be used for making the support of the non-random measures on $\nu \in \mathcal M(\mathbf W)$ bounded, in order to be able to discretize these measures as well, so as to obtain a \emph{finite} discretized space-fading landscape. Also, this new notion of $\mathbf W$ simplifies the notation (it makes it possible to omit pathological considerations with $\nu$-nullsets).

Analogously to the fading-free case, the loudness and space and discretizations are given by\index{discretization} 
\[ 
[\Fmin,\Fmax]_\delta=\left(F_o + 2\delta(\Fmax-\Fmin) \mathbb Z\right) \cap [\Fmin,\Fmax], \quad W_\delta=\delta 2r \mathbb Z^d \cap W.  \]
Thus, we can define the discretized space-fading landscape as
\[ \mathbf W_\delta=W_\delta \times [\Fmin,\Fmax]_\delta.  \]
Now consider two operations that relate the discretized space $\mathbf W_\delta$ to the continuous space $\mathbf W$. Note that under Assumption \ref{szamár}, $\mathcal{M}(\mathbf W_\delta)$ can be identified with $[0,\infty)^{\mathbf W_\delta}$, the set of functions with domain $\mathbf W_\delta$ mapping to $[0,\infty)$. First, we discretize $x \in \mathbf W$ by spatially moving $x	$ to the centers of space-loudness sub-cubes in $\mathbf W_\delta$, i.e. let us write $\varrho'((x,u))=(\varrho_1(x),~\varrho_2(u)) \in \mathbf W_\delta$,\footnote{ Note that this definition does not specify where $\varrho'$ maps the boundary points that are contained in more than one closed sub-cube. However, since the set of all these boundary points has measure zero, and $\mu'$ is a finite measure that is absolutely continuous w.r.t. the Lebesgue measure on $\mathbf W$, we can let $\varrho'$ map these points arbitrarily, e.g. to the one out of the neighbouring sub-cube centers that is the smallest w.r.t. the lexicographic ordering of $W \times [\Fmin, \Fmax]$.} for the discretized value given by 
\[ \varrho': \mathbf W \rightarrow \mathbf W_\delta, \quad (x,u) \mapsto \varrho'((x,u)), \]
where $\varrho'(x)$ denotes the shift of $x \in \mathbf{W}$ to its nearest sub-cube center in $\mathbf W_\delta$. For $\nu \in \mathcal{M}(W)$, the mapping $\varrho'$ induces an image measure
\begin{equation} \label{rócska} \nu^{\varrho'}=\nu \circ \varrho'^{-1} \in \mathcal{M}(\mathbf W_\delta). \end{equation} 
Second, since $\mathbf W_\delta \subset \mathbf W$, for each $\nu \in \mathcal{M}(\mathbf W_\delta)$ the inclusion map 
\[ \imath': \mathbf W_\delta \to \mathbf W,~x \mapsto x \] 
induces the image measure 
\[ \nu^{\imath'}=\nu \times \imath'^{-1} \in \mathcal{M}(\mathbf W) \]
which is essentially identical to the original measure. Note that $\imath'$ is just the inclusion map because there is no mobility in the model, in Section \ref{explanation} we had a more complicated embedding map $\imath$.

The notion of $\delta$-discretized function simplifies when we omit mobility from the model. For $\delta \in \mathbb B$, we say that a function $F:~ \mathcal{M}(\mathbf W)^4 \to [-\infty,\infty)$ is $\delta$-discretized if $F((\nu^{\varrho'})^{\imath'})=F(\nu)$ holds for all $\nu \in \mathcal{M}(\mathbf{W})^4$. Since $\imath'$ is the inclusion map, it is easy to see that the analogue of Lemma \ref{deltadiscretization} holds for this setting, i.e. if $F$ is $\delta$-discretized for some $\delta \in \mathbb B$, then $F$ is also $\delta'$-discretized for any $\delta' \in (0,\delta) \cap \mathbb B$. Note that the functions $F$ defined in \eqref{Ffading} are $\delta$-discretized for all $\delta \in \mathbb B$. We will also use this result throughout this chapter.\index{discretization!$\delta$-discretized function}

In the case of random fadings without mobility, the empirical measure of the discretized marked Poisson process is given as $\lambda \mathbf L_{\lambda}^{\varrho'} \in \mathcal{M}(\mathbf W_\delta)$,
\begin{equation} \label{mértékkel} \quad \lambda \mathbf L_{\lambda}^{\varrho'} ((\xi,w))=\sum\limits_{X_i \in X^\lambda} \delta_{\varrho_1(X_{i})}(\xi) \mathds 1\lbrace \varrho_2(F_{X_i})=w \rbrace=\sharp \lbrace X_i \in X^\lambda \vert~ \varrho_1(X_i)=\xi,~\varrho_2(F_{X_i})=w \rbrace. \end{equation}

We note that this measure does not belong to a simple point process any more, i.e., two distinct points $X_i, X_j \in X^\lambda$ may have the same $\delta$-discretized values $\varrho'_{\delta}(X_i)=\varrho'_{\delta}(X_j)$, and thus $\lambda \mathbf L_{\lambda}^{\varrho'}((\xi,w)) \geq 2$ has positive probability for $(\xi,w) \in \mathbf W_\delta$ such that $\mu'^{\varrho'}((\xi,w))>0$.
We also note that \begin{equation} \label{művarró} {(\mu')}^{\varrho'}((x,u)) = \mu^{\varrho_1}(x) \times \mathbb P(\varrho_2(F_0)=u) \end{equation} is the intensity measure of the discretized marked Poisson process $\mathbf \lambda L^\lambda_\delta$ on $\mathbf W_\delta$ for $\lambda=1$. 

\section{Statement of results of the chapter} \label{kijelentés}\index{main results!of the thesis}
Generalizing the propositions in \cite[Section 1.2]{cikk}, we present a large deviation analysis of the quantities $F(G(\mathbf L_\lambda,~ \boldsymbol \tau))$. Recall that throughout Chapter \ref{eleje} we always assume that Assumption \ref{szamár} holds. Our result is a level-2 large deviation result, as described in Section \ref{ldpcske}. Therefore, the relative entropy defined as \eqref{relatíventrópia}\index{relative entropy} on the space $\mathcal{\mathbf W}$ (endowed with the weak topology\index{weak topology}) plays an important r\^{o}le. Our general theorem is the following.
\begin{thm} \label{1.1}
Let $\tau_i: [0,\infty) \to [0,\infty)$, for $i \in \four$, be bounded, measurable and decreasing functions that map $[\tilde{c}_+,\infty)$ to 0. Further, let $F: \mathcal{M}(\mathbf W)^4 \to [-\infty,\infty) $ be an increasing function that is $\delta$-discretized for some $\delta \in \mathbb B$, bounded from above, and maps the vector of zero measures to $-\infty$. If the $\tau_i \circ \imath'$ are u.s.c. as functions on $[0,\infty)$ and $\nu \mapsto F(\nu^{\imath'})$ is u.s.c. as a function on $\mathcal{M}(\mathbf W_\delta)^4$, then we have
\[ \limsup_{\lambda \to \infty} \frac{1}{\lambda} \log \mathbb E_2 \exp (\lambda F(G(\mathbf L_\lambda,~\boldsymbol \tau))) \leq -\inf\limits_{\nu \in \mathcal{M}(\mathbf W)} \left\lbrace h(\nu \vert \mu') - F(G(\nu,~ \boldsymbol \tau)) \right\rbrace, \] 
whereas if the $\tau_i \circ \imath'$ are l.s.c. as functions on $[0,\infty)$ and $\nu \mapsto F(\nu^{\imath'})$ is l.s.c. as a function on $\mathcal{M}(\mathbf W_\delta)^4$, then 
\[ \liminf_{\lambda \to \infty} \frac{1}{\lambda} \log \mathbb E_2 \exp (\lambda F(G(\mathbf L_\lambda,~\boldsymbol \tau))) \geq -\inf\limits_{\nu \in \mathcal{M}(\mathbf W)} \left\lbrace h(\nu \vert \mu') - F(G(\nu,~ \boldsymbol \tau)) \right\rbrace. \]
\end{thm}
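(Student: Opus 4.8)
The plan is to apply the one-sided Varadhan lemmas (Lemma \ref{varadhanlower} and Lemma \ref{varadhanupper}) to the functional $\Phi(\nu)=F(G(\nu,\boldsymbol\tau))$ evaluated along the empirical measures $\mathbf L_\lambda$ from \eqref{vastagL}. For this I first need a large deviation principle for $\mathbf L_\lambda$ on $\mathcal M(\mathbf W)$ with good rate function $h(\cdot\vert\mu')$ from \eqref{relatíventrópia}. This does not follow directly from Sanov's theorem (Theorem \ref{nagysanov}), since $\mathbf X^\lambda$ is a marked Poisson process and not an i.i.d.\ sample, so I would derive it by Poissonization: conditionally on $N=X^\lambda(W)$, which is $\mathrm{Poisson}(\lambda\mu(W))$ distributed, the points together with their marks are i.i.d.\ with law $\mu'/\mu'(\mathbf W)$ by the Marking Theorem (Theorem \ref{marking}), so Sanov applies conditionally, and integrating out the Poisson fluctuations of $N$ produces exactly the extra terms $-\nu(\mathbf W)+\mu'(\mathbf W)$ appearing in $h$. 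On the finite discretized space $\mathbf W_\delta$ the same rate function arises more elementarily by combining Cramér's theorem (Theorem \ref{nagycramér}) on each cell, whose counts are independent, via Proposition \ref{4.2.7}.

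Given this LDP, the two inequalities of the theorem are precisely the conclusions of Lemma \ref{varadhanupper} and Lemma \ref{varadhanlower} applied to $\Phi$, once their hypotheses are checked. The tail condition \eqref{farkacska} needed for the upper bound is immediate: since $F$ is bounded from above and each $\tau_i$ is bounded, $\Phi$ is bounded from above, so $\mathds 1\{\Phi(\mathbf L_\lambda)\geq M\}$ vanishes for $M$ large and \eqref{farkacska} holds trivially; the lower bound requires no such condition. It then remains to verify the semicontinuity of $\Phi$: upper semicontinuity under the first set of hypotheses and lower semicontinuity under the second.

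Establishing this semicontinuity is the heart of the argument and the main obstacle. The difficulty, flagged already in Section \ref{discretization} and Section \ref{pajti}, is that in $G(\nu,\boldsymbol\tau)$ the measure $\nu$ enters both as the integrator and, through $R$ and the SIR of \eqref{végretudjukmiasir}, inside the integrand, which breaks naive weak continuity; moreover the essential supremum defining the relayed QoS $R$ is only one-sidedly continuous. Here the $\delta$-discretization is the key tool. Because $\mathbf W=W\times[\Fmin,\Fmax]$ is compact under Assumption \ref{szamár} and $\ell$ is positive and bounded away from $0$, the denominator $\nu\mapsto\nu(\ell(\vert\cdot-\eta\vert)\centerdot)$ is weakly continuous, so the discretized SIR reduces to a finite sum over the cells of $\mathbf W_\delta$ that is continuous in the cell masses, and the relaying essential supremum reduces to a finite maximum. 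Using that $F$ is $\delta$-discretized, that $\nu\mapsto F(\nu^{\imath'})$ carries the assumed semicontinuity on $\mathcal M(\mathbf W_\delta)^4$, that the $\tau_i\circ\imath'$ are correspondingly semicontinuous, and exploiting the monotonicity of $F$ and the $\tau_i$ to control the coupling between integrand and integrator, I would conclude that $\Phi$ is u.s.c.\ (resp.\ l.s.c.).

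Assembling the pieces, Lemma \ref{varadhanupper} and Lemma \ref{varadhanlower} yield
\[ \limsup_{\lambda\to\infty}\tfrac1\lambda\log\mathbb E_2\exp(\lambda\Phi(\mathbf L_\lambda))\leq\sup_{\nu\in\mathcal M(\mathbf W)}\{\Phi(\nu)-h(\nu\vert\mu')\} \]
together with the matching $\liminf$ inequality, and rewriting the right-hand side as $-\inf_{\nu}\{h(\nu\vert\mu')-F(G(\nu,\boldsymbol\tau))\}$ gives the claimed bounds; the consistency Lemma \ref{deltadiscretization} ensures the argument is insensitive to the particular $\delta$ for which $F$ is $\delta$-discretized. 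I expect the semicontinuity step, and in particular handling the essential supremum in $R$ simultaneously with the integrand-integrator coupling, to be the most delicate part.
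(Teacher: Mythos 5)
Your overall architecture (LDP for $\mathbf L_\lambda$ with rate $h(\cdot\vert\mu')$, then Varadhan) matches the paper's starting point, and your treatment of the upper bound is essentially right: $\nu\mapsto G(\nu,\boldsymbol\tau)$ is indeed u.s.c.\ under the first set of hypotheses (Lemma \ref{3.6} and Remark \ref{3.7,8}), the tail condition is trivial since $F$ is bounded above, and Varadhan's upper-bound lemma applies on the discretized space. However, there is a genuine gap in your lower bound. You write that you would ``conclude that $\Phi$ is \dots l.s.c.'' under the second set of hypotheses, but this is false, and the paper says so explicitly (see the discussion after Proposition \ref{2.2} and before Lemma \ref{3.6}, and the opening of Section \ref{kettőkettő}): the relayed QoS $\nu\mapsto R((x,u),(o,F_o),\nu)$ is only l.s.c., not continuous, because a relay site whose mass tends to zero disappears in the limit and the QoS drops discontinuously. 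Composing a decreasing $\tau_i$ with a merely l.s.c.\ $R$ destroys lower semicontinuity of $\tau_i(R(\cdot))$, so $\nu\mapsto G(\nu,\boldsymbol\tau)$ fails to be l.s.c.\ even on the finite space $\mathcal M(\mathbf W_\delta)$, regardless of how nice $\tau_i\circ\imath'$ and $\nu\mapsto F(\nu^{\imath'})$ are. No amount of care with the integrand--integrator coupling repairs this; the obstruction is configurations with a small but positive number of users at a relay site.

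The paper's resolution, which your proposal is missing entirely, is the sprinkling construction of Section \ref{sprinkle} together with the l.s.c.\ approximations $G_{\varepsilon}$ built from $\Gamma_\varepsilon((x,u),(y,v),(o,F_o),\nu)=\min\{1,\varepsilon^{-1}\nu((y,v))\}\,\Gamma(\cdots)$ in Section \ref{kettőkettő}: one slightly increases the Poisson intensity from $\lambda$ to $\lambda'=(1+2\varepsilon_0\kappa_\delta^{-1})\lambda$ so that, at cost $e^{-\sqrt{\varepsilon_0}\lambda}$ (Lemmas \ref{3.9}, \ref{3.10}), every occupied site carries mass of order $\varepsilon_0$, on which event $G$ and its continuous approximation $G_{\varepsilon_0\alphaminus}$ agree up to controlled constants (Lemmas \ref{4.1}, \ref{4.2}); Varadhan's lower bound is then applied to the continuous approximation and the approximation parameters are removed afterwards. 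A second structural difference: you propose to verify semicontinuity and apply Varadhan directly on $\mathcal M(\mathbf W)$, whereas the paper first sandwiches $G(\nu,\boldsymbol\tau)^{\varrho'}$ between $G((1\mp\varepsilon)\nu^{\varrho'},\boldsymbol\tau\circ\imath')$ (Proposition \ref{2.1}), proves the discretized statement (Proposition \ref{2.2}), and only then passes $\delta\to0$ via the variational convergence of the rate functions (Lemma \ref{5.1}); this ordering is what makes the hypotheses on $\tau_i\circ\imath'$ and $\nu\mapsto F(\nu^{\imath'})$ (rather than on $\tau_i$ and $F$ themselves) the natural ones, and your proposal does not explain how those finite-dimensional hypotheses would be used in a direct infinite-dimensional Varadhan argument.
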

Here $\mu'$ denotes the intensity of the marked Poisson point process $X^\lambda$ for $\lambda=1$, defined in \eqref{újmérték}. According to our assumption that $F$ is $\delta$-discretized, the semicontinuity properties $\nu \mapsto F(\nu^{\imath'})$ and $\tau_i \circ \imath'$ can be checked on finite-dimensional spaces. This is substantially simpler than for $F$ and $\tau_i$, because the domain of $F$ is infinite-dimensional.

As a special case of Theorem \ref{1.1}, we determine the rate of decay for the frustration probabilities $\mathbb P_2 (G(\mathbf L_\lambda,~ \boldsymbol \tau_{\mathbf c}) (\mathbf W) > \mathbf b)$ where $\boldsymbol \tau_{\mathbf c}$ is defined in \eqref{taufading}. Furthermore, we define $[0,\tilde{\mathbf c}_+ )=[0,\tilde{c}_+)^4$. In particular, analogously to the notation of \cite{cikk}, we do not use boldface notation for the zero vector $0 \in \mathbb R^4$.
\begin{cor} \label{1.2}\index{frustration probabilities!rate function}
Let $\mathbf b \in \mathbb R^4$ and $\mathbf c \in [0,\tilde{\mathbf c}_+)$. Then,
\begin{equation} \label{minike} \lim_{\lambda \to \infty} \frac{1}{\lambda} \log \mathbb P_2 (G(\mathbf L_\lambda, ~\boldsymbol \tau_{\mathbf c})(\mathbf W)>\mathbf b)= - \inf\limits_{\nu \in \mathcal{M}(\mathbf W):~G(\nu, ~\boldsymbol \tau_{\mathbf c}) (\mathbf W) > \mathbf b} h(\nu \vert \mu'). \end{equation}
\end{cor}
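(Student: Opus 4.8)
The plan is to specialize Theorem~\ref{1.1} to the increasing functional $F=F_{\mathbf b}$ of \eqref{Ffading} and the decreasing profile $\boldsymbol\tau=\boldsymbol\tau_{\mathbf c}=(\tau_{c_i})_{i\in\four}$ of \eqref{taufading}, and then to convert the exponential-moment asymptotics into the frustration probability via the identity \eqref{másodikvégesvárhatóérték}; this identity is available because Assumption~\ref{szamár} gives $\Finite$, hence $\mathbb E[F_0]<\infty$. First I would check the hypotheses of Theorem~\ref{1.1}. Each $\tau_{c_i}=\mathds 1\{\,\cdot<c_i\,\}$ is bounded, measurable and decreasing, and since $c_i<\tilde{c}_+$ it maps $[\tilde{c}_+,\infty)$ to $0$. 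The functional $F_{\mathbf b}$ is increasing, bounded above by $0$, and $\delta$-discretized for every $\delta\in\mathbb B$ (as observed after \eqref{Ffading}). In the nondegenerate case where at least one $b_i\ge 0$, the vector of zero measures is sent to $-\infty$, as required; the remaining case $\mathbf b<0$ is trivial, since then $G(\mathbf L_\lambda,\boldsymbol\tau_{\mathbf c})(\mathbf W)>\mathbf b$ holds $\mathbb P_2$-almost surely and both sides of \eqref{minike} equal $0$.

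For the lower bound I would use the l.s.c.\ half of Theorem~\ref{1.1}. Since $\imath'$ is the inclusion, $\tau_{c_i}\circ\imath'=\mathds 1\{\,\cdot<c_i\,\}$ is lower semicontinuous on $[0,\infty)$; moreover $\nu\mapsto\nu(\mathbf W_\delta)$ is continuous on the finite-dimensional cone $\mathcal{M}(\mathbf W_\delta)\cong[0,\infty)^{\mathbf W_\delta}$, so $\{\nu:F_{\mathbf b}(\nu^{\imath'})=0\}$ is open and $\nu\mapsto F_{\mathbf b}(\nu^{\imath'})$ is lower semicontinuous on $\mathcal{M}(\mathbf W_\delta)^4$. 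The theorem then yields $\liminf_{\lambda}\tfrac1\lambda\log\mathbb E_2\exp(\lambda F_{\mathbf b}(G(\mathbf L_\lambda,\boldsymbol\tau_{\mathbf c})))\ge-\inf_{\nu}\{h(\nu\vert\mu')-F_{\mathbf b}(G(\nu,\boldsymbol\tau_{\mathbf c}))\}$. Because $F_{\mathbf b}(G(\nu,\boldsymbol\tau_{\mathbf c}))$ equals $0$ exactly when $G(\nu,\boldsymbol\tau_{\mathbf c})(\mathbf W)>\mathbf b$ and $-\infty$ otherwise, this infimum collapses to $J:=\inf_{\nu:\,G(\nu,\boldsymbol\tau_{\mathbf c})(\mathbf W)>\mathbf b}h(\nu\vert\mu')$, which is the ``$\ge$'' inequality in \eqref{minike}.

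The upper bound is the crux, because neither $\tau_{c_i}=\mathds 1\{\,\cdot<c_i\,\}$ nor $F_{\mathbf b}$ is upper semicontinuous, so the u.s.c.\ half of Theorem~\ref{1.1} cannot be applied to them directly. I would pass to their smallest u.s.c.\ majorants: $\bar\tau_{c_i}=\mathds 1\{\,\cdot\le c_i\,\}$, which is still decreasing and vanishes on $[\tilde{c}_+,\infty)$, and the functional $F_{\mathbf b}^{\ge}$ that sends $\nu$ to $0$ when $\nu_i(\mathbf W)\ge b_i$ for all $i$ and to $-\infty$ otherwise, whose zero set is now closed. As $\bar\tau_{c_i}\ge\tau_{c_i}$ enlarges the frustration event, one obtains $\mathbb P_2(G(\mathbf L_\lambda,\boldsymbol\tau_{\mathbf c})(\mathbf W)>\mathbf b)\le\mathbb E_2\exp(\lambda F_{\mathbf b}^{\ge}(G(\mathbf L_\lambda,\bar{\boldsymbol\tau}_{\mathbf c})))$, and the u.s.c.\ half of Theorem~\ref{1.1} bounds the $\limsup$ by $-\bar J$, where $\bar J:=\inf_{\nu:\,G(\nu,\bar{\boldsymbol\tau}_{\mathbf c})(\mathbf W)\ge\mathbf b}h(\nu\vert\mu')$.

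The main obstacle is that, a priori, relaxing the open strict constraint to the closed weak one only gives $\bar J\le J$, so I must still prove $\bar J=J$. I expect to settle this by an approximation argument that reintroduces strict constraints: for $\eps>0$ the pointwise bound $\bar\tau_{c_i}\le\mathds 1\{\,\cdot<c_i+\eps\,\}=\tau_{c_i+\eps}$ yields the inclusion $\{\nu:G(\nu,\bar{\boldsymbol\tau}_{\mathbf c})(\mathbf W)\ge\mathbf b\}\subseteq\{\nu:G(\nu,\boldsymbol\tau_{\mathbf c+\eps})(\mathbf W)>\mathbf b-\eps\}$, whence $\bar J\ge J_\eps:=\inf_{\nu:\,G(\nu,\boldsymbol\tau_{\mathbf c+\eps})(\mathbf W)>\mathbf b-\eps}h(\nu\vert\mu')$. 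It then remains to show $J_\eps\uparrow J$ as $\eps\downarrow0$, i.e.\ continuity of the strict-constraint infimum in the parameters $(\mathbf b,\mathbf c)$ at the point of interest. This is where I anticipate the real work: using that $h(\cdot\vert\mu')$ is a good rate function (so minimizers exist along the approximating problems and lie in a common compact sublevel set), together with the monotone dependence of $G(\nu,\cdot)(\mathbf W)$ on $\mathbf c$ and of the constrained infimum on $(\mathbf b,\mathbf c)$, I would extract a subsequential limit of near-minimizers and verify that it meets the strict constraint $G(\nu,\boldsymbol\tau_{\mathbf c})(\mathbf W)>\mathbf b$, forcing $\lim_{\eps\downarrow0}J_\eps\ge J$ and hence $\bar J=J$. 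Combining the two halves then gives \eqref{minike}.
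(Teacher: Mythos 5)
Your lower bound is exactly the paper's (the l.s.c.\ half of Theorem~\ref{1.1} applied to $F_{\mathbf b}$ and $\boldsymbol\tau_{\mathbf c}$), and you correctly locate the difficulty in the upper bound. But the reduction to ``$\bar J=J$'' does not close, for two concrete reasons. First, when some $b_i=0$ the closed relaxation is the wrong object: the constraint $G(\nu,\bar\tau_{c_i},m_i)(\mathbf W)\ge 0$ is vacuous, so $\mu'$ itself is feasible and $\bar J=0$, whereas $J$ — the rate for the existence of at least one frustrated user — is strictly positive whenever $c_i<K_i$ (see case~5 of Section~\ref{kalsszikus} and Corollary~\ref{új1.4}(ii)). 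So $\bar J<J$ genuinely occurs and your upper bound $\limsup\le-\bar J=0$ is vacuous precisely in the regime the corollary is supposed to cover. Relatedly, $F^{\ge}_{\mathbf b}$ then violates the hypothesis of Theorem~\ref{1.1} that the vector of zero measures is mapped to $-\infty$. The paper handles $b_i=0$ with a separate device, the worst-QoS indicators $\mathbf\Phi'_\varepsilon(\mathbf c,\cdot)$ and the sets $C_i'(\mathbf b,\mathbf c,\varepsilon)$, which retain a nontrivial closed constraint even at $b_i=0$.

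Second, even for $\mathbf b>0$ your proposed proof of $J_\eps\uparrow J$ cannot work as described: a subsequential weak limit $\nu^\ast$ of near-minimizers of the relaxed problems satisfies (at best, and only granting upper semicontinuity of $\nu\mapsto G(\nu,\bar{\boldsymbol\tau}_{\mathbf c})(\mathbf W)$, which fails in the continuum for the relayed links) the \emph{closed} constraint $G(\nu^\ast,\bar{\boldsymbol\tau}_{\mathbf c})(\mathbf W)\ge\mathbf b$; indeed the natural limit of $J_\eps$ is $\bar J$ itself, so the argument is circular. The missing ingredient is the converse direction: given $\nu$ in the closed constraint set with finite entropy, one must produce $\nu'$ in the \emph{open} set with nearly the same entropy. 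The paper obtains this from the strict monotonicity of the QoS in the measure (Lemma~\ref{3.1}(ii),(iv)): replacing $\nu$ by $(\alpha_+/\alpha)\nu$ with $\alpha_+>\alpha$ strictly decreases every SIR, so weakly frustrated users become strictly frustrated and the frustrated mass is multiplied by a factor $>1$, turning $\ge b_i$ into $>b_i$; the entropic cost of the dilation is controlled by Corollary~\ref{3.12} and vanishes as $\alpha_+\downarrow\alpha$. Moreover this is implemented at the level of the point process via the sprinkling events (Lemmas~\ref{3.10} and~\ref{6.2}) and the discretization/transfer steps (Propositions~\ref{2.1}, \ref{6.1} and Lemma~\ref{5.1}), rather than as an abstract statement about the variational problem. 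Without this mechanism the passage from the closed to the open constraint set is unjustified.
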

Finally, we formalize the observation that also in the extended model random fadings, the probability of frustration events that are unlikely w.r.t. the a priori measure $\mu'$ decays at an exponential speed.
\begin{cor} \label{1.3}\index{frustration probabilities!exponential decay}
Let $\mathbf b \in \mathbb R^4,~\mathbf c \in [0, \tilde{\mathbf c}_+)$ and assume that $G((1+\varepsilon)\mu',~ \boldsymbol \tau_{\mathbf c})(\mathbf W) \leq \mathbf b$ for some $\varepsilon>0$. Then we have
\begin{equation} \label{fontos} \limsup_{\lambda \to \infty} \frac{1}{\lambda} \log \mathbb P_2(G(\mathbf L_\lambda,~ \boldsymbol \tau_{\mathbf c})(\mathbf W) > \mathbf b ) < 0. \end{equation}
\end{cor}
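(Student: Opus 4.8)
The plan is to read off the strict negativity from the exact decay rate already identified in Corollary~\ref{1.2}. By that corollary,
\[ \limsup_{\lambda \to \infty} \frac{1}{\lambda} \log \mathbb P_2(G(\mathbf L_\lambda,~ \boldsymbol \tau_{\mathbf c})(\mathbf W) > \mathbf b ) = - \inf_{\nu \in A} h(\nu \vert \mu'), \qquad A := \lbrace \nu \in \mathcal{M}(\mathbf W) :~ G(\nu,~\boldsymbol \tau_{\mathbf c})(\mathbf W) > \mathbf b \rbrace, \]
so the whole statement reduces to showing that the good rate function $h(\cdot \vert \mu')$ stays bounded away from $0$ on the frustration set $A$, i.e. $\inf_{\nu \in A} h(\nu \vert \mu') > 0$.

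I would use two structural facts. First, by Sanov's theorem (Theorem~\ref{nagysanov}) $h(\cdot\vert\mu')$ is a good rate function whose \emph{unique} zero is $\mu'$, since $h(\nu\vert\mu')=0$ forces $\nu=\mu'$. Second, $\nu \mapsto G(\nu,~\boldsymbol\tau_{\mathbf c})(\mathbf W)$ is increasing in $\nu$: enlarging $\nu$ raises the interference in the denominator of every $\SIR$ in \eqref{végretudjukmiasir}, hence lowers $D$ and $R$, hence raises each $\tau_{c_i}(\cdot)$ (which is decreasing), while simultaneously enlarging the integrator measure, so both effects push $G$ up. Consequently $\nu \le (1+\varepsilon)\mu'$ implies $G(\nu,~\boldsymbol\tau_{\mathbf c})(\mathbf W) \le G((1+\varepsilon)\mu',~\boldsymbol\tau_{\mathbf c})(\mathbf W) \le \mathbf b$, so $\nu \notin A$; in particular $\mu' \notin A$. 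The assumption moreover buys a genuine entropy budget: a direct computation from \eqref{relatíventrópia} with $\tfrac{\mathrm d}{\mathrm d\mu'}\big((1+\varepsilon)\mu'\big)=1+\varepsilon$ gives $h((1+\varepsilon)\mu'\vert\mu') = \mu'(\mathbf W)\big((1+\varepsilon)\log(1+\varepsilon)-\varepsilon\big) > 0$.

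To convert $\mu'\notin A$ into the quantitative bound, I would argue by contradiction using goodness. Suppose $\inf_A h(\cdot\vert\mu')=0$ and pick $\nu_n \in A$ with $h(\nu_n\vert\mu')\to 0$. Since the sublevel sets of a good rate function are compact, a subsequence converges in the $\tau$-topology of Theorem~\ref{nagysanov} to some $\nu_\infty$, and lower semicontinuity gives $h(\nu_\infty\vert\mu')=0$, i.e. $\nu_\infty=\mu'$. It then remains to contradict $G(\nu_n,~\boldsymbol\tau_{\mathbf c})(\mathbf W) > \mathbf b$ in the limit, by establishing $\limsup_n G(\nu_n,~\boldsymbol\tau_{\mathbf c})(\mathbf W) \le G(\mu',~\boldsymbol\tau_{\mathbf c})(\mathbf W)$ together with a \emph{strict} gap $G(\mu',~\boldsymbol\tau_{\mathbf c})(\mathbf W) < \mathbf b$ coming from the $\varepsilon$-slack and the monotonicity of $G$ along the ray $t\mapsto G(t\mu')$.

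The hard part will be this last step, because $\nu\mapsto G(\nu,~\boldsymbol\tau_{\mathbf c})(\mathbf W)$ is in general discontinuous (the measure $\nu$ enters both the integrand and the integrator), and $\tau_{c_i}=\mathds 1\lbrace\cdot<c_i\rbrace$ is only lower semicontinuous, so the naive semicontinuity points the wrong way for an upper bound. I would circumvent this as in Theorem~\ref{1.1}, exploiting that the limit point $\mu'$ is absolutely continuous: comparing $\tau_{c_i}=\mathds 1\lbrace\cdot<c_i\rbrace$ with its u.s.c. majorant $\mathds 1\lbrace\cdot\le c_i'\rbrace$ for some $c_i<c_i'<\tilde c_+$ (available since $\mathbf c \in [0,\tilde{\mathbf c}_+)$), the two agree $\mu'$-almost everywhere because the level set $\lbrace D=c_i\rbrace$ is $\mu'$-null, which restores upper semicontinuity of $G$ at $\mu'$; the slack in $\varepsilon$ then absorbs the enlargement of the threshold and keeps the limiting value strictly below $\mathbf b$. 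Pinning down this trade-off between the threshold slack and the scaling slack is the main technical obstacle; once it is in place, $\limsup_n G(\nu_n,~\boldsymbol\tau_{\mathbf c})(\mathbf W) < \mathbf b$ contradicts $G(\nu_n,~\boldsymbol\tau_{\mathbf c})(\mathbf W)>\mathbf b$, forcing $\inf_A h(\cdot\vert\mu') > 0$ and hence \eqref{fontos}.
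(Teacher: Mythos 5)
Your overall strategy --- reduce via Corollary \ref{1.2} to showing $\inf_{\nu \in A} h(\nu\vert\mu')>0$, then argue by contradiction using goodness of the rate function, the fact that $\mu'$ is its unique zero, and an u.s.c.\ regularization of $\boldsymbol\tau_{\mathbf c}$ --- is indeed the skeleton of the paper's proof. But two of your intermediate claims have genuine problems. First, the assertion that $\nu\mapsto G(\nu,\boldsymbol\tau_{\mathbf c})(\mathbf W)$ is increasing in $\nu$ is false for the relayed components: enlarging $\nu$ raises the interference, but it can also enlarge the support and hence the set of admissible relays in the $\nu$-$\esssup$ defining $R$, which pushes $R$ \emph{up}. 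This is exactly why Lemma \ref{3.1}(iii)--(iv) require the extra hypothesis $V(\nu)=V(\nu')$. Your argument survives only because the one comparison you actually need is along the ray $t\mapsto t\mu'$, where the zero sets coincide; but as stated the monotonicity step is wrong. Second, and more seriously, your plan for the key upper-semicontinuity step does not work: replacing $\mathds 1\{\cdot<c_i\}$ by a closed-level majorant only repairs the discontinuity of the indicator, not the discontinuity of $\nu\mapsto R(\cdot,\cdot,\nu)$ itself (relays vanishing in the limit) nor the fact that in $G(\nu,\cdot)(\mathbf W)$ the measure $\nu$ sits simultaneously in the integrand and the integrator, so weak convergence $\nu_n\to\mu'$ gives no control by itself. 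The paper never attempts this in the continuous space: it transfers the whole contradiction argument to the finite-dimensional discretized space $\mathcal{M}(\mathbf W_\delta)$ via Proposition \ref{2.1} and Proposition \ref{6.1}, where $\nu\mapsto G(\nu,\boldsymbol\tau^{\text{usc}}_{\mathbf c})$ genuinely is u.s.c.\ (Lemma \ref{3.6} and Remark \ref{3.7,8} are statements about $\mathbf W_\delta$ only), and where the sprinkling construction has already absorbed the relay-disappearance pathology.

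There is also a case you do not address at all: when some $b_i=0$ (which the hypotheses permit, since they force $G((1+\varepsilon)\mu',\tau_{c_i},m_i)(\mathbf W)=0$), the u.s.c.\ contradiction collapses --- from $G(\nu_n,\cdot)_i>0$ and upper semicontinuity you only get $G(\mu',\cdot)_i\geq\limsup_n G(\nu_n,\cdot)_i\geq 0$, which is no contradiction with $G(\mu',\cdot)_i=0$. The paper handles this separately with the regularized existence functionals $\Phi'_{\kappa_\delta}(\mathbf c,\cdot)$, which take values in a discrete set and therefore force the limit to equal $1$. Without that (or an equivalent device), your proof covers only the case $\mathbf b>0$.
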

For a full classification of the cases that are not covered by Corollary \ref{1.3}, see Section \ref{kalsszikus}.

In the remainder of this chapter, we proceed similarly to the structure of \cite[Sections 2--6]{cikk}. In Section \ref{kettő} we present an outline of the proof of Theorem \ref{1.1}. We continue by preliminary results about monotonicity and continuity properties of the QoS trajectories and about linear perturbations of relative entropies in Section \ref{pajti}. This is followed by the sprinkling argument in Section \ref{sprinkle}, which is one key ingredient of the proof of Theorem \ref{1.1}. After this, in Sections \ref{kettőegy} and \ref{kettőkettő}, we establish the proof of Propositions \ref{2.1} and \ref{2.2}, which are important auxiliary results for the proof of Theorem \ref{1.1}. The proof of Theorem \ref{1.1} takes place in Section \ref{egyegy}, while the proofs of Corollaries \ref{1.2} and \ref{1.3} are established in Section \ref{egykettőegyhárom}. Section \ref{kalsszikus} gives a classification of frustration probabilities, determining when they decay at an exponential speed. Once we have obtained the results of this chapter, we can detect the effects coming from the random fadings, relax the i.i.d. assumption on the loudnesses of users, and accompany a fading random variable to the origin. These are the goals of Chapter \ref{effect}. 
\section{Outline of the proof of Theorem \ref{1.1}} \label{kettő}
The mathematical analysis of relay-based communication becomes much less technical if we discretize the possible user locations and loudness values. Pairs of spatial positions and fading values of the users are no longer distributed according to $\mu'$ but according to $\mu'^{\varrho'}$, as defined in \eqref{művarró}. In other words, analogously to the approach of \cite[Section 2]{cikk}, but with random fadings and without mobility, we use the approximation that all users are located at the sites in $W_\delta$ and have fading values in $[\Fmin,\Fmax]_\delta$. By the assumptions on $\mu$ we have that 
\begin{equation} \label{kapa} \max_{(x,u) \in \mathbf W_\delta} \mu'^{\varrho'}((x,u))\end{equation}
tends to zero as $\delta$ tends to zero. Thus, so does
\begin{equation} \label{kifalva} \kappa_\delta=\min_{(x,u) \in \mathbf W_\delta:~\mu'^{\varrho'}((x,u)) >0} \mu'^{\varrho'}((x,u)), \end{equation} which is a quantity that appears in several auxiliary results of this chapter. 
We introduce the analogue of $G(\nu,~ \boldsymbol \tau)$, as given in \eqref{tautológia}, in the discretized setting. For a general $\nu \in \mathcal{M}(\mathbf W_\delta)$ and a general bounded $\tau: [0,\infty) \to [0,\infty)$, $G(\nu,~\boldsymbol \tau, ~\text{up})$ is given as an element of the space of finite measures $\mathcal{M}(\mathbf W_\delta)$ via
\[ \frac{\d G(\nu,~\tau,~\text{up})}{\d \nu}=\tau(R((x,u),~(o,F_o),~\nu)) \] 
and similarly for $G(\nu,~\tau,~\text{up-dir})$, $G(\nu,~\tau,~\text{do})$ and $G(\nu,~\tau,~\text{do-dir})$. Also we put
\[ G(\nu,~\boldsymbol \tau) =(G(\nu,~\tau_1,~\text{up}),~G(\nu,~\tau_2,~\text{up-dir}),~G(\nu,~\tau_3,~\text{do}),~G(\nu,~\tau_4,~\text{do-dir})), \] 
where $\boldsymbol \tau=(\tau_i)_{i \in \four}$. 
The following proposition will be used for passing from considering frustrated users of the discretized setting to the original one for small discretization parameter values $\delta$. 
\begin{prop} \label{2.1}\label{frustration probabilities!in the discrete setting}
For all $\varepsilon >0$ there exists $\delta'=\delta'(\varepsilon) \in \mathbb B$ such that for all $\delta \in \mathbb B \cap (0,\delta')$, $\nu \in \mathcal{M}(\mathbf W)$ and $\tau_i:\mathcal [0,\infty) \to [0,\infty)$ bounded and decreasing for all $i \in \lbrace 1,\ldots,4 \rbrace$, 
\[ G((1-\varepsilon)\nu^{\varrho'},~ \boldsymbol \tau \circ \imath') \leq G(\nu,~ \boldsymbol{\tau})^{\varrho'} \leq G((1+\varepsilon)\nu^{\varrho'}, ~\boldsymbol \tau \circ \imath'). \]
\end{prop}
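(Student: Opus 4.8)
The plan is to reduce the asserted inequality of measures to an atom-by-atom comparison on the finite discretized space $\mathbf W_\delta$, and then to absorb the discretization error into the multiplicative slack $1\pm\varepsilon$ using the Lipschitz continuity of $\ell$ together with the boundedness of the loudnesses under Assumption \ref{szamár}. I would argue each of the four components of $G$ (up, up-dir, do, do-dir) separately, the up-dir and do-dir ones being simpler since they involve only $D$ and no relaying. Since $G(\nu,\boldsymbol\tau)^{\varrho'}$ is a pushforward under $\varrho'$, for a fixed atom $(\xi,w)\in\mathbf W_\delta$ with preimage $S:=\varrho'^{-1}(\{(\xi,w)\})$ its up-component carries mass $\int_S \tau_1(R((x,s),(o,F_o),\nu))\,\nu(\d x,\d s)$, whereas $G((1+\varepsilon)\nu^{\varrho'},\boldsymbol\tau\circ\imath')$ puts mass $(1+\varepsilon)\,\nu(S)\,\tau_1(R((\xi,w),(o,F_o),(1+\varepsilon)\nu^{\varrho'}))$ there. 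Because $\tau_1\ge 0$, $1+\varepsilon\ge 1$ and $\tau_1$ is decreasing, both the upper and the lower bound follow at once from the pointwise QoS comparison
\[ R((\xi,w),(o,F_o),(1+\varepsilon)\nu^{\varrho'}) \le R((x,s),(o,F_o),\nu) \le R((\xi,w),(o,F_o),(1-\varepsilon)\nu^{\varrho'}) \]
valid for every $(x,s)\in S$, and its analogues for $D$ and the downlink.

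The core estimate is a uniform SIR comparison. Writing $\mathcal I_\nu(\eta)=\int_{\mathbf W}\ell(|x-\eta|)\,r\,\nu(\d x,\d r)$ and recalling that $\SIR$ does not depend on the receiver's loudness, I would show that there is a constant $a=a(\ell_{\min},J_2,d,r,\Fmin,\Fmax)$, independent of $\nu$ and of all positions, with $\mathcal I_{\nu^{\varrho'}}(\varrho_1(\eta))\in[(1-a\delta),(1+a\delta)]\,\mathcal I_\nu(\eta)$: moving each atom $(x,r)$ to $(\varrho_1(x),\varrho_2(r))$ and the receiver to $\varrho_1(\eta)$ changes distances by $O(\delta)$ and loudnesses by $O(\delta)$, so using $\ell$ Lipschitz together with $\ell\ge\ell_{\min}>0$ and $r\ge\Fmin>0$ the integrand ratio lies in $[1-a\delta,1+a\delta]$; the same bound controls the numerator $\ell(|\xi-\eta|)s$. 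Hence for any transmitter $(x,s)$ and receiver position $\eta$,
\[ \SIR((\varrho_1(x),\varrho_2(s)),(\varrho_1(\eta),\cdot),\nu^{\varrho'})\in\left[\tfrac{1-a\delta}{1+a\delta},\tfrac{1+a\delta}{1-a\delta}\right]\SIR((x,s),(\eta,\cdot),\nu), \]
and scaling $\nu^{\varrho'}$ by $1\pm\varepsilon$ multiplies the SIR by $(1\pm\varepsilon)^{-1}$. Choosing $\delta'(\varepsilon)\in\mathbb B$ so small that $\tfrac{1+a\delta'}{(1+\varepsilon)(1-a\delta')}\le 1$ and $\tfrac{1-a\delta'}{(1-\varepsilon)(1+a\delta')}\ge 1$ then forces, for all $\delta<\delta'$, the scaled discretized SIR below (resp.\ above) the continuous one, uniformly in $\nu$ and in all positions. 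Since $g$ is increasing and $D=g\circ\SIR$, the same ordering passes to every direct link $D$, hence to $\Gamma$ (a minimum of two such $D$), which already settles the up-dir and do-dir components.

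Finally I would promote this to $R$, whose relayed term is an essential supremum over relays. For the upper bound, each discretized relay $(\sigma',v')$ of positive $\nu^{\varrho'}$-mass has $\nu(\varrho'^{-1}(\{(\sigma',v')\}))>0$, and for every continuous relay $(\sigma,v)$ in this preimage the pointwise $\Gamma$-ordering (applied to both legs) yields $\Gamma_{\mathrm{disc}}\le\Gamma_{\mathrm{cont}}$; as this holds on a set of positive $\nu$-measure, $\Gamma_{\mathrm{disc}}\le \nu\text{-}\esssup\,\Gamma_{\mathrm{cont}}$, and taking the maximum over discretized relays gives $(1+\varepsilon)\nu^{\varrho'}\text{-}\esssup\,\Gamma_{\mathrm{disc}}\le \nu\text{-}\esssup\,\Gamma_{\mathrm{cont}}$. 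The lower bound is symmetric, using that positive $\nu$-mass pushes forward to positive $\nu^{\varrho'}$-mass. Combining the direct and relayed terms through the monotonicity of $\max$ delivers the required $R$-comparison. I expect the essential-supremum matching of relays between the two state spaces to be the main obstacle, since it is precisely where the non-monotonicity of $\nu\mapsto R(\cdot,\cdot,\nu)$ (more mass meaning both more interference and more available relays, cf.\ Section \ref{pajti}) must be navigated; everything else is a uniform Lipschitz-in-$\delta$ estimate. The degenerate cases $\nu(\mathbf W)=0$ or $\nu(S)=0$ are disposed of separately, both sides then vanishing on the atom in question.
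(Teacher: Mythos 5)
Your proposal is correct and follows essentially the same route as the paper's proof: reduce the measure inequality to a pointwise QoS comparison on each preimage $\varrho'^{-1}((x,u))$ (absorbing the factor $1\pm\varepsilon$ via $\tau_i\ge 0$ and monotonicity), establish a uniform $O(\delta)$ multiplicative SIR estimate from the Lipschitz continuity of $\ell$ and the bounds $\ell_{\min}\Fmin>0$, $\ell_{\max}\Fmax<\infty$, pass through $g$ to $D$ and $\Gamma$, and finally match relays of positive mass between the continuous and discretized state spaces to compare the essential suprema in $R$. The only differences are cosmetic (you bound numerator and interference separately rather than the ratio minus one, and phrase the relay-matching via positive-mass preimages rather than an attained essential supremum), so no gap.
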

Working in the discrete setting is substantially simpler. Instead of the marked Poisson process in $\mathbf W$, we have independent Poisson random variables attached to every element of the space-fading grid $\mathbf W_\delta$, with parameters $\mu'^{\varrho'}(\cdot,\centerdot)$. In particular, the corresponding relative entropy for the discretized setting is defined as\index{relative entropy!discrete}
\[ h(\nu \vert \mu'^{\varrho'}) =\sum\limits_{(x,u) \in \mathbf W_\delta} h \left( {\nu}((x,u)) \left| \right. {\mu}'^{\varrho'}(u) \right), \]
where for $a,~b \geq 0$ we write $h(a\vert b)=a \log \frac{a}{b}-a+b$, in accordance with \eqref{diszkrétrelatíventrópia}.

Then the analogue of Theorem \ref{1.1} in the discretized setting is stated as follows.
\begin{prop} \label{2.2}\index{main results!of the thesis!discrete version}
Let $0<\alpha<2$ and $\tau_i: [0,\infty) \rightarrow [0,\infty)$, for $i \in \four$, be bounded, measurable and decreasing functions which map $[\tilde{c}_+,\infty)$ to 0. Further, let $F: \mathcal{M}(\mathbf W_\delta)^4 \rightarrow [-\infty,\infty)$ be an increasing measurable function that is bounded from above. Moreover, assume that $F$ maps the vector of zero measures to $-\infty$. If $F$ and $\tau_i$ are u.s.c., then we have
\[ \limsup\limits_{\lambda \to \infty} \frac{1}{\lambda} \log \mathbb E_2 \exp (F(G(\alpha \mathbf L_\lambda^{\varrho'},~\boldsymbol \tau))) \leq - \inf\limits_{{\nu} \in \mathcal{M} (\mathbf W_\delta)} \left\lbrace h({\nu} \vert {\mu}'^{\varrho'}) - F(G(\alpha {\nu},~\boldsymbol \tau)) \right\rbrace, \]
whereas if $F$ and $\tau_i$ are l.s.c., then
\[ \liminf\limits_{\lambda \to \infty} \frac{1}{\lambda} \log \mathbb E_2 \exp (F(G(\alpha \mathbf L_\lambda^{\varrho'},~\boldsymbol \tau))) \geq - \inf\limits_{{\nu} \in \mathcal{M} (\mathbf W_\delta)} \left\lbrace h({\nu} \vert {\mu}'^{\varrho'}) - F(G(\alpha {\nu},~\boldsymbol \tau)) \right\rbrace. \]
\end{prop}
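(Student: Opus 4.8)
The plan is to deduce the statement from Varadhan's lemmas (Lemmas \ref{varadhanlower} and \ref{varadhanupper}), applied to the functional $\Phi(\nu)=F(G(\alpha\nu,\boldsymbol\tau))$ on the \emph{finite-dimensional} space $\mathcal{M}(\mathbf W_\delta)\cong[0,\infty)^{\mathbf W_\delta}$, after first establishing a level-2 LDP for the discretized empirical measure $\mathbf L_\lambda^{\varrho'}$ with the good rate function $h(\cdot\vert\mu'^{\varrho'})$. Here $\alpha>0$ is a fixed constant that plays no special role beyond being absorbed into $\Phi$. Writing $\varepsilon=\lambda^{-1}$ and $Z_\varepsilon=\mathbf L_\lambda^{\varrho'}$, the left-hand side is $\varepsilon\log\mathbb E_2\exp(\Phi(Z_\varepsilon)/\varepsilon)$, so Varadhan's conclusion reads $\sup_\nu\{\Phi(\nu)-h(\nu\vert\mu'^{\varrho'})\}=-\inf_\nu\{h(\nu\vert\mu'^{\varrho'})-F(G(\alpha\nu,\boldsymbol\tau))\}$, which is exactly the claimed right-hand side; the u.s.c.\ version of Varadhan will give the $\limsup$ bound and the l.s.c.\ version the $\liminf$ bound.

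First I would establish the LDP for $\mathbf L_\lambda^{\varrho'}$. By \eqref{mértékkel} the coordinates $\lambda\mathbf L_\lambda^{\varrho'}((x,u))$, for $(x,u)\in\mathbf W_\delta$, are independent and Poisson distributed with means $\lambda\mu'^{\varrho'}((x,u))$. For a single cell, infinite divisibility of the Poisson law lets me write the count (for integer $\lambda$) as a sum of $\lambda$ i.i.d.\ Poisson$(\mu'^{\varrho'}((x,u)))$ variables, so Cramér's theorem (Theorem \ref{nagycramér}) yields a one-dimensional LDP for $\mathbf L_\lambda^{\varrho'}((x,u))$ with the convex good rate function $a\mapsto h(a\vert\mu'^{\varrho'}((x,u)))$, since the logarithmic moment generating function $b(e^t-1)$ has Fenchel--Legendre transform $h(\cdot\vert b)$; non-integer $\lambda$ is handled by a routine interpolation. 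Because the cells are independent and each rate function is good (hence each coordinate family is exponentially tight in finite dimensions), iterating Proposition \ref{4.2.7} with the continuous superposition map $(\nu_1,\nu_2)\mapsto\nu_1+\nu_2$ combines these into a joint LDP on $\mathcal{M}(\mathbf W_\delta)$; the resulting inf-convolution of the single-cell rates (each infinite off its own cell) collapses to the additive good rate function $h(\nu\vert\mu'^{\varrho'})=\sum_{(x,u)\in\mathbf W_\delta}h(\nu((x,u))\vert\mu'^{\varrho'}((x,u)))$.

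It then remains to verify the hypotheses of Varadhan's lemmas for $\Phi$. The integrability requirement is immediate: since $F$ is bounded from above, so is $\Phi$, whence the tail condition \eqref{farkacska} (a fortiori the moment condition \eqref{momentecske}) holds trivially, while the lower bound (Lemma \ref{varadhanlower}) needs no integrability at all. The substance of the argument, and the main obstacle, is the semicontinuity of $\Phi$: I must show that the u.s.c.\ hypotheses on $F$ and the $\tau_i$ force $\Phi$ to be u.s.c., and likewise in the l.s.c.\ case. This is precisely where the difficulty flagged after \eqref{tautológia} enters, namely that $\nu$ appears both in the integrand (through $R$, $D$, $\Gamma$, all built from $\SIR((\cdot),(\cdot),\nu)$) and in the integrator. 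Away from the zero measure the denominator in \eqref{végretudjukmiasir} is a strictly positive finite sum, so $\SIR$, and hence $D=g(\SIR)$ and $\Gamma$ (a minimum of two values of $D$), are continuous in $\nu$; the only source of discontinuity is the $\nu$-essential supremum in $R$, which in the discrete setting is a maximum of $\Gamma$ over the finitely many cells carrying positive $\nu$-mass and may jump as a cell's mass vanishes. I would invoke the monotonicity and one-sided continuity properties of $\nu\mapsto G(\nu,\boldsymbol\tau)$ developed in Section \ref{pajti} to pin down that this map is semicontinuous in the direction matching the hypotheses, and then compose with the decreasing $\tau_i$ and the increasing $F$ (both u.s.c., resp.\ l.s.c.) to obtain semicontinuity of $\Phi$ on $\{\nu\neq0\}$.

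At the zero measure semicontinuity is automatic: since $\lVert\tau_i\rVert_\infty<\infty$ and the prefactor $\nu((x,u))$ tends to $0$, we have $G(\alpha\nu,\boldsymbol\tau)\to0$ as $\nu\to0$, and because $F$ maps the vector of zero measures to $-\infty$, the value $\Phi(\nu)\to-\infty=\Phi(0)$, so u.s.c.\ at $0$ holds and l.s.c.\ at $0$ is trivial. Finally, the extension of Varadhan's lemmas to functionals $\Phi$ valued in $[-\infty,\infty)$ is obtained by truncating $F$ from below at $-M$, applying Lemmas \ref{varadhanlower} and \ref{varadhanupper} to the bounded u.s.c.\ (resp.\ l.s.c.) functional, and letting $M\to\infty$, the degenerate $-\infty$ values contributing nothing to the supremum. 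I expect the semicontinuity analysis of $\nu\mapsto G(\nu,\boldsymbol\tau)$ to be the genuinely delicate step, all other ingredients being either finite-dimensional LDP bookkeeping or direct consequences of $F$ being bounded above and sending the zero measure to $-\infty$.
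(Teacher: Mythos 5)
Your derivation of the finite-dimensional LDP for $\mathbf L_\lambda^{\varrho'}$ (Cramér per cell, then Proposition \ref{4.2.7} to combine the independent coordinates into the additive rate function $h(\cdot\vert\mu'^{\varrho'})$) and your treatment of the upper bound (u.s.c.\ of $\nu\mapsto F(G(\alpha\nu,\boldsymbol\tau))$ via Remark \ref{3.7,8}, then Lemma \ref{varadhanupper}, with the tail condition trivial because $F$ is bounded above) match the paper's proof.

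The gap is in the lower bound. You write that you would ``invoke the monotonicity and one-sided continuity properties of $\nu\mapsto G(\nu,\boldsymbol\tau)$ developed in Section \ref{pajti} to pin down that this map is semicontinuous in the direction matching the hypotheses,'' but those lemmas establish the opposite: by Lemma \ref{3.6} the map $\nu\mapsto R((x,u),(o,F_o),\nu)$ is only \emph{lower} semicontinuous, and composing a lower semicontinuous map with a \emph{decreasing} l.s.c.\ $\tau_i$ does not yield a l.s.c.\ composition --- the inequality $\liminf_n R(\nu_n)\geq R(\nu)$ combined with $\tau_i$ decreasing gives an upper, not a lower, bound on $\liminf_n\tau_i(R(\nu_n))$. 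Indeed the paper states explicitly that $\nu\mapsto G(\nu,\boldsymbol\tau)$ is \emph{not} l.s.c.\ for the relayed communication types (a cell's mass vanishing removes a relay and causes a downward jump in $R$, hence an upward jump in the number of frustrated users), so Lemma \ref{varadhanlower} cannot be applied to $\Phi$ directly. The paper's lower bound instead replaces $\Gamma$ by $\Gamma_\varepsilon((x,u),(y,v),(o,F_o),\nu)=\min\{1,\varepsilon^{-1}\nu((y,v))\}\,\Gamma(\cdots)$, which makes $\nu\mapsto G_\varepsilon(\nu,\boldsymbol\tau)$ continuous, and then uses the sprinkling construction of Section \ref{sprinkle} (Lemmas \ref{3.9}, \ref{4.1} and \ref{4.2}): at the slightly increased intensity $\lambda'=(1+2\varepsilon_0\kappa_\delta^{-1})\lambda$ every occupied site carries mass at least $\varepsilon_0$ with conditional probability at least $\exp(-\sqrt{\varepsilon_0}\lambda)$, so that $G_{\varepsilon_0\alpha_-}(\alpha_-\mathbf L_{\lambda'}^{\varrho'},\boldsymbol\tau)\leq\alpha\,G(\mathbf L_\lambda^{\varrho'},\boldsymbol\tau)$ on that event and the approximation costs nothing on the exponential scale; one then applies Varadhan's lower bound to the continuous approximant and removes $\varepsilon_0$ and $\alpha_-\uparrow\alpha$ via Lemma \ref{3.11}. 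None of this machinery appears in your proposal, and without it the l.s.c.\ hypothesis of Lemma \ref{varadhanlower} simply fails for the relayed components.
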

The difficulty in the proof of Proposition \ref{2.2} is caused by the discontinuity of the function ${\nu} \mapsto G(\nu,~ \boldsymbol \tau)$\index{frustration events!discontinuity}. Indeed, if the number of users on a certain site tends to zero, then in the limit users cannot relay via this site. This might cause a sudden drop in the QoS and therefore a sudden increase of the number of frustrated users. Therefore we have to handle continuity problems originating from configurations that exhibit a small but positive number of users. Such problems can only appear for relayed communication; we will prove in Section \ref{pajti} that ${\nu} \mapsto G(\nu,~ \tau,~\text{up-dir})$ and ${\nu} \mapsto G(\nu,~\tau,~\text{do-dir})$ are continuous. In order to overcome such continuity problems, we will use a sprinkling argument, following \cite[Section 3.2]{cikk}. That is, increasing the Poisson intensity slightly, we add a small number of additional users in such a way that after the sprinkling every occupied site contains a number of users of the same order as the Poisson intensity. We show that assuming that we observe the desired kind of sprinkling comes at negligible cost on the exponential scale, and that on the resulting configurations the map $\nu \mapsto G(\nu,~\boldsymbol \tau)$ exhibits the desired continuity properties. As long as we assume that $\Finite$, we can perform the sprinkling construction analogously to \cite[Section 3.2]{cikk}, incorporating the fading dimension. The effect of the random nature of the fadings appears rather in the form of the minimizers of relative entropy \eqref{fontos} than in the proof techniques of this chapter. 

\section{Auxiliary lemmas} \label{pajti}

The purpose of this section is to set up the preliminary results that we will need for proving the propositions of Section \ref{kijelentés}, following the approach of \cite[Section 3.1]{cikk}, incorporating the new loudness dimension. Some preliminary results in the fading-free setting are related to mobility of users, these are therefore not needed in the setting of this thesis. We also omit proofs that are, after trivial changes of notation, entirely analogous to the corresponding proofs in the fading-free setting of \cite{cikk}, while we present all the proofs which do not immediately follow from those fading-free analogues. We introduce the following notations for the empirical measures in the discretized setting: $X^\lambda_\delta=\lambda L_\lambda^{\varrho_1}$ and $\mathbf X^\lambda_\delta=\lambda \mathbf L_\lambda^{\varrho'}$. We write $V(\nu)=\lbrace (\upsilon,s) \in \mathbf W_\delta \vert~ \nu((\upsilon,s))=0 \rbrace$, for $\nu \in \mathcal{M}(\mathbf W_\delta)$ as the zero set of the measure $\nu$\index{zero sets of measures}.

\begin{lem} \label{3.1}
Let $\delta \in \mathbb B$, $(\xi,s),(\eta,u) \in \mathbf W_\delta$ be arbitrary.
\begin{enumerate}[(i)]
\item If $\nu, \nu' \in \mathcal{M}(\mathbf W_\delta)$ are such that $\nu \leq \nu'$, then $D((\xi,s),~(\eta,u),~\nu') \leq D((\xi,s),~(\eta,u),~\nu)$. 
\item If $\nu, \nu' \in \mathcal{M}(\mathbf W_\delta)$ are such that $\nu < \nu'$ and $D((\xi,s),~(\eta,u),~\nu')<\tilde{c}_+$, then \\ $D((\xi,s),~(\eta,u),~\nu') < D((\xi,s),~(\eta,u),~\nu)$.
\item If $\nu, \nu' \in \mathcal{M}(\mathbf W_\delta)$ are such that $\nu \leq \nu'$ and $V(\nu)=V(\nu')$, then \\ $R((\xi,s),~(\eta,u),~\nu') \leq R((\xi,s),~(\eta,u),~\nu)$.
\item If $\nu, \nu' \in \mathcal{M}(\mathbf W_\delta)$ are such that $\nu < \nu'$, $V(\nu)=V(\nu')$ and $R((\xi,s),~(\eta,u),~\nu')<\tilde{c}_+$, then $R((\xi,s),~(\eta,u),~\nu') < R((\xi,s),~(\eta,u),~\nu)$.
\item If $\lambda' \geq \lambda >0$ and $\sigma \in (0,1)$ are such that $\frac{X_{\delta}^{\lambda'} (W_{\delta})}{ X_{\delta}^{\lambda} (W_{\delta})} \leq 1 + \frac{\ell_{\min} \Fmin (1-\sigma)}{\ell_{\max} \Fmax \sigma}$, then we have that $\mathbb P_2$-almost surely, $D(\xi,~\eta,~\mathbf L^{\varrho'}_{\lambda}) \leq D(\xi,~\eta,~\sigma \mathbf L^{\varrho'}_{\lambda'})$.
\end{enumerate}
\end{lem}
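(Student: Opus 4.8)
The plan is to reduce every part to the single structural fact that $D$ depends on its measure argument only through the interference $\nu(\ell(\vert\cdot-\eta\vert)\centerdot)=\int_{\mathbf W}\ell(\vert x-\eta\vert)\,r\,\nu(\mathrm d x,\mathrm d r)$ in the denominator of $\SIR$, and that $g$ is nondecreasing. I abbreviate $D(\nu)=D((\xi,s),(\eta,u),\nu)$, $R(\nu)=R((\xi,s),(\eta,u),\nu)$ and $\Gamma_{(\sigma,v)}(\nu)=\Gamma((\xi,s),(\sigma,v),(\eta,u),\nu)$. For (i), the integrand $\ell(\vert x-\eta\vert)\,r$ is nonnegative, so $\nu\le\nu'$ gives a larger interference for $\nu'$, hence $\SIR((\xi,s),(\eta,u),\nu')\le\SIR((\xi,s),(\eta,u),\nu)$, and monotonicity of $g$ gives $D(\nu')\le D(\nu)$; the cases $\nu(\mathbf W)=0$ or $\nu'(\mathbf W)=0$ are immediate from $g\le\tilde{c}_+$. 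For (iii) I first note that in the discrete setting the $\nu$-essential supremum over $(\sigma,v)\in\mathbf W_\delta$ is the maximum over the support $\mathbf W_\delta\setminus V(\nu)$, which is finite and, because $V(\nu)=V(\nu')$, identical for $\nu$ and $\nu'$. Since $\Gamma_{(\sigma,v)}$ is a minimum of two $D$-terms, (i) yields $\Gamma_{(\sigma,v)}(\nu')\le\Gamma_{(\sigma,v)}(\nu)$ pointwise on this common index set; taking maxima and combining with the direct term through $R(\nu)=\max\{D(\nu),\max_{(\sigma,v)}\Gamma_{(\sigma,v)}(\nu)\}$ gives (iii).

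The strict statements (ii) and (iv) rest on two sharpenings. Under Assumption \ref{szamár} one has $\ell>0$ and $r\ge\Fmin>0$, so the integrand is strictly positive on $\mathbf W_\delta$; hence $\nu<\nu'$ forces a strict increase of the interference and $\SIR((\xi,s),(\eta,u),\nu')<\SIR((\xi,s),(\eta,u),\nu)$. Moreover $g$ is strictly increasing on the closed interval $[0,\tilde{\varrho}_+]$, since by continuity $g(\tilde{\varrho}_+)=\tilde{c}_+>g(x)$ for $x<\tilde{\varrho}_+$. For (ii), $D(\nu')<\tilde{c}_+$ means $\SIR((\xi,s),(\eta,u),\nu')<\tilde{\varrho}_+$, and then either $\SIR((\xi,s),(\eta,u),\nu)\le\tilde{\varrho}_+$, so both arguments lie in the region of strict monotonicity of $g$, or $\SIR((\xi,s),(\eta,u),\nu)>\tilde{\varrho}_+$, so $D(\nu)=\tilde{c}_+>D(\nu')$; either way $D(\nu')<D(\nu)$ (and $\nu(\mathbf W)=0$ gives $D(\nu)=\tilde{c}_+$ directly).

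For (iv), observe first that $V(\nu)=V(\nu')$ together with $\nu<\nu'$ forces a common nonempty support, and that $R(\nu')<\tilde{c}_+$ makes both components of $R(\nu')=\max\{D(\nu'),E'\}$, with $E'=\nu'\text{-}\esssup_{(\sigma,v)}\Gamma_{(\sigma,v)}(\nu')$, strictly below $\tilde{c}_+$. If $R(\nu')=D(\nu')$, then (ii) gives $R(\nu')=D(\nu')<D(\nu)\le R(\nu)$. If $R(\nu')=E'$, choose a relay $(\sigma^*,v^*)$ in the finite common support attaining $E'$; the binding $D$-term in $\Gamma_{(\sigma^*,v^*)}(\nu')=E'<\tilde{c}_+$ is strictly increased at $\nu$ by (ii), while the other term does not decrease by (i), and a short case check then gives $\Gamma_{(\sigma^*,v^*)}(\nu)>E'$, whence $R(\nu)\ge\Gamma_{(\sigma^*,v^*)}(\nu)>E'=R(\nu')$. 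Propagating strictness through the inner $\min$ of $\Gamma$ and the outer $\max$/essential supremum of $R$, and using $V(\nu)=V(\nu')$ to guarantee that the $\nu'$-optimal relay is still charged by $\nu$, is the main obstacle, so I would write this final case out in full.

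Part (v) is the only one that appeals to the sprinkling coupling of Section \ref{sprinkle} rather than to pure monotonicity. Since $D$ is nonincreasing in the interference by (i), it suffices to show $\sigma\,\mathbf L_{\lambda'}^{\varrho'}(\ell(\vert\cdot-\eta\vert)\centerdot)\le\mathbf L_{\lambda}^{\varrho'}(\ell(\vert\cdot-\eta\vert)\centerdot)$. Abbreviating these two interferences by $I_{\lambda'}$ and $I_\lambda$, the sprinkling coupling realises the higher-intensity configuration as the lower one with extra users added, so $I_{\lambda'}=I_\lambda+(\text{contribution of the added users})$ with $I_{\lambda'}\ge I_\lambda$; by \eqref{korlátka} the added contribution is at most $\ellmax\Fmax$ times the number of added users, while $I_\lambda$ is at least $\ellmin\Fmin$ times the number of existing users (both up to the common normalisation, which cancels in the ratio below). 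Decomposing $\sigma I_{\lambda'}=\sigma I_\lambda+\sigma(I_{\lambda'}-I_\lambda)$, the hypothesised bound on $X_{\delta}^{\lambda'}(W_{\delta})/X_{\delta}^{\lambda}(W_{\delta})$ is exactly the break-even ratio that forces $\sigma(I_{\lambda'}-I_\lambda)\le(1-\sigma)I_\lambda$, i.e. $\sigma I_{\lambda'}\le I_\lambda$; monotonicity of $D$ then yields $D(\xi,\eta,\mathbf L^{\varrho'}_{\lambda})\le D(\xi,\eta,\sigma\mathbf L^{\varrho'}_{\lambda'})$ $\mathbb P_2$-almost surely. The only point needing care here is that the coupling genuinely adds users, so that $I_{\lambda'}\ge I_\lambda$ with a controllable increment, which is supplied by Section \ref{sprinkle}.
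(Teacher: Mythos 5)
Your argument is correct and matches the paper's proof in all essentials: parts (i)--(iv) follow, as in the paper, from the monotonicity of the interference integral $\nu\mapsto\nu(\ell(|\cdot-\eta|)\centerdot)$ combined with the (strict) monotonicity of $g$ below $\tilde\varrho_+$ and the common finite support guaranteed by $V(\nu)=V(\nu')$, and part (v) uses the same additive decomposition of the interference with the $\ellmax\Fmax/(\ellmin\Fmin)$ break-even bound. The only point to tidy is in (v): the normalisations $1/\lambda$ and $1/\lambda'$ are not common and do not literally cancel, but since $\lambda'\ge\lambda$ one has $\mathbf L_{\lambda'}^{\varrho'}(L(\cdot,\eta))-\mathbf L_{\lambda}^{\varrho'}(L(\cdot,\eta))\le\lambda^{-1}\bigl(\mathbf X^{\lambda'}_\delta(L(\cdot,\eta))-\mathbf X^{\lambda}_\delta(L(\cdot,\eta))\bigr)$, after which your computation coincides with the paper's; note also that only the elementary monotone coupling $X^\lambda_\delta\le X^{\lambda'}_\delta$ is needed here, not the sprinkling construction of Section \ref{sprinkle}.
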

\begin{proof}
Assume $\nu \leq \nu'$. Then, since \begin{equation} \label{kétsir} \frac{\SIR((\xi,s),~(\eta,u),~\nu')}{\SIR((\xi,s),~(\eta,u),~\nu)}=\frac{\nu(\ell(\vert \cdot-\eta \vert\centerdot)}{\nu'(\ell(\vert \cdot-\eta \vert\centerdot)} \end{equation} and both the numerator and the denominator is finite under Assumption \ref{szamár}, the monotonicity properties of $g$ imply (i) and (ii). This monotonicity extends to expressions of the form $\Gamma((\xi,s),~(\eta,u),~(\upsilon,v),~\nu)$ as well, where $(\xi,s),~(\eta,u),~(\upsilon,v) \in \mathbf W_\delta$. Under the additional condition $V(\nu)=V(\nu')$, we have that $\nu$ and $\nu'$ have the same zero-set, and therefore (iii) and (iv) also holds.

For (v), by \eqref{kétsir} and the monotonicity of $g$, it suffices to show that $\mathbb{P}_2$-a.s.
\[ \frac{\mathbf L_{\lambda',t}(L(\cdot,\eta))-\mathbf L_{\lambda}(L(\cdot,\eta))}{\mathbf L_{\lambda}(L(\cdot,\eta))} \leq \frac{\lambda' \mathbf L_{\lambda'}(L(\cdot,\eta)) - \lambda \mathbf L_{\lambda}(L(\cdot,\eta))}{\lambda \mathbf L_{\lambda}(L(\cdot,\eta))} \leq \frac{1-\sigma}{\sigma}. \]
Indeed, under the event $\lbrace X^\lambda<\infty,~ F_\xi \in [\Fmin, \Fmax], \forall \xi \in X^\lambda \rbrace$, i.e., $\mathbb P_2$-a.s., we have
\[ \frac{\mathbf L_{\lambda'}(L(\cdot,\eta))-\mathbf L_{\lambda}(L(\cdot,\eta))}{\mathbf L_{\lambda}(L(\cdot,\eta))} = \frac{\frac{\lambda}{\lambda'} \mathbf X^{\lambda'}_{\delta}(L(\cdot,\eta))-\mathbf X^{\lambda}_{\delta}(L(\cdot,\eta))}{\mathbf X^{\lambda}_{\delta}(L(\cdot,\eta))} \leq \frac{\ell_{\max} \Fmax}{\ell_{\min} \Fmin} \frac{X^{\lambda'}_{\delta}(W_\delta)-X^{\lambda}_{\delta}(W_\delta)}{X^{\lambda}_{\delta}(W_\delta)} \leq \frac{1-\sigma}{\sigma},  \]
as asserted.
\end{proof}
The next lemma establishes Lipschitz continuity and in particular Borel measurability for the QoS quantities as functions on $\mathbf W$. Note that since on a finite-dimensional vector space all norms are equivalent, it is sufficient to use the following $\ell^1$ product norm\index{$\ell^1$ product norm} $\Vert \cdot \Vert$ on $\mathbb R^d \times \mathbb R=\mathbb R^{d+1} \supset \mathbf W$ \[ \mathbb R^{d+1}=\mathbb R^d \times \mathbb R \ni (x,u) \mapsto \vert x \vert + \vert u \vert = \Vert x \Vert_2 + \vert u \vert. \] This norm simplifies the following computations, and shows the effect of the deviations of path-losses and fadings separately. In the same time, it preserves the geometrical structure given by the square norm in the space coordinate. 
\begin{lem} \label{3.5} Let $(y,v) \in \mathbf W$ and $\nu \in \mathcal{M}(\mathbf W)$. Then, as mappings from $\mathbf W$ to $[0,\infty)$
\begin{enumerate}[(i)]
\item $(x,u) \mapsto D((x,u),~(y,v),~{\nu})$ is Lipschitz continuous,
\item $(x,u) \mapsto R((x,u),~(o,F_o),~{\nu})$, $(x,u) \mapsto R((o,F_o),~(x,u),~{\nu})$ are Lipschitz continuous.
\end{enumerate}
\end{lem}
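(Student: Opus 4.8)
The plan is to build all three maps out of the single quantity $\SIR((\xi,s),(\eta,u),\nu)$, which is a quotient of an $\ell$-weighted numerator by the positive interference $Z(\eta,\nu):=\int_{\mathbf W}\ell(\vert x-\eta\vert)\,r\,\nu(\mathrm d x,\mathrm d r)$, and then to track how Lipschitz continuity survives the operations $g(\cdot)$, $\min$, $\nu$-$\esssup$ and $\max$ that assemble $D$, $\Gamma$ and $R$. Throughout, under Assumption \ref{szamár} we have $\mathbf W=W\times[\Fmin,\Fmax]$ with $0<\Fmin\le\Fmax<\infty$ and $0<\ellmin\le\ellmax<\infty$, which gives the uniform two-sided bound $\ellmin\Fmin\,\nu(\mathbf W)\le Z(\eta,\nu)\le\ellmax\Fmax\,\nu(\mathbf W)$ for every receiver position $\eta$. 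If $\nu(\mathbf W)=0$ then $D\equiv\tilde c_+$ and hence $R\equiv\tilde c_+$ are constant, so throughout I may assume $\nu(\mathbf W)>0$, whence $Z$ is bounded away from $0$ and $\infty$ uniformly in $\eta$.

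For (i) I would fix the receiver $(y,v)$, so that the denominator $Z(y,\nu)$ is a fixed positive constant and only the numerator $(x,u)\mapsto\ell(\vert x-y\vert)\,u$ varies. This numerator is a product of the bounded Lipschitz functions $x\mapsto\ell(\vert x-y\vert)\in[\ellmin,\ellmax]$ (Lipschitz since $\ell$ has parameter $J_2$ and $x\mapsto\vert x-y\vert$ is $1$-Lipschitz) and $u\mapsto u\in[\Fmin,\Fmax]$, hence itself Lipschitz; dividing by the constant $Z(y,\nu)$ and composing with the Lipschitz function $g$ yields (i).

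For the uplink map in (ii) I would write $R((x,u),(o,F_o),\nu)$ as the maximum of $D((x,u),(o,F_o),\nu)$, which is Lipschitz by (i), and the term $\nu$-$\esssup_{(\sigma,v)}\Gamma((x,u),(\sigma,v),(o,F_o),\nu)$. The point that makes the essential supremum harmless is uniformity: by the lower bound $Z(\sigma,\nu)\ge\ellmin\Fmin\,\nu(\mathbf W)$, the Lipschitz constant produced in (i) for $(x,u)\mapsto D((x,u),(\sigma,v),\nu)$ does not depend on the relay $(\sigma,v)$; call it $L$. Since $D((\sigma,v),(o,F_o),\nu)$ is constant in $(x,u)$, the map $(x,u)\mapsto\Gamma((x,u),(\sigma,v),(o,F_o),\nu)$ is the minimum of an $L$-Lipschitz function and a constant, hence $L$-Lipschitz with the same $L$ for all $(\sigma,v)$. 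Finally, for $\nu$-a.e.\ $(\sigma,v)$ we have $\Gamma((x_1,u_1),\cdots)\le\Gamma((x_2,u_2),\cdots)+L\Vert(x_1,u_1)-(x_2,u_2)\Vert$; applying $\nu$-$\esssup$ to both sides and using that the increment is a constant, then exchanging the two points, shows the essential supremum is $L$-Lipschitz, and the maximum of two Lipschitz functions is Lipschitz.

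The downlink map $R((o,F_o),(x,u),\nu)$ is where the real obstacle lies, and it is the step I expect to be most delicate: here the varying argument is the \emph{receiver}, whose position $x$ enters both the numerator $\ell(\vert\sigma-x\vert)v$ and the denominator $Z(x,\nu)=\int_{\mathbf W}\ell(\vert x'-x\vert)\,r\,\nu(\mathrm d x',\mathrm d r)$ of the relevant $\SIR$ quantities (the receiver loudness does not appear at all). In place of the product argument I would use a quotient estimate: $x\mapsto Z(x,\nu)$ is Lipschitz with constant at most $J_2\Fmax\,\nu(\mathbf W)$, is bounded above by $\ellmax\Fmax\,\nu(\mathbf W)$ and below by $\ellmin\Fmin\,\nu(\mathbf W)>0$, and the numerator is bounded and Lipschitz, so the standard bound for ratios of bounded Lipschitz functions with denominator bounded away from $0$ gives a Lipschitz estimate, again with a constant uniform over the relay $(\sigma,v)$. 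With this uniform constant in hand, the $\min$/$\nu$-$\esssup$/$\max$ structure is handled exactly as in the uplink case. Thus the only genuine difficulty is securing a Lipschitz constant that is uniform in the relay variable so that the essential supremum preserves it, and this uniformity is precisely what the bound $Z\ge\ellmin\Fmin\,\nu(\mathbf W)$ provides.
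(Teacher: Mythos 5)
Your proposal is correct and follows essentially the same route as the paper: establish Lipschitz continuity of the SIR in the transmitter with a constant uniform in the receiver/relay (via the lower bound $\ellmin\Fmin\nu(\mathbf W)$ on the interference), compose with the Lipschitz function $g$, and then push the uniform constant through the $\min$, $\nu$-$\esssup$ and $\max$ operations. The only differences are presentational: you handle the essential supremum by applying $\nu$-$\esssup$ to a pointwise inequality rather than the paper's near-maximizer $\varepsilon$-argument, and you spell out the quotient estimate needed when the \emph{receiver} varies (so that $x$ enters the denominator $\nu(\ell(\vert\cdot-x\vert)\centerdot)$ as well), a step the paper compresses into "the proof is analogous."
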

\begin{proof}
First we note that if ${\nu}(\mathbf W)=0$, then by the definition of $g$, we have that ${D}$ and ${R}$ are constant and hence Lipschitz continuous. Let ${\nu}(\mathbf W) >0$. We show that $x \mapsto \SIR((x,u),~(y,v),~{\nu})$ is Lipschitz continuous. Indeed, for any $(x,u),~(x',u') \in \mathbf W$ the following holds
\begin{align*} \Vert \SIR((x,u),~(y,v),~{\nu})-\SIR((x',u'),~(y,v),~{\nu}) \Vert &\leq \frac{1}{\ell_{\min} \Fmin {\nu}(\mathbf  W)} \vert \ell(\vert x-y \vert) u-\ell(\vert x'-y \vert) u'\vert \\ &\leq   \frac{\left( \vert \ell(\vert x-y\vert) (u - u') \vert  + \vert \ell ( \vert x-y\vert) u' - \ell(\vert x'-y \vert) u' \vert \right) }{\ell_{\min} \Fmin {\nu}(\mathbf W)}  \\ &\leq  \frac{1}{\ell_{\min} \Fmin {\nu}(\mathbf W)}  \left( \ellmax \vert u-u' \vert + \Fmax J_2 \vert x-x' \vert \right) \\ &\leq \frac{1}{\ell_{\min} \Fmin {\nu}(\mathbf W)} (\ellmax+J_2 \Fmax) \Vert (x,u)-(x',u') \Vert. \end{align*} Note that the Lipschitz parameter provided by this inequality is independent of $(y,v)$. Now, since $g$ is Lipschitz continuous, (i) follows from the definition of ${D}$. Now we show that the claim of (ii) holds for ${R}((x,u),~(y,v)~{\nu})$. 
Let $\varepsilon>0$ be arbitrary and $(x,u),~(x',u') \in \mathbf W$, then we have
\begin{multline} \label{Epsz Ilona} \nu\text{-}\esssup\limits_{(y,v) \in \mathbf W} \Gamma((x,u),~(y,v),~(o,F_o),~\nu)-{\nu}\text{-}\esssup\limits_{(y,v) \in \mathbf W} \Gamma((x',u'),~(y,v),~(o,F_o),~{\nu}) \\ \leq  \Gamma((x,u),~(y,v),~(o,F_o),~{\nu})- \Gamma((x',u'),~(y,v),~(o,F_o),~{\nu})+2\varepsilon \end{multline}
for some $(y,v)=(y,v)( (x,u)) \in N^c$, where $N=N((x',u'))$ is a $\nu$-nullset. Since $\Gamma$ is a maximum of Lipschitz continous functions, with parameter independent of $(y,v)$, in the r.h.s. of \eqref{Epsz Ilona} we can further estimate \begin{small}
\[\Gamma((x,u),~(y,v),~(o,F_o),~{\nu})- \Gamma((x',u'),~(y,v),~(o,F_o),~{\nu}) \leq \alpha (\vert x - x' \vert + \vert u - u' \vert) = \alpha \Vert (x,u)-(x',u') \Vert \] \end{small}
\hspace{-8pt} for some constant $\alpha >0$. Sending $\varepsilon$ to zero and using the symmetry in $(x,u)$ and $(x',u')$ gives the Lipschitz continuity of ${R}((x,u),~(o,F_o),~{\nu})$. For ${R}((o,F_o),~(x,u),~{\nu})$, the proof is analogous.
\end{proof}
The following result corresponds to the discretized setting in the case $\Finite$. Note that in the case of relayed communication, the QoS of a given user is at least as sensitive to the distribution of the surrounding users than in the fading-free setting. This is true because the disappearance of possible relays\index{means of communication!uplink!relayed}\index{means of communication!downlink!relayed}, caused by the user being situated in an area that is empty, 
may lead to a sudden decrease in QoS. This is the reason why ${\nu} \mapsto {R}((x,u),~(o,F_o),~{\nu})$ and ${\nu} \mapsto {R}((o,F_o),~(x,u),~{\nu})$ are only l.s.c, analogously to the fading-free setting.
\begin{lem} \label{3.6}
For all $(x,u),~(y,v)~\in \mathbf W_\delta$, the maps ${\nu} \mapsto {D}((x,u),~(y,v),~{\nu}),~ {\nu} \mapsto {R}((x,u),~(o,F_o),~{\nu}),~\\ {\nu} \mapsto {R}((o,F_o),~(x,u),~{\nu})$, from $\mathcal{M}(\mathbf W_\delta)$ to $[0,\infty)$ are continuous, l.s.c. and l.s.c. respectively.
\end{lem}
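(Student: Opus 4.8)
The plan is to reduce the whole statement to two facts: that $\nu \mapsto D((x,u),(y,v),\nu)$ is continuous, and that on the \emph{finite} discretized space the $\nu$-essential supremum appearing in $R$ is simply an ordinary maximum over the support of $\nu$. Throughout I use that under Assumption~\ref{szamár} the space $\mathcal{M}(\mathbf W_\delta)$ is identified with $[0,\infty)^{\mathbf W_\delta}$, so that $\nu_n \to \nu$ in the weak topology is equivalent to coordinatewise convergence $\nu_n((x,u)) \to \nu((x,u))$ for every $(x,u) \in \mathbf W_\delta$, and that the denominator of $\SIR((x,u),(y,v),\nu)$ in \eqref{végretudjukmiasir} becomes the finite sum $\nu(\ell(\vert \cdot-y \vert )\centerdot)=\sum_{(x',s') \in \mathbf W_\delta} \ell(\vert x'-y\vert)\, s'\, \nu((x',s'))$, a nonnegative linear combination of the coordinates of $\nu$.

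First I would prove continuity of $D = g \circ \SIR$, distinguishing two cases. If $\nu(\mathbf W) > 0$, the numerator $\ell(\vert x-y\vert)u$ of $\SIR$ is a constant and the denominator is continuous in $\nu$ and, by $0 < \ellmin \le \ell \le \ellmax < \infty$ and $0<\Fmin$, bounded below by $\ellmin \Fmin\, \nu(\mathbf W)$; since $\nu \mapsto \nu(\mathbf W)$ is continuous and stays positive near a fixed $\nu_0 \neq 0$, the quotient $\SIR$ is continuous at $\nu_0$, and composing with the continuous $g$ yields continuity of $D$. For the remaining case $\nu_0 = 0$, one has $D((x,u),(y,v),\nu_0) = \tilde{c}_+$ by definition, and since $\SIR \ge \ellmin\Fmin/(\ellmax\Fmax\,\nu(\mathbf W)) \to \infty$ as $\nu(\mathbf W) \to 0$, the estimate \eqref{kisbecslés} shows $D = \tilde{c}_+$ for every $\nu$ with $\nu(\mathbf W)$ below a positive threshold; thus $D$ is locally constant near $0$, and continuity at $\nu_0 = 0$ follows.

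Given continuity of $D$, each map $\nu\mapsto\Gamma((x,u),(\sigma,v),(o,F_o),\nu) = \min\{D((x,u),(\sigma,v),\nu),\, D((\sigma,v),(o,F_o),\nu)\}$ is continuous as a minimum of two continuous functions. I would then write $R((x,u),(o,F_o),\nu) = \max\{D((x,u),(o,F_o),\nu),\, M(\nu)\}$, where $M(\nu) = \max_{(\sigma,v):\,\nu((\sigma,v))>0}\Gamma((x,u),(\sigma,v),(o,F_o),\nu)$, using that on the finite set $\mathbf W_\delta$ the $\nu$-essential supremum equals the ordinary maximum of $\Gamma$ over the support $\{(\sigma,v): \nu((\sigma,v))>0\}$. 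The crux is that $M$ is l.s.c. Let $\nu_n \to \nu_0$; if $\nu_0 \neq 0$, choose a support site $(\sigma^\ast,v^\ast)$ attaining $M(\nu_0)$. Since $\nu_n((\sigma^\ast,v^\ast)) \to \nu_0((\sigma^\ast,v^\ast)) > 0$, this site lies in the support of $\nu_n$ for all large $n$, whence $M(\nu_n) \ge \Gamma((x,u),(\sigma^\ast,v^\ast),(o,F_o),\nu_n) \to \Gamma((x,u),(\sigma^\ast,v^\ast),(o,F_o),\nu_0) = M(\nu_0)$ by continuity of $\Gamma$, giving $\liminf_n M(\nu_n) \ge M(\nu_0)$. (The case $\nu_0 = 0$ is immediate: there $D = \tilde{c}_+$ on a neighbourhood while $M \le \tilde{c}_+$ always, so $R = \tilde{c}_+$ is locally constant.) As a maximum of the continuous $D$ and the l.s.c. $M$, the map $R$ is l.s.c.; the downlink map $\nu \mapsto R((o,F_o),(x,u),\nu)$ is handled identically, with transmitter and receiver exchanged.

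The main obstacle is precisely the behaviour of the support under limits. Lower semicontinuity, but not continuity, holds because a support site of $\nu_0$ survives into all nearby $\nu_n$ (lower semicontinuity of the support map under coordinatewise convergence), so no admissible relay of $\nu_0$ is lost in the limit; the map fails to be u.s.c. exactly because the $\nu_n$ may have additional occupied sites lying outside the support of $\nu_0$, supplying extra relays that can push $M(\nu_n)$ above $M(\nu_0)$. Making the reduction from the $\nu$-essential supremum to a finite maximum precise, and tracking which sites stay occupied in the limit, is the only genuinely delicate point; everything else reduces to continuity of elementary algebraic operations together with the uniform bounds supplied by $\Finite$.
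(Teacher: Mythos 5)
Your argument is correct and follows essentially the same route as the proof the paper has in mind: the thesis omits the proof, deferring to the fading-free analogue in \cite{cikk}, and your two key observations --- that on the finite grid $\mathbf W_\delta$ the $\nu$-essential supremum is an ordinary maximum over the occupied sites, and that any site in the support of $\nu_0$ stays occupied for all nearby $\nu_n$ (while new sites may appear, which is exactly why upper semicontinuity fails) --- are precisely the mechanism that proof relies on. The continuity of $D$ near $\nu=0$ via the local constancy from \eqref{kisbecslés} and away from $0$ via the lower bound $\ellmin\Fmin\,\nu(\mathbf W)$ on the denominator is also handled correctly, so nothing is missing.
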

The proof of this lemma is entirely analogous to the one of \cite[Lemma 3.6]{cikk}, therefore we omit it. 
As in \cite[Section 3.1]{cikk}, we call a function $f: [0,\infty)^m \to [-\infty,\infty)$ decreasing if $f$ is decreasing w.r.t. the partial order on $[0,\infty)^m$ given by \begin{equation} \label{kisebbegyenlő} x \leq y \quad \Leftrightarrow \quad (x_i \leq y_i,~\forall i \in \lbrace 1, \ldots, m \rbrace).\end{equation} $f$ is called increasing if $-f$ is decreasing. Further we call a function $g: [0,\infty)^n \to \mathbb R^m$ u.s.c. if $g$ is u.s.c. as a mapping in every coordinate $1 \leq i \leq m$ in the image space. $g$ is called l.s.c. if $-g$ is u.s.c.\index{semicontinuous function!taking $[0,\infty)^m$ to $[-\infty,\infty)$}
\begin{rem} \label{3.7,8}
Using \cite[Lemma 3.7,~Remark 3.8]{cikk}, one can easily see that under Assumption \ref{szamár} and the assumption that $\tau$ is u.s.c. and decreasing, the map ${\nu} \mapsto \tau(R((x,u)~(o,F_o),~{\nu})$ is u.s.c., and the map $\nu \mapsto F(G(\nu,~\boldsymbol \tau))$ appearing in Proposition \ref{2.2} is u.s.c. for any increasing and u.s.c. function $F$. 
\end{rem}
Finally, we state the lemmas of \cite[Section 3.3]{cikk} on relative entropies under linear perturbation. Since we work with a model without mobility, we write them with measures on $\mathbf W$ (instead of $\mathcal{L}$ from Section \ref{explanation}), but this does not cause any substantial change in their proofs, which therefore we omit.
\begin{lem} \label{3.11}
Let $a>0$ and ${\nu} \in \mathcal{M}(\mathbf W)$ be arbitrary. Then,
\[ h(a {\nu} \vert {\mu}') = a h(\nu \vert {\mu}')+a \log(a) {\nu} (\mathbf W)+(1-a) {\mu}'(\mathbf W). \]
\end{lem}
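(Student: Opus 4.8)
The plan is to compute both sides directly from the definition of relative entropy in \eqref{relatíventrópia}, taken with $\Sigma = \mathbf W$ and reference measure $\mu'$, splitting into the two cases of that definition according to whether the Radon--Nikodym derivative exists. The whole argument rests on how densities transform under scaling by the positive constant $a$.

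First I would treat the absolutely continuous case. Suppose $f = \frac{\d\nu}{\d\mu'}$ exists. Since $a>0$, the scaled measure $a\nu$ is also absolutely continuous with respect to $\mu'$, with density $\frac{\d(a\nu)}{\d\mu'} = af$. Substituting into \eqref{relatíventrópia} gives
\[ h(a\nu \vert \mu') = \int_{\mathbf W} (af)\log(af)\,\d\mu' - a\nu(\mathbf W) + \mu'(\mathbf W). \]
Then I would expand $\log(af) = \log a + \log f$ on the set $\lbrace f>0 \rbrace$ (on $\lbrace f=0 \rbrace$ both integrands vanish by the convention $0\log 0 = 0$, so the splitting is harmless on all of $\mathbf W$), obtaining
\[ \int_{\mathbf W} (af)\log(af)\,\d\mu' = a\log(a)\,\nu(\mathbf W) + a\int_{\mathbf W} f\log f\,\d\mu', \]
where I used $\int_{\mathbf W} f\,\d\mu' = \nu(\mathbf W)$. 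Substituting this back and recognizing that $a\int_{\mathbf W} f\log f\,\d\mu' - a\nu(\mathbf W) = a\big(h(\nu\vert\mu') - \mu'(\mathbf W)\big)$, a short rearrangement of the $\mu'(\mathbf W)$ terms, namely $a\mu'(\mathbf W) + (1-a)\mu'(\mathbf W) = \mu'(\mathbf W)$, yields exactly the claimed identity.

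Next I would dispose of the singular case. If $f = \frac{\d\nu}{\d\mu'}$ does not exist, then $h(\nu\vert\mu') = \infty$ by definition, and since $a>0$ the measure $a\nu$ fails to be absolutely continuous with respect to $\mu'$ as well, so $h(a\nu\vert\mu') = \infty$. On the right-hand side, the quantities $a\log(a)\,\nu(\mathbf W)$ and $(1-a)\mu'(\mathbf W)$ are finite, because $\nu$ and $\mu'$ are finite measures, and they are added to $a\cdot\infty = \infty$; hence both sides equal $\infty$ and the identity holds trivially.

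I do not expect any genuine obstacle here, as the argument is pure bookkeeping built on the behaviour of Radon--Nikodym derivatives under scaling. The only points requiring care are the convention $0\log 0 = 0$, which legitimizes splitting the logarithm over all of $\mathbf W$, and the finiteness of $\nu$ and $\mu'$, which keeps the correction terms finite and thus guarantees the clean matching in both the regular and the singular cases.
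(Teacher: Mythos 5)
Your proof is correct: the identity follows by exactly the direct computation you give, substituting the density $af$ of $a\nu$ into the definition \eqref{relatíventrópia} and splitting $\log(af)=\log a+\log f$, with the singular case handled trivially since $a\nu\ll\mu'$ if and only if $\nu\ll\mu'$ for $a>0$. The paper omits the proof entirely (deferring to \cite[Section~3.3]{cikk}), and your argument is precisely the intended bookkeeping; the only unremarked point, which causes no trouble, is that $\int f\log f\,\d\mu'$ may equal $+\infty$ even in the absolutely continuous case, but since $f\log f\geq -1/e$ and $\mu'$ is finite the integral is well defined in $(-\infty,\infty]$ and the identity then reads $\infty=\infty$.
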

\begin{cor} \label{3.12}\index{relative entropy!under linear perturbation}
Let ${\nu} \in \mathcal{M}(\mathbf W)$ and $\varepsilon \in (0,~1/2)$ be arbitrary. Then,
\[ h((1+\varepsilon) {\nu} \vert {\mu}') \leq (1+3 \varepsilon) h({\nu} \vert {\mu}')+3 \varepsilon {\mu}'(\mathbf W) \]
and
\[ h((1-\varepsilon) {\nu} \vert {\mu}') \geq (1-3 \varepsilon) h({\nu} \vert {\mu}')-3 \varepsilon {\mu}'(\mathbf W). \]
\end{cor}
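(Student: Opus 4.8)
The plan is to reduce both inequalities to a single elementary entropy estimate by invoking Lemma~\ref{3.11}. First note that if $\nu$ is not absolutely continuous with respect to $\mu'$, then neither is $(1\pm\varepsilon)\nu$, so $h((1\pm\varepsilon)\nu\vert\mu')=h(\nu\vert\mu')=\infty$ and both claimed inequalities hold trivially. Hence I may assume $f=\mathrm d\nu/\mathrm d\mu'$ exists, so that $h(\nu\vert\mu')=\int_{\mathbf W}\phi(f)\,\mathrm d\mu'<\infty$ with $\phi(t)=t\log t-t+1$, and all of $\nu(\mathbf W)$, $\mu'(\mathbf W)$, $h(\nu\vert\mu')$ are finite. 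Applying Lemma~\ref{3.11} with $a=1+\varepsilon$ and with $a=1-\varepsilon$ gives the two exact identities
\begin{align*}
h((1+\varepsilon)\nu\vert\mu')&=(1+\varepsilon)h(\nu\vert\mu')+(1+\varepsilon)\log(1+\varepsilon)\,\nu(\mathbf W)-\varepsilon\mu'(\mathbf W),\\
h((1-\varepsilon)\nu\vert\mu')&=(1-\varepsilon)h(\nu\vert\mu')+(1-\varepsilon)\log(1-\varepsilon)\,\nu(\mathbf W)+\varepsilon\mu'(\mathbf W).
\end{align*}
Subtracting the two target right-hand sides, the first desired inequality is seen to be equivalent to $(1+\varepsilon)\log(1+\varepsilon)\,\nu(\mathbf W)\le 2\varepsilon\,h(\nu\vert\mu')+4\varepsilon\mu'(\mathbf W)$, and the second to $(1-\varepsilon)\log(1-\varepsilon)\,\nu(\mathbf W)\ge -2\varepsilon\,h(\nu\vert\mu')-4\varepsilon\mu'(\mathbf W)$.

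The key step, which carries the whole argument, is the auxiliary entropy bound
\[ \nu(\mathbf W)\le h(\nu\vert\mu')+2\mu'(\mathbf W). \]
This is exactly what is needed because the identities from Lemma~\ref{3.11} produce a term proportional to $\nu(\mathbf W)$, whereas the statement permits only $h(\nu\vert\mu')$ and $\mu'(\mathbf W)$ on the right-hand side; thus the total mass $\nu(\mathbf W)$ must be absorbed into these two quantities. I would prove it pointwise: the function $\psi(t)=\phi(t)+2-t=t\log t-2t+3$ satisfies $\psi'(t)=\log t-1$, so $\psi$ attains its minimum at $t=e$ with value $3-e>0$; hence $t\le\phi(t)+2$ for every $t\ge 0$. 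Integrating this against $\mu'$ and using $\int_{\mathbf W}\phi(f)\,\mathrm d\mu'=h(\nu\vert\mu')$ together with $\int_{\mathbf W}f\,\mathrm d\mu'=\nu(\mathbf W)$ yields the claim.

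It then remains to combine this with elementary bounds on the logarithm valid for $\varepsilon\in(0,1/2)$. On the one hand $(1+\varepsilon)\log(1+\varepsilon)\le(1+\varepsilon)\varepsilon\le\tfrac32\varepsilon\le 2\varepsilon$. On the other hand, since $g(\varepsilon)=\log(1-\varepsilon)+2\varepsilon$ has $g(0)=0$ and $g'(\varepsilon)=2-(1-\varepsilon)^{-1}> 0$ on $(0,1/2)$, one gets $\log(1-\varepsilon)\ge-2\varepsilon$, whence $(1-\varepsilon)\log(1-\varepsilon)\ge-2\varepsilon$. Because $\nu(\mathbf W)\ge 0$, these give $(1+\varepsilon)\log(1+\varepsilon)\,\nu(\mathbf W)\le 2\varepsilon\,\nu(\mathbf W)$ and $(1-\varepsilon)\log(1-\varepsilon)\,\nu(\mathbf W)\ge-2\varepsilon\,\nu(\mathbf W)$. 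In both cases substituting the auxiliary bound $\nu(\mathbf W)\le h(\nu\vert\mu')+2\mu'(\mathbf W)$ into the $\nu(\mathbf W)$-term (using $2\varepsilon>0$ for the upper estimate and $-2\varepsilon<0$ for the lower one) produces precisely the two reduced inequalities isolated in the first paragraph, which finishes the argument. The only genuinely non-routine ingredient is the entropy bound of the middle paragraph; the rest is bookkeeping with Lemma~\ref{3.11} and convexity of $t\log t$, and I expect that isolating this entropy estimate is the main point rather than any serious obstacle.
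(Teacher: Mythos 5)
Your proof is correct and follows the route the thesis itself implicitly takes: the thesis omits the proof of Corollary \ref{3.12} (deferring to the fading-free original), and the intended argument is exactly your combination of the exact identity from Lemma \ref{3.11} with the elementary bounds $(1+\varepsilon)\log(1+\varepsilon)\le 2\varepsilon$ and $(1-\varepsilon)\log(1-\varepsilon)\ge -2\varepsilon$ together with the absorption estimate $\nu(\mathbf W)\le h(\nu\vert\mu')+2\mu'(\mathbf W)$, which you establish correctly via the pointwise inequality $t\le t\log t - t + 3$ for $t\ge 0$. The only slip is the assertion that existence of the density $f=\mathrm d\nu/\mathrm d\mu'$ forces $h(\nu\vert\mu')<\infty$ — it does not — but the remaining case $h(\nu\vert\mu')=\infty$ is immediate: the upper bound then has right-hand side $+\infty$, and for the lower bound either $1-3\varepsilon\le 0$ makes the right-hand side $-\infty$, or else one notes that $h((1-\varepsilon)\nu\vert\mu')=\infty$ as well, since $\nu(\mathbf W)<\infty$ and the density of $(1-\varepsilon)\nu$ is $(1-\varepsilon)f$.
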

Note that since $\mu'=\mu \tensor \zeta$, where $\zeta=\mathbb P \circ F_0^{-1}$ is a probability measure, we have $\mu'(\mathbf W)=\mu(W)$.
\begin{rem} \label{3.13}
Lemma \ref{3.11} and Corollary \ref{3.12} remain true if $\mathbf W$ and $\mu'$ are replaced by $\mathbf W_\delta$ and ${\mu}'^{\varrho'}$, respectively.
\end{rem}
\section{Sprinkling construction with random fadings} \label{sprinkle}\index{sprinkling construction}
As we described in the paragraph after stating Proposition \ref{2.2}, the main difficulty in analyzing the empirical measures $G(\mathbf L^{\varrho'}_{\lambda},~\boldsymbol \tau)$ comes from the discontinuity of the indicators $\mathds 1\lbrace \mathbf L^{\varrho'}_{\lambda} ((x,u)) >0 \rbrace,~(x,u) \in \mathbf W_\delta$. In other words, similarly to the fading-free case, the configurations that obstruct us in applying the contraction principle (Theorem \ref{contractionprinciple}) are those that exhibit $\delta$-discretized trajectories with a small but non-zero number of users. Now we show that, knowing the values of the marked Poisson point process $\mathbf X^\lambda$, a small increase in the intensity of $\mathbf X^\lambda$ provides us with a sufficient amount of additional randomness to be able to neglect such pathological configurations on the exponential scale. In other words, we follow the approach of \cite[Section 3.2]{cikk} in order to give a sprinkling argument and obtain analogues of the lemmas of that section. The difference from the fading-free case is that our estimations involve the quantity $\kappa_\delta$ defined in \eqref{kifalva}, which depends on the lowest density value of the fading variable $F_0$, and the quantity $\beta'_o$ defined in \eqref{kisbecslés}, which depends on the extremal fading values $\Fmin$ and $\Fmax$.


To perform the sprinkling operator with parameter $\varepsilon_0 \in (0,1)$, we define $X^\lambda_\delta$, $X^{\lambda'}_\delta$, $\mathbf X_\delta^\lambda$ and $\mathbf X_\delta^{\lambda'}$ as before, with $\lambda'=(1+\varepsilon_1)\lambda$, where we put $\varepsilon_1=2\varepsilon_0 {\kappa_\delta}^{-1}$, defining
$\kappa_\delta$ via \eqref{kifalva}.
In the following, we always assume that $\delta \in \mathbb B$ is sufficiently small to ensure that $\kappa_\delta$ is not greater than 1. Similarly to \cite[Section 3.2]{cikk}, we define
\[ \mathbf Q = \lbrace (x,u) \in \mathbf W_\delta \vert~ \mathbf L_{\lambda'}^{\varrho'} ((x,u)) \leq \varepsilon_0 \rbrace \quad \text{ and } \mathbf V=\lbrace (x,u) \in \mathbf W_\delta \vert~\mathbf L^{\varrho'}_\lambda ((x,u)) =0 \rbrace \]
to be the \emph{quasi-empty}\index{sites!quasi-empty} and the \emph{virtual}\index{sites!virtual} sites in $\mathbf W_\delta$, respectively. Furthermore,
we introduce the event
\[ \mathbf E_{\varepsilon_0}=\lbrace \mathbf Q \subseteq \mathbf V \rbrace \]
that all quasi-empty sites are virtual. Finally, we define
\[ \mathbf{E}'_{\varepsilon_0} = \lbrace  \mathbf X^\lambda_\delta (\mathbf W_\delta) \geq (1-{\varepsilon}_2 ) \mathbf X^{\lambda'}_{\delta} (\mathbf W_\delta) \rbrace, \] 
where $\varepsilon_2=4 \varepsilon_1 {\beta'_o}^{-1} \sharp \mathbf W_\delta (1+{\mu'}^{\varrho'} (\mathbf W_\delta))$ and $\sharp \mathbf W_\delta$ denotes the number of sub-cubes in the discretization $\mathbf W_\delta$ of $\mathbf W$. Note that $\kappa_\delta$ depends on the distribution of the discretized fading variable $\varrho_2(F_0)$ and $\beta'_o$ depends on $\Fmin$ and $\Fmax$ via \eqref{kisbecslés}. By the following two lemmas, we formalize the sprinkling heuristic explained above.
\begin{lem} \label{3.9}
For all sufficiently small $\varepsilon_0 \in (0,1)$, there exists $\lambda_0=\lambda_0(\varepsilon_0)$ such that for all $\lambda \geq \lambda_0$ we have
\[ \mathbb P_2 (\mathbf E_{\varepsilon_0} \cap \mathbf E'_{\varepsilon_0} \vert~ \mathbf X_\delta^{\lambda'}) \geq \exp(-\sqrt{\varepsilon_0}\lambda) \mathds 1\lbrace L_{\lambda'}^{\varrho_1}(W_\delta) \geq \beta'_0/2 \rbrace \]
holds $\mathbb P_2$-a.s.
\end{lem}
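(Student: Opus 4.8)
The plan is to exploit the thinning representation underlying the sprinkling. Since $\lambda'=(1+\varepsilon_1)\lambda$, I would couple the base process $\mathbf X^\lambda_\delta$ to the sprinkled process $\mathbf X^{\lambda'}_\delta$ by regarding $\mathbf X^\lambda_\delta$ as a $p$-thinning of $\mathbf X^{\lambda'}_\delta$ with retention probability $p=\lambda/\lambda'=(1+\varepsilon_1)^{-1}$, as justified by the Colouring Theorem (Theorem \ref{colouring}): in the discretized setting $\mathbf X^{\lambda'}_\delta$ is just a family of independent Poisson variables on the finite grid $\mathbf W_\delta$, so conditionally on $\mathbf X^{\lambda'}_\delta$, at each site $(x,u)$ carrying $N_{(x,u)}:=\mathbf X^{\lambda'}_\delta((x,u))$ points, every point is retained in $\mathbf X^\lambda_\delta$ independently with probability $p$, and the thinnings at different sites are independent. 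I condition on $\mathbf X^{\lambda'}_\delta$ throughout and work on the event $\{L^{\varrho_1}_{\lambda'}(W_\delta)\geq\beta'_0/2\}$, since otherwise the right-hand side vanishes and the claim is trivial. Note that $\mathbf Q$ is $\sigma(\mathbf X^{\lambda'}_\delta)$-measurable, whereas $\mathbf V$, hence $\mathbf E_{\varepsilon_0}$, depends on the thinning.

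Next I would separate the contributions of quasi-empty and non-quasi-empty sites. The event $\mathbf E_{\varepsilon_0}=\{\mathbf Q\subseteq\mathbf V\}$ occurs exactly when every point lying on a quasi-empty site is discarded; since there are at most $\sharp\mathbf W_\delta$ quasi-empty sites and each carries $N_{(x,u)}\leq\varepsilon_0\lambda'$ points, independence gives
\[ \mathbb P_2(\mathbf E_{\varepsilon_0}\mid\mathbf X^{\lambda'}_\delta)=(1-p)^{\sum_{(x,u)\in\mathbf Q}N_{(x,u)}}\geq(1-p)^{\sharp\mathbf W_\delta\,\varepsilon_0\lambda'}. \]
With $1-p=\varepsilon_1/(1+\varepsilon_1)$ and $\varepsilon_1=2\varepsilon_0\kappa_\delta^{-1}$, the logarithm of the right-hand side equals $\sharp\mathbf W_\delta\,\varepsilon_0(1+\varepsilon_1)\lambda\log\big(\varepsilon_1/(1+\varepsilon_1)\big)$, which is of order $\varepsilon_0\log(1/\varepsilon_0)\,\lambda$ for fixed $\delta$ (hence fixed $\kappa_\delta$ and $\sharp\mathbf W_\delta$). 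As $\sqrt{\varepsilon_0}\,\log(1/\varepsilon_0)\to 0$ when $\varepsilon_0\to0$, this is bounded below by $-\tfrac12\sqrt{\varepsilon_0}\,\lambda$ for all sufficiently small $\varepsilon_0$.

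To secure $\mathbf E'_{\varepsilon_0}$ as well, I would bound the total number of discarded points by the mass discarded at quasi-empty sites plus the mass discarded at non-quasi-empty sites. On $\mathbf E_{\varepsilon_0}$ the former equals $\sum_{(x,u)\in\mathbf Q}N_{(x,u)}\leq\sharp\mathbf W_\delta\,\varepsilon_0\lambda'$, and a direct computation with the definitions $\varepsilon_1=2\varepsilon_0\kappa_\delta^{-1}$, $\varepsilon_2=4\varepsilon_1{\beta'_0}^{-1}\sharp\mathbf W_\delta(1+{\mu'}^{\varrho'}(\mathbf W_\delta))$, together with $\kappa_\delta\leq1$, $\beta'_0\leq1$ and $\mathbf X^{\lambda'}_\delta(\mathbf W_\delta)=\lambda'L^{\varrho_1}_{\lambda'}(W_\delta)\geq\lambda'\beta'_0/2$, shows $\sharp\mathbf W_\delta\,\varepsilon_0\lambda'\leq\tfrac12\varepsilon_2\mathbf X^{\lambda'}_\delta(\mathbf W_\delta)$. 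For the non-quasi-empty sites, the discarded count $D_{\mathrm{nq}}$ is a sum of independent $\mathrm{Bernoulli}(1-p)$ variables with mean at most $(1-p)\mathbf X^{\lambda'}_\delta(\mathbf W_\delta)\leq\varepsilon_1\mathbf X^{\lambda'}_\delta(\mathbf W_\delta)$; since $\tfrac12\varepsilon_2\geq2\varepsilon_1$ (again because ${\beta'_0}^{-1}\geq1$), Markov's inequality yields $\mathbb P_2\big(D_{\mathrm{nq}}\leq\tfrac12\varepsilon_2\mathbf X^{\lambda'}_\delta(\mathbf W_\delta)\mid\mathbf X^{\lambda'}_\delta\big)\geq\tfrac12$. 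On the intersection of $\mathbf E_{\varepsilon_0}$ with this last event the total discarded mass is at most $\varepsilon_2\mathbf X^{\lambda'}_\delta(\mathbf W_\delta)$, i.e. $\mathbf E'_{\varepsilon_0}$ holds. Because $\mathbf E_{\varepsilon_0}$ and $\{D_{\mathrm{nq}}\leq\cdots\}$ depend on disjoint, hence conditionally independent, families of retention variables, I multiply the two bounds to get $\mathbb P_2(\mathbf E_{\varepsilon_0}\cap\mathbf E'_{\varepsilon_0}\mid\mathbf X^{\lambda'}_\delta)\geq\tfrac12(1-p)^{\sharp\mathbf W_\delta\,\varepsilon_0\lambda'}$, and absorbing the factor $\tfrac12$ into $\exp(-\tfrac12\sqrt{\varepsilon_0}\lambda)$ for $\lambda\geq\lambda_0(\varepsilon_0)$ large yields the asserted bound $\exp(-\sqrt{\varepsilon_0}\lambda)$.

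The main obstacle is the simultaneous bookkeeping of the three budgets: the cost of emptying the quasi-empty sites must remain $o(\sqrt{\varepsilon_0})\cdot\lambda$ on the logarithmic scale, while the threshold in $\mathbf E'_{\varepsilon_0}$ must be large enough both to absorb the deterministic quasi-empty mass (which is where the conditioning $L^{\varrho_1}_{\lambda'}(W_\delta)\geq\beta'_0/2$ enters) and to leave room for the crude Markov bound on the random non-quasi-empty discards. Tuning $\varepsilon_1=2\varepsilon_0\kappa_\delta^{-1}$ and $\varepsilon_2$ so that all of these inequalities hold at once — in particular the entry of $\kappa_\delta$ from \eqref{kifalva} (the smallest positive discretized intensity, reflecting the lowest density of $F_0$) and of $\beta'_0$ from \eqref{kisbecslés} (governed by the extremal fadings $\Fmin,\Fmax$) — is precisely where the fading version departs from the fading-free computation of \cite[Section 3.2]{cikk}.
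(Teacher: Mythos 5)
Your proof is correct and follows essentially the same route as the paper: represent $\mathbf X^\lambda_\delta$ as an independent thinning of $\mathbf X^{\lambda'}_\delta$, pay an exponential cost of order $\varepsilon_0\log(1/\varepsilon_0)\,\lambda=o(\sqrt{\varepsilon_0})\lambda$ to empty the quasi-empty sites, control the mass lost on the remaining sites with uniformly positive conditional probability, and multiply using conditional independence of the two disjoint families of retention variables. The only deviation is in the second budget: the paper bounds $\mathbb P_2(\mathbf E''_{\varepsilon_0}\mid\mathbf X^{\lambda'}_\delta)$ site-by-site via a law-of-large-numbers argument yielding a non-explicit constant $c_1>0$, whereas you bound the aggregate discarded mass $D_{\mathrm{nq}}$ by Markov's inequality to get the explicit constant $\tfrac12$ — a slightly more self-contained version of the same step, with the bookkeeping for $\varepsilon_1$, $\varepsilon_2$, $\kappa_\delta$ and $\beta'_0$ checking out as you wrote it.
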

\begin{proof}
We proceed analogously to the proof of \cite[Lemma 3.9]{cikk}. First of all, the key observation is that an independent thinning\index{thinning} of $\mathbf X_\delta^{\lambda'}$ with survival probability $\frac{1}{1+\varepsilon_1}$ results $\mathbf X_\delta^\lambda$. (About thinning of Poisson processes, see Theorem \ref{colouring}.) This means that for each $(x,u) \in \mathbf W_\delta$ there exist $N'_{(x,u)}=\mathbf X_\delta^{\lambda'}(x)$ independent Bernoulli($1/(1+\varepsilon_1)$)-distributed random variables $\lbrace U_k((x,u)) \rbrace_{1 \leq k \leq N'_{(x,u)}}$ such that 
\[ \mathbf X_\delta^\lambda((x,u))=\sum\limits_{k=1}^{N'_{(x,u)}} U_k((x,u)). \]
In particular, $\mathbf E_{\varepsilon_0}$ is the event that $U_k((x,u))=0$ for all $(x,u) \in \mathbf Q$, for all $1 \leq k \leq N'_{(x,u)}$. 
Now, let $\mathbf E_{\varepsilon_0}''$ denote the event that $\mathbf X_\delta^\lambda((x,u)) \geq (1-\varepsilon_1) \mathbf X_\delta^{\lambda'}((x,u))$ holds for all $(x,u) \in \mathbf W_\delta \setminus \mathbf Q$. Since $\sharp \mathbf Q \leq \sharp \mathbf W_\delta$, this event implies that $\mathbf X^\lambda_\delta(\mathbf W_\delta)$ is bounded below by
\begin{multline*} (1-\varepsilon_1) \mathbf X_\delta^{\lambda'} (\mathbf W_\delta \setminus \mathbf Q) =(1-\varepsilon_1) \left( \mathbf X_\delta^{\lambda'} (\mathbf W_\delta)-\mathbf X_\delta^{\lambda'} (\mathbf Q) \right) \geq (1-\varepsilon_1) \left( \mathbf X_\delta^{\lambda'} (\mathbf W_\delta) - \sharp \mathbf W_\delta \varepsilon_0 \lambda' \right). \end{multline*} 
If additionally $\lbrace L^{\varrho_1}_{\lambda'} (W_\delta) \geq \beta'_o/2 \rbrace$ occurs, then we can estimate 
\[ \mathbf X_\delta^{\lambda} (\mathbf W_\delta) = X_\delta^\lambda (W_\delta) \geq (1-\varepsilon_1) (1-2 \sharp \mathbf W_\delta \varepsilon_0 {\beta'_o}^{-1}) X_\delta^{\lambda'} (W_\delta) \geq (1-\varepsilon_2) X^{\lambda'}_\delta (W_\delta)=(1-\varepsilon_2) \mathbf X^{\lambda'}_\delta (\mathbf W_\delta). \]
Therefore $\mathbf{E}''_{\varepsilon_0} \cap \lbrace L^{\varrho_1}_{\lambda'} (W_\delta) \geq \beta'_o/2 \rbrace \subseteq \mathbf{E}'_{\varepsilon_0}$, and it remains to bound $\mathbb P_2(\mathbf E_{\varepsilon_0} \cap \mathbf E''_{\varepsilon_0} \vert \mathbf X^{\lambda'}_\delta)$ from below. First, we have
\[ \mathbb P_2(\mathbf E''_{\varepsilon_0}) \geq \mathbb P_2(U_k((x,u))=1~ \forall (x,u) \in \mathbf W_\delta \setminus \mathbf Q,~\forall k= 1,\ldots,N'_{(x,u)} \vert~\mathbf X^{\lambda'}_\delta) \geq (1+\varepsilon_1)^{- \sum_{(x,u) \in \mathbf W_\delta} N'_{(x,u)}}. \]
By the law of large numbers, if $N'_{(x,u)}$ in $\mathbf X_\delta^{\lambda'}((x,u))$ is large, $\mathbb P_2(\mathbf X^\lambda_\delta((x,u)) \geq (1-\varepsilon_1) \mathbf X_\delta^{\lambda'} (x) \vert~\mathbf X_\delta^{\lambda'})$ is close to one. 
Thus, there exists $c_1 >0$ such that $\mathbb P_2(\mathbf E''_{\varepsilon_0} \vert~\mathbf X^{\lambda'}_\delta) \geq c_1$ holds $\mathbb P_2$-almost surely for every $\lambda \geq 1$. Hence, since conditioned on $\mathbf X^\lambda_\delta$, the events $ \mathbf E_{\varepsilon_0}$ and $\mathbf E''_{\varepsilon_0}$ are independent, we conclude that
\[ \mathbb P_2(\mathbf E_{\varepsilon_0} \cap \mathbf E_{\varepsilon_0}'' \vert ~ \mathbf X^{\lambda'}_\delta)=\mathbb P_2 (\mathbf E_{\varepsilon_0} \vert ~ \mathbf X^{\lambda'}_\delta) \mathbb P_2(\mathbf E''_{\varepsilon_0} \vert ~ \mathbf X^{\lambda'}_\delta) \geq (\varepsilon_1/2)^{\sum_{(x ,u) \in \mathbf Q} N'_{(x,u)}} c_1 \geq (\varepsilon_1/2)^{\varepsilon_0 \lambda' \sharp \mathbf W_\delta} c_1. \]
Observing that $-\varepsilon_0 \log(\varepsilon_1 /2) = -\varepsilon_0 \log(\varepsilon_0 {\kappa}_\delta^{-1}) \leq -\varepsilon_0 \log (\varepsilon_0) \in o(\sqrt{\varepsilon_0})$ finishes the proof.
\end{proof}
Now, with the notation $V(\nu)= \lbrace (x,u) \in \mathbf W_\delta \vert~\nu((x,u))=0 \rbrace$\index{zero sets of measures} from Section \ref{pajti}, we let
\[ \mathbf{E}^\ast_{\varepsilon_0} = \mathbf E_{\varepsilon_0} \cap \lbrace V(\mathbf L_\lambda^{\varrho'})=V(\mathbf L_{\lambda'}^{\varrho'} ) \rbrace.\]
Note that by $\Finite$, it holds (apart from a $\mathbb P_2$ nullset) that this event occurs if and only if all the quasi-empty sites are not only virtual, but also empty w.r.t. $\mathbf L_{\lambda}^{\varrho'}$. Our second sprinkling lemma is the following.
\begin{lem} \label{3.10}
For sufficiently small $\varepsilon_0 \in (0,1)$, there exists $\lambda_0 =\lambda_0(\varepsilon_0)$ such that for all $\lambda \geq \lambda_0$ we have
\[ \mathbb P_2(\mathbf{E}_{\varepsilon_0}^\ast \vert ~ \mathbf X^\lambda_\delta) \geq \exp(-\sqrt{\varepsilon_0}\lambda). \]
\end{lem}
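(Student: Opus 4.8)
The plan is to condition on the smaller process $\mathbf X_\delta^\lambda$ and to realize the larger one by \emph{adding} an independent Poisson sprinkle, the coupling dual to the thinning used in the proof of Lemma \ref{3.9}. By the Superposition Theorem (Theorem \ref{superposition}), conditionally on $\mathbf X_\delta^\lambda$ we may write $\mathbf X_\delta^{\lambda'}=\mathbf X_\delta^\lambda+\tilde{\mathbf X}$, where $\tilde{\mathbf X}$ is an independent Poisson process on $\mathbf W_\delta$ with intensity $(\lambda'-\lambda)\mu'^{\varrho'}=\varepsilon_1\lambda\mu'^{\varrho'}$; equivalently, the counts $\tilde{\mathbf X}((x,u))$ are independent Poisson variables with means $\varepsilon_1\lambda\mu'^{\varrho'}((x,u))$, and $\mathbf X_\delta^{\lambda'}((x,u))=\mathbf X_\delta^\lambda((x,u))+\tilde{\mathbf X}((x,u))$ for $(x,u)\in\mathbf W_\delta$. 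Since this reproduces the correct joint law of $(\mathbf X_\delta^\lambda,\mathbf X_\delta^{\lambda'})$, the conditional probabilities below may be computed in this coupling.

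First I would isolate two sufficient events for $\mathbf E_{\varepsilon_0}^\ast$ that depend on disjoint sets of sites. Let $A=\lbrace \tilde{\mathbf X}((x,u))=0~\forall (x,u)\in\mathbf V \rbrace$ be the event that the sprinkle creates no point at a virtual site; on $A$ one has $V(\mathbf L_\lambda^{\varrho'})=V(\mathbf L_{\lambda'}^{\varrho'})$, the reverse inclusion being automatic from $\mathbf X_\delta^\lambda\leq\mathbf X_\delta^{\lambda'}$. Let $B=\lbrace \mathbf X_\delta^{\lambda'}((x,u))>\varepsilon_0\lambda'~\forall (x,u)\in\mathbf W_\delta\setminus\mathbf V \rbrace$ be the event that every site occupied under $\mathbf X_\delta^\lambda$ fails to be quasi-empty under $\mathbf X_\delta^{\lambda'}$; on $B$ no occupied site lies in $\mathbf Q$, so $\mathbf Q\subseteq\mathbf V$, i.e.\ $\mathbf E_{\varepsilon_0}$ holds. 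Hence $A\cap B\subseteq\mathbf E_{\varepsilon_0}^\ast$. Because $A$ is measurable with respect to $\lbrace \tilde{\mathbf X}((x,u)):(x,u)\in\mathbf V \rbrace$ and $B$ with respect to the counts on the complementary sites, the independence of $\tilde{\mathbf X}$ across disjoint sites gives $\mathbb P_2(\mathbf E_{\varepsilon_0}^\ast\mid\mathbf X_\delta^\lambda)\geq\mathbb P_2(A\mid\mathbf X_\delta^\lambda)\,\mathbb P_2(B\mid\mathbf X_\delta^\lambda)$, $\mathbb P_2$-a.s.

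The factor from $A$ is a product of void probabilities,
\[ \mathbb P_2(A\mid\mathbf X_\delta^\lambda)=\exp\Big(-\varepsilon_1\lambda\!\!\sum_{(x,u)\in\mathbf V}\!\!\mu'^{\varrho'}((x,u))\Big)\geq\exp\big(-\varepsilon_1\lambda\,\mu'^{\varrho'}(\mathbf W_\delta)\big)=\exp\big(-2\varepsilon_0\kappa_\delta^{-1}\mu(W)\lambda\big), \]
using $\mu'^{\varrho'}(\mathbf W_\delta)=\mu(W)$ and $\varepsilon_1=2\varepsilon_0\kappa_\delta^{-1}$; as $\varepsilon_0=o(\sqrt{\varepsilon_0})$, for $\varepsilon_0$ small enough this is at least $\exp(-\tfrac12\sqrt{\varepsilon_0}\lambda)$. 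For the factor from $B$ I would use that any $(x,u)\notin\mathbf V$ carries a point of $\mathbf X_\delta^\lambda$, whence $\mathbb P_2$-a.s.\ $\mu'^{\varrho'}((x,u))>0$ and thus $\mu'^{\varrho'}((x,u))\geq\kappa_\delta$ by \eqref{kifalva}; consequently $\tilde{\mathbf X}((x,u))$ has mean at least $\varepsilon_1\lambda\kappa_\delta=2\varepsilon_0\lambda$. Once $\varepsilon_0$ is small enough that $\varepsilon_1<1$, the threshold obeys $\varepsilon_0\lambda'=\varepsilon_0(1+\varepsilon_1)\lambda<2\varepsilon_0\lambda$, so it sits strictly below the mean, and a Chernoff bound for the Poisson lower tail yields $\mathbb P_2(\mathbf X_\delta^{\lambda'}((x,u))\leq\varepsilon_0\lambda'\mid\mathbf X_\delta^\lambda)\leq e^{-c\lambda}$ for some $c=c(\varepsilon_0,\delta)>0$. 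Taking the product over the at most $\sharp\mathbf W_\delta$ occupied sites gives $\mathbb P_2(B\mid\mathbf X_\delta^\lambda)\geq(1-e^{-c\lambda})^{\sharp\mathbf W_\delta}\to1$, so there is $\lambda_0=\lambda_0(\varepsilon_0)$ with $\mathbb P_2(B\mid\mathbf X_\delta^\lambda)\geq\exp(-\tfrac12\sqrt{\varepsilon_0}\lambda)$ for all $\lambda\geq\lambda_0$; multiplying the two factors finishes the proof.

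The main obstacle is the control of $B$: one must ensure that the sprinkle overwhelms the quasi-emptiness threshold at \emph{every} occupied site simultaneously. This is exactly where the choice $\varepsilon_1=2\varepsilon_0\kappa_\delta^{-1}$ and the lower bound $\kappa_\delta$ on positive discretized intensities enter, forcing a sprinkle mean $\geq 2\varepsilon_0\lambda$ that lies strictly above the threshold $\varepsilon_0\lambda'$ as soon as $\varepsilon_1<1$. The uniformity over the finitely many sites of $\mathbf W_\delta$ together with Poisson concentration as $\lambda\to\infty$ renders this cost negligible on the exponential scale, so that the genuine $\varepsilon_0$-sized cost comes entirely from the void event $A$, which accounts for the $\exp(-\sqrt{\varepsilon_0}\lambda)$ bound.
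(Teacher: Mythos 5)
Your proposal is correct and follows essentially the same route as the paper: the paper likewise splits $\mathbf E_{\varepsilon_0}^\ast$ into the void event for the increment process at the virtual sites (whose explicit Poisson void probability $\exp(-(\lambda'-\lambda)\mu'^{\varrho'}(\mathbf W_\delta))$ supplies the $o(\sqrt{\varepsilon_0})\lambda$ exponential cost) and an event $\mathbf H_{\varepsilon_0}$ that the independent increment exceeds $\varepsilon_0\lambda'$ at every occupied site, bounded below by a constant via the same mean-versus-threshold comparison $2\mu'^{\varrho'}((x,u))\varepsilon_0\kappa_\delta^{-1}\lambda>\varepsilon_0\lambda'$, and then multiplies the two using conditional independence across disjoint sites. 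Your Chernoff bound merely makes explicit the concentration step the paper attributes to the law of large numbers.
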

\begin{proof}
We proceed similarly to the proof of \cite[Lemma 3.10]{cikk}, and we note that $ \mathbf{H}_{\varepsilon_0} \subseteq \mathbf{E}_{\varepsilon_0}$, where $ \mathbf{ H}_{\varepsilon_0}$ denotes the event that $N''_{(x,u)} \geq \varepsilon_0 \lambda'$ holds for all $(x,u) \in \mathbf W_\delta \setminus V(\mathbf L_\lambda^{\varrho'})$. Here if $\mu'^{\varrho'}((x,u))>0$, then $N''_{(x,u)}=\mathbf X^{\lambda'}_\delta((x,u))-\mathbf X^\lambda_\delta ((x,u))$ is independent of $\mathbf X^\lambda_\delta(x)$ and Poisson distributed with parameter $(\lambda'-\lambda){\mu'}^{\varrho'}((x,u))=2 {\mu'}^{\varrho'}((x,u)) \varepsilon_0 {\kappa_\delta}^{-1} \lambda > \varepsilon_0 \lambda' $. On the other hand, if $\mu'^{\varrho'}((x,u))=0$, then $(x,u) \in V(\mathbf L_\lambda^{\varrho'}) \cap V(\mathbf L_{\lambda'}^{\varrho'})$, $\mathbb P_2$-a.s. 
Thus, similarly to the proof of Lemma \ref{3.9}, there exists $c_1>0$ such that $\mathbb P_2(\mathbf H_{\varepsilon_0} \vert~\mathbf X^\lambda_\delta) \geq c_1$ holds $\mathbb P_2$-almost surely for every $\lambda \geq 1$. Hence, since conditioned on $\mathbf X^\lambda_\delta$, $\mathbf H_{\varepsilon_0}$ and $\lbrace V(\mathbf L_\lambda^{\varrho'})=V(\mathbf L_{\lambda'}^{\varrho'} ) \rbrace $ are independent, this implies that
\[ \mathbb P_2(\lbrace V(\mathbf L_\lambda^{\varrho'})=V(\mathbf L_{\lambda'}^{\varrho'} ) \rbrace  \cap \mathbf H_{\varepsilon_0} \vert~\mathbf X^\lambda_\delta)=\mathbb P_2(\mathbf H_{\varepsilon_0} \vert~\mathbf X^\lambda_\delta)\mathbb P_2(V(\mathbf L_\lambda^{\varrho'})=V(\mathbf L_{\lambda'}^{\varrho'} ) \vert~\mathbf X^\lambda_\delta).\]
Since the r.h.s. is bounded below by $\exp(-(\lambda'-\lambda) {\mu'}^{\varrho'}(\mathbf W_\delta)) c_1=\exp(-(\lambda'-\lambda) {\mu}^{\varrho_1}(W_\delta)) c_1$ and \[ \frac{1}{\lambda} \left( -(\lambda'-\lambda) {\mu}^{\varrho_1}(W_\delta) \right) =-2 \varepsilon_0 {\kappa}_\delta^{-1} {\mu}^{\varrho_1} (W_\delta) \in o(\sqrt{\varepsilon_0}), \] we conclude the proof. 
\end{proof}
\section{Proof of Proposition \ref{2.1}} \label{kettőegy}
This proof is based on the proof of \cite[Proposition 2.1]{cikk}. 
The random fadings give rise to additional terms and factors in the computations. These can easily be explained by considering the $\ell^1$ product norm as in the proof of Lemma \ref{3.5}.

First, for all $(x,u) \in \mathbf W_\delta$ with ${\nu}^{\varrho'}((x,u))=0$ we have $G((1\pm\varepsilon){\nu}^{\varrho'},~\boldsymbol \tau \circ \imath') ((x,u))=0$ and \\ $G(\nu,~ \boldsymbol \tau)^{\varrho'}((x,u))=0$, and hence the inequalities are trivially satisfied. Now, fix $\varepsilon >0$ and assume that there exists $(x,u) \in \mathbf W_\delta$ such that ${\nu}^{\varrho'}((x,u)) >0$. Let us write \begin{equation} \label{cé} C((\xi,u),~\nu) \in \lbrace D((\xi,u),~(o,F_o),~\nu),~D((o,F_o),~(\xi,u),~\nu),~R((\xi,u),~(o,F_o),~\nu),~R((o,F_o),~(\xi,u),~\nu) \rbrace \end{equation} for the different forms of communication, for $(\xi,u) \in \mathbf W$ and $\nu \in \mathcal{M}(\mathbf W)$. 

We first prove the upper bound. It suffices to find $\delta'=\delta'(\varepsilon) \in \mathbb B$ such that for all $\delta \in \mathbb B \cap (0,\delta')$ and all $C$ we have
\[ \sup\limits_{(\xi,s) \in \varrho'^{-1} ((x,u))} \tau_i({C}((\xi,s),~{\nu})) \leq \tau_i(\imath'(C((x,u),~(1+\varepsilon){\nu}^{\varrho'} ))) \]
for all $i \in \four$. Since $\tau_i$ are decreasing, it is enough to find $\delta'=\delta'(\varepsilon) \in \mathbb B$ such that $\delta \in \mathbb B \cap (0,\delta')$, $i \in \four$ and $(\xi,s) \in \varrho'^{-1}((x,u))$
\begin{equation} \label{Tizesbérc} {C}((\xi,s),~{\nu}) \geq \imath'(C((x,u),~(1+\varepsilon){\nu}^{\varrho'} )). \end{equation}
We first show that for sufficiently small $\delta$, for all $(\xi,s) \in \varrho'^{-1}((x,u))$ and $(\chi,b) \in \varrho'^{-1}((y,v))$ with $(x,u),~(y,v) \in \mathbf W_\delta$ we have
\begin{equation} \label{Tízenegyesbérc} {\SIR}((\xi,s),~(\chi,b),~{\nu}) \geq \imath'(\SIR((x,u),~(y,v),~(1+\varepsilon) {\nu}^{\varrho'} )). \end{equation}
Using the definition of $\SIR$, this is equivalent to showing that
\[ \frac{\ell(\vert x-y \vert) u ~ {\nu}(\ell(\vert \cdot- \chi\vert)\centerdot)}{\ell(\vert \xi-\chi \vert) s ~ {\nu}^{\varrho'}(\ell(\vert \cdot - y \vert)\centerdot)}-1 \leq \varepsilon. \]
The absolute value of the left hand side can be estimated as follows
\begin{align*}
\left| \frac{\ell(\vert x-y \vert) u ~ \nu (\ell(\vert \cdot- \chi \vert)\centerdot)}{\ell(\vert \xi -\chi \vert) s ~ \nu^{\varrho'}(\ell(\vert \cdot - y \vert)\centerdot)}-1  \right| &= \left| \frac{\ell(\vert x-y \vert) u ~ \nu(\ell(\vert \cdot- \chi \vert)\centerdot)-\ell(\vert \xi -\chi \vert) s ~ \nu^{\varrho'}(\ell(\vert \cdot - y \vert)\centerdot)}{\ell(\vert \xi-\chi \vert) s ~ \nu^{\varrho'}(\ell(\vert \cdot - y \vert)\centerdot)} \right| \\ &\leq \frac{\ell(\vert x-y \vert) u}{\ell(\vert \xi-\chi \vert) s} \left| \frac{\nu(\ell(\vert \cdot- \chi\vert)\centerdot)-\nu^{\varrho'}(\ell(\vert \cdot - y \vert)\centerdot)}{\nu^{\varrho'}(\ell(\vert \cdot - y \vert)\centerdot)}\right| \\&+ \left| \frac{\ell(\vert x-y \vert) u-\ell(\vert \xi-\chi \vert) s}{\ell(\vert \xi-\chi \vert) s} \right| \\ &\leq \frac{\ellmax \Fmax}{\ellmin^2 \Fmin^2} \frac{\sum_{(w,p) \in \mathbf W_\delta} \int\limits_{\varrho'^{-1}((w,p))} \left| \ell(\vert \upsilon-\chi \vert)q-\ell(\vert w-y \vert)p \right|~ \nu (\mathrm d \upsilon,\mathrm d q) }{\nu(\mathbf W)} \\ &+ \left(\frac{J_2 \Fmax}{\ellmin  \Fmin} \left| x-y-\xi+\chi \right|+\frac{\ellmax}{\ellmin \Fmin} \vert u-s \vert \right) \\ &\leq \frac{J_2 \Fmax \ellmax}{\ellmin^2 \Fmin^2} \sup_{\begin{smallmatrix} (w,p) \in \mathbf W_\delta \\ (\upsilon,q) \in \varrho'^{-1}((w,p)) \end{smallmatrix}} \left( J_2 \Fmax \vert \upsilon-\chi-w+y \vert + \ellmax \vert p-q \vert \right) \\ &+ \left(\frac{J_2 \Fmax}{\ellmin} \left| x-y-\xi+\chi \right|+\ellmax \vert u-s \vert \right) \\ &\leq \alpha_1' \sup_{(w,p) \in \mathbf W_\delta} \sup_{(\upsilon,q) \in \varrho'^{-1}((w,p))} \left( \left| \upsilon-w \right| + \left| p-q \right| \right) \leq \alpha_2' \delta, \numberthis  \label{Tízenkettő}
\end{align*}
where $\alpha_1', \alpha_2'$ are some constants depending also on the edge length $r$ of $W$. Since $g$ is increasing by assumption, \eqref{Tízenegyesbérc} implies
\begin{equation} \label{Tizenháárom} {D}((\xi,s),~(\chi,b),~{\nu}) \geq \imath'(D((x,u),~(y,v),~(1+\varepsilon) {\nu}^{\varrho'} ))
\end{equation}
for all $(\xi,s) \in \varrho'^{-1}((x,u))$ and $(\chi,b) \in \varrho'^{-1}((y,v))$. Now we show that for every $C$, the inequality \eqref{Tizesbérc} can be derived from the inequality \eqref{Tizenháárom}. Clearly, the direct up- and downlink cases \eqref{Tizesbérc} follow from \eqref{Tizenháárom} setting $(\chi,b)=(y,v)=(o,F_o)$ and $(\xi,s)=(x,u)=(o,F_o)$ respectively. For the relayed uplink case $C((\xi,u),~\nu)=R((\xi,u),~(o,F_o),~\nu)$, it is sufficient to prove \eqref{Tizesbérc} for the relaying component in $R((\xi,u),~(o,F_o),~\nu)$, since the direct communication part has already been verified. For this, we show that for all $(\xi,s) \in \varrho'^{-1}((x,u))$ we have
\[ {\nu}\text{-}\esssup\limits_{(\chi,b) \in \mathbf W} \Gamma((\xi,s),~(\chi,b),~(o,F_o),~{\nu}) \geq {\nu}^{\varrho'}\text{-}\esssup\limits_{(y,v) \in \mathbf W_\delta} \Gamma((x,u),~(y,v),~(o,F_o),~(1+ \varepsilon) {\nu}^{\varrho'}). \] 
Let us assume that the essential supremum on the right hand side is attained in $(y,v) \in \mathbf W_\delta$, where it is necessary that ${\nu}^{\varrho'}((y,v))>0$. Then it is sufficient to find $\delta'=\delta'(\varepsilon) \in \mathbb B$ such that for all $\delta \in \mathbb B \cap (0,\delta')$ and $(\xi,s) \in \varrho'^{-1}((x,u)),~(\chi,b) \in \varrho'^{-1}((y,v))$ we have
\begin{small} \[ \min \left\lbrace D((\xi,s),~(\chi,b),~{\nu}),~D((\chi,b),~(o,F_o),~{\nu}) \right\rbrace \geq \min \left\lbrace D((x,u),~(y,v),~(1+\varepsilon){\nu}^{\varrho'}),~ D((y,v),~(o,F_o),~(1+\varepsilon){\nu}^{\varrho'})   \right\rbrace, \] \end{small} 
\hspace{-8pt} and this can be done using \eqref{Tizenháárom}.

Similarly, in the case of relayed downlink communication $C((\xi,s),~\nu)=R((o,F_o),~(\xi,s),~\nu)$, using the same argument as in the previous case, we have to show 
\begin{small} \[ \min \left\lbrace D((o,F_o),~(\chi,b),~{\nu}),~D((\chi,b),~(\xi,s),~{\nu}) \right\rbrace \geq \min \left\lbrace D((o,F_o),~(y,v),~(1+\varepsilon){\nu}^{\varrho'}),~ D((y,v),~(x,u),~(1+\varepsilon){\nu}^{\varrho'})   \right\rbrace, \] \end{small} \hspace{-8pt}
for all $(\xi,s) \in \varrho'^{-1}((x,u)),~(\chi,b) \in \varrho'^{-1}((y,v))$ if $\mathbb B \ni \delta <\delta'$, with some threshold value $\delta'=\delta'(\varepsilon) \in \mathbb B$. But this can also be verified using \eqref{Tizenháárom}.

For the lower bound, by the above it suffices to find $\delta'=\delta'(\varepsilon) \in \mathbb B$ such that for all $\delta \in \mathbb B \cap (0,\delta')$, $(\xi,s) \in \varrho'^{-1}((x,u))$ and all $C$ we have
\begin{equation} \label{Tizennégyesbérc} {C}((\xi,s),~{\nu}) \leq \imath'(C((x,u),~(1-\varepsilon){\nu}^{\varrho'} )). \end{equation}
Again, we start with showing that for sufficiently small $\delta$, for all $(\xi,s) \in \varrho'^{-1}((x,u))$ and $(\chi,b) \in \varrho'^{-1}((y,v))$ with $(x,u) \in \mathbf W_\delta$ we have
\[ {\SIR}((\xi,s),~(\chi,b),~{\nu}) \leq \imath'(\SIR((x,u),~(y,v),~(1-\varepsilon) {\nu}^{\varrho'})), \]
which is equivalent to proving that \[ 1-\frac{\ell(\vert x-y \vert) u ~ {\nu}(\ell(\vert \cdot- \chi\vert)\centerdot)}{\ell(\vert \xi-\chi \vert) s ~ {\nu}^{\varrho'}(\ell(\vert \cdot - y \vert)\centerdot)} \leq \varepsilon. \]
But this follows from the estimate \eqref{Tízenkettő}. This implies
\begin{equation} \label{Tízenötösbérc} {D}((\xi,s),~(\chi,b),~{\nu}) \leq \imath'(D((x,u),~(y,v),~(1-\varepsilon) {\nu}^{\varrho'})).
\end{equation}
For the direct uplink and downlink cases in \eqref{Tizennégyesbérc} are implied by \eqref{Tízenötösbérc} by setting $(y,v)=(\chi,b)=(o,F_o)$ respectively $(x,u)=(\xi,s)=(o,F_o)$. Thus, for the relayed communication cases we have to prove \eqref{Tizennégyesbérc} only for the relaying component. For this purpose, in the direct uplink case $C((x,u),~(o,F_o))=R((x,u),~(o,F_o),~\nu)$ we show that for all $(\xi,s) \in \varrho'^{-1}((x,u))$ we have
\[ \nu\text{-} \esssup\limits_{(\chi,b) \in \mathbf W} \Gamma((\xi,s),~(\chi,b),~(o,F_o),~\nu) \leq \nu^{\varrho'}\text{-}\esssup\limits_{(y,v) \in \mathbf W_\delta} \Gamma((x,u),~(y,v),~(o,F_o),~(1-\varepsilon)\nu^{\varrho'}). \]
We note that for $(\xi,s) \in \varrho'^{-1}((x,u))$ we can write
\[ \nu\text{-} \esssup\limits_{(\chi,b) \in \mathbf W} \Gamma((\xi,s),~(\chi,b),~(o,F_o),~\nu)=\nu^{\varrho'}\text{-}\esssup\limits_{(y,v) \in \mathbf W_\delta} \left( \nu\text{-}\esssup\limits_{(\chi,b) \in \varrho'^{-1}((y,v))} \Gamma((\xi,s),~(\chi,b),~(o,F_o),~\nu) \right). \] 
Hence this essential supremum is attained, say in $(y,v) \in \mathbf W_\delta$, where it is necessary that $\nu((y,v))>0$. Then it suffices to find $\delta' \in \mathbb B$ such that for all $\delta \in \mathbb B \cap (0,\delta')$, $(\xi,s) \in \varrho'^{-1}((x,u))$ and $(\chi,b) \in \varrho'^{-1}((y,v))$ we have
\begin{small} \[ \min \left\lbrace D((\xi,s),~(\chi,b),~{\nu}),~D((\chi,b),~(o,F_o),~{\nu}) \right\rbrace \leq \min \left\lbrace D((x,u),~(y,v),~(1-\varepsilon){\nu}^{\varrho'}),~ D((y,v),~(o,F_o),~(1-\varepsilon){\nu}^{\varrho'})   \right\rbrace, \]\end{small} \hspace{-7pt} which can be done using \eqref{Tízenötösbérc}.

As for the relayed downlink communication $C((x,u),~\nu)=R((o,F_o),~(x,u),~\nu)$, we need to show
\begin{small} \[ \min \left\lbrace D((o,F_o),~(\chi,b),~{\nu}),~D((\chi,b),~(\xi,s),~{\nu}) \right\rbrace \leq \min \left\lbrace D((o,F_o),~(y,v),~(1-\varepsilon){\nu}^{\varrho'}),~ D((y,v),~(x,u),~(1-\varepsilon){\nu}^{\varrho'})   \right\rbrace \]\end{small} \hspace{-7pt} for all $(\xi,s) \in \varrho'^{-1}((x,u))$, $(\chi,b) \in \varrho'^{-1}((y,v))$ and sufficiently small $\delta$. But this again follows from \eqref{Tízenötösbérc}. This finishes the proof.
\section{Proof of Proposition \ref{2.2}} \label{kettőkettő}
We first establish a large deviation principle for the random variables $\left\lbrace \mathbf L_\lambda^{\varrho'} ((x,u)) \right\rbrace_{(x,u) \in \mathbf W_\delta}$ indexed by $\lambda > 0$, and then we can proceed analogously as in \cite[Section 4.2]{cikk}.
By Cramér's theorem\index{empirical measure}\index{Cramér's theorem} (Theorem \ref{nagycramér}), for each $(x,u) \in \mathbf W_\delta$, the empirical measures
\[ \mathbf L_{\lambda}^{\varrho'} ((x,u))=\frac{1}{\lambda} \sum_{X_i \in X^\lambda} \mathds 1 {\lbrace \varrho_1(X_i)=x \rbrace} \mathds 1 {\lbrace \varrho_2(F_{X_i})=u \rbrace} \]
satisfy a large deviation principle\index{large deviation principle} with good rate function\index{rate function}\index{relative entropy}\index{relative entropy!discrete}
\[ a \mapsto h(a \vert \mu^{\varrho'}((x,u))), \]
where $\mu^{\varrho'}((x,u))$ has the product form $\mu^{\varrho_1}(x)\mathbb P(\varrho_2(F_0)=u)$ in our case when the fading variable of a user does not depend on the spatial position of the user.

To prove this, we note that $\lbrace \mathbf X^\lambda_\delta((x,u)) \rbrace_{\lambda>0}$ is a homogeneous Poisson process on $(0,\infty)$ with intensity $\mu'^{\varrho'}((x,u))$. This implies that for $n \in \mathbb N$, the random variables $\mathbf X^1_\delta((x,u))$, $\mathbf X^2_\delta((x,u))-\mathbf X^1_\delta((x,u))$, $\ldots$, $\mathbf X^n_\delta((x,u))-\mathbf X^{n-1}_\delta((x,u))$ are i.i.d. Poisson random variables with common parameter $\mu'^{\varrho'}((x,u))$. First we show that the LDP holds for $\mathbf L_{n}^{\varrho'} ((x,u))$ w.r.t. the limit $n \to \infty$ with $n \in \mathbb N$, instead of the continuous limit $\lambda \to \infty$. Using the notions defined in Section \ref{ldpcske}, for $n \in \mathbb N$, the moment generating function of $\mathbf X^1_\delta((x,u))$ is given by
\begin{equation} \label{lözsandrka} \mathbb E_2[\exp(\alpha \mathbf X^1_\delta((x,u)))] =\sum_{k=0}^\infty e^{- \mu'^{\varrho'}((x,u))} \frac{(\mu'^{\varrho'}((x,u)))^k}{k!} e^{\alpha k}=\exp( \mu'^{\varrho'}((x,u)) e^{\alpha}-1). \end{equation}
The rate function for the LDP of $\lbrace \mathbf X^n_\delta((x,u)) \rbrace_{n \in \mathbb N}$ is the Fenchel--Legendre transform $\Lambda^\ast_{\mathbf X^1_\delta((x,u))}(\cdot)$ of the logarithm of the quantity \eqref{lözsandrka}. For fixed $y \in \mathbb R$, $\Lambda^\ast_{\mathbf X^1_\delta}(y)$ equals the supremum of the function
\[ f(\alpha)=\alpha y+\mu'^{\varrho'}((x,u)) (1-e^\alpha)\]
over $\alpha \geq 0$. The unique $\alpha$ with $f'(\alpha)=0$ is $\alpha_0=\log \frac{y}{\mu'^{\varrho'}((x,u))}$, and also $f''(\alpha_0) \leq 0$ holds, therefore the fact that $\lim_{\alpha \to \infty} f(\alpha)=-\infty$ implies that $f(\alpha_0)$ is the global maximum of $f$ on $[0,\infty)$. Thus, using the definition \eqref{diszkrétrelatíventrópia} of relative entropy, we have  \[ \Lambda_{\mathbf X^1_\delta((x,u))}^\ast(y)= y \log \frac{y}{ \mu'^{\varrho'}((x,u))}+\mu'^{\varrho'}((x,u)) \left(1-\frac{y}{ \mu'^{\varrho'}((x,u))} \right)=h(y \vert \mu'^{\varrho'}((x,u))), \]
as wanted. Now, for fixed $k \in \mathbb N$, the LDP for $\lambda=n/k \to \infty$ follows by analogous arguments for $\mathbf X^{1/k}_\delta((x,u))$, $\mathbf X^{2/k}_\delta((x,u))-\mathbf X^{1/k}_\delta((x,u))$, $\ldots$, $\mathbf X^{n/k}_\delta((x,u))-\mathbf X^{(n-1)/k}_\delta((x,u))$ instead of $\mathbf X^1_\delta((x,u))$, $\mathbf X^2_\delta((x,u))-\mathbf X^1_\delta((x,u))$, $\ldots$, $\mathbf X^n_\delta((x,u))-\mathbf X^{n-1}_\delta((x,u))$. This implies the LDP for $\lambda \to \infty$, $\lambda \in \mathbb Q$. 

Now, we can show that the desired LDP follows. This is equivalent to proving that for all sequence $\lbrace \lambda_n \rbrace$ in $\mathbb R$ with $\lim_{n \to \infty} \lambda_n=\infty$, we have the correct large deviation rate. For all $n \in \mathbb N$, let $\lbrace q_{n,k} \rbrace$ be a sequence of rational numbers converging to $\lambda_n$ from above. E.g. for the upper bound, for $\alpha \geq 0$ we estimate
\begin{align*} \limsup_{n \to \infty} \frac{1}{\lambda_n} \log \mathbb P_2(\mathbf L_{\lambda_n}^{\varrho'}((x,u)) \geq \alpha)  &= \limsup_{n \to \infty} \frac{1}{\lambda_n} \log \mathbb P_2(\lim_{k \to \infty} \mathbf L_{q_{n,k}}^{\varrho'}((x,u)) \geq \alpha, ~\forall k \in \mathbb N) \numberthis \label{Harmincnégyesbérc} \\ &= \limsup_{n \to \infty} \frac{1}{\lambda_n} \log \lim\limits_{\lambda \to \infty} \mathbb P_2(\mathbf L_{q_{n,k}}^{\varrho'}((x,u)) \geq \alpha) \numberthis \label{Harmincötösbérc} \\ &= \limsup_{n \to \infty} \limsup\limits_{k \to \infty} \frac{1}{q_{n,k}} \log \mathbb P_2(\mathbf L_{q_{n,k}}^{\varrho'}((x,u)) \geq \alpha) \\ &= \limsup\limits_{k \to \infty} \limsup_{n \to \infty}  \frac{1}{q_{n,k}} \log \mathbb P_2(\mathbf L_{q_{n,k}}^{\varrho'}((x,u)) \geq \alpha) \numberthis \label{Harminchatosbérc}  \\ &\leq \limsup\limits_{k \to \infty} -\Lambda_{\mathbf X^1_\delta((x,u))}^\ast(\alpha) \numberthis \label{Harminchetesbérc} = -\Lambda_{\mathbf X^1_\delta((x,u))}^\ast(\alpha), \end{align*} where in \eqref{Harmincnégyesbérc} we used that the homogeneous Poisson process has right-continuous paths (cf. \cite[Section 1.1]{lévyprocesses}), in \eqref{Harmincötösbérc} that these paths are also increasing, in \eqref{Harminchatosbérc} we used the continuity of measures. Finally, in \eqref{Harminchetesbérc} we were able to interchange the limits since our argumentation for rational intensities implies that the following two-dimensional limsup: $\limsup_{(n,k) \to \infty} \frac{1}{q_{n,k}} \log \mathbb P_2(\mathbf L_{q_{n,k}}^{\varrho'}((x,u)) \geq \alpha)$ is finite. This limit is equal to both sides of \eqref{Harminchetesbérc}, since for fixed $n$, $\limsup_{k \to \infty} \frac{1}{q_{n,k}} \log \mathbb P_2(\mathbf L_{q_{n,k}}^{\varrho'}((x,u)) \geq \alpha)=\frac{1}{\lambda_n} \log \mathbb P_2(\mathbf L_{\lambda_n}^{\varrho'}((x,u)) \geq \alpha)$ is finite (this holds also with $\lim$ instead of $\limsup$), and for fixed $k$, $\limsup_{n \to \infty} \frac{1}{q_{n,k}} \log \mathbb P_2(\mathbf L_{q_{n,k}}^{\varrho'}((x,u)) \geq \alpha) \leq -\Lambda_{\mathbf X^1_\delta((x,u))^\ast}(\alpha)$ is finite. Analogous arguments apply for the lower bound of the same probabilities, and thus also for upper respectively lower bounds for probabilities of $\mathbf L_{q_{n,k}}^{\varrho'}((x,u))$ taking place in more complex open respectively closed sets. Thus, we have obtained the LDP for $\lambda \to \infty$.
Therefore, Proposition \ref{4.2.7} implies that the independent random variables $\left\lbrace \mathbf L_\lambda^{\varrho'} ((x,u)) \right\rbrace_{(x,u) \in \mathbf W_\delta}$ satisfy an LDP with good rate function
\[ \lbrace a_{x,u} \rbrace_{(x,u) \in \mathbf W_\delta}  \mapsto \sum_{(x,u) \in \mathbf W_\delta} h(a_{x,u} \vert \mu^{\varrho'}((x,u))). \]
Now we turn to the proof of Proposition \ref{2.2}.
\begin{flushleft} \emph{Upper bound} \end{flushleft}
According to Remark \ref{3.7,8}, for every $(x,u) \in \mathbf W_\delta$ the map \begin{small}
\[ {\nu} \mapsto \left( \tau_1({R}((x,u),~(o,F_o),~\alpha {\nu}),~\tau_2({D}((x,u),~(o,F_o),~\alpha {\nu}),~\tau_3({R}(o,F_o),~((x,u),~\alpha {\nu}),~\tau_4({D}(o,F_o),~((x,u),~\alpha {\nu})\right) \] \end{small}
\hspace{-8pt} is u.s.c. Thus, also the map ${\nu} \mapsto G(\alpha {\nu},~ \boldsymbol \tau)$ is u.s.c. Another application of \cite[Lemma 3.7]{cikk} yields that $F(G(\alpha {\nu},~\boldsymbol \tau))$ is u.s.c., and therefore the upper bound in Proposition \ref{2.2} immediately follows from Varadhan's lemma for the upper bound (Lemma \ref{varadhanupper}\index{Varadhan's lemmas}). 

\begin{flushleft} \emph{Lower bound} \end{flushleft}
In contrast, it follows from our argumentation before Lemma \ref{3.6} that the map $\nu \mapsto G(\nu,~ \boldsymbol \tau)$ is not l.s.c., similarly to the fading-free case. Hence, proving the lower bound requires a substantial amount of work. Therefore, first we approximate the map $\nu \mapsto G(\nu,~ \boldsymbol \tau)$ by l.s.c. functions. We will see that the cost of these approximations is negligible on the exponential scale. More precisely, for the uplink, we define the following approximating measures
\[ G_\varepsilon(\nu,~\tau,~\text{up})(A)=\sum\limits_{(x,u) \in A} \tau( R_\varepsilon ((x,u),~(o,F_o),~{\nu}) ), \quad  A \subseteq \mathbf W_\delta, \] 
where \[  R_\varepsilon ((x,u),~(o,F_o),~{\nu})=\max \left\lbrace D((x,u),~(o,F_o),~{\nu}),~\max_{(y,v) \in \mathbf{W}_\delta} \Gamma_\varepsilon ((x,u),~(y,v),~(o,F_o),~{\nu}) \right\rbrace \]
is defined using \[ \Gamma_\varepsilon ((x,u),~(y,v),~(o,F_o),~{\nu}) = \min \lbrace 1, \varepsilon^{-1} {\nu}((y,v)) \rbrace~ \Gamma ((x,u),~(y,v),~(o,F_o),~{\nu}). \] Since by Lemma \ref{3.6} the lower semicontinuity of $\nu \mapsto G(\nu,~ \boldsymbol \tau)$ can only be obstructed by the relaying component, we easily see that for any $\varepsilon>0$, all $\Gamma_\varepsilon$ are l.s.c., and \[ \Gamma_\varepsilon ((x,u),~(y,v),~(o,F_o),~{\nu}) \leq \mathds 1\lbrace  {\nu}((y,v)))>0 \rbrace \Gamma ((x,u),~(y,v),~(o,F_o),~{\nu}), \] where equality holds if and only if ${\nu}((y,v)) \in \lbrace 0 \rbrace \cup \left[ \varepsilon, \infty \right) $. 

Similarly, for the direct uplink, the downlink and the direct downlink we introduce the approximating empirical measures $G_\varepsilon(\nu,~\tau,~\text{up-dir}),~G_\varepsilon(\nu,~\tau,~\text{do}),~G_\varepsilon(\nu,~\tau,~\text{do-dir})$ respectively, and we put
\[ G_\varepsilon(\nu,~\boldsymbol \tau)=\left(G_\varepsilon(\nu,~\tau_1,~\text{up}),~G_\varepsilon(\nu,~\tau_2,~\text{up-dir}),~G_\varepsilon(\nu,~\tau_3,~\text{do}),~G_\varepsilon(\nu,~\tau_4,~\text{do-dir})\right). \]
Now we formalize the approximation property under the event $\mathbf E_{\varepsilon_0} \cap \mathbf E_{\varepsilon_0}'$ from Section \ref{sprinkle}. 
\begin{lem} \label{4.1}
Let $0<\alpha_{-}<\alpha<2$ and $\tau_i: [0,\infty) \to [0,\infty),~i \in \four$ be decreasing measurable functions such that $\tau_i(\gamma)=0$ if $\gamma\geq \tilde{c}_+$. Then, for every sufficiently small $\varepsilon_0>0$ there exists $\lambda_0=\lambda_0(\varepsilon_0)$ such that for $\lambda \geq \lambda_0$, the following hold $\mathbb P_2$-a.s., for every $(x,u) \in \mathbf W_\delta$,
\[ \mathds 1\lbrace \mathbf E_{\varepsilon_0} \cap \mathbf E_{\varepsilon_0}' \rbrace R((x,u),~(o,F_o),~\alpha \mathbf L^{\varrho'}_{\lambda}) \leq  R_{\varepsilon_0 \alphaminus} ((x,u),~(o,F_o),~\alphaminus \mathbf L^{\varrho'}_{\lambda'}), \] 
and
\[ \mathds 1\lbrace \mathbf E_{\varepsilon_0} \cap \mathbf E_{\varepsilon_0}' \rbrace R((o,F_o),~(x,u),~\alpha \mathbf L^{\varrho'}_{\lambda}) \leq  R_{\varepsilon_0 \alphaminus} ((o,F_o),~(x,u),~\alphaminus \mathbf L^{\varrho'}_{\lambda'}) \] 
where $\lambda'=(1+\varepsilon_1) \lambda = (1+2 \varepsilon_0 {\kappa}_\delta^{-1}) \lambda$. In particular,
\[ \mathds 1\lbrace \mathbf E_{\varepsilon_0} \cap \mathbf E_{\varepsilon_0}' \rbrace G_{\varepsilon_0 \alphaminus}(\alphaminus \mathbf L_{\lambda'}^{\varrho'},~\boldsymbol \tau) \leq \alpha G(\mathbf L_{\lambda}^{\varrho'},~\boldsymbol \tau). \] 
\end{lem}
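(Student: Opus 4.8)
The plan is to establish the two pointwise QoS estimates for $R$ first, and then deduce the measure inequality for $G$ by summation; I treat the uplink, the downlink being identical after interchanging the transmitter $(x,u)$ and the receiver $o$. Since $R((x,u),(o,F_o),\cdot)$ is the maximum of its \emph{direct} part $D((x,u),(o,F_o),\cdot)$ and its \emph{relaying} part $\nu\text{-}\esssup_{(y,v)}\Gamma((x,u),(y,v),(o,F_o),\cdot)$, and $R_{\varepsilon_0\alphaminus}$ has exactly this shape with $\Gamma$ replaced by $\Gamma_{\varepsilon_0\alphaminus}$, it suffices to bound each part of $R((x,u),(o,F_o),\alpha\mathbf L_\lambda^{\varrho'})$ by the corresponding part of $R_{\varepsilon_0\alphaminus}((x,u),(o,F_o),\alphaminus\mathbf L_{\lambda'}^{\varrho'})$ on the event $\mathbf E_{\varepsilon_0}\cap\mathbf E'_{\varepsilon_0}$.

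For the direct part I would follow the scaling/sprinkling computation behind Lemma \ref{3.1}(v). Because $\SIR$ is inversely linear in its measure argument and $g$ is increasing, the inequality $D((x,u),(o,F_o),\alpha\mathbf L_\lambda^{\varrho'})\leq D((x,u),(o,F_o),\alphaminus\mathbf L_{\lambda'}^{\varrho'})$ reduces to the interference bound $\alphaminus\,\mathbf L_{\lambda'}^{\varrho'}(\ell(\vert\cdot-o\vert)\centerdot)\leq\alpha\,\mathbf L_\lambda^{\varrho'}(\ell(\vert\cdot-o\vert)\centerdot)$, i.e.\ to $\mathbf L_{\lambda'}^{\varrho'}(\ell(\vert\cdot-o\vert)\centerdot)/\mathbf L_\lambda^{\varrho'}(\ell(\vert\cdot-o\vert)\centerdot)\leq\alpha/\alphaminus$. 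Using that $\mathbf X_\delta^\lambda$ is a thinning of $\mathbf X_\delta^{\lambda'}$ together with the per-user bounds $\ellmin\Fmin\leq\ell(\cdot)\cdot(\cdot)\leq\ellmax\Fmax$ from \eqref{korlátka}, the ratio is at most $1+\tfrac{\ellmax\Fmax}{\ellmin\Fmin}\tfrac{X_\delta^{\lambda'}(W_\delta)-X_\delta^\lambda(W_\delta)}{X_\delta^\lambda(W_\delta)}$, and on $\mathbf E'_{\varepsilon_0}$ this is $\leq 1+\tfrac{\ellmax\Fmax}{\ellmin\Fmin}\tfrac{\varepsilon_2}{1-\varepsilon_2}$, which tends to $1$ as $\varepsilon_0\to0$ since $\varepsilon_2=4\varepsilon_1{\beta'_o}^{-1}\sharp\mathbf W_\delta(1+\mu'^{\varrho'}(\mathbf W_\delta))$ and $\varepsilon_1=2\varepsilon_0\kappa_\delta^{-1}$. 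Hence for all $\varepsilon_0$ small relative to the fixed ratio $\alpha/\alphaminus$ (and to $\delta$ and the extremal fadings) the direct inequality holds $\mathbb P_2$-a.s., and the very same estimate applies to each of the two $D$-factors inside $\Gamma$.

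The relaying part is where $\mathbf E_{\varepsilon_0}$ enters and the truncation is essential. The outer maximum in $R((x,u),(o,F_o),\alpha\mathbf L_\lambda^{\varrho'})$ is an $(\alpha\mathbf L_\lambda^{\varrho'})$-essential supremum, so it is attained at a relay $(y,v)$ with $\mathbf L_\lambda^{\varrho'}((y,v))>0$, that is $(y,v)\notin\mathbf V$. On $\mathbf E_{\varepsilon_0}=\{\mathbf Q\subseteq\mathbf V\}$ the contrapositive gives $(y,v)\notin\mathbf Q$, hence $\mathbf L_{\lambda'}^{\varrho'}((y,v))>\varepsilon_0$, so the truncation factor $\min\{1,(\varepsilon_0\alphaminus)^{-1}(\alphaminus\mathbf L_{\lambda'}^{\varrho'})((y,v))\}=\min\{1,\varepsilon_0^{-1}\mathbf L_{\lambda'}^{\varrho'}((y,v))\}$ equals $1$. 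Thus $\Gamma_{\varepsilon_0\alphaminus}$ coincides with $\Gamma$ at this $(y,v)$, and combining with the two $D$-factor bounds of the previous step gives $\Gamma((x,u),(y,v),(o,F_o),\alpha\mathbf L_\lambda^{\varrho'})\leq\Gamma_{\varepsilon_0\alphaminus}((x,u),(y,v),(o,F_o),\alphaminus\mathbf L_{\lambda'}^{\varrho'})\leq\max_{(y',v')}\Gamma_{\varepsilon_0\alphaminus}(\cdots)$. Taking the maximum of the direct and relaying parts yields the first pointwise inequality, and the downlink estimate follows identically.

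Finally, the measure inequality is obtained by evaluating both sides on an arbitrary $A\subseteq\mathbf W_\delta$ and comparing site by site: since each $\tau_i$ is decreasing, the pointwise bound $R(\cdots,\alpha\mathbf L_\lambda^{\varrho'})\leq R_{\varepsilon_0\alphaminus}(\cdots,\alphaminus\mathbf L_{\lambda'}^{\varrho'})$ turns into $\tau_i(R)\geq\tau_i(R_{\varepsilon_0\alphaminus})$, which I would combine with a comparison of the reference masses, splitting $\mathbf W_\delta$ into virtual, quasi-empty, and substantially occupied sites and invoking $\mathbf E_{\varepsilon_0}$ (quasi-empty $\subseteq$ virtual) together with the total-mass control of $\mathbf E'_{\varepsilon_0}$ to line up the weights. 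I expect the genuine obstacle to lie exactly in this last bookkeeping for the relaying coordinates: the direct coordinates are controlled by the plain continuous monotonicity of Lemma \ref{3.1}(i)--(ii), but for $R$ one must keep two competing mechanisms compatible at once, namely the possible disappearance of relays (forcing $\mathbf E_{\varepsilon_0}$ so that the $\Gamma_{\varepsilon_0\alphaminus}$-truncation stays inactive on the relevant sites) and the slight inflation of interference by sprinkling (forcing $\mathbf E'_{\varepsilon_0}$ and the smallness of $\varepsilon_0$), uniformly over all sites and over the choice of $0<\alphaminus<\alpha<2$.
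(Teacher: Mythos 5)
Your proposal is correct and follows essentially the same route as the paper's proof: the direct part is handled by the mass-ratio bound coming from $\mathbf E'_{\varepsilon_0}$ fed into the monotonicity statement of Lemma \ref{3.1}(v), and the relaying part rests on the observation that any relay $(y,v)$ with $\mathbf L_\lambda^{\varrho'}((y,v))>0$ must, on $\mathbf E_{\varepsilon_0}$, satisfy $\mathbf L_{\lambda'}^{\varrho'}((y,v))\geq\varepsilon_0$, so the truncation in $\Gamma_{\varepsilon_0\alphaminus}$ is inactive exactly where it matters. The concluding passage from the pointwise $R$-inequalities to the $G$-inequality via monotonicity of the $\tau_i$ is no more (and no less) detailed than the paper's own one-line conclusion, so nothing essential is missing.
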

\begin{proof}
First, with the notation of Section \ref{sprinkle}\index{sprinkling construction}, under the event $\mathbf E'_{\varepsilon_0}$ we have $\frac{\mathbf X^{\lambda'}_\delta(\mathbf W_\delta)}{\mathbf X^{\lambda}_\delta (\mathbf W_\delta)}=\frac{ X^{\lambda'}_\delta( W_\delta)}{X^{\lambda}_\delta (W_\delta)} \leq \frac{1}{1-\varepsilon_2},$ which converges to 1 as $\varepsilon_0 \to 0$. Thus, for sufficiently small $\varepsilon_0$, we have $\left( \frac{1}{1-\varepsilon_2}-1 \right) \leq \frac{ \left( 1-\frac{\alphaminus}{\alpha} \right) \ellmin \Fmin }{\frac{\alphaminus}{\alpha} \ellmax \Fmax}$, and hence the part (v) of Lemma \ref{3.1} implies that $D((x,u),~(y,v),~\alpha \mathbf L^{\varrho'}_{\lambda} ) \leq D((x,u),~(y,v),~\alphaminus \mathbf L^{\varrho'}_{\lambda'} )$ holds for all $(x,u),~(y,v) \in \mathbf W_\delta$.

Now, it suffices to show that under the event $\mathbf E_{\varepsilon_0} \cap \mathbf E'_{\varepsilon_0}$
\[ \Gamma((x,u),~(y,v),~(z,w),~\alpha \mathbf L^{\varrho'}_{\lambda} ) \leq \min \lbrace 1, \varepsilon_0^{-1} \mathbf L^{\varrho'}_{\lambda'}((y,v)) \rbrace \Gamma((x,u),~(y,v),~(z,w), ~\alphaminus \mathbf L^{\varrho'}_{\lambda'}) \]
holds for all $(x,u),~(y,v),~(z,w) \in \mathbf W_\delta$ with $\mathbf L^{\varrho'}_{\lambda}((y,v))>0$. We claim that for such $(y,v)$, $\mathbf L^{\varrho'}_{\lambda'} ((y,v)) \geq \varepsilon_0$ holds under the event $\mathbf E_{\varepsilon_0}$. Indeed, otherwise
$ \varepsilon_0 >\mathbf L^{\varrho'}_{\lambda'} ((y,v))$,
and thus by the definition of $\mathbf E_{\varepsilon_0}$, we have $ 0=\mathbf L_{\lambda}^{\varrho'}((y,v)).$ Now we conclude by applying the inequality for $D$.
\end{proof}
Our next lemma shows that Lemma \ref{4.1} implies closeness in the exponential scale.
\begin{lem} \label{4.2}
Let $0<\alphaminus<\alpha<2$. Let $\tau_i:~[0,\infty) \to [0,\infty),~i \in \four$ and $F:~\mathcal{M}(\mathbf W_\delta)^4 \to [-\infty,\infty)$ be measurable functions such that $\tau_i$ are increasing and $F$ is decreasing. Furthermore, assume that $\tau_i(\gamma)=0$ for every $i \in \four$ if $\gamma \geq \tilde{c}_+$, and that $F$ maps the vector of zero measures to $-\infty$. Then, for sufficiently small $\varepsilon_0 \in (0,1)$, there exists $\lambda_0=\lambda_0(\varepsilon_0)$ such that for all $\lambda \geq \lambda_0$ we have
\[ \mathbb E_2 \exp \left( \lambda F(G(\alpha\mathbf L^{\varrho'}_\lambda,~\boldsymbol \tau)) \right) \geq \exp(-\sqrt{\varepsilon_0} \lambda) \mathbb E_2 \exp \left( \lambda F(G_{\varepsilon_0 \alphaminus}(\alphaminus \mathbf L^{\varrho'}_{\lambda'} ,~\boldsymbol \tau)) \right).\]
\end{lem}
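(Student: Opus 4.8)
The plan is to combine the pathwise comparison provided by Lemma \ref{4.1} with the conditional probability estimate from the sprinkling Lemma \ref{3.9}. The key structural observation is that $\lambda' = (1+\varepsilon_1)\lambda$, and the two empirical measures $\mathbf L^{\varrho'}_\lambda$ and $\mathbf L^{\varrho'}_{\lambda'}$ are coupled through the thinning construction of Section \ref{sprinkle}: conditioning on $\mathbf X^{\lambda'}_\delta$ determines $\mathbf L^{\varrho'}_{\lambda'}$, while $\mathbf L^{\varrho'}_\lambda$ arises from an independent Bernoulli thinning. First I would restrict attention to the event $\mathbf E_{\varepsilon_0} \cap \mathbf E'_{\varepsilon_0}$, on which Lemma \ref{4.1} gives the pointwise measure inequality $G_{\varepsilon_0 \alphaminus}(\alphaminus \mathbf L^{\varrho'}_{\lambda'},~\boldsymbol\tau) \leq \alpha G(\mathbf L^{\varrho'}_\lambda,~\boldsymbol\tau)$ $\mathbb P_2$-a.s. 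Since $F$ is decreasing (in the sense of \eqref{kisebbegyenlő}), this measure inequality reverses under $F$, yielding $F(G(\alpha\mathbf L^{\varrho'}_\lambda,~\boldsymbol\tau)) \geq F(G_{\varepsilon_0\alphaminus}(\alphaminus \mathbf L^{\varrho'}_{\lambda'},~\boldsymbol\tau))$ on that event.

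**Carrying out the estimate.**

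With this pointwise bound in hand, I would bound the left-hand side from below by inserting the indicator of the good event:
\[ \mathbb E_2 \exp\left(\lambda F(G(\alpha\mathbf L^{\varrho'}_\lambda,~\boldsymbol\tau))\right) \geq \mathbb E_2\left[ \mathds 1\lbrace \mathbf E_{\varepsilon_0}\cap\mathbf E'_{\varepsilon_0}\rbrace \exp\left(\lambda F(G_{\varepsilon_0\alphaminus}(\alphaminus \mathbf L^{\varrho'}_{\lambda'},~\boldsymbol\tau))\right)\right], \]
which is legitimate because $F$ is bounded from above, so the integrand is nonnegative up to a multiplicative constant, and dropping the indicator can only decrease the integral. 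The crucial point is that the integrand on the right depends on the randomness only through $\mathbf X^{\lambda'}_\delta$ (hence through $\mathbf L^{\varrho'}_{\lambda'}$), whereas the event $\mathbf E_{\varepsilon_0}\cap\mathbf E'_{\varepsilon_0}$ involves the additional thinning randomness. I would therefore condition on $\mathbf X^{\lambda'}_\delta$ and apply the tower property, writing the right-hand side as
\[ \mathbb E_2\left[ \exp\left(\lambda F(G_{\varepsilon_0\alphaminus}(\alphaminus \mathbf L^{\varrho'}_{\lambda'},~\boldsymbol\tau))\right)~\mathbb P_2\!\left(\mathbf E_{\varepsilon_0}\cap\mathbf E'_{\varepsilon_0}\mid \mathbf X^{\lambda'}_\delta\right)\right]. \]
Now Lemma \ref{3.9} supplies the lower bound $\mathbb P_2(\mathbf E_{\varepsilon_0}\cap\mathbf E'_{\varepsilon_0}\mid \mathbf X^{\lambda'}_\delta) \geq \exp(-\sqrt{\varepsilon_0}\lambda)\mathds 1\lbrace L^{\varrho_1}_{\lambda'}(W_\delta)\geq \beta'_0/2\rbrace$ for $\lambda$ large. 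Substituting this and pulling out the deterministic factor $\exp(-\sqrt{\varepsilon_0}\lambda)$ gives the claimed inequality, provided one can absorb the remaining indicator $\mathds 1\lbrace L^{\varrho_1}_{\lambda'}(W_\delta)\geq \beta'_0/2\rbrace$.

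**The main obstacle.**

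I expect the genuine difficulty to lie in handling the indicator $\mathds 1\lbrace L^{\varrho_1}_{\lambda'}(W_\delta)\geq \beta'_0/2\rbrace$ coming from Lemma \ref{3.9}, since the target inequality has no such indicator. On the complementary event $\lbrace L^{\varrho_1}_{\lambda'}(W_\delta) < \beta'_0/2\rbrace$ the total mass of users is small, and I would argue that there $G_{\varepsilon_0\alphaminus}(\alphaminus \mathbf L^{\varrho'}_{\lambda'},~\boldsymbol\tau)$ contributes negligibly: by the estimate \eqref{kisbecslés}, when $L_{\lambda'}(W)$ falls below the threshold $\beta'_0$ every QoS quantity equals $\tilde c_+$, so that each $\tau_i$ vanishes (by the hypothesis $\tau_i(\gamma)=0$ for $\gamma\geq\tilde c_+$), whence $G_{\varepsilon_0\alphaminus}$ is the vector of zero measures and $F$ takes the value $-\infty$; thus $\exp(\lambda F(\cdots)) = 0$ on that event and the indicator may be dropped for free. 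I would need to verify that the constant $\beta'_0/2$ is compatible with the threshold $\beta'_0$ in \eqref{kisbecslés}, adjusting $\varepsilon_0$ and $\lambda_0$ if necessary, and to check that the scaling factor $\alphaminus < 2$ does not spoil the mass comparison. This reduction to the zero-measure boundary case, using the boundedness of $F$ from above and its value $-\infty$ at zero, is the only step requiring real care; the rest is a conditioning argument that is essentially formal once Lemmas \ref{4.1} and \ref{3.9} are granted.
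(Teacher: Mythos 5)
Your argument is essentially the paper's own proof: Lemma \ref{4.1} gives the comparison on $\mathbf E_{\varepsilon_0}\cap\mathbf E'_{\varepsilon_0}$, the tower property over $\mathbf X^{\lambda'}_\delta$ lets you invoke Lemma \ref{3.9}, and the leftover indicator $\mathds 1\lbrace L^{\varrho_1}_{\lambda'}(W_\delta)\geq \beta'_0/2\rbrace$ is dropped for free because on its complement $G_{\varepsilon_0\alphaminus}(\alphaminus\mathbf L^{\varrho'}_{\lambda'},\boldsymbol\tau)$ is the zero vector and $F$ there equals $-\infty$ — exactly the paper's reduction. The only blemish is your monotonicity sentence: the step actually requires $F$ \emph{increasing} (as the paper's proof states; the ``decreasing'' in the lemma's hypothesis is a typo carried over from swapping the roles of $F$ and $\tau_i$), and your phrase ``this measure inequality reverses under $F$'' contradicts the displayed inequality you then correctly use.
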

\begin{proof}
First, since $F$ is increasing, Lemma \ref{4.1} implies that
\[ \mathbb E_2 \exp \left( \lambda F(G(\alpha\mathbf L^{\varrho'}_\lambda,~\boldsymbol \tau)) \right) \geq \mathbb E_2 \left( \mathds 1 {\lbrace \mathbf E_{\varepsilon_0} \cap \mathbf E'_{\varepsilon_0} \rbrace} \exp ( \lambda F(G_{\varepsilon_0 \alphaminus}(\alphaminus \mathbf L^{\varrho'}_{\lambda'} ,~\boldsymbol \tau))) \right).\] 
Note that if $ L^{\varrho_1}_{\lambda'} (W_\delta) < \beta'_o/2$, then by the definition of the QoS quantities in Section \ref{Anfang}, we have $G_{\varepsilon_0 \alphaminus}(\alphaminus \mathbf L^{\varrho'}_{\lambda'} ,~\boldsymbol \tau)=0$. Hence, using the assumption that $F$ maps the vector of zero measures to $-\infty$, using Lemma \ref{3.9} we deduce that
\begin{align*} \mathbb E_2 \left( F(G(\alpha\mathbf L^{\varrho'}_\lambda,~\boldsymbol \tau))  \right) &\geq \mathbb E_2 \left( \mathbb P_2 (\mathbf E_{\varepsilon_0} \cap \mathbf E'_{\varepsilon_0} \vert \mathbf X^{\lambda'}_{\delta} ) \exp \left( \lambda F(G_{\varepsilon_0 \alphaminus}(\alphaminus \mathbf L^{\varrho'}_{\lambda'} ,~\boldsymbol \tau)) \right) \right) \\ &\geq \exp(-\sqrt{\varepsilon_0} \lambda) \mathbb E_2 \left( \mathds 1\lbrace L^{\varrho_1}_{\lambda'} (W_\delta) \geq \beta'_0/2 \rbrace \exp \left( \lambda F(G_{\varepsilon_0 \alphaminus}(\alphaminus \mathbf L^{\varrho'}_{\lambda'} ,~\boldsymbol \tau)) \right) \right) \\& \geq \exp(-\sqrt{\varepsilon_0} \lambda) \mathbb E_2 \left(\exp \left( \lambda F(G_{\varepsilon_0 \alphaminus}(\alphaminus \mathbf L^{\varrho'}_{\lambda'} ,~\boldsymbol \tau)) \right) \right) ,
\end{align*}
as required.
\end{proof}
Now we can prove the lower bound of Proposition \ref{2.2} similarly to the lower bound of \cite[Proposition 2.2]{cikk}. First, note the map ${\nu} \mapsto G_{\varepsilon_0}(\nu,~ \boldsymbol \tau)$ is continuous. This is clear by the essential bounds of $\varrho_2(F_0)$, the upper semicontinuity of ${\nu} \mapsto G(\nu,~ \boldsymbol \tau)$, and the lower semicontinuity of ${\nu} \mapsto G_\varepsilon(\nu,~ \boldsymbol \tau)$ that follows from the definition of $\Gamma_\varepsilon$. Thus, for $0<\alphaminus<\alpha$, combining Lemma \ref{2.2} with Varadhan's lemma\index{Varadhan's lemmas} for the lower bound (Lemma \ref{varadhanlower}) shows that
\begin{align*}
\liminf_{\lambda \to \infty} \frac{1}{\lambda} \log \mathbb E_2 \left( F(G(\alpha\mathbf L^{\varrho'}_\lambda,~\boldsymbol \tau))  \right) \geq -\sqrt{\varepsilon_0} + \liminf_{\lambda \to \infty} \frac{1}{\lambda} \log \mathbb E_2 \exp \left( \lambda F(G_{\varepsilon_0 \alphaminus}(\alphaminus \mathbf L^{\varrho'}_{\lambda'} ),~\boldsymbol \tau) \right) \\ \geq -\sqrt{\varepsilon_0} - (1+2  \varepsilon_0 {\kappa}_{\delta}^{-1}) \inf\limits_{{\nu} \in \mathcal{M} (\mathbf W_\delta)} \left\lbrace h({\nu} \vert {\mu}'^{\varrho'}) - F(G_{ \varepsilon_0 \alphaminus}(\alphaminus {\nu},~\boldsymbol \tau)) \right\rbrace. 
\end{align*}
Furthermore, using that $\tau_1$ is decreasing, we have
\[ \tau_1 \left(  R((x,u),~(o,F_o),~\alphaminus \nu) \right) \leq \tau_1 \left(  R_{\varepsilon_0 \alphaminus} ((x,u),~(o,F_o),~\alphaminus {\nu}) \right) \]
for all $(x,u) \in \mathbf W_\delta$ and ${\nu} \in \mathcal{M}(\mathbf W_\delta)$. Similarly for the remaining communication cases. Hence,
\[ G_{\varepsilon_0 \alphaminus}(\alphaminus {\nu},~ \boldsymbol \tau) \geq G(\alphaminus \nu,~ \boldsymbol \tau), \]
and sending $\varepsilon_0$ to zero yields
\[ \liminf_{\lambda \to \infty} \frac{1}{\lambda} \log \mathbb E_2 \left(\exp(\lambda F(G(\alpha\mathbf L^{\varrho'}_\lambda,~\boldsymbol \tau)) ) \right) \geq -\inf\limits_{{\nu} \in \mathcal{M}(\mathbf W_\delta)} \left\lbrace h({\nu} \vert {\mu}'^{\varrho'}) - F(G(\alphaminus {\nu},~\boldsymbol \tau)) \right\rbrace.  \]
Finally, it remains to prove that 
\[ \inf\limits_{{\nu} \in \mathcal{M}(\mathbf W_\delta)} \left\lbrace h(\alpha^{-1} {\nu} \vert {\mu}'^{\varrho'}) - F(G(\nu,~\boldsymbol \tau)) \right\rbrace \geq \limsup_{\alphaminus \uparrow \alpha} \inf\limits_{{\nu} \in \mathcal{M}(\mathbf W_\delta)} \left\lbrace h(\alphaminus^{-1} {\nu} \vert {\mu}'^{\varrho'}) - F(G(\nu,~\boldsymbol \tau)) \right\rbrace. \] In order to prove this claim, consider ${\nu} \in \mathcal{M}(\mathbf W_\delta)$. If ${\nu}$ is not absolutely continuous with respect to ${\mu}^{\varrho'}$, then the left-hand side is infinite and there is nothing to prove. Otherwise, Lemma \ref{3.11} implies that $\lim\limits_{\alphaminus \uparrow \alpha} h(\alphaminus^{-1} {\nu} \vert {\mu}'^{\varrho'})=h(\alpha^{-1} {\nu} \vert {\mu}'^{\varrho'})$, as required. The proposition follows by putting $\alpha=1$.
\section{Proof of Theorem \ref{1.1}}  \label{egyegy}
Analogously to the proof of \cite[Theorem 1.1]{cikk}, after having established Proposition \ref{2.1} and Proposition \ref{2.2}, the proof of Theorem \ref{1.1} is reduced to a result on the behaviour of the rate functions in Proposition \ref{2.2} as $\delta \downarrow 0$.
\begin{lem} \label{5.1}
Let $F: \mathcal{M}(\mathbf W) \to [-\infty,\infty)$ and $\tau:\mathcal{B}' \to [0,\infty)$, $i \in \four$ be measurable functions that are respectively increasing and decreasing, and assume that $F$ is $\delta_0$-discretized for some $\delta_0 \in \mathbb B$ and bounded from above. Then,
\[ \lim_{\varepsilon \to 0} \limsup_{\begin{smallmatrix} \delta \to 0 \\ \delta \in \mathbb B \end{smallmatrix}} \inf_{\nu \in \mathbf W_\delta} \left\lbrace h({\nu} \vert {\mu}'^{\varrho'}) - F (G(((1-\varepsilon) {\nu})^{\imath'},~\boldsymbol \tau)) \right\rbrace \leq \inf_{{\nu} \in \mathcal{M}(\mathbf W)} \left\lbrace h({\nu} \vert {\mu}') - F(G(\nu,~\boldsymbol \tau) ) \right\rbrace \]
and
\[ \lim_{\varepsilon \to 0} \liminf_{\begin{smallmatrix} \delta \to 0 \\ \delta \in \mathbb B \end{smallmatrix}} \inf_{\nu \in \mathbf W_\delta} \left\lbrace h({\nu} \vert {\mu}'^{\varrho'}) - F (G(((1+\varepsilon) {\nu})^{\imath'},~\boldsymbol \tau)) \right\rbrace \geq \inf_{{\nu} \in \mathcal{M}(\mathbf W)} \left\lbrace h({\nu} \vert {\mu}') - F(G(\nu,~\boldsymbol \tau) ) \right\rbrace. \]
\end{lem}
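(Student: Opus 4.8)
The plan is to prove the two inequalities by transferring between the discrete and the continuous landscape with the help of three ingredients: Proposition \ref{2.1}, which compares the discretized frustration measure $G(\nu,\boldsymbol\tau)^{\varrho'}$ with $G((1\pm\varepsilon)\nu^{\varrho'},\boldsymbol\tau\circ\imath')$; the $\delta$-discretization of $F$, which lets us rewrite $F(G(\sigma,\boldsymbol\tau))=F((G(\sigma,\boldsymbol\tau)^{\varrho'})^{\imath'})$; and the behaviour of relative entropy, namely its monotonicity under the pushforward $\varrho'$ (so that $h(\nu^{\varrho'}\vert\mu'^{\varrho'})\le h(\nu\vert\mu')$) together with Lemma \ref{3.11} and Corollary \ref{3.12} (valid on $\mathbf W_\delta$ by Remark \ref{3.13}) applied to the scaling factors $\tfrac{1+\varepsilon}{1-\varepsilon}$. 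Throughout I would use the elementary identification $F(G(\sigma^{\imath'},\boldsymbol\tau))=F((G(\sigma,\boldsymbol\tau\circ\imath'))^{\imath'})$ for $\sigma\in\mathcal{M}(\mathbf W_\delta)$, which holds because $\imath'$ is the inclusion and hence the continuous and discrete QoS quantities agree on measures supported on $\mathbf W_\delta$. The two statements are dual, so I would run them in parallel.

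For the first (upper) inequality I would fix $\nu\in\mathcal{M}(\mathbf W)$ with $h(\nu\vert\mu')<\infty$ and $F(G(\nu,\boldsymbol\tau))>-\infty$ (all other $\nu$ being irrelevant to the right-hand infimum) and, for each small $\delta$, feed the boosted test measure $\tilde\nu=\tfrac{1+\varepsilon}{1-\varepsilon}\nu^{\varrho'}\in\mathcal{M}(\mathbf W_\delta)$ into the discrete infimum. Then $(1-\varepsilon)\tilde\nu=(1+\varepsilon)\nu^{\varrho'}$, so the right-hand inequality of Proposition \ref{2.1} gives $G(\nu,\boldsymbol\tau)^{\varrho'}\le G((1-\varepsilon)\tilde\nu,\boldsymbol\tau\circ\imath')$ once $\delta<\delta'(\varepsilon)$; applying $(\cdot)^{\imath'}$, the monotone map $F$, and the $\delta$-discretization of $F$ yields $F(G(((1-\varepsilon)\tilde\nu)^{\imath'},\boldsymbol\tau))\ge F(G(\nu,\boldsymbol\tau))$. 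For the entropy I would combine $h(\nu^{\varrho'}\vert\mu'^{\varrho'})\le h(\nu\vert\mu')$ with Corollary \ref{3.12} to obtain $h(\tilde\nu\vert\mu'^{\varrho'})\le(1+3\eta)h(\nu\vert\mu')+3\eta\,\mu(W)$, where $1+\eta=\tfrac{1+\varepsilon}{1-\varepsilon}$. Hence the discrete objective at $\tilde\nu$ is at most $(1+3\eta)h(\nu\vert\mu')+3\eta\,\mu(W)-F(G(\nu,\boldsymbol\tau))$; taking $\limsup_{\delta\to0}$, then the infimum over $\nu$, and finally $\varepsilon\to0$ (so $\eta\to0$), the bound converges to the infimum of $h(\nu\vert\mu')-F(G(\nu,\boldsymbol\tau))$, as claimed.

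For the second (lower) inequality I would argue pointwise in the discrete variable. Given an arbitrary $\tilde\nu\in\mathcal{M}(\mathbf W_\delta)$, I would spread the measure $\tfrac{1+\varepsilon}{1-\varepsilon}\tilde\nu$ within each sub-cube proportionally to $\mu'$, producing $\nu\in\mathcal{M}(\mathbf W)$ with $\nu^{\varrho'}=\tfrac{1+\varepsilon}{1-\varepsilon}\tilde\nu$ and, crucially, $h(\nu\vert\mu')=h(\tfrac{1+\varepsilon}{1-\varepsilon}\tilde\nu\vert\mu'^{\varrho'})$ exactly, since $\mu'$ is absolutely continuous and $\varrho'$ maps onto cube centres. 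Now $(1-\varepsilon)\nu^{\varrho'}=(1+\varepsilon)\tilde\nu$, so the left-hand inequality of Proposition \ref{2.1} gives $G((1+\varepsilon)\tilde\nu,\boldsymbol\tau\circ\imath')\le G(\nu,\boldsymbol\tau)^{\varrho'}$, whence $F(G(((1+\varepsilon)\tilde\nu)^{\imath'},\boldsymbol\tau))\le F(G(\nu,\boldsymbol\tau))$ as before. Therefore the discrete objective at $\tilde\nu$ is at least $h(\tilde\nu\vert\mu'^{\varrho'})-F(G(\nu,\boldsymbol\tau))$, and Corollary \ref{3.12} bounds $h(\tilde\nu\vert\mu'^{\varrho'})$ from below by $h(\nu\vert\mu')$ up to an error of order $\eta(1+h(\nu\vert\mu'))$. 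Since $F$ is bounded above and $h\ge0$, only $\tilde\nu$ of bounded cost matter for the infimum, so this error is uniform; taking the infimum, then $\liminf_{\delta\to0}$, then $\varepsilon\to0$, gives the lower bound $\inf_\nu\{h(\nu\vert\mu')-F(G(\nu,\boldsymbol\tau))\}$.

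I expect the main obstacle to be the bookkeeping that links the two perturbation parameters — the $(1\pm\varepsilon)$ of the statement and the $\varepsilon$ of Proposition \ref{2.1} — while keeping the entropy corrections uniform in $\tilde\nu$. The delicate points are that the control from Corollary \ref{3.12} is multiplicative, so one must first restrict to configurations of bounded cost before letting the scaling factors tend to $1$, and that the discrete/continuous identification of $G$, including the convention $\boldsymbol\tau\circ\imath'$, must be verified so that Proposition \ref{2.1} and the $\delta$-discretization of $F$ can be chained without loss; the exact preservation of relative entropy under the spreading construction is the clean fact that makes the lower bound work without a separate compactness argument.
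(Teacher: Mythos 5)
Your proposal is correct and follows essentially the same route as the paper's proof: the upper bound by testing the discrete infimum at $\tfrac{1+\varepsilon}{1-\varepsilon}\nu^{\varrho'}$ and combining the right-hand inequality of Proposition \ref{2.1} with the $\delta$-discretization of $F$, Jensen's inequality for $h(\nu^{\varrho'}\vert\mu'^{\varrho'})\le h(\nu\vert\mu')$, and Corollary \ref{3.12}; the lower bound by spreading a near-optimal discrete measure proportionally to $\mu'$ inside the sub-cubes with a slight boost so that the relative entropy is preserved exactly, then applying the left-hand inequality of Proposition \ref{2.1} and restricting to bounded-cost configurations via the upper bound on $F$. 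The only difference is cosmetic (the paper boosts by $1+2\varepsilon$ and feeds $1-\varepsilon''=(1+\varepsilon)/(1+2\varepsilon)$ into Proposition \ref{2.1}, whereas you boost by $\tfrac{1+\varepsilon}{1-\varepsilon}$ and use $\varepsilon$ directly).
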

Now, first we show how Lemma \ref{5.1} can be used to complete the proof of Theorem \ref{1.1}, and then we provide a proof for the lemma.

\emph{Proof of Theorem \ref{1.1}.} We only prove the lower bound, the proof of the upper bound is analogous. Let $\varepsilon \in (0,1)$ be arbitrary. Then, Propositions \ref{2.1} and \ref{2.2} show that for all sufficiently small $\delta \in \mathbb B$
\begin{align*}
\liminf_{\lambda \to \infty} \frac{1}{\lambda} \log \mathbb E_2 \exp \left(\lambda F(G(\mathbf L_\lambda,~\boldsymbol \tau)) \right) 
&= \liminf_{\lambda \to \infty} \frac{1}{\lambda} \log \mathbb E_2 \exp \left( \lambda F(((G(\mathbf L_\lambda,~\boldsymbol \tau))^{\varrho'})^{\imath'}) \right) \\ 
&\leq \liminf_{\lambda \to \infty} \frac{1}{\lambda} \log \mathbb E_2 \exp \left( \lambda F (G((1-\varepsilon) \mathbf L^{\varrho'}_\lambda,~\boldsymbol \tau \circ \imath' )^{\imath'})\right) \\ &
\geq - \inf_{{\nu} \in \mathcal{M}(\mathbf W_\delta)} \left\lbrace h({\nu} \vert {\mu}'^{\varrho'})-F (G((1-\varepsilon) \nu,~\boldsymbol \tau \circ \imath' )^{\imath'}) \right\rbrace \\ &
= -\inf_{{\nu} \in \mathcal{M}(\mathbf W_\delta)} \left\lbrace h({\nu} \vert {\mu}'^{\varrho'} ) - F(G(((1-\varepsilon){\nu})^{\imath'},~\boldsymbol \tau )) \right\rbrace. 
\end{align*}
Hence, Lemma \ref{5.1} implies that
\[ \liminf_{\lambda \to \infty} \frac{1}{\lambda} \log \mathbb E_2 \exp \left( \lambda F(G(\mathbf L_\lambda,~\boldsymbol \tau)) \right) \geq - \inf_{{\nu} \in \mathcal{M}(\mathbf W)} \left\lbrace h({\nu} \vert {\mu}') - F(G(\nu,~\boldsymbol \tau)) \right\rbrace, \]
as required. \hspace{378pt} $\square$
\begin{flushleft} Now we prove Lemma \ref{5.1}. \end{flushleft}
\emph{Proof of Lemma \ref{5.1}}. First, we consider the upper bound. We need to show that for arbitrary $\varepsilon_0 \in (0,1)$ and ${\nu}_0 \in \mathcal{M}(\mathbf W)$ we have that
\[ \limsup_{\delta \to 0} \inf_{{\nu} \in \mathcal{M}(\mathbf W_\delta)} \left\lbrace h({\nu} \vert {\mu}'^{\varrho'})-F(G(((1-\varepsilon){\nu})^{\imath'},~\boldsymbol \tau )) \right\rbrace \leq \varepsilon_0 + h({\nu}_0 \vert {\mu}') - F(G(\nu_0,~\boldsymbol \tau)) \]
holds provided that $\varepsilon_0 \in (0,1)$ is sufficiently small. Since $F$ is bounded from above, without loss of generality we can assume that ${\nu}_0$ is absolutely continuous with respect to ${\mu}'$.

First, Proposition \ref{2.1} shows that if $\delta$ is sufficiently small, then $(G({\nu}_0,~ \boldsymbol \tau))^{\varrho'} \leq G((1+\varepsilon){\nu}_0^{\varrho'},~\boldsymbol \tau \circ \imath')$. In particular, since $F$ is $\delta$-discretized, using the argumentation of Lemma \ref{deltadiscretization} we have
\[ F(G({\nu}_0,~\boldsymbol \tau))=F \left(((G( {\nu}_0,~\boldsymbol \tau)^{\varrho'})^{\imath'} \right) \leq F \left(G (((1+\varepsilon) {\nu}^{\varrho'}_0)^{\imath'},~\boldsymbol \tau) \right). \]
Thus, putting $1+\varepsilon'=(1+\varepsilon)/(1-\varepsilon)$, it suffices to show that
\[ \limsup_{\delta \to 0} h((1+\varepsilon'){\nu}_0^{\varrho'} \vert {\mu}'^{\varrho'}) \leq \varepsilon_0 + h({\nu}_0 \vert {\mu}') \]
holds for all sufficiently small $\varepsilon$. By Jensen's inequality, we have $h({\nu}_0 \vert {\mu}') \geq h({\nu}_0^{\varrho'} \vert {\mu}'^{\varrho'})$. Hence, by Corollary \ref{3.12}, we can estimate
\[ h((1+\varepsilon') {\nu}_0^{\varrho'} \vert {\mu}'^{\varrho'}) - h({\nu}_0^{\varrho'} \vert {\mu}'^{\varrho'}) \leq 3 \varepsilon' {\mu}'^{\varrho'} (\mathbf W_\delta)+3\varepsilon' h({\nu}_0 \vert {\mu}'^{\varrho}) \leq  3 \varepsilon' {\mu}'(\mathbf W)+3 \varepsilon' h({\nu}_0 \vert {\mu}'). \]
Since this upper bound tends to zero as $\varepsilon$ tends to zero, we conclude the proof.

Next, we consider the lower bound. We need to show that for arbitrary $\varepsilon_0>0$, we have that
\[ \liminf_{\delta \to 0} \inf_{{\nu} \in \mathcal{M}(\mathbf W_\delta)} \left\lbrace h({\nu} \vert {\mu}'^{\varrho'})-F(G(((1+\varepsilon){\nu})^{\imath'},~\boldsymbol \tau )) \right\rbrace \geq - \varepsilon_0 + \inf_{{\nu} \in \mathcal{M}(\mathbf W)}  \left\lbrace h({\nu} \vert {\mu}') - F(G({\nu},~ \boldsymbol \tau)) \right\rbrace \]
holds if $\varepsilon \in (0,1)$ is sufficiently small. First, for all $\varepsilon \in (0,1)$ we can choose a suitable sequence $\lbrace \delta_k \rbrace_{k \geq 1}$ in $\mathbb B$ such that $\lim\limits_{k \to \infty} \delta_k=0$ and such that the $\liminf\limits_{\delta \to 0}$ above is replaced by $\lim\limits_{\delta_k \to 0}$. Furthermore, for $\varepsilon \in (0,1)$ and $k \geq 1$ we choose ${\nu}_{k,\varepsilon} \in \mathcal{M} (\mathbf W_{\delta_k})$ such that
\[ h({\nu}_{k, \varepsilon} \vert {\mu}'^{\varrho'_{\delta_k}}) - F(G(((1+\varepsilon){\nu}_{k,\varepsilon})^{\imath'},\boldsymbol \tau) \leq \varepsilon_0/2 + \inf_{{\nu} \in \mathcal{M}(\mathbf W_\delta)} \left\lbrace h({\nu} \vert {\mu}'^{\varrho'_{\delta_k}} )-F(G((1+\varepsilon){\nu})^{\imath'} ,~\boldsymbol \tau)) \right\rbrace. \] 
Hence, it remains to show that
\[ \liminf_{\varepsilon \to 0} \liminf_{k \to \infty} h({\nu}_{k, \varepsilon} \vert {\mu}'^{\varrho'_{\delta_k}}) - F(G(((1+\varepsilon){\nu}_{k,\varepsilon})^{\imath'},~\boldsymbol \tau)  \geq \inf_{{\nu} \in \mathcal{M}(\mathbf W)}  \left\lbrace h({\nu} \vert {\mu}') - F(G(\nu,~\boldsymbol \tau)) \right\rbrace.   \]
In particular, we may assume that ${\nu}_{k,\varepsilon}$ is absolutely continuous with respect to ${\nu}'^{\varrho'_{\delta_k}}$. Then, we define ${\nu}'_{k,\varepsilon} \in \mathcal{M}(\mathbf W)$ by
\[ {\nu}'_{k,\varepsilon} (\cdot) = (1+2 \varepsilon) \sum_{(x,u) \in \mathbf W_{\delta_k}} \frac{{\nu}_{k,\varepsilon}((x,u))}{{\mu}'^{\varrho'_{\delta_k}}((x,u))} {\mu}'({\varrho'_{\delta_k}}^{-1} ((x,u)) \cap \cdot),  \]
so that $h({\nu}'_{k, \varepsilon} \vert {\mu}')=h({{\nu}'}_{k,\varepsilon}^{\varrho'_{\delta_k}} \vert {{\mu}'}^{\varrho'_{\delta_k}})$. Then, Proposition \ref{2.1} implies that
\[ (G(\nu'_{k,\varepsilon},~\boldsymbol \tau))^{\varrho'_{\delta_k}} \geq G(((1-\varepsilon'')(1+2\varepsilon) {\nu}_{k,\varepsilon}),~\boldsymbol \tau \circ \imath' )=G(((1+\varepsilon){\nu}_{k,\varepsilon}),~\boldsymbol \tau \circ \imath') \]
for all sufficiently small $\delta_k$ where $1-\varepsilon''=(1+\varepsilon)(1+2\varepsilon)^{-1}$. Also, by Corollary \ref{3.12} we have
\begin{equation} \label{utcsóbecslés1} h(({\nu}'_{k,\varepsilon})^{\varrho'_{\delta_k}} \vert {{\mu}'}^{\varrho'_{\delta_k}})-h({\nu}_{k,\varepsilon} \vert {{\mu}'}^{\varrho'_{\delta_k}} ) \leq 6 \varepsilon h({\nu}_{k,\varepsilon} \vert {{\mu}'}^{\varrho'_{\delta_k}}) +6 \varepsilon {\mu}'(\mathbf W). \end{equation}
Since $F$ is bounded from above, we have that if
\[ \liminf_{\varepsilon \to 0} \liminf_{k \to \infty} h({\nu}_{k,\varepsilon} \vert {{\mu}'}^{\varrho'_{\delta_k}} ) = \infty, \] 
then there is nothing to show. Otherwise, it follows from \eqref{utcsóbecslés1} that \[ \liminf_{\varepsilon \to 0} \liminf_{k \to \infty} h(({\nu}'_{k,\varepsilon})^{\varrho'_{\delta_k}} \vert {{\mu}'}^{\varrho'_{\delta_k}})-h({\nu}_{k,\varepsilon} \vert {{\mu}'}^{\varrho'_{\delta_k}} ) \leq 0, \] and hence, using that $F$ is increasing, we have
\[ \liminf_{\varepsilon \to 0} \liminf_{k \to \infty} h({\nu}_{k, \varepsilon} \vert {\mu}'^{\varrho'_{\delta_k}}) - F(G(((1+\varepsilon){\nu}_{k,\varepsilon})^{\imath'},~\boldsymbol \tau)) \geq  \liminf_{\varepsilon \to 0 } \liminf_{k \to \infty} h({\nu}'_{k,\varepsilon} \vert {\mu}')-F(G({\nu}'_{k,\varepsilon},~\boldsymbol \tau)), \] 
as required. \hspace{375pt} $\square$
\section{Proof of Corollaries \ref{1.2} and \ref{1.3}} \label{egykettőegyhárom}
First, defining the maps $F_{\mathbf b}: \mathcal{M} (\mathbf W) \to [-\infty,\infty)$ and $\boldsymbol \tau_{\mathbf{c}}:~[0,\infty)^4 \to [0,\infty)^4$ as in \eqref{Ffading} and \eqref{taufading}, we see that the maps ${\nu} \mapsto F_{\mathbf{b}} ({\nu}^{\imath'})$ and $\boldsymbol{\tau}_{\mathbf{c}} \circ \imath'$ are l.s.c. on $\mathcal{M}(\mathbf W_\delta)$ and $[0,\infty)^4$, respectively. We also recall that $F_{\mathbf b}$ is $\delta$-discretized for all $\delta \in \mathbb B$. Hence, by Theorem \ref{1.1}, only the upper bound needs a proof. In the following, we assume that $\mathbf b \geq 0$, which we can do without loss of generality, since negative coordinates of $\mathbf b$ mean no constraints on the corresponding component of $G(\mathbf L_\lambda,~\boldsymbol \tau_{\mathbf{c}})$.

Along this section we follow the proceed as in \cite[Section 6]{cikk}. We first derive the upper bound in Corollary \ref{1.2} in the discretized model, for fixed $\delta \in \mathbb B$ such that $\kappa_\delta \leq 1$. 
\begin{prop} \label{6.1}
Let $0<\alpha<2$, $\mathbf b \geq 0$ and $\mathbf c \in [0,\tilde{\mathbf c}_+)$. Then,
\[ \limsup_{\lambda \to \infty} \frac{1}{\lambda} \log \mathbb P(G((\alpha \mathbf L_\lambda^{\varrho'}),~ \boldsymbol \tau_{\mathbf{c}} \circ \imath') (\mathbf W_\delta) > \mathbf b) \leq - \inf\limits_{\begin{smallmatrix} {\nu} \in \mathcal{M}(\mathbf W_\delta) \\ G(\alpha {\nu},~ \boldsymbol \tau_{\mathbf{c}} \circ \imath') (\mathbf W_\delta) >\mathbf b \end{smallmatrix}} h({\nu} \vert {\mu}'^{\varrho}). \]
\end{prop}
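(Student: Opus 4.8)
The plan is to deduce the upper bound from the u.s.c.\ upper bound of Proposition \ref{2.2} by replacing the lower semicontinuous building blocks with upper semicontinuous surrogates, and then to match the resulting rate with the one claimed by a scaling argument. The point to keep in mind is that $\tau_{c_i}(x)=\mathds 1\{x<c_i\}$ from \eqref{taufading} and $F_{\mathbf b}$ from \eqref{Ffading} are l.s.c.\ rather than u.s.c.; this is precisely why Proposition \ref{2.2} delivers the \emph{lower} bound for these functions at once but not the upper bound, so a separate device is required. As u.s.c.\ replacements I would take $\tilde\tau_{c_i}(x)=\mathds 1\{x\le c_i\}$, which is u.s.c., decreasing, and (since $c_i<\tilde c_+$) maps $[\tilde c_+,\infty)$ to $0$, together with $F^{\ge}_{\mathbf b}(\nu)=0$ if $\nu_i(\mathbf W_\delta)\ge b_i$ for all $i\in\four$ and $-\infty$ otherwise, which is increasing, bounded above, and u.s.c.

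Assume first $\mathbf b>0$ componentwise, so that $F^{\ge}_{\mathbf b}$ sends the vector of zero measures to $-\infty$ and all hypotheses of Proposition \ref{2.2} hold. Using $\tilde\tau_{c_i}\ge\tau_{c_i}$ and monotonicity of $G$ in its $\tau$-argument, the inclusion
\[ \{G(\alpha\mathbf L^{\varrho'}_\lambda,\boldsymbol\tau_{\mathbf c}\circ\imath')(\mathbf W_\delta)>\mathbf b\}\subseteq\{G(\alpha\mathbf L^{\varrho'}_\lambda,\tilde{\boldsymbol\tau}_{\mathbf c}\circ\imath')(\mathbf W_\delta)\ge\mathbf b\} \]
gives $\mathbb P(\,\cdot>\mathbf b\,)\le\mathbb E_2\exp(\lambda F^{\ge}_{\mathbf b}(G(\alpha\mathbf L^{\varrho'}_\lambda,\tilde{\boldsymbol\tau}_{\mathbf c}\circ\imath')))$, the right-hand side being exactly $\mathbb P(G(\alpha\mathbf L^{\varrho'}_\lambda,\tilde{\boldsymbol\tau}_{\mathbf c}\circ\imath')(\mathbf W_\delta)\ge\mathbf b)$. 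Applying the u.s.c.\ upper bound of Proposition \ref{2.2} with $F^{\ge}_{\mathbf b}$ and $\tilde{\boldsymbol\tau}_{\mathbf c}$ then yields
\[ \limsup_{\lambda\to\infty}\frac1\lambda\log\mathbb P(G(\alpha\mathbf L^{\varrho'}_\lambda,\boldsymbol\tau_{\mathbf c}\circ\imath')(\mathbf W_\delta)>\mathbf b)\le-\tilde I,\qquad \tilde I:=\inf_{\nu:\,G(\alpha\nu,\tilde{\boldsymbol\tau}_{\mathbf c}\circ\imath')(\mathbf W_\delta)\ge\mathbf b}h(\nu\vert\mu'^{\varrho'}). \]

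It then remains to show that $\tilde I$ coincides with the target rate $I^\ast:=\inf_{\nu:\,G(\alpha\nu,\boldsymbol\tau_{\mathbf c}\circ\imath')(\mathbf W_\delta)>\mathbf b}h(\nu\vert\mu'^{\varrho'})$. The inequality $\tilde I\le I^\ast$ is immediate from $\tilde\tau_{c_i}\ge\tau_{c_i}$. For the reverse inequality I would use a scaling perturbation: given $\nu$ with $G(\alpha\nu,\tilde{\boldsymbol\tau}_{\mathbf c})(\mathbf W_\delta)\ge\mathbf b$, pass to $(1+\eps)\nu$. By the strict monotonicity parts (ii) and (iv) of Lemma \ref{3.1}, enlarging the measure strictly decreases every QoS value lying below the cap $\tilde c_+$ while preserving zero sets, so each site counted by $\tilde\tau_{c_i}$ at $\nu$ (QoS $\le c_i<\tilde c_+$) is counted by $\tau_{c_i}$ at $(1+\eps)\nu$ (QoS $<c_i$); combined with the extra mass factor this gives $G(\alpha(1+\eps)\nu,\boldsymbol\tau_{\mathbf c})(\mathbf W_\delta)\ge(1+\eps)G(\alpha\nu,\tilde{\boldsymbol\tau}_{\mathbf c})(\mathbf W_\delta)>\mathbf b$, so $(1+\eps)\nu$ satisfies the constraint defining $I^\ast$. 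Since $h((1+\eps)\nu\vert\mu'^{\varrho'})\to h(\nu\vert\mu'^{\varrho'})$ as $\eps\downarrow0$ by Lemma \ref{3.11} and Remark \ref{3.13}, letting $\eps\downarrow0$ and taking the infimum over $\nu$ gives $I^\ast\le\tilde I$, hence $\tilde I=I^\ast$ and the proposition for $\mathbf b>0$.

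I expect this scaling step to be the main obstacle, since it is exactly where the gap between the open event $\{\cdot>\mathbf b\}$ and its closed surrogate $\{\cdot\ge\mathbf b\}$ (equivalently, between the l.s.c.\ $\tau_{c_i}$ and its u.s.c.\ regularization $\tilde\tau_{c_i}$) must be reconciled, and it hinges on the strict decrease of the QoS under enlargement of the measure below saturation. Finally, to reach the full range $\mathbf b\ge0$ in the statement, where $F^{\ge}_{\mathbf b}$ need not send the zero measure to $-\infty$, I would run the preceding argument with $\mathbf b+\eta\mathbf 1$ in place of $\mathbf b$ for $\eta>0$ and let $\eta\downarrow0$, invoking once more the scaling-and-continuity estimate to verify that $\inf_{\nu:\,G(\alpha\nu,\boldsymbol\tau_{\mathbf c})(\mathbf W_\delta)>\mathbf b+\eta\mathbf 1}h(\nu\vert\mu'^{\varrho'})\to I^\ast$.
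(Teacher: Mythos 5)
Your argument for the case where every coordinate of $\mathbf b$ is strictly positive is correct, and it is genuinely a different (and shorter) route than the paper's: instead of passing through the sprinkling step (Lemmas \ref{3.10} and \ref{6.2}) and applying the LDP to the closed sets $\mathbf C'$, you feed the u.s.c.\ surrogates $\tilde\tau_{c_i}=\mathds 1\{\cdot\le c_i\}$ and $F^{\ge}_{\mathbf b}$ directly into the upper bound of Proposition \ref{2.2}, and then reconcile the two constraint sets by the perturbation $(1+\varepsilon)\nu$ using Lemma \ref{3.1}(ii),(iv) and Corollary \ref{3.12}/Remark \ref{3.13}. That reconciliation step is sound: since $c_i<\tilde c_+$, the QoS at any $\nu$-occupied site drops strictly below $c_i$ when the measure is enlarged with unchanged zero set, so every site counted by $\tilde\tau_{c_i}$ at $\alpha\nu$ is counted by $\tau_{c_i}$ at $\alpha(1+\varepsilon)\nu$, and the relative entropy changes only by $O(\varepsilon)$. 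The paper itself remarks that $\{G(\cdot,\boldsymbol\tau^{\text{usc}}_{\mathbf c})(\mathbf W_\delta)\ge\mathbf b\}$ ``would be a useful u.s.c.\ approximation'' precisely when $\mathbf b$ is positive.

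The genuine gap is the case $b_i=0$ for some $i$, which the proposition explicitly covers ($\mathbf b\ge 0$) and which is needed later (Corollary \ref{1.3}, Section \ref{kalsszikus}, Section \ref{bélus}). Your reduction via $\mathbf b+\eta\mathbf 1$ with $\eta\downarrow 0$ runs in the wrong direction: $\{G>\mathbf b+\eta\mathbf 1\}\subseteq\{G>\mathbf b\}$, so an upper bound on $\mathbb P_2(G(\alpha\mathbf L^{\varrho'}_\lambda,\boldsymbol\tau_{\mathbf c}\circ\imath')(\mathbf W_\delta)>\mathbf b+\eta\mathbf 1)$ gives no control whatsoever on $\mathbb P_2(G(\cdot)>\mathbf b)$. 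Nor can the coordinate with $b_i=0$ be absorbed into your surrogate framework, since $\{G_i(\cdot,\tilde\tau_{c_i})\ge 0\}$ is all of $\mathcal M(\mathbf W_\delta)$ and a single frustrated user carries mass only $\alpha/\lambda\to 0$, so there is no fixed positive threshold to fall back on. The event ``at least one frustrated user'' is triggered by an arbitrarily small amount of mass at a bad site; this is exactly the discontinuity that forces the paper to introduce the worst-QoS functionals $\Phi'$, their u.s.c.\ regularizations $\Phi'_{\varepsilon}$ (which satisfy $\Phi'\ge\Phi'_{\varepsilon}$, the wrong inequality for a direct upper bound), and the sprinkling event $\mathbf E^{\ast}_{\varepsilon_0}$ of Lemma \ref{3.10}, which guarantees $\Phi'_{\alpha_+\varepsilon_0}=\Phi'$ on the configurations that matter at negligible exponential cost. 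Some device of this kind is unavoidable for $b_i=0$; your proposal as written does not supply one.
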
 
Since the maps ${\nu} \mapsto F_{\mathbf b}({\nu}^{\imath'})$ and $\boldsymbol \tau_{\mathbf{c}} \circ \imath'$ are not upper semicontinuous\footnote{ See the argumentation about the sensitivity of the relayed QoS of a given user w.r.t. its neighbours, before Lemma \ref{3.6}.}, we cannot use Proposition \ref{2.2} directly. However, if we define $\tau_{c}^{\text{usc}}: [0,\infty) \to [0,\infty)$ by
\begin{equation} \label{tausc} \gamma \mapsto \begin{cases} 1 \quad \text{ if } \imath'(\gamma) \leq c, \\ 0 \quad \text{ otherwise}, \end{cases} \end{equation}
then $\tau^{\text{usc}}_{a,c}$ does not have downwards jumps, and therefore it is u.s.c. For $\mathbf c \in [0,\tilde{\mathbf c}_+)$, we put $\boldsymbol{\tau}^{\text{usc}}_{\mathbf{c}}=(\tau^{\text{usc}}_{c_i})_{i \in \four}$. 

If $\mathbf b$ were positive, then $\lbrace G(\mathbf L^{\varrho'}_{\lambda},~\boldsymbol \tau_{\mathbf{c}}^{\text{usc}})(\mathbf W_\delta) \geq \mathbf b \rbrace$ would be a useful u.s.c. approximation of the considered event. However, we aim to deal with the general case where certain entries of $\mathbf b$ may be zero, 
and therefore it will be convenient to introduce quantities describing the worst QoS that is experienced by any user in the system. For this purpose, for $(\xi,u) \in \mathbf W_\delta$ and $\nu \in \mathcal{M}(\mathbf W_\delta)$ we put
\[ \mathbf \Phi((\xi,u),~\nu)=\left( R((\xi,u),~(o,F_o),~\nu),~D((\xi,u),~(o,F_o),~\nu),~R((o,F_o),~(\xi,u),~\nu),~D((o,F_o),~(\xi,u),~\nu) \right) \]
and note that for fixed $(\xi,u) \in \mathbf W_\delta$, ${\nu} \mapsto \mathbf \Phi ((\xi,u),~{\nu})$ is l.s.c., see Lemma \ref{3.6}. Here discontinuities may come from the effect that sites can become unavailable as relay locations if the number of users at certain sites tends to zero, as ${\nu}_n \to {\nu}$. 
Further, for $\mathbf c \in [0,\tilde{\mathbf c}_+),~(x,u) \in \mathbf W_\delta$ and ${\nu} \in \mathcal{M}(\mathbf W_\delta)$, we define
\[ \mathbf{\Phi}' (\mathbf c,~(\xi,u),~{\nu})=\left( \mathds 1\lbrace \pi_i (\Phi'((\xi,~u),~{\nu})  \leq c_i) \rbrace \right)_{i \in \four} , \]
as the indicator of the event that a user $(\xi,u)$ experiences QoS of at most $c_i$ for all $i \in \four$. Here $\pi_i$ denotes the projection $[0,\infty)^4 \to [0,\infty)$, $(a_j)_{j \in \four} \mapsto a_i$, $i \in \four$. 
Finally, we define
\[ \mathbf{\Phi}' (\mathbf c, ~{\nu})=\max\limits_{\begin{smallmatrix} (\xi,u) \in \mathbf W_{\delta} \\ {\nu}((\xi,u))>0 \end{smallmatrix}} \mathbf{\Phi}' (\mathbf c, ~(\xi,u),~{\nu}). \]
Similarly to the fading-free case in \cite[Section 6]{cikk}, $\mathbf \Phi'(\mathbf c,~(\xi,u),~{\nu})$ does not satisfy any semicontinuity properties. For example, lower semicontinuity can be obstructed by the effect that users along trajectories with bad QoS become irrelevant if the number of these users tends to zero. Upper semicontinuity can be obstructed e.g. by obstruction of the lower continuity of $\Phi ((\xi,u),~{\nu})$, cf. \cite[Lemma 3.7]{cikk} about compositions of monotonous semicontinuous functions and our Remark \ref{3.7,8}. Therefore we introduce the approximations
\[ \mathbf \Phi'_{\varepsilon} (\mathbf c, {\nu})=\max\limits_{(\xi,u) \in \mathbf W_\delta} \left\lbrace \mathbf \Phi'(\mathbf c,~(\xi,u),~{\nu}) \min \lbrace 1, \varepsilon^{-1}\nu((\xi,u)) \rbrace \right\rbrace. \]
In particular, $\mathbf \Phi' (\mathbf c, {\nu}) \geq \mathbf \Phi'_{\varepsilon} (\mathbf c, {\nu})$. In the following, for $\varepsilon>0$, $\mathbf b \geq 0,~\mathbf c \in [0, \tilde{\mathbf c}_+),~i \in \four$ we define
\[ C'_i(\mathbf{b},~\mathbf{c},~\varepsilon)= \begin{cases} \left\lbrace {\nu} \in \mathcal{M}(\mathbf W_\delta): \pi_i(\mathbf \Phi'_{\varepsilon} (\mathbf c, {\nu})) =1 \right\rbrace \quad \mathrm{ ~~if~} b_i=0, \\ \left\lbrace {\nu} \in \mathcal{M}(\mathbf W_\delta): \pi_i(G({\nu},~ \boldsymbol \tau^{\text{usc}}_{\mathbf c} ))(\mathbf W_\delta) \geq b_i \right\rbrace \text{~~if } b_i>0. \\   \end{cases} \]
Moreover, we put 
\[\mathbf{C}'(\mathbf b,~ \mathbf c,~\varepsilon) = \bigcap\limits_{i=1}^4 C'_i(\mathbf b,~ \mathbf c,~ \varepsilon). \]
Note that $\mathbf{C}'(\mathbf b,~ \mathbf c,~\varepsilon)$ is a closed set, since the maps ${\nu} \mapsto G(\nu,~ \boldsymbol \tau^{\text{usc}}_{\mathbf c} )$ and ${\nu} \mapsto \mathbf \Phi'_{\varepsilon} (\mathbf c, {\nu})$ are u.s.c. Note that by Lemma \ref{3.1} parts (ii) and (iv), for each $\varepsilon >0$ and $\alpha_+ > \alpha >0$ we have an inclusion
\begin{equation} \label{observationbefore6.2} \lbrace \alpha \mathbf L^{\varrho'}_\lambda \in \mathbf C'((1+\varepsilon) \mathbf b, ~ \mathbf c, ~ \mathbf \varepsilon) \rbrace \subset \lbrace G((\alpha_+ \mathbf L^{\varrho'}_\lambda ),~\boldsymbol \tau_{\mathbf c} \circ \imath') (\mathbf W_\delta) > \mathbf b \rbrace. \end{equation}
Now we show that under the event $\mathbf E_{\varepsilon_0}^\ast$ introduced in Section \ref{sprinkle}\index{sprinkling construction}, for $\alpha_+ > \alpha > 0$ the inclusion
\[ \left\lbrace \alpha \mathbf L^{\varrho'}_\lambda \in \mathbf C'(\sqrt{\alpha_+ \alpha^{-1}} \mathbf b, ~ \mathbf c, ~ \mathbf \alpha_+ \varepsilon_0) \right\rbrace \subset \left\lbrace G( \alpha_+ \mathbf L^{\varrho'}_\lambda , ~\boldsymbol \tau_{\mathbf c} \circ \imath') (\mathbf W_\delta) > \mathbf b \right\rbrace \] 
is close to being an equality. 
\begin{lem} \label{6.2}
Let $\alpha_+ > \alpha >0$, $\mathbf b \geq 0$ and $\mathbf c \in [0, \tilde{\mathbf c}_+)$ be arbitrary. Then, for every sufficiently small $\varepsilon_0 \in (0,1)$ there exists $\lambda_0 = \lambda_0(\varepsilon_0)$ such that for all $\lambda \geq \lambda_0$ we have
\begin{equation} \label{csirkefej} \mathbf E_{\varepsilon_0}^\ast \cap \left\lbrace G(\alpha \mathbf L^{\varrho'}_\lambda,~ \boldsymbol \tau_{\mathbf c} \circ \imath') (\mathbf W_\delta) > \mathbf b \right\rbrace \subset \left\lbrace \alpha_+ \mathbf L^{\varrho'}_{\lambda'} \in \mathbf C'(\sqrt{\alpha_+ \alpha^{-1}} \mathbf b, ~ \mathbf c, ~ \mathbf \alpha_+ \varepsilon_0) \right\rbrace, \end{equation}
with $\lambda'=(1+2\varepsilon_0 {\kappa}_\delta^{-1}) \lambda$. 
\end{lem}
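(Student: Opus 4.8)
The plan is to prove the inclusion coordinatewise, exploiting that raising the intensity can only worsen the quality of service, and that the sprinkling event $\mathbf E_{\varepsilon_0}^\ast$ keeps the occupied sites fixed. Recall that $\lambda'=(1+\varepsilon_1)\lambda$ with $\varepsilon_1=2\varepsilon_0\kappa_\delta^{-1}\to 0$ as $\varepsilon_0\to0$; since $\alpha_+>\alpha$ are fixed, I first fix $\varepsilon_0$ small enough that $1+\varepsilon_1\le\sqrt{\alpha_+/\alpha}$. This single smallness requirement suffices throughout, as it also gives $\alpha_+/(1+\varepsilon_1)\ge\sqrt{\alpha_+\alpha}\ge\alpha$. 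The starting point is the thinning coupling behind the sprinkling (cf.\ the proof of Lemma \ref{3.9}), which yields $\mathbf X_\delta^{\lambda'}((x,u))\ge\mathbf X_\delta^{\lambda}((x,u))$ for every $(x,u)\in\mathbf W_\delta$. Dividing by the respective intensities gives both the pointwise mass bound $\alpha_+\mathbf L^{\varrho'}_{\lambda'}((x,u))\ge\frac{\alpha_+}{\alpha(1+\varepsilon_1)}\,\alpha\mathbf L^{\varrho'}_{\lambda}((x,u))$ and the interference domination $\alpha_+\mathbf L^{\varrho'}_{\lambda'}(\ell(\vert\cdot-\eta\vert)\centerdot)\ge\alpha\mathbf L^{\varrho'}_{\lambda}(\ell(\vert\cdot-\eta\vert)\centerdot)$. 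By \eqref{kétsir} and the monotonicity of $g$, the latter gives $D((\xi,s),(\eta,v),\alpha_+\mathbf L^{\varrho'}_{\lambda'})\le D((\xi,s),(\eta,v),\alpha\mathbf L^{\varrho'}_{\lambda})$ at all sites; and because $\mathbf E_{\varepsilon_0}^\ast$ enforces $V(\mathbf L^{\varrho'}_{\lambda})=V(\mathbf L^{\varrho'}_{\lambda'})$, the essential suprema defining $R$ range over the same set of relay sites, so this monotonicity transfers to $R$ in both the uplink and downlink directions, exactly as in parts (iii)--(v) of Lemma \ref{3.1}.

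For a coordinate $i$ with $b_i>0$ I compare weights. Writing $R_{\mathrm{hi}}$, $R_{\mathrm{lo}}$ for the relevant $i$-th QoS value at a given site under $\alpha_+\mathbf L^{\varrho'}_{\lambda'}$ and $\alpha\mathbf L^{\varrho'}_{\lambda}$ respectively, the monotonicity $R_{\mathrm{hi}}\le R_{\mathrm{lo}}$ together with $\lbrace\gamma<c_i\rbrace\subseteq\lbrace\gamma\le c_i\rbrace$ shows that the u.s.c.\ weight $\tau^{\text{usc}}_{c_i}(R_{\mathrm{hi}})$ dominates $(\tau_{c_i}\circ\imath')(R_{\mathrm{lo}})$ at every site. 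Summing against the masses and inserting the pointwise mass bound, I obtain
\[ \pi_i\big(G(\alpha_+\mathbf L^{\varrho'}_{\lambda'},~\boldsymbol\tau^{\text{usc}}_{\mathbf c})\big)(\mathbf W_\delta)\ \ge\ \tfrac{\alpha_+}{\alpha(1+\varepsilon_1)}\,\pi_i\big(G(\alpha\mathbf L^{\varrho'}_{\lambda},~\boldsymbol\tau_{\mathbf c}\circ\imath')\big)(\mathbf W_\delta)\ >\ \tfrac{\alpha_+}{\alpha(1+\varepsilon_1)}\,b_i\ \ge\ \sqrt{\alpha_+\alpha^{-1}}\,b_i, \]
where the middle strict inequality is the frustration hypothesis and the last step uses $1+\varepsilon_1\le\sqrt{\alpha_+/\alpha}$. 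Hence $\alpha_+\mathbf L^{\varrho'}_{\lambda'}\in C'_i(\sqrt{\alpha_+\alpha^{-1}}\mathbf b,\mathbf c,\alpha_+\varepsilon_0)$.

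For a coordinate $i$ with $b_i=0$ the hypothesis only gives $\pi_i(G(\alpha\mathbf L^{\varrho'}_{\lambda},\boldsymbol\tau_{\mathbf c}\circ\imath'))(\mathbf W_\delta)>0$, so there is a site $(x^\ast,u^\ast)$ with $\mathbf L^{\varrho'}_{\lambda}((x^\ast,u^\ast))>0$ whose $i$-th QoS under $\alpha\mathbf L^{\varrho'}_{\lambda}$ is below $c_i$. By the monotonicity above this QoS stays $\le c_i$ under $\alpha_+\mathbf L^{\varrho'}_{\lambda'}$, so the indicator in $\mathbf \Phi'(\mathbf c,(x^\ast,u^\ast),\alpha_+\mathbf L^{\varrho'}_{\lambda'})$ is $1$ in coordinate $i$. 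It remains to verify the prefactor, i.e.\ $\mathbf L^{\varrho'}_{\lambda'}((x^\ast,u^\ast))\ge\varepsilon_0$, so that $\min\lbrace1,(\alpha_+\varepsilon_0)^{-1}\alpha_+\mathbf L^{\varrho'}_{\lambda'}((x^\ast,u^\ast))\rbrace=1$. Here the full event $\mathbf E_{\varepsilon_0}^\ast$ is used: since $(x^\ast,u^\ast)\notin V(\mathbf L^{\varrho'}_{\lambda})=V(\mathbf L^{\varrho'}_{\lambda'})$ the site is not virtual, and $\mathbf E_{\varepsilon_0}=\lbrace\mathbf Q\subseteq\mathbf V\rbrace$ then forces it not to be quasi-empty, giving $\mathbf L^{\varrho'}_{\lambda'}((x^\ast,u^\ast))>\varepsilon_0$. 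Thus $\pi_i(\mathbf \Phi'_{\alpha_+\varepsilon_0}(\mathbf c,\alpha_+\mathbf L^{\varrho'}_{\lambda'}))=1$ and again $\alpha_+\mathbf L^{\varrho'}_{\lambda'}\in C'_i(\sqrt{\alpha_+\alpha^{-1}}\mathbf b,\mathbf c,\alpha_+\varepsilon_0)$. Intersecting over $i\in\four$ yields membership in $\mathbf C'$, which is the assertion.

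I expect the delicate point to be the monotonicity of the relayed QoS $R$: unlike the direct QoS $D$, it is an essential supremum over occupied relay sites, and increasing the intensity could a priori create new relays and \emph{improve} the connection. This is precisely what $V(\mathbf L^{\varrho'}_{\lambda})=V(\mathbf L^{\varrho'}_{\lambda'})$, built into $\mathbf E_{\varepsilon_0}^\ast$, rules out, keeping the relay set fixed so that the coordinatewise monotonicity of $D$ propagates to $R$. The second point requiring care is the bookkeeping of $\varepsilon_0$, $\varepsilon_1$ and the ratio $\alpha_+/\alpha$; I have arranged matters so that the single inequality $1+\varepsilon_1\le\sqrt{\alpha_+/\alpha}$ simultaneously delivers the interference domination, the mass bound, and the final threshold $\sqrt{\alpha_+\alpha^{-1}}\mathbf b$, which is exactly what makes the regimes $b_i>0$ and $b_i=0$ close under one smallness assumption on $\varepsilon_0$. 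The inclusion is pathwise under the thinning coupling, so it holds for all $\lambda$ (in particular for $\lambda\ge\lambda_0$).
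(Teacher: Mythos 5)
Your proof is correct and follows essentially the same route as the paper's: the monotonicity of $D$ and $R$ under the intensity increase (Lemma \ref{3.1}(i),(iii), with $V(\mathbf L^{\varrho'}_{\lambda})=V(\mathbf L^{\varrho'}_{\lambda'})$ guaranteeing that no new relays appear), the mass-comparison factor $\tfrac{\alpha_+\lambda}{\alpha\lambda'}\geq\sqrt{\alpha_+\alpha^{-1}}$ for the coordinates with $b_i>0$, and the quasi-empty/virtual argument from $\mathbf E_{\varepsilon_0}$ to secure the prefactor $\min\lbrace 1,(\alpha_+\varepsilon_0)^{-1}\alpha_+\mathbf L^{\varrho'}_{\lambda'}((x^\ast,u^\ast))\rbrace=1$ when $b_i=0$. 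Your bookkeeping via the single condition $1+\varepsilon_1\leq\sqrt{\alpha_+/\alpha}$ is a clean packaging of the same estimates.
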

\begin{proof} 
First, recall that under the event $\mathbf E^{\ast}_{\varepsilon_0}$, by passing from $\lambda$ to $\lambda'$ users can only be added along space trajectories which are occupied by at least one user. Hence, under the event $\mathbf E^{\ast}_{\varepsilon_0}$, parts (i) and (iii) of Lemma \ref{3.1} give that
\[ {D} \left( (x,u),~(y,v),~\alpha_+ \mathbf L^{\varrho'}_{\lambda'} \right) \leq {D}  \left( (x,u),~(y,v),~\alpha \mathbf L^{\varrho'}_{\lambda} \right),~~{R} \left( (x,u),~(y,v),~\alpha_+ \mathbf L^{\varrho'}_{\lambda'} \right)  \leq {R} \left( (x,u),~(y,v),~\alpha \mathbf L^{\varrho'}_{\lambda} \right) \]
for all $(x,u),~(y,v) \in \mathbf{W}_\delta$ if $\varepsilon_0$ is sufficiently small. 
Hence, under the event $\mathbf E_{\varepsilon_0}^\ast$, we conclude that $\pi_i\left(G (\alpha \mathbf L^{\varrho'}_\lambda,~\boldsymbol \tau_{\mathbf c} \circ \imath') (\mathbf W_\delta) \right) > b_i$ implies
\[ \pi_i\left( G(\alpha_+ \mathbf L^{\varrho'}_{\lambda'},~\boldsymbol \tau_{\mathbf c}^{\text{usc}}) (\mathbf W_\delta) \right) \geq \frac{\alpha_+ \lambda}{\alpha \lambda'} b_i \geq \sqrt{\alpha_+ \alpha^{-1}} b_i. \] 
In particular, $\alpha_+ \mathbf L^{\varrho'}_{\lambda'} \in C'_i(\sqrt{\alpha_+ \alpha^{-1}} \mathbf b, \mathbf c, \alpha_+ \varepsilon_0)$ if $b_i>0$.

For the case $b_i=0$, let $(x,u) \in \mathbf W_\delta$ with $\mathbf L^{\varrho'}_{\lambda}((x,u)) >0$ be arbitrary. Since the event $\mathbf E^\ast_{\varepsilon_0}$ occurs, we have $\mathbf L^{\varrho'}_{\lambda'}((x,u)) \geq \varepsilon_0$ and therefore
\[ \min \left\lbrace 1, (\alpha_+ \varepsilon_0)^{-1} (\alpha_+ \mathbf L^{\varrho'}_{\lambda'} ((x,u)))\right\rbrace =1. \]
Moreover, by parts (i) and (iii) of Lemma \ref{3.1}, we have that $\pi_i(\Phi'(\mathbf c,~ \alpha_+ \mathbf L^{\varrho'}_{\lambda'}))) \geq \pi_i(\Phi'(\mathbf c,~ \alpha \mathbf L^{\varrho'}_{\lambda}))$. Therefore,
\[ \pi_i(\Phi'_{\alpha_+ \varepsilon_0} (\mathbf c,~\alpha_+ \mathbf L^{\varrho'}_{\lambda'})) = \pi_i(\Phi' (\mathbf c,~\alpha_+ \mathbf L^{\varrho'}_{\lambda'})) \geq \pi_i (\Phi'(\mathbf c,~ \alpha \mathbf L^{\varrho'}_{\lambda})) =1, \]
which means that $\alpha_+ \mathbf L^{\varrho'}_{\lambda'} \in C'_i(\sqrt{\alpha_+ \alpha^{-1}} \mathbf b, \mathbf c, \alpha_+ \varepsilon_0)$, as wanted.
\end{proof}

\color{white} a \color{black} \\
Now, we can continue with the proof of Proposition \ref{6.1}. \\
\emph{Proof of Proposition \ref{6.1}.} First, by Lemmas \ref{3.10} and \ref{6.2}, we have
\begin{align*} \mathbb P(G(\alpha \mathbf L^{\varrho'}_{\lambda},~\boldsymbol \tau_{\mathbf c})(\mathbf W_\delta)>\mathbf b) &\leq \exp(\sqrt{\varepsilon_0} \lambda) \mathbb P_2 \left( \mathbf E^\ast_{\varepsilon_0} \cap \left\lbrace \alpha_+ \mathbf L^{\varrho'}_{\lambda'} \in \mathbf C'( \sqrt{\alpha_+ \alpha^{-1}} \mathbf b, ~\mathbf c, ~\alpha_+ \varepsilon_0) \right\rbrace \right) \\ &\leq \exp(\sqrt{\varepsilon_0} \lambda) \mathbb P_2 \left(\alpha_+ \mathbf L^{\varrho'}_{\lambda'} \in \mathbf C'( \sqrt{\alpha_+ \alpha^{-1}} \mathbf b, ~\mathbf c, ~\alpha_+ \varepsilon_0) \right). \end{align*}
In particular, the large deviation principle for $\mathbf L^{\varrho'}_\lambda$ and the observation \eqref{observationbefore6.2} together yield
\begin{align*} \limsup_{\lambda \to \infty} \frac{1}{\lambda} \log \mathbb P_2 \left( G(\alpha \mathbf L^{\varrho'}_{\lambda},~\boldsymbol \tau_{\mathbf c}) (\mathbf W_\delta)>\mathbf b \right) &\leq \sqrt{\varepsilon_0} - (1+2\varepsilon_0){\kappa}_\delta^{-1})  \inf\limits_{\tiny{\begin{smallmatrix} {\nu} \in \mathcal{M}(\mathbf W_\delta) \\ (\alpha_+ \nu) \in \mathbf C'( \sqrt{\alpha_+ \alpha^{-1}} \mathbf b, ~\mathbf c, ~\alpha_+ \varepsilon_0)  \end{smallmatrix}}} h({\nu} \vert \mu'^{\varrho'}) \\ &\leq \sqrt{\varepsilon_0}- (1+2\varepsilon_0 {\kappa}_\delta^{-1}) \inf\limits_{\tiny{\begin{smallmatrix} {\nu} \in \mathcal{M}(\mathbf W_\delta) \\ G(\alpha^2_+ \alpha^{-1} {\nu},~ \boldsymbol \tau_{\mathbf{c}} \circ \imath') (\mathbf W_\delta)>\mathbf b  \end{smallmatrix}}} h({\nu} \vert {\mu}'^{\varrho'}). \end{align*}
Hence, letting $\varepsilon_0$ to 0, we conclude
\[  \limsup_{\lambda \to \infty} \frac{1}{\lambda} \log \mathbb P_2 \left(G (\alpha \mathbf L^{\varrho'}_{\lambda},~\boldsymbol \tau_{\mathbf c}) (\mathbf W_\delta)>\mathbf b \right) \leq - \inf\limits_{\begin{smallmatrix} {\nu} \in \mathcal{M}(\mathbf W_\delta) \\ G(\alpha^2_+ \alpha^{-1} {\nu},~ \boldsymbol \tau_{\mathbf{c}} \circ \imath') (\mathbf W_\delta)>\mathbf b  \end{smallmatrix}} h({\nu} \vert {\mu}'^{\varrho'}). \]
Moreover, by Corollary \ref{3.12},
\[ - \inf\limits_{\tiny{\begin{smallmatrix} {\nu} \in \mathcal{M}(\mathbf W_\delta) \\ G(\alpha {\nu},~ \boldsymbol \tau_{\mathbf{c}} \circ \imath') (\mathbf W_\delta)>\mathbf b  \end{smallmatrix}}} h(\alpha^2 \alpha_+^{-2} {\nu} \vert {\mu}'^{\varrho'}) \leq -(1-3 \varepsilon') \inf\limits_{\begin{smallmatrix} {\nu} \in \mathcal{M}(\mathbf W_\delta) \\ G(\alpha {\nu},~\boldsymbol \tau_{\mathbf{c}} \circ \imath') (\mathbf W_\delta)>\mathbf b  \end{smallmatrix}} h({\nu} \vert \mu'^{\varrho'}) + 3\varepsilon'{\mu}'^{\varrho'} (\mathbf W_\delta), \]
where $\varepsilon'>0$ is such that $1-\varepsilon' = \alpha_+^{-2} \alpha^2$. Sending $\alpha_+$ to $\alpha$ completes the proof. \hspace{72pt} $\square$

\color{white} Hülye aki elolvassa \color{black} \\
Next, we can conclude the proof of Corollary \ref{1.2}. \\ 
\emph{Proof of Corollary \ref{1.2}.} Let $\varepsilon \in (0,1)$ be arbitrary. Then, using \eqref{másodikvégesvárhatóérték}, Proposition \ref{2.1} yields
\[ \mathbb P_2 (G(\mathbf L_\lambda,~\boldsymbol \tau_{\mathbf{c}})(\mathbf W) > \mathbf b) \leq \mathbb P_2 (G((1+\varepsilon)\mathbf L^{\varrho'}_\lambda,~\boldsymbol{\tau}_{\mathbf c} \circ \imath') (\mathbf W_\delta) > \mathbf b) \]
for sufficiently small $\delta \in \mathbb B$. Therefore, Proposition \ref{6.1} implies that
\[ \limsup_{\lambda \to \infty} \frac{1}{\lambda} \log \mathbb P(G(\mathbf L_{\lambda},~\boldsymbol \tau_{\mathbf c}) (\mathbf W)>\mathbf b) \leq - \inf\limits_{\begin{smallmatrix} {\nu} \in \mathcal{M}(\mathbf W_\delta) \\ G((1+\varepsilon) {\nu},~\boldsymbol \tau_{\mathbf{c}} \circ \imath') (\mathbf W_\delta)>\mathbf b  \end{smallmatrix}} h({\nu} \vert {\mu}'^{\varrho'}). \]
Using Lemma \ref{5.1}, we conclude that
\[ \lim_{\varepsilon \to 0} \liminf_{\delta \to 0}  \inf\limits_{\begin{smallmatrix} {\nu} \in \mathcal{M}(\mathbf W_\delta) \\ G((1+\varepsilon) {\nu},~\boldsymbol \tau_{\mathbf{c}} \circ \imath') (\mathbf W_\delta)>\mathbf b  \end{smallmatrix}} h({\nu} \vert {\mu}'^{\varrho'}) \geq \inf\limits_{\begin{smallmatrix} {\nu} \in \mathcal{M}(\mathbf W) \\ G({\nu},~\boldsymbol \tau_{\mathbf c}) (\mathbf W) >\mathbf b \end{smallmatrix}} h({\nu} \vert {\mu}'), \] 
as required. \hspace{370pt} $\square$

\color{white} Mindjárt kész! \color{black} \\
Finally, we prove Corollary \ref{1.3}.\\
\emph{Proof of Corollary \ref{1.3}.} 
Let $\varepsilon>0$ such that $G((1+\varepsilon)\mu',~\tau_{\mathbf c})(\mathbf W) \leq \mathbf b$. First, Lemma \ref{3.1} parts (ii) and (iv) together with Proposition \ref{2.1} imply that for sufficiently small $\delta \in \mathbb B$ we have
\[ \pi_i\left(G\left(\left(1+\frac{\varepsilon}{2}\right){\mu}'^{\varrho'},~\boldsymbol \tau^{\text{usc}}_{\mathbf c} \right) (\mathbf W_\delta)\right) \leq \pi_i \left(G\left(\left(1+\frac{3\varepsilon}{4}\right){\mu}'^{\varrho'},~\boldsymbol \tau_{\mathbf c} \circ \imath' \right) (\mathbf W_\delta)\right)\leq b_i, \] 
for all $i\in\four$. Moreover, another application of Proposition \ref{2.1} yields that it suffices to show that 
\begin{equation} \label{egyszerűsödnifog} \limsup_{\lambda \to \infty} \frac{1}{\lambda} \log \mathbb P_2 \left( G \left( \left( 1+\frac{\varepsilon}{3}\right) \mathbf L^{\varrho'}_\lambda, ~\boldsymbol \tau^{\text{usc}}_{\mathbf c} \right) (\mathbf W_\delta) > b \right) <0 \end{equation}
holds for sufficiently small $\delta \in \mathbb B$. Using Proposition \ref{6.1}, in order to verify \eqref{egyszerűsödnifog} we merely have to show
\[ \inf\limits_{\begin{smallmatrix} {\nu} \in \mathcal{M}(\mathbf W_\delta) \\ G((1+\varepsilon/3){\nu},~ \boldsymbol \tau_{\mathbf c}^{\text{usc}})(\mathbf W_\delta)>\mathbf b \end{smallmatrix}} h({\nu} \vert {\mu}'^{\varrho'})>0. \] 
We assume for contradiction that there exist ${\nu}_k \in \mathcal{M}(\mathbf W_\delta)$ such that $G((1+\varepsilon/3) {\nu}_k,~\boldsymbol \tau_{\mathbf c}^{\text{usc}})(\mathbf W_\delta) > \mathbf b$ and $\lim_{k \to \infty} h(\nu_k \vert {\mu}'^{\varrho'})=0$. In particular, the goodness of the rate function $h(\cdot \vert {\mu}'^{\varrho'})$ implies that the measures $ \lbrace {\nu}_k \rbrace$ have a subsequence $\lbrace {\nu}_{k_l} \rbrace$ that converges weakly\index{weak convergence of measures} to some ${\nu}^\ast$. The lower semicontinuity of the relative entropy w.r.t. $\mu'$ implies that $h({\nu}^\ast \vert {\mu}'^{\varrho'}) \leq \liminf_{l \to \infty} h({\nu}_{k_l} \vert {\mu}'^{\varrho'})=0$, thus we conclude that ${\mu}'^{\varrho'}={\nu}^\ast$. Hence, since the function ${\nu} \mapsto G(\nu,~ \boldsymbol \tau_{\mathbf c}^{\text{usc}})$ is also u.s.c., we obtain that $\pi_i(G((1+\frac{\varepsilon}{3}){\mu}'^{\varrho'},~ \boldsymbol \tau^{\text{usc}}_{\mathbf c} ) (\mathbf W_\delta)) \geq \mathbf b$. If $b_i>0$, then this together with parts (i) and (iii) of Lemma \ref{3.1} implies that $\pi_i(G((1+\frac{\varepsilon}{2}){\mu}'^{\varrho'},~\boldsymbol \tau^{\text{usc}}_{\mathbf c} )(\mathbf W_\delta)) \geq b_i$. Thus, we have obtained a contradiction to our assumption $\pi_i(G((1+\frac{\varepsilon}{2}){\mu}'^{\varrho'},~\boldsymbol \tau^{\text{usc}}_{\mathbf c} ) (\mathbf W_\delta)) \leq b_i$. On the other hand, if $b_i=0$, then we can apply the above argument with the u.s.c. function ${\nu} \mapsto \Phi'_{\kappa_\delta}(\mathbf c, ~(1+\frac{\varepsilon}{3}) {\nu})$ instead of ${\nu} \mapsto G(\nu,~\boldsymbol \tau_{\mathbf c}^{\text{usc}})$. More precisely, since ${\nu} \mapsto \Phi'_{\kappa_\delta}(\mathbf c, ~\nu)$ takes values in a discrete set, and since $\Phi'_{\kappa_\delta}(\mathbf c, ~(1+\frac{\varepsilon}{3}) {\nu}_k)=1$ holds for $k$ sufficiently large, we conclude that also ${\nu} \mapsto \Phi'_{\kappa_\delta}(\mathbf c,~ (1+\frac{\varepsilon}{3}) {\mu}'^{\varrho'})=1.$ Therefore,
\[ \pi_i(\Phi'(\mathbf c, ~(1+\frac{\varepsilon}{2}) {\mu}'^{\varrho'} )) \geq \pi_i(\Phi'(\mathbf c,~ (1+\frac{\varepsilon}{3}) {\mu}'^{\varrho'} )) = \pi_i (\Phi'_{\kappa_{\delta}} (\mathbf c,~ (1+\frac{\varepsilon}{3})  {\mu}'^{\varrho'} ))= 1, \] 
where we used parts (i) and (iii) of Lemma \ref{3.1} in the first inequality. Thus, we have obtained a contradiction to the assumption $\pi_i(\Phi'(c,~(1+\frac{\varepsilon}{2}) {\mu}'^{\varrho'})) = 0$. \hspace{200pt} $\square$
\section{Classification of the asymptotic behaviour of frustration probabilities} \label{kalsszikus}\index{frustration probabilities!classification}
In order to obtain a full understanding of the question when the frustration events $\mathbb P_2(G(\mathbf L_\lambda,~\boldsymbol\tau_{\mathbf c})(\mathbf W)>\mathbf b)$ for $\mathbf b \geq 0$, $\mathbf c \in (0, \tilde{\mathbf c}_+)$ decay exponentially, we give a classification of these events w.r.t. properties of the a priori measure $\mu'$, assuming that $\mu(W)>0$. (If $\mu(W)=0$, then $\mathbb P_2$-a.s. there are no users in the system, and all QoS quantities equal $\tilde{c}_+$.). We proceed using Corollaries \ref{1.2} and \ref{1.3}. (The case $c_i=0$ is pathological since all QoS quantities are nonnegative by our assumptions.) We summarize these results in Corollary \ref{új1.4} in the end of this section. 

Throughout the rest of the thesis, we say that for $A \subseteq \mathcal{M}(\mathbf W)$, $\inf \lbrace h(\nu \vert \mu') \vert~\nu \in A \rbrace$ is \emph{attained in} $\nu^\ast$ if $\nu^\ast \in A$ and $h(\nu^\ast \vert \mu')=\inf \lbrace h(\nu \vert \mu') \vert~\nu \in A \rbrace$. Using this definition, under Assumption \ref{szamár} with i.i.d. fadings, we have 
\begin{enumerate}
    \item If there exists $\varepsilon>0$ such that $G((1+\varepsilon)\mu',~\boldsymbol \tau_{\mathbf c})(\mathbf W) \leq \mathbf b$, then, according to Corollary \ref{1.3}, $\mathbb P_2 \left( G(\mathbf L_\lambda,~\boldsymbol \tau_{\mathbf c} )(\mathbf W)>\mathbf b \right)$ decays exponentially in $\lambda$.
    \item If $G(\mu',~\boldsymbol \tau_{\mathbf c})(\mathbf W)>\mathbf b$, then the infimum in \eqref{fontos} is attained by $\nu=\mu'$. Hence by Corollary \ref{1.2}, $\lim_{\lambda \to \infty} \mathbb P_2(G(\mathbf L_\lambda,~\boldsymbol\tau_{\mathbf c})(\mathbf W)>\mathbf b)=0$. I.e., probability of frustration events that are not unlikely w.r.t. the a priori product measure $\mu'$ does not decay exponentially.
    
\item If although $G(\mu',~\boldsymbol \tau_{\mathbf c})(\mathbf W)<\mathbf b$, we have $G((1+\varepsilon)\mu',~\boldsymbol \tau_{\mathbf c})(\mathbf W) > \mathbf b$ for all $\varepsilon>0$, then an easy computation shows that for all $\varepsilon>0$ \[ h((1+\varepsilon)\mu' \vert \mu')=((1+\varepsilon)\log(1+\varepsilon) -\varepsilon)\mu(W), \] and thus Corollary \ref{1.2} implies
\[ \lim_{\lambda \to \infty} \frac{1}{\lambda} \log \mathbb P_2(G(\mathbf L_\lambda,~\boldsymbol \tau_{\mathbf c})(\mathbf W)>\mathbf b) = - \inf\limits_{\begin{smallmatrix} \nu \in \mathcal{M}(\mathbf W) \\ G(\nu, ~\boldsymbol \tau_{\mathbf c}) (\mathbf W) > \mathbf b \end{smallmatrix}} h(\nu \vert \mu') \geq -((1+\varepsilon)\log(1+\varepsilon) -\varepsilon)\mu(W),  \] and the r.h.s. tends to 0 as $\varepsilon$ tends to 0. Hence, the frustration probabilities considered in this case do not decay exponentially. 
\item If $G(\mu',~\boldsymbol \tau_{\mathbf c})(\mathbf W)=\mathbf b$ and $\mathbf b >0$ (i.e. $b_i>0$ for some $i \in \four$), we make the following observations. For any $\nu \in \mathcal{M}( \mathbf W)$, $c >0$ and $\varepsilon>0$ we have
\[ \lbrace \SIR((x,u),~(y,v),~\nu)<(1+\varepsilon)c \rbrace = \lbrace \SIR((x,u),~(y,v),~(1+\varepsilon)\nu)<c \rbrace , \]
and this together with the monotonicity properties of $g$ implies for $\mathbf c \in (0,\tilde{\mathbf c_+})$
\[ \lbrace G(\nu,~\boldsymbol \tau_{(1+\varepsilon)\mathbf c})(\mathbf W) \leq \mathbf b \rbrace = \lbrace G((1+\varepsilon)\nu,~\boldsymbol \tau_{\mathbf c})(\mathbf W) \leq (1+\varepsilon)\mathbf b \rbrace. \] Thus, if $G(\mu',~ \boldsymbol \tau_{\mathbf c})(\mathbf W)=\mathbf b$, then for all $\varepsilon>0$ we have $G((1+\varepsilon)\mu',~ \boldsymbol \tau_{\mathbf c/(1+\varepsilon)})(\mathbf W)=(1+\varepsilon)\mathbf b$. In particular, since $b_i>0$ for some $i \in \four$, $G((1+\varepsilon)\mu', ~\boldsymbol \tau_{\mathbf c})(\mathbf W)>\mathbf b$ . Thus, analogously to the previous case, $\mathbb P_2(G(\mathbf L_\lambda,~\boldsymbol \tau_{\mathbf c})(\mathbf W)>\mathbf b)$ does not decay exponentially as $\lambda \to \infty$. 
\item If $G(\mu',~\boldsymbol \tau_{\mathbf c})(\mathbf W)=\mathbf b$ and $\mathbf b=0$, then $G(\mu',~\boldsymbol \tau_{\mathbf c})(\mathbf W)=\mathbf b$ implies that the set of locations where the a user observes QoS less than $\mathbf c$ is a nullset of the intensity measure $\mu'$. By Campbell's theorem (Theorem \ref{campbell})\index{Campbell's theorem} and the measurability properties observed according to \eqref{másodikvégesvárhatóérték}, we obtain 
\[ \lim_{\lambda \to \infty} \mathbb P_2(G(\mathbf L_\lambda,~\boldsymbol \tau_{\mathbf c})(\mathbf W)=0)=\int_{\mathbf W} \mathds 1 \lbrace G(\mu',~ \boldsymbol \tau_{\mathbf c})=0 \rbrace\d \mu =1. \]
We want to determine whether $\mathbb P_2(G(\mathbf L_\lambda,~\boldsymbol \tau_{\mathbf c})(\mathbf W)>0)$ decays exponentially or not. If $\mu(W)=0$, the answer is clearly yes, since then all QoS values in the system equal $\tilde{c}_+$. In the rest of this case, let $\mu(W)>0$. \\ We first note that \[ S_{\text{up-dir}}=\frac{\ellmin \Fmin}{\int_{\mathbf W} \ell(\vert x \vert) \mu'(\d x, \d u)}=\frac{\ellmin \Fmin}{\int_{\mathbf W} \ell(\vert x \vert) \mu(\d x) \mathbb E[F_0]}~\text{and} \] \[ S_{\text{do-dir}}=\mu\text{-}\essinf_{y \in W} \frac{\ellmin F_o}{\int_{\mathbf W} \ell(\vert x-y \vert) \mu(\d x) \mathbb E[F_0]}\] are the $\mu'$-essential infima of the SIR values in the system w.r.t. $\mu'$, coming from uplink communication and from downlink communication, respectively. This is true because $\Fmin$ is the $\mathbb P$-essential infimum (and $\Fmax$ is the $\mathbb P$-essential supremum) of $F_0$, moreover $\ell$ is Lipschitz continuous. We also define \[ K_{\text{up}}=\mu\text{-}\essinf_{(x,u) \in \mathbf W} \mu\text{-}\esssup_{(y,v) \in \mathbf W} \Gamma((x,u)~(y,v),~(o,F_o),~\mu'), \text{ and} \] \[  K_{\text{do}}=\mu\text{-}\essinf_{(x,u) \in \mathbf W} \mu\text{-}\esssup_{(y,v) \in \mathbf W} \Gamma((o,F_o)~(y,v),~(x,u),~\mu') \] which are the $\mu'$-essential infima of QoS levels for relayed uplink and relayed downlink communication, respectively. Thus, since $G(\mu',~\boldsymbol \tau_{\mathbf c})(\mathbf W)=0$ occurs, we conclude that for at least one $i \in \four$, $c_i \leq K_i$, where we define \begin{equation} \label{minsir} \mathbf K=(K_1,~K_2,~K_3,~K_4)=(K_{\text{up}},~K_{\text{up-dir}},~K_{\text{do}},~K_{\text{do-dir}})=(K_{\text{up}},~g(S_{\text{up-dir}}),~K_{\text{do}},~g(S_{\text{do-dir}})). \end{equation} We call $\mathbf K$ \emph{minimal $\SIR$ vector}\index{SIR!minimal SIR vector}.\footnote{ Formally, it would be more precise to call $\mathbf K$ minimal QoS vector, but since in all the concrete cases considered in the next chapter, $g$ will be given as the identity truncated at a large positive number, most often we will have that the $D$ type QoS quantities are indeed the same as the corresponding SIR quantities.}\\ First we consider the sub-case that this inequality is strict in at least one coordinate, say in the $i$th one, i.e. $c_i<K_i$. We show that then, $\mathbb P_2(G(\mathbf L_\lambda,~\boldsymbol \tau_{\mathbf c})(\mathbf W)> 0)$ decays exponentially. Indeed, for $\varepsilon>0$, let us define $\mathbf c_{\varepsilon} \in \mathbb R^4$ via $\mathbf c_{\varepsilon}(i)=(1+\varepsilon) c_i$ and $c_{\varepsilon}(j)=c_j$ for $j \neq i$. Let us write $m_1=\text{up}$, $m_2=\text{up-dir}$, $m_3=\text{do}$, $m_4=\text{do-dir}$. Then for sufficiently small $\varepsilon>0$ we have
\[ G((1+\varepsilon) \mu', ~\tau_{c_i},~m_i)(\mathbf W)=G(\mu', ~\boldsymbol \tau_{\mathbf c_\varepsilon (i)},~m_i)(\mathbf W)=0. \]
Hence, by Corollary \ref{1.3}, we conclude that 
\[ \limsup_{\lambda \to \infty} \frac{1}{\lambda} \log \mathbb P_2(G(\mathbf L_\lambda,~\tau_{c_i},~m_i)(\mathbf W)>0)<0. \]
Noting that for all $\lambda>0$ we have $\lbrace G(\mathbf L_\lambda,~\tau_{c_i},~m_i)(\mathbf W)>0 \rbrace \supseteq \lbrace G(\mathbf L_\lambda,~\boldsymbol \tau_{\mathbf c})(\mathbf W)>0 \rbrace$ implies the exponential decay of the probability of the latter event as $\lambda \to \infty$.

In the sub-case when there is no coordinate such that $c_j < K_j$, but $c_i=K_i$ for some $i \in \four$, we observe the following. Using the arguments of case 4, we conclude that $G((1+\varepsilon)\mu',~\boldsymbol \tau_{\mathbf c})(\mathbf W)>0$ for all $\varepsilon>0$. Hence, we can estimate
\[ \inf \lbrace h(\nu \vert \mu') |~G(\nu,~\boldsymbol \tau_{\mathbf c},~\mu')(\mathbf W)>0 \rbrace \leq \inf_{\varepsilon>0}  h((1+\varepsilon)\mu' \vert \mu')=\inf_{\varepsilon>0} ((1+\varepsilon)\log(1+\varepsilon) -\varepsilon)\mu(W)=0, \]
which, according to Corollary \ref{1.2}, implies that $\mathbb P_2(G(\mathbf L_\lambda,~\boldsymbol \tau_{\mathbf c})(\mathbf W)>0)$ does not decay exponentially.

\item The last remaining case is that $G(\mu',~\boldsymbol \tau_{\mathbf c})(\mathbf W)$ and $\mathbf b$ are incomparable w.r.t. our partial order \eqref{kisebbegyenlő} on $\mathbb R^4$. Then $\mathbf b \neq 0$, since $0$ is the minimal element in $[0,\infty)^4$ w.r.t. this partial order, and hence it is comparable with all non-negative vectors. Let us write $m_1=\text{up}$, $m_2=\text{up-dir}$, $m_3=\text{do}$, $m_4=\text{do-dir}$. This means that for some coordinate $i \in \four$, $G(\mu,~ \tau_{c_i}, m_i)(\mathbf W) \leq b_i$, and for another $j \in \four$, $G(\mu,~ \tau_{c_j}, m_i)(\mathbf W) > b_j$. Now, note that for all $j \in \four$ we have
\begin{small} \[ \mathbb P_2(G(\mathbf L_\lambda,~\boldsymbol \tau_{\mathbf c})(\mathbf W)>\mathbf b)=\mathbb P_2(G(\mathbf L_\lambda,~ \tau_{c_i},m_i)(\mathbf W)>b_i,~\forall i=1,\cdots,4) \leq \mathbb P_2(G(\mathbf L_\lambda,~ \tau_{c_j},m_j)(\mathbf W)>b_j).\] \end{small} Thus, if there exists at least one $i \in \four$ such that $\mathbb P_2(G(\mathbf L_\lambda,~ \tau_{c_i},m_i)(\mathbf W)>b_i)$ decays exponentially as $\lambda \to \infty$, then also $\mathbb P_2(G(\mathbf L_\lambda,~\boldsymbol \tau_{\mathbf c})(\mathbf W)>\mathbf b)$ decays exponentially. Using the previous cases, this happens if and only if there exists at least one $i \in \four$ and $\varepsilon>0$ such that $G((1+\varepsilon)\mu',~ \tau_{c_i})(\mathbf W) \leq b_i$. \\
Finally, since $\mathbf b \neq 0$, it remains to consider the sub-case when $G(\mu',~\boldsymbol \tau_{\mathbf c})$ and $\mathbf b$ are incomparable but $G((1+\varepsilon)\mu',~ \boldsymbol{\tau}_{\mathbf c})(\mathbf W) > \mathbf b$ for all $\varepsilon>0$. But then, again, we can use the argumentation of case 3 to conclude that $\mathbb P_2(G(\mathbf L_\lambda,~\boldsymbol \tau_{\mathbf c})(\mathbf W)>\mathbf b)$ does not decay exponentially. 
\end{enumerate}
Thus, we have proved the following corollary of Theorem \ref{1.1}, which determines whether the frustration probabilities decay exponentially, knowing the a priori measure $\mu'$. 
\begin{cor} \label{új1.4}\index{frustration probabilities!exponential decay}
Let $\mu(W)>0$. Under the assumptions of this chapter, for $\mathbf b \geq 0$ and $\mathbf c \in (0,\tilde{\mathbf c}_+)$, 
\[ \limsup_{\lambda \to \infty} \frac{1}{\lambda} \log \mathbb P_2(G(\mathbf L_\lambda,~\boldsymbol \tau_{\mathbf c})(\mathbf W)>\mathbf b)<0 \] 
holds if and only if one of the following is true:
\begin{enumerate}[(i)] \item there exists $i \in \four$ and $\varepsilon>0$ such that $G((1+\varepsilon)\mu',~\tau_{c_i},~m_i)(\mathbf W) \leq b_i$, \item $\mathbf b=0$, $G(\mu',~\boldsymbol \tau_{\mathbf c})(\mathbf W)=\mathbf b$, and there exists $i \in \four$ such that $c_i < K_i$, where $K_i$ is defined as the $i$th coordinate of the minimal $\SIR$ vector $\mathbf K$ in \eqref{minsir}. \end{enumerate}
\end{cor}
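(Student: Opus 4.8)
The plan is to read the equivalence off the exact rate furnished by Corollary~\ref{1.2} together with the sufficient criterion of Corollary~\ref{1.3}, after first reducing the vector-valued frustration event to its four scalar components. I would write $\{G(\mathbf L_\lambda,~\boldsymbol\tau_{\mathbf c})(\mathbf W)>\mathbf b\}=\bigcap_{i\in\four}E_\lambda^{(i)}$ with $E_\lambda^{(i)}=\{G(\mathbf L_\lambda,~\tau_{c_i},~m_i)(\mathbf W)>b_i\}$, and set $C=\{\nu\in\mathcal M(\mathbf W):~G(\nu,~\boldsymbol\tau_{\mathbf c})(\mathbf W)>\mathbf b\}$, so that by Corollary~\ref{1.2} the decay rate equals $-\inf_{\nu\in C}h(\nu\vert\mu')$ and exponential decay is equivalent to $\inf_{\nu\in C}h(\nu\vert\mu')>0$. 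The argument is then organised according to the position of $G(\mu',~\boldsymbol\tau_{\mathbf c})(\mathbf W)$ relative to $\mathbf b$, exactly as in the seven cases preceding the statement.

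For the direction ``(i) or (ii) $\Rightarrow$ exponential decay'' I would argue as follows. If (i) holds for some coordinate $i$, then $G((1+\varepsilon)\mu',~\tau_{c_i},~m_i)(\mathbf W)\le b_i$ is precisely the hypothesis of the single-coordinate instance of Corollary~\ref{1.3}, which yields $\limsup_{\lambda\to\infty}\frac1\lambda\log\mathbb P_2(E_\lambda^{(i)})<0$; since $\{G(\mathbf L_\lambda,~\boldsymbol\tau_{\mathbf c})(\mathbf W)>\mathbf b\}\subseteq E_\lambda^{(i)}$, exponential decay of the full event follows by monotonicity of $\mathbb P_2$. To handle (ii) I would show that it reduces to (i): when $\mathbf b=0$, $G(\mu',~\boldsymbol\tau_{\mathbf c})(\mathbf W)=0$ and $c_i<K_i$, scaling $\mu'\mapsto(1+\varepsilon)\mu'$ leaves the zero set unchanged, so by Lemma~\ref{3.1}(i),(iii) the QoS in mode $m_i$ only decreases, and it does so continuously since the $\SIR$ scales by $(1+\varepsilon)^{-1}$; as the $\mu'$-essential infimum of this QoS is $K_i>c_i$, for small $\varepsilon$ it stays above $c_i$, whence $G((1+\varepsilon)\mu',~\tau_{c_i},~m_i)(\mathbf W)=0\le b_i$, i.e.\ (i).

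For the converse I would assume that neither (i) nor (ii) holds and produce a sequence in $C$ with vanishing entropy. Negating (i) gives $G((1+\varepsilon)\mu',~\tau_{c_i},~m_i)(\mathbf W)>b_i$ for every $i\in\four$ and every $\varepsilon>0$, which means $(1+\varepsilon)\mu'\in C$ for all $\varepsilon>0$. By Lemma~\ref{3.11} applied to $\nu=\mu'$ one has $h((1+\varepsilon)\mu'\vert\mu')=((1+\varepsilon)\log(1+\varepsilon)-\varepsilon)\,\mu(W)\to0$ as $\varepsilon\downarrow0$, so $\inf_{\nu\in C}h(\nu\vert\mu')=0$ and Corollary~\ref{1.2} gives decay rate $0$; hence there is no exponential decay. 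The degenerate configuration $\mathbf b=0=G(\mu',~\boldsymbol\tau_{\mathbf c})(\mathbf W)$ with every $c_i=K_i$ is exactly where this mechanism operates while (ii) fails, so the two conditions are complementary and the equivalence closes.

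The main obstacle is precisely this boundary case $\mathbf b=0$ with $G(\mu',~\boldsymbol\tau_{\mathbf c})(\mathbf W)=0$, where $\mu'$ sits on the edge of the constraint region and one must separate $c_i<K_i$ from $c_i=K_i$. The delicacy is that the \emph{relayed} QoS is only lower semicontinuous in $\nu$ (Lemma~\ref{3.6}), so a naive perturbation argument is not automatically valid; the saving observation is that the scaling $\mu'\mapsto(1+\varepsilon)\mu'$ preserves the zero set, which is exactly the hypothesis $V(\nu)=V(\nu')$ of Lemma~\ref{3.1}(iii), making the relayed QoS monotone and continuous along this direction. This is what singles out the explicit minimal $\SIR$ vector $\mathbf K$ of \eqref{minsir} as the correct threshold. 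A secondary care point is the passage between the scalar and vector statements: the ``if'' direction rests on the event inclusion $E_\lambda\subseteq E_\lambda^{(i)}$, whereas the ``only if'' direction rests on the observation that the failure of (i) forces \emph{all} scalings $(1+\varepsilon)\mu'$ into $C$ simultaneously, so that the whole family can be used to drive the entropy infimum to zero.
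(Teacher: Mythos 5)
Your proposal is correct, and it reaches the equivalence by a noticeably more economical route than the paper. The paper's proof (the whole of Section~\ref{kalsszikus}) is an exhaustive six-case classification according to the position of $G(\mu',~\boldsymbol\tau_{\mathbf c})(\mathbf W)$ relative to $\mathbf b$, with each negative case handled by its own mechanism: attainment of the infimum at $\mu'$ itself in case~2, the $(1+\varepsilon)\mu'$-perturbation in cases~3 and~4, the scaling identity $\lbrace G(\nu,\boldsymbol\tau_{(1+\varepsilon)\mathbf c})(\mathbf W)\le\mathbf b\rbrace=\lbrace G((1+\varepsilon)\nu,\boldsymbol\tau_{\mathbf c})(\mathbf W)\le(1+\varepsilon)\mathbf b\rbrace$ in case~4, and the auxiliary function $\Phi'_{\kappa_\delta}$ in the boundary sub-case of case~5. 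You collapse the entire converse into one observation: the negation of (i) says precisely that $(1+\varepsilon)\mu'$ lies in the constraint set $C$ for \emph{every} $\varepsilon>0$, so Lemma~\ref{3.11} forces $\inf_{\nu\in C}h(\nu\vert\mu')=0$ and Corollary~\ref{1.2} kills exponential decay. This is airtight and subsumes all of the paper's negative cases at once; it also makes explicit something the paper leaves implicit, namely that your argument proves the sharper statement ``exponential decay $\Leftrightarrow$ (i)'', so that (ii) is a logically redundant disjunct — consistent with the fact that your reduction of (ii) to (i) (which mirrors the paper's treatment of the sub-case $c_i<K_i$ via the SIR scaling $\SIR(\cdot,\cdot,(1+\varepsilon)\nu)=(1+\varepsilon)^{-1}\SIR(\cdot,\cdot,\nu)$ and the zero-set-preserving hypothesis of Lemma~\ref{3.1}) is exactly where the minimal SIR vector $\mathbf K$ enters. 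What the paper's longer route buys is the explicit description of near-minimizers in each regime (useful later in Sections~\ref{I'm a simulant} and~\ref{bélus}); what yours buys is a transparent proof of the stated equivalence. One point to keep at the paper's own level of care: both you and the paper invoke a single-coordinate version of Corollaries~\ref{1.2} and~\ref{1.3}, which is justified by the footnote to Proposition~\ref{Nagy Béla} (take the remaining coordinates of $\mathbf b$ negative so those constraints are vacuous) together with a rerun of the compactness argument in the proof of Corollary~\ref{1.3} for one coordinate — worth a sentence, but not a gap relative to the paper.
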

Such classification for the fading-free case does not appear in \cite{cikk}. We have nevertheless used Corollaries \ref{1.2} and \ref{1.3}, which are the analogues of \cite[Corollaries 1.2, 1.3]{cikk}, for deducing when frustration probabilities decay exponentially. We see from the argumentation of this section that this classification is formally analogous to the fading-free setting $F_0 \equiv 1$. However, the fading distribution contributes to $\mu'$, and therefore for fixed spatial intensity $\mu$ and for fixed frustration parameters $\mathbf b, \mathbf c$, it influences the asymptotic behaviour of the frustration probabilities with these parameters. A more delicate question is what the minimizers \eqref{minike} of relative entropy look like in specific cases, which we answer for a setting with two-dimensional communication area $W$ in Section \ref{ráadás}.
\chapter{Detecting the effects of random fadings} \label{effect}
We have developed an analogue of the results of \cite[Sections 1--6]{cikk} for the setting when users are static, fadings are random, i.i.d., bounded and bounded away from zero. In this chapter, we analyze the minimizers of relative entropy according to Corollary \ref{1.2}, and investigate how the randomness of the fadings influences the system. The chapter is organized as follows. In Section \ref{ráadás}, we obtain exact formulas for the minimizers of relative entropies w.r.t. direct uplink communication, in a special case of our setting with two-dimensional communication area, using rotational symmetry and variational calculus. In Section \ref{ocsú}, we describe an even more special setting where direct downlink communication can also be handled analytically. 
Where analytical approach is not applicable, we use numerical computations and simulations. This is done in Section \ref{I'm a simulant}, and it supports the main conjecture of this part of the chapter. In the high-density limit, in the most likely configurations that exhibit unexpectedly many frustrated users, the average loudness of the users is not unlikely large, but that the \emph{number} of users is unexpectedly high. Section \ref{bélus} investigates the existence of frustrated users in the system, using the results of Section \ref{kalsszikus}, in particular in more detail for the special setting of Section \ref{ráadás}. Finally, in Sections \ref{kernel} and \ref{Fernsehturm}, we relax the following assumptions of Chapter \ref{eleje}. In particular, Section \ref{kernel} describes space-dependent fading, and in Section \ref{Fernsehturm} we consider the case when also the fading value $F_o$ of the origin is random.
\section{Description of minimizers of the rate function in the direct uplink case}  \label{ráadás}
Following \cite[Section 7]{cikk}, in this section we work with a special case of the model of Section \ref{Anfang}.\footnote{ Note that the setting of \cite[Section 7]{cikk} is also mobility-free. It seems to be hard to describe the system in more detail than what Corollary \ref{1.2} gives if one considers the model with mobility, even if fadings are non-random.} Further, for geometrical reasons, we consider the disk $B_r(o) \subset \mathbb R^2$ centered at the origin with radius $r >0$. We observe that the approach of Chapter \ref{eleje} can be easily generalized to a setting where $r>0$ is not an integer, even the discretization in Section \ref{discretization} works the same way as before. We simplify the notation and write $W=B_r(o)$, $\mathbf W=W \times [\Fmin, \Fmax]$. We want to find the minimizers of the rate function presented in Corollary \ref{1.2} in the setting of Assumption \ref{szamár}, in particular to show how these minimizers depend on the distribution of the fading variable $F_0$. We restrict our observations to the direct uplink communication case. The reason for this is that in the direct uplink case minimizers exhibits radial symmetry, and this radial symmetry is transferred to our setting (cf. Proposition \ref{körbekörbekarikába} for a proof of this), while at the three other means of communication, even in the fading-free case, symmetry breaking is likely to occur at the minimizers, according to \cite[Sections 7.2, 7.3]{cikk}\index{radial symmetry!symmetry breaking} (see the exact definition of symmetry breaking also there). In those cases, there is no known explicit formula for describing the minimizers, and some interesting properties of the minimizers in have been conjectured by simulations \cite{cikk}.

We are interested in the specific frustration event
\[ \lbrace G(\mathbf L_\lambda,~\tau_{c},\text{up-dir})(\mathbf W)>b \rbrace, \]
also considered in the Corollaries \ref{1.2} and \ref{1.3}, which describes that w.r.t. direct uplink communication, at least $\lambda b$ users in $B_r(o)$ experience a QoS that is less than $c$.\index{radial symmetry}
We assume that the a priori measure $\mu$ for the locations of users is rotationally invariant on $B_r(o)$, with a continuos radial density. That is, \[ \mu(\d s)=q(s) \d s= q(\sqrt{x^2+y^2}) \d \sqrt{x^2+y^2}, \]
where $q$ is continuous. E.g. if $\mu$ is the restriction of the Lebesgue measure to $B_r(o)$, then the radial density is $q(s) = 2 \pi s$.
Under Assumption \ref{szamár}, if $g$ is given as $g(x)=\min \lbrace x, K \rbrace$ for $K$ sufficiently large to ensure that it is the identity on an interval that contains all possible direct uplink SIR values, i.e. $K \geq \frac{\ellmax \Fmax}{\int_W \ell(\vert x \vert) q(x) \d x  \mathbb E[F_0]}$, then the empirical measure of frustrated users w.r.t. direct uplink communication is given by
\[G( \mathbf L_\lambda,~\tau_c,~{\text{up-dir}}) = \frac{1}{\lambda} \sum_{X_j \in X^\lambda} \mathds 1 \lbrace \SIR (X_j,~o,~\mathbf L_{\lambda}) < c \rbrace \delta_{X_j}. \]
According to Corollary \ref{1.2}, for $b >0$, the probability for the event $\lbrace G(\mathbf L_\lambda,~\tau_{c},\text{up-dir})(\mathbf W) \geq b \rbrace$ decays exponentially at rate $\lambda \mathbf J^{\text{up-dir}}(c,b)$ where\footnote{ According to Section \ref{kalsszikus}, this rate may be equal to 0.}
\[ \mathbf J^{\text{up-dir}}(c,b)=\inf_{\nu \in \mathcal{M}(\mathbf W):~ G(\nu,~\tau_c,~{\text{up-dir}} )(B_r(o)) \geq b} h(\nu \vert \mu). \]
Here we used the definition \[ G(\nu,~\tau_c,~\text{up-dir}) (B_r(o) \times [\Fmin, \Fmax])=\int_{B_r(o)} \mathds 1 \lbrace \SIR ((x,u),~(o,F_o),~\nu) < c \rbrace \nu(\d x, \d u). \] Note that in these conditions we have written $\geq b$ instead of $>b$, in order to ensure that we always find a minimizer. Such a relaxation of the conditions is not possible if $b=0$, because then $G(\nu,~\tau_c,~{\text{up-dir}} )(B_r(o)) \geq b$ holds for all $\nu \in \mathcal{M}(\mathbf W)$.

Now we show that similarly to the fading-free case, also in our case all minimizers preserve the rotational symmetry in the direct uplink scenario.
\begin{prop} \label{körbekörbekarikába}\index{radial symmetry!of minimizers for the direct uplink}\index{means of communication!uplink!direct}
Suppose that Assumption \ref{szamár} holds, and $\mu$ is rotationally symmetric on $B_r(o)$. Assume further that $\mathbb P(F_0 \in \lbrace \Fmin, \Fmax \rbrace)=0$ and that $F_0$ has a strictly positive, continuous density $f$ with respect to the Lebesgue measure on $(\Fmin, \Fmax)$ (so that $\zeta(\d u)=\mathbb P(F_0 \in \d u)=f(u) \d u$ on $(\Fmin,\Fmax)$). Then the following holds. \\ In the direct uplink communication case, all minimizers in $\mathbf J^{\text{up-dir}}(c,b)$ are rotationally invariant (w.r.t. spatial configuration). I.e., they are of the form $\nu(\d x, \d y, \d u)=2 \pi s k(s,u) \d s \d u$ on $B_r(o) \times (\Fmin, \Fmax)$, where $s=\sqrt{x^2+y^2}$ denotes Euclidean norm in $\mathbb R^2$, $u$ denotes the fading coordinate, and $k: [0,r] \times (\Fmin, \Fmax) \to \mathbb R$ is a density function.
\end{prop}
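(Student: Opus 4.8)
The plan is to prove the statement by a \emph{symmetrization argument}, exploiting that in the direct uplink scenario the relevant quantities depend on a candidate measure only through rotation-invariant data. First I would record the key structural fact: by \eqref{végretudjukmiasir} the SIR of a transmitter $(x,u)$ towards $(o,F_o)$ is $\SIR((x,u),(o,F_o),\nu)=\ell(\vert x\vert)u/\mathcal I(\nu)$, where $\mathcal I(\nu)=\nu(\ell(\vert\cdot\vert)\centerdot)$ is the (scalar) interference, so that the frustration indicator satisfies $\mathds 1\{\SIR((x,u),(o,F_o),\nu)<c\}=\mathds 1\{\ell(\vert x\vert)u<c\,\mathcal I(\nu)\}$ and depends on the spatial variable only through $\vert x\vert$. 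Consequently $G(\nu,\tau_c,\text{up-dir})(\mathbf W)=\int_{\mathbf W}\mathds 1\{\ell(\vert x\vert)u<c\,\mathcal I(\nu)\}\,\nu(\d x,\d u)$ is a functional of $\nu$ built from rotation-invariant ingredients. I would also first reduce to absolutely continuous competitors: since the scaled measures $a\mu'$ (with $\mu'=\mu\otimes\zeta$) are feasible for all sufficiently large $a$ (the interference grows linearly in $a$, so every user eventually becomes frustrated) and have finite entropy $h(a\mu'\vert\mu')=(a\log a-a+1)\mu(W)$, the infimum $\mathbf J^{\text{up-dir}}(c,b)$ is finite, whence any minimizer $\nu$ is absolutely continuous with respect to $\mu'$.

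Next I would introduce, for $\theta\in[0,2\pi)$, the rotation $R_\theta$ of $\mathbb R^2$ about $o$ acting on the spatial coordinate only, and set $\nu_\theta=\nu\circ R_\theta^{-1}$. Three invariances drive the proof: (i) $h(\nu_\theta\vert\mu')=h(\nu\vert\mu')$, because $\mu'$ is rotationally invariant; (ii) $\mathcal I(\nu_\theta)=\mathcal I(\nu)$, since $\ell(\vert\cdot\vert)$ is radial; and (iii) $G(\nu_\theta,\tau_c,\text{up-dir})(\mathbf W)=G(\nu,\tau_c,\text{up-dir})(\mathbf W)$, which follows from (ii) together with the fact that the integrand above depends on the location only through $\vert x\vert$. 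I would then form the rotational average $\overline\nu=\tfrac1{2\pi}\int_0^{2\pi}\nu_\theta\,\d\theta$, which is rotationally symmetric by construction. Because $\mathcal I$ is linear and rotation-invariant, $\mathcal I(\overline\nu)=\mathcal I(\nu)$, and since the frustration indicator is a fixed radial function of $(x,u)$ once this scalar is frozen, $G(\overline\nu,\tau_c,\text{up-dir})(\mathbf W)=G(\nu,\tau_c,\text{up-dir})(\mathbf W)\ge b$; thus $\overline\nu$ is feasible. Convexity of $t\mapsto t\log t-t+1$ and Jensen's inequality applied to the densities $\d\nu_\theta/\d\mu'$ give $h(\overline\nu\vert\mu')\le\tfrac1{2\pi}\int_0^{2\pi}h(\nu_\theta\vert\mu')\,\d\theta=h(\nu\vert\mu')$, so a symmetric feasible competitor of no larger cost always exists.

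To upgrade this to the claim that \emph{every} minimizer is symmetric, I would use the equality case of Jensen's inequality. If $\nu$ is a minimizer, then feasibility of $\overline\nu$ forces $h(\overline\nu\vert\mu')=h(\nu\vert\mu')$, i.e. equality in the convexity estimate. Writing $\tfrac{\d\nu}{\d\mu'}(R_{-\theta}z)=\tfrac{\d\nu_\theta}{\d\mu'}(z)$ and using strict convexity, the nonnegative integrand in the Jensen gap must vanish $\mu'$-a.e., which forces $\theta\mapsto\tfrac{\d\nu}{\d\mu'}(R_{-\theta}z)$ to be constant for $\mu'$-almost every $z$; by Fubini the density $\d\nu/\d\mu'$ is therefore (a.e. equal to) a function of $(\vert x\vert,u)$ alone, i.e. $\nu$ is rotationally invariant. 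Finally, since $\zeta$ has a strictly positive continuous density $f$ on $(\Fmin,\Fmax)$ with $\mathbb P(F_0\in\{\Fmin,\Fmax\})=0$ and $\mu$ has a continuous radial density, $\mu'$ is absolutely continuous with respect to Lebesgue measure on $B_r(o)\times(\Fmin,\Fmax)$; combining this with the rotational invariance just obtained and passing to polar coordinates yields exactly the stated form $\nu(\d x,\d y,\d u)=2\pi s\,k(s,u)\,\d s\,\d u$.

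The main obstacle, and the step deserving the most care, is precisely this last upgrade from ``a symmetric minimizer exists'' to ``all minimizers are symmetric'': it is not enough to invoke convexity, one must track the pointwise Jensen gap and invoke \emph{strict} convexity, together with a Fubini argument to pass from ``constant along almost every orbit'' to ``a function of $(\vert x\vert,u)$''. A second point to verify carefully is that rotational averaging preserves the constraint exactly; this is where it is essential that the interference enters only as the rotation-invariant linear scalar $\mathcal I(\nu)$ and that the frustration event depends on position solely through $\vert x\vert$, a feature special to the direct uplink case (and the reason the analogous statement fails, through symmetry breaking, for the other means of communication). The hypotheses on the fading density are, by contrast, needed only to convert the abstract rotational invariance into the explicit density representation, not for the symmetrization itself.
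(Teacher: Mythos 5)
Your proof is correct, but it follows a genuinely different route from the one in the thesis. The thesis argues by contradiction and by reduction to the fading-free case: it slices a candidate minimizer $\nu$ by fading level, obtains projected spatial measures $\nu_F$ with densities $g_F$, invokes \cite[Proposition 7.1]{cikk} to produce a strictly entropically better \emph{spatial} profile $g'_F$ at some fading level where symmetry fails, and then patches this symmetrized profile into $\nu$ on a thin fading slab $W\times[F-\varepsilon,F+\varepsilon]$ to contradict minimality; this is why it needs the continuity of $f$ and of $F\mapsto g_F$. You instead give a self-contained rotational-averaging argument: the interference $\mathcal I(\nu)$ is a rotation-invariant linear scalar and the frustration indicator $\mathds 1\{\ell(\vert x\vert)u<c\,\mathcal I(\nu)\}$ is radial once $\mathcal I(\nu)$ is frozen, so the orbit average $\overline\nu$ is feasible with the same constraint value, and Jensen applied to the strictly convex integrand $t\log t-t+1$ of the relative entropy gives $h(\overline\nu\vert\mu')\le h(\nu\vert\mu')$, with the equality case plus Fubini forcing every minimizer's density to be a function of $(\vert x\vert,u)$ alone. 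Your approach buys independence from the fading-free result and from the regularity of the fading slices (you only use rotational invariance of $\mu'$ and strict convexity), and it handles "all minimizers are symmetric" directly through the Jensen equality case rather than by constructing a strictly better competitor; the thesis's approach buys reuse of the already-established spatial symmetrization of \cite{cikk} at the price of the slab-patching construction and the extra continuity hypotheses. The two steps you flag as delicate — exact preservation of the constraint under averaging, and the upgrade from "constant along a.e.\ orbit" to "radial density" — are indeed the places where care is needed, and your treatment of both is sound.
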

\begin{proof}
If instead of our assumptions on $F_0$ we assume that $F_0 \equiv 1$, then \cite[Proposition 7.1]{cikk} ensures that the claim is true. 

Under the assumptions of our proposition, let $\mu$ be a rotation-invariant intensity measure on $B_r(o)$ that has a strictly positive density w.r.t. the Lebesgue measure, let $h_0$ denote its radial density. Now, assume that $\nu \in \mathcal{M}(B_r(o) \times [\Fmin, \Fmax])$ has a continuous density on $B_r(o) \times (\Fmin,\Fmax)$ and zero total mass on $B_r(o) \times \lbrace \Fmin, \Fmax \rbrace$. Furthermore, for $F \in (\Fmin, \Fmax)$, we have a projected measure $\nu_F$ on $B_r(o)$ given by
\[ \nu_F(\d x, \d y)=\lim_{\varepsilon \to 0} \frac{1}{2\varepsilon} \int_{F-\varepsilon}^{F+\varepsilon} \nu(\d x, \d y, \d u).\]
Then $\nu_F$ has a density $g_F$ w.r.t. $h_0$. By the continuity of $f$, we may assume that $F \mapsto g_F$ is continuous. We show that if $\nu$ is a minimizer, than $\nu$ is rotationally invariant.

Assume that $\nu$ is not rotationally invariant, then by \cite[Proposition 7.1]{cikk}, there exists a measure $\nu' \in \mathcal{M}(B_r(o))$ with density $g'_F$ w.r.t. $h_0$ such that 
\[ \int_{B_r(o)} g_F(x) h_0(\vert x \vert) \log g_F(x) dx > \int_{B_r(o)} g'_F(x) h_0(\vert x \vert) \log g'_F(x) \mathrm d x. \]
Now we define a new probability measure $\nu'$ on $B_r(o) \times [\Fmin, \Fmax]$ by defining its density $f'_F$ as follows
\[ f'_F(x,y,u)=\frac{f(x,y,u) \frac{g'_F(x,y)}{g_F(x,y)} \mathds 1 \lbrace g_F(x,y) >0 \rbrace}{\int_{B_r(o) \times (\Fmin,\Fmax)} f(x',y',u') \frac{g'_F(x',y')}{g_F(x',y')} \mathds 1 \lbrace g_F(x',y') >0 \rbrace \d x' \d y' \d u'} . \]
By the proof of \cite[Proposition 7.1]{cikk} $g'$ can be constructed in the following way
\[ g'_F(\vert x \vert)=\frac{1}{2 \pi \vert x \vert} \int_{\partial B_{\vert x \vert} (o)} g_F(y) \mathcal{H}_1(\d y), \]
where $\mathcal{H}_1(\d y)$ denotes one-dimensional Hausdorff measure (equivalently, curve length in $\mathbb R^2$), thus we see that $f'_F$ is a well-defined density function. By the continuity of $F \mapsto g_F$, there exists some $\varepsilon>0$ such that for all $F' \in (F-\varepsilon, F+\varepsilon) \subset (\Fmin, \Fmax)$ we have that 
\[ \int_{B_r(o)} g_F(x) h_0(\vert x \vert) \log g_F(x) dx > \int_{B_r(o)} g'_{F'}(x) h_0(\vert x \vert) \log g'_{F'}(x), \]
where $g'_{F'}=f'_F \circ \pi_{F'}^{-1}$. Let now $\nu'' \in \mathcal{M}(\mathbf W)$ be defined as $\nu''=\nu'$ on $W \times [F-\varepsilon, F+\varepsilon]$ and $\nu''=\nu$ on $\mathbf W \setminus W \times [F-\varepsilon, F+\varepsilon]$. Since by construction for all $u \in (\Fmin, \Fmax)$ we have that $\frac{f'_F(x,y,u)}{f(x,y,u)}=\alpha(x,y)$ (i.e., $\frac{f'_F}{f}$ does not depend on $u$), this implies that $h(\nu \vert \mu)>h(\nu'' \vert \mu)$, and hence $\nu$ is not a minimizer of the corresponding relative entropy.
\end{proof}
Now we give an approximate description of the minimizers in the direct uplink case, in the spirit of \cite[Section 7.1]{cikk}. We assume that the Lipschitz continuous path-loss function $\ell$ is monotone decreasing on $[0,r)$\index{path-loss!function!decreasing}, and that there exists $0 \leq r_0<r$ such that $\ell$ is strictly monotone decreasing on $[r_0,r)$. This condition ensures that $\ell_{\min} < \ell_{\max}$. We note that e.g. the path-loss function $\ell(\vert x \vert)=\min \lbrace R, \vert x \vert^{-\beta} \rbrace$, with constants $\beta>0$ and $R>r^{-\beta}$, satisfies this condition. This path-loss function it is widely used in the wireless communication literature, since it corresponds to the case of isotropic antennas with ideal Hertzian propagation\index{Hertzian propagation}\index{path-loss!function!ideal Hertzian propagation} (see e.g. \cite[Section II]{pathlossLDP}). Further, we assume again in addition to Assumption \ref{szamár} that $\zeta=\mathbb P \circ F_0^{-1}$ has zero mass on $\lbrace \Fmin, \Fmax \rbrace$ and a continuous density $f$ w.r.t. the Lebesgue measure on $(\Fmin, \Fmax)$. This implies $\Fmin < \Fmax$. The intensity measure $\mu'$ for the marked Poisson point process $\mathbf X^\lambda=\lbrace (X_i, F_{X_i}) \vert~ X_i \in X^\lambda \rbrace$ on $\mathbf W$ is given by $\mu'(\d s, \d u)=q(s) f(u) \d s \d u$.

First, we note that the decay of the path-loss function and the i.i.d. nature of the fading variables implies that it is entropically more efficient to increase the interference by placing more users close to the base station. As for the fadings of these users, in Section \ref{I'm a simulant} we investigate the question if the average fading in the minimizers is more than $\mathbb E[F_0]$. Second, if the interference at the origin is held fixed, then the SIR decays with the random path-loss of the user. Hence, users with bad QoS will be located at the boundary rather than at the centre of the cell $W$, and rather with low than with high fadings. Note that unlike in the fading-free case, the set where users with bad QoS are located may be a topologically complicated subset of $\mathbf W$. This is true because knowing the interference in the origin, the QoS of a user depends on the product of its path-loss value and fading value. Hence, in an arbitrarily small spatial neighbourhood of a connected user, there can be users who have lower fadings and hence they are disconnected.

The idea for the approximation is the following. Let $0<c<\tilde{c}_+$ and $b>0$ be fixed. Here $\tilde{c}_+=\frac{\ellmax \Fmax}{\int_{\Fmin}^{\Fmax} \int_{W} \ell(\vert x \vert) q(x) u f(u) \d x \d u}$ is the essential supremum of $\SIR(\cdot,((0,0),F_o),~\mu')$ w.r.t. $\mu'$. We fix a suitable $\alpha>0$, and compute the minimizer of the relative entropy under two simultaneous constraints. The first one is that a given SIR threshold $c$ is met precisely for $(x,u) \in \mathbf W$ such that $\ell(\vert x \vert)u=\alpha$. In particular, this implies that in the region \[ \mathbf D_\alpha=\lbrace (x,u) \in B_r(o) \vert~\ell(\vert x \vert)u <\alpha \rbrace \] all users are disconnected w.r.t. direct uplink communication, while in $\mathbf W \setminus \mathbf D_\alpha$ all users are connected. The second constraint is that the number of users on $\mathbf D_\alpha$ equals $b$. In order to achieve the desired quantity $b$ of disconnected users, in the outer region we add a positive or negative number of additional users (leaving the total number of users on $\mathbf D_\alpha$ positive). For these additional users, we use a profile that is flat in the spatial coordinate for each loudness value $u$, since this is entropically favourable. The optimization has to be performed now over the parameter $\alpha$ to balance the entropic costs of creating the desired amount of interference in the origin and changing the number of disconnected users in the outer regime. In the case of frustration events which are \emph{unlikely} w.r.t. $\mu'$, it is clear that both the interference of the origin and the number of users under SIR level $c$ will be larger w.r.t. the minimizer measure than w.r.t. $\mu'$. 

By the definition of $\mathbf D_\alpha$, $\alpha$ has to be an element of $(\Fmin \ell_{\min}, \Fmax \ell_{\max}]$ to ensure that $\mathbf D_\alpha$ is a set of positive Lebesgue measure in $\mathbb R^3$. This condition is necessary in order to be able to have a positive number of disconnected users on $\mathbf D_\alpha$ w.r.t. the absolutely continuous minimizing measure. If $\alpha=\Fmax \ell_{\max}$, then we have $\mathbf D_\alpha=\mathbf W$.

Using variational calculus, as presented in \cite{gelfand}, we derive expressions for minimizers of
\[ \inf_{\nu \in \mathcal{M}(\mathbf W):
~ G(\nu,~\tau_c,~\text{up-dir})(\mathbf W)=b} h(\nu \vert \mu'), \]
where the minimization is performed over $\alpha$. 
By continuity, we can obtain the minimizer of relative entropy among measures for which the number of $c$-frustrated users is greater than $b$ as the minimizer among the ones for which it is \emph{equal to} $b$, similarly to the fading-free setting. By Proposition \ref{körbekörbekarikába}, the minimizer $\nu$ has a density $h$ w.r.t. the Lebesgue measure on $\mathbb R^3$ such that $h$ is radial symmetric in the space coordinate. 
Let us use polar coordinates on $W=B_r(o)$, this way we can formulate our optimization problem for $\mathbf W'=[0,r] \times [\Fmin, \Fmax]$ and $\mathbf D'_\alpha =\lbrace (s,u) \in [0,r] \times [\Fmin, \Fmax] \vert~ \ell(s) u <\alpha \rbrace$ instead of $\mathbf W$ and $\mathbf D_\alpha$, respectively. Since $\nu$ is an extremal point of the relative entropy function
\[ h \mapsto \int_{\Fmin}^{\Fmax} \int_0^r  \frac{h(s,u)}{q(s) f(u)} \left[ \log \frac{h(s,u)}{ q(s) f(u)}-1 \right] q(s) f(u) \d s \d u = \iint\limits_{\mathbf W} h(s,u) \left[ \log \frac{h(s,u)}{ q(s) f(u)}-1 \right] \d s \d u \] under the two constraints
\begin{align} \label{egyikkutya} \iint\limits_{\mathbf W'} u~  \ell(s)~h(s,u) \d s \d u=\frac{\alpha}{c}, \\ \label{másikeb} \iint\limits_{\mathbf D'_\alpha} h(s,u) \d s \d u=b,  \end{align}
using \cite[Section 12, Theorem 1]{gelfand}\index{variational calculus!Lagrange multiplier} there exist constants $\beta_\alpha,~\delta_\alpha$ such that the minimizing density has the form
\[ h_{\alpha}(s,u)=q(s) f(u) \exp \left( \beta_{\alpha} u \ell(s) + \delta_\alpha \mathds 1 {\lbrace (s,u) \in \mathbf D'_\alpha \rbrace} \right). \]
Using this density, 
the entropic cost is given by
\begin{multline} \label{entrópikusköltség} \gamma_{\text{int}} (\alpha)= \iint_{\mathbf W'}\exp \left( \beta_{\alpha} u \ell(s) +\delta_{\alpha} \mathds 1 {\lbrace (s,u) \in \mathbf D'_\alpha \rbrace}  \right) \\ \times \left[ \beta_{\alpha} u \ell(s) +\delta_{\alpha} \mathds 1 {\lbrace (s,u) \in \mathbf D'_\alpha \rbrace} -1 \right]  q(s) f(u)\d s \d u + \mu(W).  
\end{multline}
For the minimizing value $\alpha_{\min}$, this leads to a minimizing density of the form\index{frustration probabilities!rate function!minimizers}
\begin{equation}  \label{ojlerlagrandzs} h(s,u)=f(u)~q(s)~\exp\left({\beta_{\alpha_{\min}} \ell(s) u  +  {\delta_{\alpha_{\min}}} \mathds 1 {\lbrace \ell(s)u <\alpha_{\min} \rbrace}}\right). \end{equation}\index{relative entropy!minimizers}
which indicates the dependence of the rate function on the distribution of $F_0$ in the direct uplink case in Corollary \ref{1.3}, in this simple setting with two-dimensional space, without mobility of the users. 

We note that by Sanov's theorem\index{Sanov's theorem} (Theorem \ref{nagysanov}), the rate function $h(\cdot \vert \mu')$ is convex, therefore the minimizer of relative entropy has to be unique under the constraint that the number of users under SIR level $c$ is \emph{equal to $b$}\index{frustration probabilities!rate function!convexity}. Note that also our optimization has a unique solution $\alpha_{\min}$ corresponding to \eqref{ojlerlagrandzs}.

After deducing this formula for the minimizer under the constraint that the number of frustrated users is equal to $b$, we provide a necessary condition about when the relative entropy of this minimizer equals the infimum of relative entropies of finite measures on $\mathbf W$ with strictly more than $b$ frustrated users. 
\begin{claim} \label{legyenmínusz}
Assume that the density $q$ is strictly positive on $W$. Assume that for $b>0$ and $c \in (0,\tilde{c}_+)$, the infimum \begin{equation} \label{hosszúinf} \inf \lbrace h(\nu \vert \mu') \vert~G(\nu,~\tau_c,~\mathrm{up-dir})(\mathbf W) \geq b \rbrace \end{equation} equals $h(\nu_{b,c} \vert \mu')$, where $\nu_{b,c}$ has radial density \eqref{ojlerlagrandzs}, then for the minimizing value $\alpha_{\min}$ for \eqref{entrópikusköltség}. Then at least one of the constants $\beta_{\alpha_{\min}}$ and $\delta_{\alpha_{\min}}$ corresponding to \eqref{ojlerlagrandzs} is non-negative. 
\end{claim}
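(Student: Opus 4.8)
The plan is to argue by contradiction: suppose that both $\beta_{\alpha_{\min}}<0$ and $\delta_{\alpha_{\min}}<0$, and show that the a priori measure $\mu'$ itself is then a strictly better competitor in \eqref{hosszúinf} than $\nu_{b,c}$, contradicting the assumption that the infimum is attained at $\nu_{b,c}$. First I would record the elementary but crucial observation that, under this sign assumption, the Radon--Nikodym density read off from \eqref{ojlerlagrandzs},
\[ \frac{\d \nu_{b,c}}{\d \mu'}(s,u)=\exp\!\left(\beta_{\alpha_{\min}}\,u\,\ell(s)+\delta_{\alpha_{\min}}\mathds 1\{(s,u)\in\mathbf D'_{\alpha_{\min}}\}\right), \]
is strictly less than $1$ at every point, because $u\,\ell(s)>0$ (recall $u\geq\Fmin>0$ and $\ell>0$) and the indicator term is $\leq 0$. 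Hence $\nu_{b,c}<\mu'$ in the partial order on $\mathcal M(\mathbf W)$, and since $\nu_{b,c}\neq\mu'$, the strict positivity of relative entropy (equality to zero in \eqref{relatíventrópia} forces $\d\nu_{b,c}/\d\mu'\equiv 1$) gives $h(\nu_{b,c}\vert\mu')>0$.

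Next I would verify that $\mu'$ is feasible for \eqref{hosszúinf}, i.e.\ that $G(\mu',~\tau_c,~\text{up-dir})(\mathbf W)\geq b$. The key is to compare interferences: since $\d\nu_{b,c}/\d\mu'<1$ pointwise and the weight $u\,\ell(s)$ is strictly positive, the interference strictly increases,
\[ \iint u\,\ell(s)\,\mu'(\d s,\d u)>\iint u\,\ell(s)\,\nu_{b,c}(\d s,\d u)=\frac{\alpha_{\min}}{c}, \]
using the interference constraint \eqref{egyikkutya}. Consequently the disconnected region of $\mu'$, namely $\{(s,u):\ell(s)u<c\iint u'\ell(s')\mu'(\d s',\d u')\}$, strictly contains $\mathbf D'_{\alpha_{\min}}=\{(s,u):\ell(s)u<\alpha_{\min}\}$. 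Since $q$ is strictly positive and $\nu_{b,c}(\mathbf D'_{\alpha_{\min}})=b>0$ by \eqref{másikeb}, the set $\mathbf D'_{\alpha_{\min}}$ has positive $\mu'$-mass, so combining the set inclusion with the pointwise density bound yields
\[ G(\mu',~\tau_c,~\text{up-dir})(\mathbf W)\geq\mu'(\mathbf D'_{\alpha_{\min}})>\nu_{b,c}(\mathbf D'_{\alpha_{\min}})=b. \]
Here I would invoke that, in the present setting, $g$ is the identity on the range of all direct-uplink SIR values, so that the frustration count equals the $\mu'$-mass of the disconnected region, exactly as in the computation of $G(\nu,~\tau_c,~\text{up-dir})$ above.

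Finally, having shown that $\mu'$ is feasible with $h(\mu'\vert\mu')=0<h(\nu_{b,c}\vert\mu')$, I reach a contradiction with the minimality of $\nu_{b,c}$; therefore at least one of $\beta_{\alpha_{\min}},\delta_{\alpha_{\min}}$ must be non-negative (and in the degenerate case $\nu_{b,c}=\mu'$ both vanish, so the conclusion holds trivially). The step I expect to be the main obstacle is the feasibility verification in the middle paragraph: one must track carefully that enlarging the measure simultaneously raises the interference (shrinking every SIR) and enlarges the disconnected region, and that these two effects cooperate rather than compete, so that the frustration count of $\mu'$ genuinely exceeds $b$. The monotonicity of $D$ under $\nu\leq\nu'$ from Lemma \ref{3.1}(i) is the clean tool that makes this bookkeeping rigorous, while the strict positivity of $q$ is what upgrades the weak inequalities into the strict ones needed to contradict minimality.
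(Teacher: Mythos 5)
Your proof is correct and follows essentially the same route as the paper's own argument: assume both multipliers negative, observe that the density $\d\nu_{b,c}/\d\mu'$ is then pointwise below $1$, deduce that the interference increases and the frustration region grows when passing to $\mu'$, so that $G(\mu',~\tau_c,~\mathrm{up\text{-}dir})(\mathbf W)>b$ while $h(\mu'\vert\mu')=0<h(\nu_{b,c}\vert\mu')$, contradicting minimality. Your write-up is merely more explicit about the feasibility verification and the strictness of the inequalities, which the paper leaves compressed.
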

\begin{proof}
Let us assume that \eqref{hosszúinf} is attained by $\nu_{b,c}$. First, let us assume for contradiction that both $\beta_{\alpha_{\min}}$ and $\delta_{\alpha_{\min}}$ are negative. Then the interference of the origin for the minimizing measure $\nu_{b,c}$ with density $h(s,u)$ is smaller than with respect to $\mu'$, and also $\mu'(\mathbf D_\alpha)>\nu_{b,c}(\mathbf D_\alpha)=b$. Hence, there are more users in the frustration area $\mathbf D_\alpha$ w.r.t. $\mu'$ than w.r.t. $\nu_{b,c}$, and they are all under the direct uplink SIR level $c$ w.r.t. $\mu'$. This indicates $G(\mu',~\tau_c,~\text{up-dir})(\mathbf W)>b$, and also $0=h(\mu' \vert \mu') < h(\nu_{b,c} \vert \mu')$, which contradicts the fact that \eqref{hosszúinf} is attained by $\nu_{b,c}$.
\end{proof}
Note that if $c \in (0,\tilde{c}_+)$ and $b>0$ are such that $G(\mu',~\tau_c,~\text{up-dir})(\mathbf W)<b$, then $h(\nu_{b,c} \vert \mu')$ equals \eqref{hosszúinf}. Now, by the definition of SIR for the direct uplink, if $c \in (0,\tilde{c}_+)$ and $b>0$ are such that $G(\mu',~\tau_c,~\text{up-dir})(\mathbf W)=b$, then there exists $\alpha_{\min} \in [\ell_{\min} \Fmin, \ell_{\max} \Fmax]$ such that $\SIR((s,u),~(o,F_o),~\mu')$ $=c$ holds for $(s,u)$ such that $\ell(\vert s \vert) u=\alpha_{\min}$. Therefore in this special case, it follows that $\nu_{b,c}=\mu'$, which implies that $\beta_{\alpha_{\min}}=\delta_{\alpha_{\min}}=0$. An easy computation shows that we have $\alpha_{\min}=c \int_{\mathbf W'} \ell(s)u ~q(s)~ f(u)  \d s\d u$. Thus, the only case when the condition of Claim \ref{legyenmínusz} is not satisfied is when $c \in (0,\tilde{c}_+)$ and $b>0$ are such that $G(\mu',~\tau_c,~\text{up-dir})(\mathbf W)>b$. Then our optimization yields $G(\nu_{b,c},~\tau_c,\text{up-dir})(\mathbf W)=b$, hence \eqref{hosszúinf} is not equal to $h(\nu_{b,c} \vert \mu')$ but to $h(\mu' \vert \mu')=0$. In this case, one has to decrease the number of users in the a priori distribution in order to obtain a configuration which exhibits exactly $b$ users under SIR level $c$. Hence it is necessary that at least one of the constants $\beta_{\alpha_{\min}}$ and $\delta_{\alpha_{\min}}$ be negative.

We will discuss the cases $b=0$ and $b \downarrow 0$ in Section \ref{bélus}. There we first set up some general results about the infima of relative entropies as $b \downarrow 0$, and using these we conclude more concretely for the direct uplink in this special rotationally invariant setting.
\section{The path-loss-free setting for direct downlink communication} \label{ocsú}
As we noted in Section \ref{ráadás}, even in the fading-free case, the minimizers of relative entropy for direct downlink communication may break the radial symmetry, see \cite[Section 7.2]{cikk}. Nevertheless, there is a special case of our setting from Section \ref{Anfang} where we can pove that symmetry breaking does not occur, handling large deviation properties of direct downlink frustration probabilities analytically is particularly simple.
In this case, there are no spatial effects that influence the number of frustrated users, only the random number of users and the randomness of their fadings plays a r\^{o}le. However, relative entropies still depend on spatial configurations of the corresponding measures.

\begin{exam}[The path-loss-free setting]\index{path-loss!-free setting}\index{means of communication!downlink!direct}
Let $r>0$ be arbitrary, and let us use the observation from Section \ref{ráadás} that the approach of Chapter \ref{eleje} also works if $r$ is not an integer. Let us consider our marked Poisson point process $\mathbf X^\lambda$ on $W \times [\Fmin, \Fmax]$, under Assumption \ref{szamár}. Then, we note that e.g. if the path-loss function is given by $\ell(s)=\min \lbrace K, s^{-\beta} \rbrace$ for some $\beta>0$, then for sufficiently small $r>0$ we have that $(x,y) \mapsto \ell(\vert x-y \vert)$ is constant on $[-r,r]^d \times [-r,r]^d$. Hence, it is not pathological to consider the case when the path-loss function on is constant equal to $\ellmax=\ellmin=K$ on $W$. Then the SIR w.r.t. direct downlink communication is given by
\[ \SIR((o,F_o),X_i,~\mathbf L_\lambda)=\frac{F_{o}}{\frac{1}{\lambda}\sum_{X_j \in X^\lambda} F_{X_j}}=\frac{F_o}{\frac{1}{\lambda}\sum_{j=1}^{N(\lambda)} F_{X_j}}. \] 
Here $N(\lambda)=\lambda L_\lambda(W)$ is Poisson distributed with parameter $\lambda \mu(W)$ and independent of the fadings of the users. Now, let $c \in (0,\tilde{c}_+)$. Then the numerator and also the denominator of this SIR quantity is independent of the receiver $X_i$, and for all $X_i \in X^\lambda$ we have \[ \lbrace \SIR((o,F_o),X_i,~\mathbf L_\lambda) < c \rbrace = \lbrace \SIR((o,F_o),X_k,~\mathbf L_\lambda) < c, \forall X_k \in X^\lambda \rbrace=\left\lbrace \frac{1}{\lambda} \sum\limits_{j=1}^{N(\lambda)} F_{X_j} > F_o/c \right\rbrace. \] In particular, either all users or neither of them are strictly under SIR level $c$. In the latter case, we have $G(\mathbf L_\lambda,~\tau_c,~\text{do-dir})=\frac{N(\lambda)}{\lambda}$. 

Let us consider the fading-free direct downlink case with all the special assumptions of Section \ref{ráadás}, i.e. that $W=B_r(o) \subset \mathbb R^2$, the intensity measure $\mu$ is rotationally invariant with radial density $q$, and $\zeta=\mathbb P \circ F_0^{-1}$ is absolutely continuous with density $f$. We show that in this case, all minimizers of the rate function are rotationally invariant. Indeed, let $\nu \in \mathcal{M}(\mathbf W)$ with density $g$. For $c \in (0,\tilde{c}_+)$, we observe that $G(\nu,~\tau_c,~\text{do-dir})$ does not depend on the spatial configuration of the users in $\nu$. Hence, we conclude that the measure $\nu' \in \mathcal{M}(\mathbf W)$ defined by its density $g'$ as
\[ g'(x,u)=\frac{1}{2 \pi \vert x \vert} \int_{\partial B_{\vert x \vert} (o)} g(y,u) \mathcal{H}_1(\d y), \]
where $\mathcal H_1$ denotes one-dimensional Hausdorff measure (curve length in $\mathbb R^2$), exhibits the property $G(\nu,~\tau_c,~\text{do-dir})=G(\nu',~\tau_c,~\text{do-dir})$. Moreover, $\nu'$ is rotationally symmetric. The arguments of the proof of \cite[Proposition 7.1]{cikk} imply that $h(\nu' \vert \mu') \geq h(\nu \vert \mu')$. Here equality holds if and only if $\lbrace g' \neq g \rbrace$ has zero Lebesgue measure, i.e. if $g$ is rotationally invariant.

Therefore, similarly to Section \ref{ráadás}, the form of the minimizers in $\lbrace h(\nu \vert \mu') \vert~G(\nu,~\tau_c,~\text{do-dir})(\mathbf W) \geq b \rbrace$ can be found via solving the corresponding Euler--Lagrange equations. For $c \in (0,\tilde{c}_+)$ and $b>0$, we look for the measure $\nu_c$ with density $h_c$ which minimizes the relative entropy function
\begin{equation} \label{kislagrandzs} (s,u) \mapsto \int_{\mathbf W'} h_c(s,u) \left[ \log \frac{h(s,u)}{q(s)f(u)} -1 \right] \d s \d u \end{equation}
under the two constraints 

\begin{equation} \label{ebura} \int_{\mathbf W'} h_{b,c} (s,u) \d s \d u \geq b, \end{equation} 
\begin{equation} \label{fakó} \int_{\mathbf W'} h_{b,c} (s,u) u \d s \d u \geq \frac{F_o}{c K}. \end{equation}
I.e., the interference has to be large enough to push all users under the direct downlink SIR level $c$, but also the total number of users has to be at least $b$. Our observation that the number of frustrated users w.r.t. $\nu \in \mathcal{M}(\mathbf W)$ depends only on the total number of users $\nu(\mathbf W)$, and it is equal to $\nu(\mathbf W)$ if $\nu(\mathbf W)$ is sufficiently large, implies the following. \emph{Any} measure $\nu \in \mathcal{M}(\mathbf W)$ that exhibits at least $b$ users in total, also has the property that all these users are on SIR level less than or equal to $c$. Thus, we can find the minimizer $h_{b,c}$ via minimizing the relative entropy \eqref{kislagrandzs} under the constraints \eqref{ebura} and \eqref{fakó} but with equality instead of inequality in both ones. This is true for $b$ large enough to ensure that these two equations have a common solution. For small $b$, the equation coming from \eqref{ebura} will disappear and the optimization only under the constraint \eqref{fakó} with equality instead of inequality will yield the minimizer; this is certainly true for the limiting case $b=0$. Using \cite[Section 12, Theorem 1]{gelfand}\index{variational calculus!Lagrange multiplier} leads to a minimizing density of the form
\begin{equation} \label{cöcöcö} h_{b,c}(s,u)=q(s) f(u) \exp(\gamma(b,c) u+\delta(b,c)), \end{equation}
where the second term in the exponential tilting is zero for small non-negative $b$.

Note also that in this path-loss-free case, the behaviour of direct uplink frustration probabilities may be more complex than the behaviour described for the direct downlink. This is true because in the direct uplink case, the numerator of SIR also depends on the fading of the transmitter. In particular, for $c \in (0,\tilde{c}_+)$ it may happen that some but not all users have QoS less than $c$. For the same reason, the relayed downlink cannot be handled as easily as the direct downlink, even in this special, fading-free case. 
\end{exam}
The conclusion of this example is the following. On the one hand, similarly to the direct uplink case in Section \ref{ráadás}, we see that also in the direct downlink case, fading effects are in general not negligible reasons for bad connection. This is shown by the fading-dependence of the exponential tilting in \eqref{cöcöcö}. On the other hand, if there is no path-loss in the system, then effects coming from the number of users are stronger than fading effects. Indeed, let us note that $N(\lambda)=X^\lambda_\delta(W)$ holds for all $\delta \in \mathbb B$, and thus we see that $\lbrace N(\lambda) \rbrace_{\lambda>0}$ is a homogeneous Poisson process on $(0,\infty)$ with intensity $\mu(W)$. Therefore by the Poisson law of large numbers (Theorem \ref{poissonlln}\index{Poisson law of large numbers}) we have that for any $\varepsilon>0$, $\mathbb P_2( N(\lambda) \in ((1-\varepsilon) \lambda\mu(W), (1+\varepsilon) \lambda\mu(W)))$ converges to 1 as $\lambda \to \infty$. Under the event $\lbrace N(\lambda) \in ((1-\varepsilon) \lambda\mu(W), (1+\varepsilon) \lambda\mu(W)) \rbrace$, the fact that $\Finite$ implies that the SIR quantities are also bounded from above and bounded away from zero. In other words, arbitrarily low positive SIR values can only occur in the system if the \emph{number of users is unusually large}. 

This is also true if the path-loss is non-constant, since by assumption $0<\ell_{\min} < \ell_{\max} < \infty$. If there is also path-loss involved, the path-loss properties and the number of the users together correspond to one source of randomness, while the other source of randomness is the realization of the fadings of the users. Then, also for the direct downlink, the interference is different at each receiver in the system, and therefore the large deviation behaviour of frustration probabilities is more complex. The next section shows the importance of having unexpectedly many users in total in order to obtain a rare frustration event, in a special case of the setting of Section \ref{ráadás}, i.e. for direct uplink communication with non-constant path-loss.
\section{Loudness of users in the entropy minimizing settings (simulation results)}\label{I'm a simulant}
As we sketched in the Introduction, we are particularly interested in the following question. In the minimizer density of relative entropy \eqref{minike} for \emph{unlikely} frustration events, are users in average louder than the expectation of the fading variable $F_0$? If the answer were yes, then this would show that unexpectedly large average loudness is one of the reasons for bad connection in the system. In particular, not only the number and the spatial configuration of users determines these most likely rare events, but also untypical behaviour of the fadings of the users an important r\^{o}le.

However, the simulations in this section show that such effect most likely does not occur. The reason for this is that as long as the number of users is held fixed, the SIR is constant under multiplying the path-losses and fadings of \emph{all} users by the same constant. Instead, it turns out that the most likely configurations corresponding to the rare frustration events exhibit an \emph{unlikely large number of users}, which was also seen for path-loss-free direct downlink communication in Section \ref{ocsú}.

Let us consider the special case of Section \ref{ráadás} for direct uplink communication. Let us normalize the minimizer $\nu_{b,c}$, which has density \eqref{ojlerlagrandzs}, into a probability measure with density $h_0$, via rescaling the factors $\beta_{\alpha_{\min}}$ and $\delta_{\alpha_{\min}}$, let $\beta^0_{\alpha_{\min}}$ and $\delta^0_{\alpha_{\min}}$ denote the rescaled factors. I.e., we write
\begin{equation}  \label{ojlerlagrandzs0} h_0(s,u)=f(u)~q(s)~\exp\left({\beta^0_{\alpha_{\min}} \ell(s) u  +  {\delta^0_{\alpha_{\min}}} \mathds 1 {\lbrace \ell(s)u <\alpha_{\min} \rbrace}}\right). \end{equation}\index{relative entropy!minimizers} Then (conditional on the 1-set that $\lim_{\lambda \to \infty} \lambda L_\lambda(W)>0$) the average loudness of users in $\nu_{b,c}$ can be expressed as follows
\[ \int_{\mathbf W} h_0(s,u) u \d u = \mathbb E_2 \left[ F_{X_1} \exp(\beta^0_{\alpha_{\min}} F_{X_1}+\delta^0_{\alpha_{\min}} \mathds 1 \lbrace {\ell(\vert X_1 \vert) F_{X_1}}<\alpha_{\min} \rbrace \right)].\]
Now, e.g. if $\beta^0_{\alpha_{\min}}$ is positive, then by convexity we have \begin{equation} \label{kacsamajom} \mathbb E_2 \left[ F_{X_1} \exp(\beta^0_{\alpha_{\min}} F_{X_1}) \right] \geq \mathbb E_2 \left[ F_{0}] \exp(\beta^0_{\alpha_{\min}} \mathbb E[F_{0}]) \right] > \mathbb E[F_{0}], \end{equation}
i.e., the first term accounts for an increase in the average loudness. On the other hand, if we know that $\alpha_{\min}$ is strictly less than $\Fmax \ell_{\max}$ and $\delta^0_{\alpha_{\min}}$ is not too large, then the average loudness of the \emph{additional users} corresponding to the second term in the exponential of \eqref{kacsamajom} is less than $\mathbb E[F_0]$. This is true because they are situated on $\mathbf D_\alpha$, where the average random path-loss and thus the average fading is lower than in average. By Claim \ref{legyenmínusz}, in the case of our interest when $b>0$ and $c \in (0,\tilde{c}_+)$ are such that $G(\mu',~\tau_c,~\text{up-dir})(\mathbf W)<b$, we have that at least one of the constants $\beta_{\alpha_{\min}}$ and $\delta_{\alpha_{\min}}$ is nonnegative. Now, the question whether the average loudness in $\nu_{b,c}$ is larger than $\mathbb E[F_0]$ reduces to the question whether $\beta_{\alpha_{\min}}$ is large compared to $\delta_{\alpha_{\min}}$.

One cannot expect to be able to answer this question unless via numerical computations and simulations. The difficulty is that although we have the explicit formula \eqref{ojlerlagrandzs} for the density of the minimizer of relative entropy, we have to determine $\alpha_{\min}$ numerically. In order to perform this, for all possible $\alpha$ one has to compute the the parameters $\beta_\alpha$ and $\delta_\alpha$ numerically from the implicitly given integral equations \eqref{egyikkutya} and \eqref{másikeb}. This way, results about the average loudness of users in the minimizer density for fixed levels of number of frustrated users $b$ and QoS threshold $c$ does not give information about different values $b$ and $c$. Therefore instead of considering this minimal density and computing the average loudness of it numerically for certain $b$, $c$, we simulate our marked Poisson point process $\mathbf X^\lambda$ and consider the average fading in configurations corresponding to rare frustration events w.r.t. direct uplink communication. We note that these latter methods can also be extended to the other forms of communication, but the running time of the algorithms becomes particularly longer since the interference has to be computed at each user.

We used the restricted model of Section \ref{ráadás}, with the spatial intensity measure $\mu'$ being the restriction of the Lebesgue measure to $W=B_1(o) \subset \mathbb R^2$. Further we assumed that the path-loss function corresponds to ideal Hertzian propagation\index{Hertzian propagation}\index{path-loss!function!ideal Hertzian propagation} with $\beta=4$ and $K=5$, i.e. $\ell(s)=\min \lbrace s^{-4}, 5 \rbrace$, and the fadings of the users are uniformly distributed on the interval $[1,2]$. Then the first picture of Figure \ref{ábrázolás} shows that the a priori quantity $G(\mu',~\tau_{1.1},~\text{up-dir})(\mathbf W)$ of users under SIR level $c=1.1$ is not more than 0.6, in particular it follows from the continuity of $c \mapsto G(\mu',~\tau_c,~\text{up-dir})(\mathbf W)$ that there exists $\varepsilon>0$ such that $G((1+\varepsilon)\mu',~\tau_c,~\text{up-dir})(\mathbf W)$ is strictly less than $b=0.9875$. Thus, by Corollary \ref{1.3}, for $\lambda$ large, $\lbrace G(\mathbf L_\lambda,~\tau_{1.1},~\text{up-dir})(\mathbf W)>0.9875 \rbrace$ is a rare event. We generated $10^6$ samples of $X^\lambda$ with overall intensity $\lambda \mu(W)=50$, we considered those realizations which satisfied $G(\mathbf L_\lambda,~\tau_{1.1},~\text{up-dir})(\mathbf W)>0.9875$ and compute the average loudness of users in these configurations. In other words, we proceeded similarly to the simulations described in \cite[Section 7.1]{cikk}, but now we were interested rather in the average loudness of the users than their spatial positions. According to the numerically computed shape of $c \mapsto G(\mu',~\tau_{1.1},~\text{up-dir})(\mathbf W)$, Campbell's theorem (Theorem \ref{campbell}) implies that there exists users in the Poisson point process with SIR level significantly less than 1.1 with asymptotically positive probability as $\lambda \to \infty$. In particular, writing $N(\lambda)=\lambda L_\lambda (W)$, for $\lambda>0$ and $\alpha>0$ we have that the probability
\[ \mathbb P_2(G(\mathbf L_\lambda,~\tau_{1.1},~\text{up-dir})(\mathbf W)>0.9875,~N(\lambda) \leq (1+\alpha)\lambda \mu(W)) \]
is positive. In other words, the parameters $b=1.1$, $c=0.9875$ are chosen in such a way that it can happen with positive probability that the number of the users is not unexpectedly high, but their fadings and path-loss values still imply the frustration event $\lbrace G(\mathbf L_\lambda,~\tau_{1.1},~\text{up-dir})(\mathbf W)>0.9875 \rbrace$. However, we saw the following.

The empirical probability of the event $\lbrace G(\mathbf L_\lambda,~\tau_{1.1},~\text{up-dir})(\mathbf W)>0.9875 \rbrace$ after $10^6$ simulations was $1.1 \times 10^{-5}$, i.e. we found 11 configurations that are contained in these event. We computed the average fading of the users in these 11 configurations. The mean of these 11 averages was 1.47019, which is even lower than the expectation $\mathbb E[F_0]=1.5$. Out of the 11 configurations, the maximal average was 1.5378. There were larger deviations towards the other direction: the lowest average was 1.4221. This way, we do not see in the simulations that unexpectedly large average loudness would be the reason of the considered frustration event.

On the other hand, we see a interesting effect if we consider the \emph{number of users} in these rare configurations. In all the 11 ones, this number turned out to be at least 81 and at most 86. These are all very unlikely quantities. The number of users is Poisson distributed with parameter 50, which takes values greater than 80 with probability $3.436 \times 10^{-5}$, which is cca. 3 times as much as the empirical probability of $\lbrace G(\mathbf L_\lambda,~\tau_{1.1},~\text{up-dir})(\mathbf W)>0.9875 \rbrace$. Therefore it was interesting to run the simulation once again and check what proportion of the configurations with more than 80 users belongs to the ones where more than 98.75\% of the users is under SIR level 1.1. After the next round of simulations, we found 8 configurations belonging to $\lbrace G(\mathbf L_\lambda,~\tau_{1.1},~\text{up-dir})(\mathbf W)>0.9875 \rbrace$, each of which exhibiting more than 80 users. Now the mean of the average fadings in these 8 configurations was 1.49373, i.e. slightly below $\mathbb E[F_0]$. The minimal average loudness under this event was 1.46921, the maximal one was 1.51669. In total, there were 40 configurations with more than 80 users, which is even 5 times the number of the configurations with extremely many numbers of frustrated users under SIR level 1.1. Thus, empirically, the rare event $\lbrace G(\mathbf L_\lambda,~\tau_{1.1},~\text{up-dir})(\mathbf W)>0.9875 \rbrace$ is a proper subset of $\lbrace N(\lambda)>80 \rbrace$. The mean of the average loudnesses in all these 40 configurations was 1.47632, which is further away from $\mathbb E[F_0]$ than the mean for only the 8 configurations with extremely many frustrated users. 

We conclude the simulations as follows. For intensity $\lambda \mu=50$ and for $\alpha >1$, no positive empirical correlation can be seen between the occurrance of the events \[ \lbrace G(\mathbf L_\lambda,~\tau_{1.1},~\text{up-dir})(\mathbf W)>0.9875 \rbrace \quad \text{ and } \quad \lbrace \frac{1}{N(\lambda)} \sum_{X_i \in X^\lambda} F_{X_i} > \alpha \mathbb E[F_0] \rbrace. \] However, in our samples, each configuration belonging to $\lbrace G(\mathbf L_\lambda,~\tau_{1.1},~\text{up-dir})(\mathbf W)>0.9875 \rbrace$ exhibits an unlikely large number of users in the system, and this causes a large interference that makes unexpectedly many users frustrated. We note that this seems to be the same effect as the one we described in Section \ref{ocsú} for the fading-free direct downlink case. I.e., under Assumption \ref{szamár}, both the path-losses and the fadings in the system are bounded from above and bounded away from 0, and the only unbounded random quantity is the number of the users, which is Poisson distributed. We see that even under frustration events that do not have zero probability conditional on the constraint that the number of users is not unexpectedly large, i.e. where it would be possible to have a usual number of users and still too many frustrated users, we experience that these events occur only when there are unlikely many users. 

However, not all configurations with unexpectedly many users correspond to unlikely many frustrated users. Instead, it seems to be the case that compared to the unusually large number of users, fadings and path-losses have to be in some sense close the average behaviour, since this is entropically favourable. This is shown in the simulation by the fact that if the number of frustrated users is large, then the average fading of users is closer to $\mathbb E[F_0]$ than generally in the case when $N(\lambda)$ is too large. Also in the explicit minimizer \eqref{ojlerlagrandzs} from Section \ref{ráadás}, we see that the higher the fading density at a given fading value is, the more users have this loudness value in the minimizer. 

We conjecture that for any relevant frustration parameters $b \geq 0$ and $c \in (0,\tilde{c}_+)$, the average fading, and similarly the average path-loss, will not be much larger than the expected one. The reason for this is, as we have already mentioned in the beginning of this section, that given the number of users, the SIR is constant under multiplying all path-losses and fadings by the same constant. Hence, if the number of users is not unexpectedly large, then the increase of the interference caused by large path-loss and fading values is cancelled by the increase of the random path-loss\index{path-loss!expected}\index{path-loss!random} (numerator of SIR) of the users. The same cancellation does not occur if we increase the interference by increasing the number of users, since the denominator of SIR is $1/\lambda$ times and not $1/N(\lambda)$ times the sum of random path-losses. Once having sufficiently many users to cause interference, the average loudnesses and path-losses have to be as close to the expected behaviour as possible, in order not to cause additional costs on the exponential scale.

\begin{figure}
\includegraphics[scale=0.8]{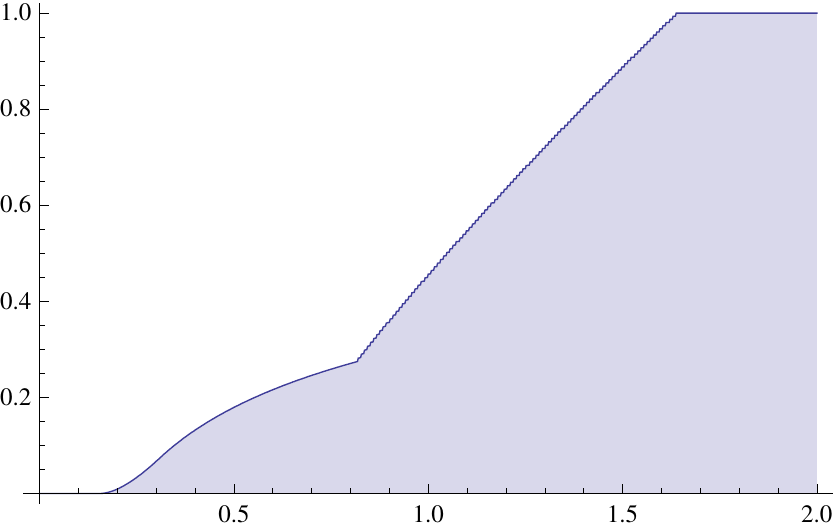}
\includegraphics[scale=0.8]{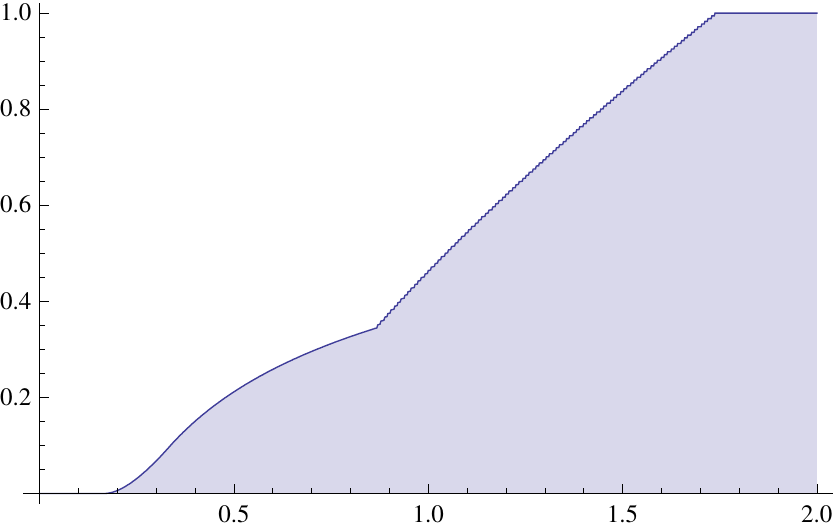} \\
\includegraphics[scale=0.8]{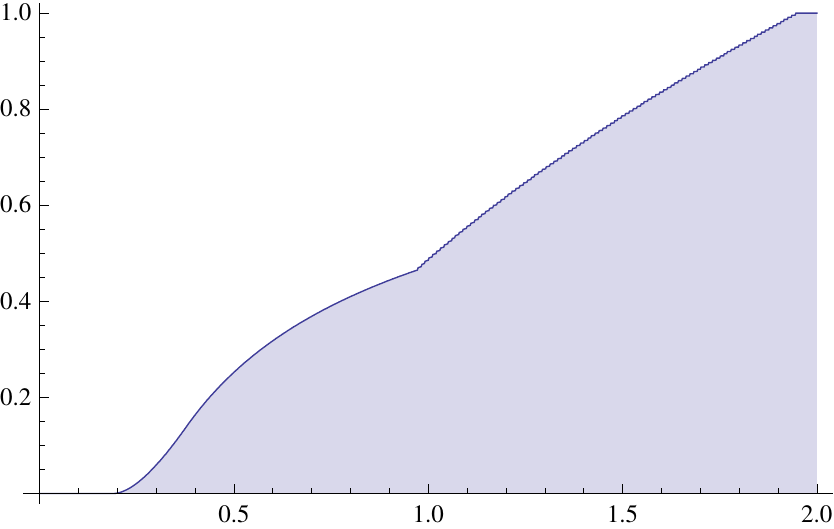}
\includegraphics[scale=0.8]{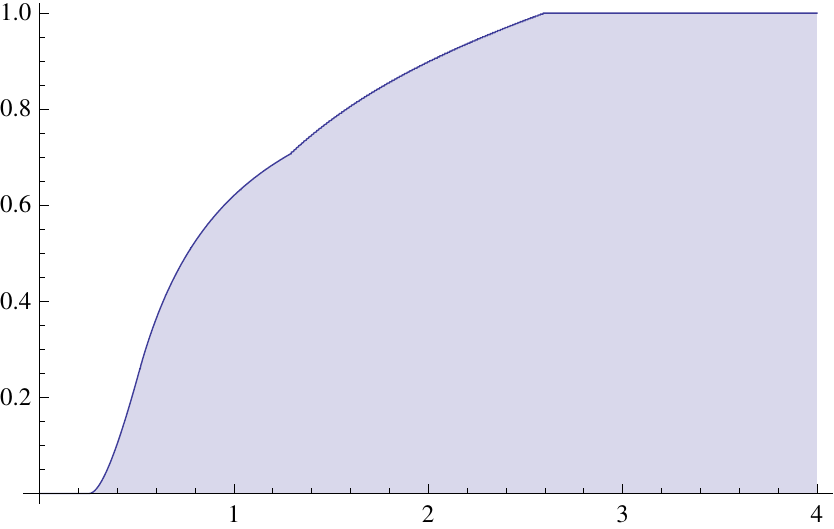} \\
\includegraphics[scale=0.8]{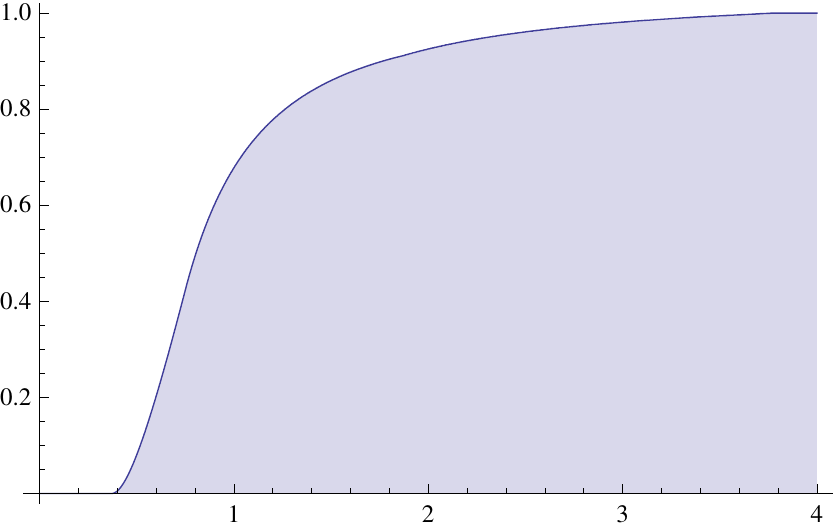}
\includegraphics[scale=0.8]{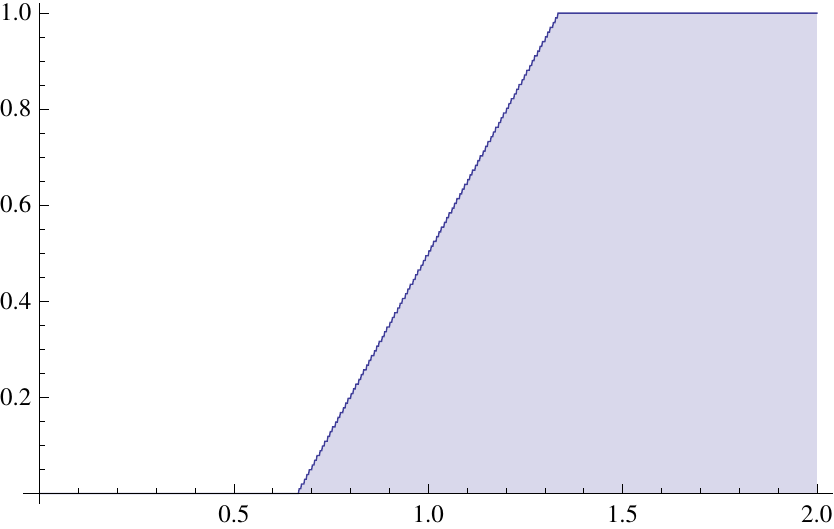}
\caption{The function $p:~c \mapsto G(\mu',~\tau_c,~\text{up-dir})(\mathbf W)/\mu'(\mathbf W)$ in the case of $\ell(| x |)=\min \lbrace 5, |x|^{-\beta} \rbrace$ with $\beta=4,~3,~2,~1,~1/2,~0$ (from the left to the right, row by row). In both axes, 1 unit equals 1, i.e. in the typical setting, there exist users with direct uplink SIR value larger than 1 in the system. \\ The upper almost linear segments of the functions appear because the highest path-loss value $\ellmax=5$ has a positive measure w.r.t. the spatial intensity $\mu$. Thus, close to the maximal SIR value, almost all users have maximal path-loss values, and since the cumulative distribution function of the fading variable is linear on $[1,2]$, this yields an almost linear increase of the function $p$. By the linearity of the fading distribution function, as the SIR level tends to the minimal level from above, the derivative of $p$ tends to a linear multiple of the slope $\ell$ at $\ellmin$, which is positive. Thus, $p$ is nondifferentiable at both the minimal and the maximal SIR level. For $\beta>0$, $p$ is also nondifferentiable and even its concavity breaks at the half of the maximal SIR value, the point where the first users with maximal path-loss value appear. However, for small positive $\beta$ the difference between the one-sided slopes is very little, see the case of $\beta=1/2$. 
Apart from these three points, the smoothness of the path-loss function $\ell$ and the cumulative distribution function of $F_0$ together with the fact that for the direct uplink, the denominator of SIR is the same for all users implies that the function $p$ is also smooth. \\ Thus, for $n \in \mathbb N$, choosing the fading distribution function to be $n$ times continuously differentiable also in $\Fmin$ respectively $\Fmax$, then also $p$ will be $n$ times continuosly differentiable and the minimal respectively maximal SIR level. The breaking point in the middle can be smoothened by choosing $\ell$ to be strictly monotone decreasing (also if the fading variable is uniform). } \label{ábrázolás}
\end{figure}
\section{Existence of frustrated users} \label{bélus}\index{frustration events!existence of frustrated users}
According to Corollaries \ref{1.2} and \ref{1.3}, we obtained large deviation limits for the frustration probabilities $\mathbb P_2(G(\mathbf L_\lambda,~\boldsymbol \tau_{\mathbf c})(\mathbf W)>\mathbf b)$ also in the case when $b_i=0$ for some $i \in \four$. As we have seen, in this case the frustration probability $\mathbb P_2(G(\mathbf L_\lambda,~\boldsymbol \tau_{c_i},m_i)(\mathbf W)>b_i)$ means that there exists at least a single user who is under QoS $b_i$. Here $m_i$ is the corresponding means of communication, using the notation of Section \ref{kalsszikus}. We want to investigate under which conditions the equality
\begin{equation} \label{béla} \lim_{\mathbf b \downarrow 0} \lim_{\lambda \to \infty} \frac{1}{\lambda} \log \mathbb P_2(G(\mathbf L_\lambda,~\boldsymbol \tau_{\mathbf c})(\mathbf W)>\mathbf b)=\lim_{\lambda \to \infty} \frac{1}{\lambda} \log \mathbb P_2(G(\mathbf L_\lambda,~\boldsymbol \tau_{\mathbf c})(\mathbf W)> 0) \end{equation} 
holds, and similarly for cases when some coordinates of $\mathbf b$ converge to 0. Moreover, in specific cases for specific means of communication, such as the one described in Section \ref{ráadás}, this will imply by continuity for all $i \in \four$ that
\begin{equation} \label{bélaegyenlőis} \lim_{b_i \downarrow 0} \lim_{\lambda \to \infty} \frac{1}{\lambda} \log \mathbb P_2(G(\mathbf L_\lambda,~\boldsymbol \tau_{c_i},~m_i)(\mathbf W) \geq b_i)=\lim_{\lambda \to \infty} \frac{1}{\lambda} \log \mathbb P_2(G(\mathbf L_\lambda,~\boldsymbol \tau_{c_i},~m_i)(\mathbf W)> 0). \end{equation} We also want to determine whether the corresponding infimum of relative entropy in \eqref{bélaegyenlőis} according to \eqref{minike} is attained, and if in some sense the minimizing configuration for $b_i=0$ is the limit of the minimizing configurations corresponding to $b_i$ as $b_i \downarrow 0$. 

First, we prove that \eqref{béla} holds in full generality, without having any information about the forms of the corresponding minimizers of relative entropy. 
\begin{prop} \label{Nagy Béla}
Under Assumption \ref{szamár}, let $\mathbf c \in (0,\tilde{\mathbf c}_+)$. Then \eqref{béla} holds. In addition, for all $i \in \four$ we have \begin{equation} \label{bélai} \lim_{b_i \downarrow 0} \lim_{\lambda \to \infty} \frac{1}{\lambda} \log \mathbb P_2(G(\mathbf L_\lambda,~\tau_{c_i},~m_i)(\mathbf W)>b_i)=\lim_{\lambda \to \infty} \frac{1}{\lambda} \log \mathbb P_2(G(\mathbf L_\lambda,~ \tau_{c_i},~m_i)(\mathbf W)>0). \end{equation} 
\end{prop}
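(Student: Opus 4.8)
The plan is to reduce the double limit to a monotonicity property of the rate functional supplied by Corollary \ref{1.2}, thereby bypassing the delicate (dis)continuity of $\nu \mapsto G(\nu,~\boldsymbol\tau_{\mathbf c})$ altogether. For $\mathbf b \in \mathbb R^4$ write
\[ I(\mathbf b) = \inf_{\nu \in \mathcal M(\mathbf W):~G(\nu,~\boldsymbol\tau_{\mathbf c})(\mathbf W) > \mathbf b} h(\nu \vert \mu'). \]
Since $\mathbf c \in (0,\tilde{\mathbf c}_+) \subset [0,\tilde{\mathbf c}_+)$, Corollary \ref{1.2} guarantees that for every fixed $\mathbf b$ (including $\mathbf b = 0$) the inner limit $\lim_{\lambda\to\infty} \frac 1\lambda \log \mathbb P_2(G(\mathbf L_\lambda,~\boldsymbol\tau_{\mathbf c})(\mathbf W) > \mathbf b)$ exists and equals $-I(\mathbf b)$. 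Hence both sides of \eqref{béla} are well defined, and the claim is equivalent to $\lim_{\mathbf b\downarrow 0} I(\mathbf b) = I(0)$.

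First I would record that $I$ is non-decreasing with respect to the coordinatewise partial order \eqref{kisebbegyenlő}: if $\mathbf b \le \mathbf b'$, the feasible set for $\mathbf b'$ is contained in that for $\mathbf b$, so the infimum over the smaller set can only be larger. In particular, for every $\mathbf b > 0$ the feasible set $\{\nu:~G(\nu,~\boldsymbol\tau_{\mathbf c})(\mathbf W) > \mathbf b\}$ is contained in $\{\nu:~G(\nu,~\boldsymbol\tau_{\mathbf c})(\mathbf W) > 0\}$, whence $I(\mathbf b) \ge I(0)$, and letting $\mathbf b \downarrow 0$ gives $\lim_{\mathbf b\downarrow 0} I(\mathbf b) \ge I(0)$. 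Monotonicity also ensures that this limit exists and equals $\inf_{\mathbf b > 0} I(\mathbf b)$, irrespective of the manner in which $\mathbf b$ tends to $0$ from above.

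The reverse inequality $\lim_{\mathbf b\downarrow 0} I(\mathbf b) \le I(0)$ is where the \emph{strict} inequality in the constraint is used decisively. Fix an arbitrary $\nu$ with $G(\nu,~\boldsymbol\tau_{\mathbf c})(\mathbf W) > 0$ and set $g = \min_{i \in \four} \pi_i(G(\nu,~\boldsymbol\tau_{\mathbf c})(\mathbf W)) > 0$. For every $\mathbf b$ with $0 < b_i < g$ for all $i \in \four$, the measure $\nu$ remains feasible, so $I(\mathbf b) \le h(\nu \vert \mu')$; since $\lim_{\mathbf b\downarrow 0} I(\mathbf b) = \inf_{\mathbf b > 0} I(\mathbf b) \le I(\mathbf b)$ for any such $\mathbf b$, we obtain $\lim_{\mathbf b\downarrow 0} I(\mathbf b) \le h(\nu \vert \mu')$. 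Letting $\nu$ range over all measures with $G(\nu,~\boldsymbol\tau_{\mathbf c})(\mathbf W) > 0$ and taking the infimum of the right-hand side yields $\lim_{\mathbf b\downarrow 0} I(\mathbf b) \le I(0)$ (with the convention that the infimum over the empty set is $\infty$, so the degenerate case is covered too). Combining the two bounds gives $\lim_{\mathbf b\downarrow 0} I(\mathbf b) = I(0)$ and hence \eqref{béla}.

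Identity \eqref{bélai} then follows by running the identical argument in a single variable. For fixed $i \in \four$, Corollary \ref{1.2}, applied with the vector whose $i$-th entry is $b_i$ and whose remaining entries are negative (so as to impose no constraint, cf. the reduction at the start of the proof of Corollary \ref{1.2}, using $\pi_j(G(\nu,~\boldsymbol\tau_{\mathbf c})(\mathbf W)) \ge 0$), identifies $\lim_{\lambda\to\infty}\frac 1\lambda\log\mathbb P_2(G(\mathbf L_\lambda,~\tau_{c_i},~m_i)(\mathbf W) > b_i)$ with $-\inf_{\nu:~G(\nu,~\tau_{c_i},~m_i)(\mathbf W) > b_i} h(\nu \vert \mu')$, a quantity monotone in the scalar $b_i$; the same shrinking-feasible-set estimate and the same room-for-small-$b_i$ reasoning deliver the conclusion. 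I do not anticipate a genuine obstacle: the only point requiring care is that the outer limit must be controlled through the monotonicity of $I$ rather than through any semicontinuity of the typically discontinuous map $\nu \mapsto G(\nu,~\boldsymbol\tau_{\mathbf c})$, so that the continuity difficulties discussed before Lemma \ref{3.6} play no role whatsoever in this argument.
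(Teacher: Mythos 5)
Your argument is correct and follows essentially the same route as the paper: both reduce the claim via Corollary \ref{1.2} to showing that the infimum of $h(\cdot\vert\mu')$ over the feasible sets converges as $\mathbf b \downarrow 0$ to the infimum for $\mathbf b = 0$, with one inequality immediate from the nesting of feasible sets and the other from the observation that any $\nu$ feasible at $0$ stays feasible for all sufficiently small $\mathbf b > 0$ (the "standard argument for limits of infima" that the paper leaves to the reader, which you spell out). The only cosmetic difference is that the paper deduces the vector statement from the coordinate-wise one whereas you treat the vector case directly; both orderings work.
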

\begin{proof}
It suffices to prove the second, coordinate-wise statement, which implies the first one.\footnote{ Recall from Section \ref{egykettőegyhárom} that for all $\lambda >0$, the event $\lbrace G(\mathbf L_\lambda,~\tau_{c_i},~m_i)(\mathbf W)>b_i \rbrace$ equals $\lbrace G(\mathbf L_\lambda,~\boldsymbol \tau_{\mathbf c})(\mathbf W)>\mathbf b \rbrace$, where $\mathbf b \in \mathbb R^4$ is such that its $i$th coordinate equals $b_i$, and its remaining coordinates are negative. This ensures that the large deviation limits of frustration probabilities in \emph{single coordinates} are also given by infima of relative entropies w.r.t. $\mu'$.} Let $c \in (0,\tilde{c}_+)$. Using Corollary \ref{1.2}, we merely have to show that for all $i \in \four$ we have
\begin{equation} \label{szám} \lim_{b_i \downarrow 0} \inf_{\nu \in \mathcal{M}(\mathbf W):~ G(\nu,~\tau_{c_i},~m_i)(\mathbf W)>b_i} h(\nu \vert \mu')= \inf_{\nu \in \mathcal{M}(\mathbf W): ~ G(\nu,~\tau_{c_i},~m_i)(\mathbf W)>0} h(\nu \vert \mu'). \end{equation} It is clear that the l.h.s. of \eqref{szám} is greater than or equal to the r.h.s. In order to prove the opposite inequality, one can use a standard argument for limits of infima, which does not involve any specific property of the relative entropies, and therefore we leave it for the reader.
\end{proof}
The proof indicates that this result is also formally analogous to the fading-free setting $F_0 \equiv 1$. Now we show an interesting effect coming from the case distinction of Section \ref{kalsszikus}, where we also used fading-related quantities to define the minimal SIR vector $\mathbf K$.
\begin{claim} \label{nincsenpathloss}
Under Assumption \ref{szamár}, let $\mathbf c \in (0,\tilde{\mathbf c}_+)$ and let us assume that $G(\mu',~\boldsymbol{\tau}_{\mathbf c})(\mathbf W)=0$. Then,
\[ \inf \lbrace h(\nu | \mu') | ~ \nu \in\mathcal{M}(\mathbf W), ~G(\nu,~\boldsymbol \tau_{\mathbf c})(\mathbf W)>0 \rbrace=0 \] if and only if there exists $i \in \four$ such that $c_i=K_i$, where $\mathbf K$ is the corresponding minimal $\SIR$ vector defined in \eqref{minsir}, but there exists no $j \in \four$ such that $c_j$ is strictly less than $K_j$.\index{SIR!minimal SIR vector}
\end{claim}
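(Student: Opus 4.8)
The plan is to reduce the statement to the dichotomy already worked out in the last case of the classification in Section \ref{kalsszikus}, so the first task is to translate the hypotheses into statements about the minimal SIR vector $\mathbf K$. Since $\mu'=\mu\otimes\zeta$ is absolutely continuous in the spatial variable and, by \eqref{minsir}, $K_i$ is the $\mu'$-essential infimum of the QoS for the means of communication $m_i$, one has $G(\mu',\tau_{c_i},m_i)(\mathbf W)=\mu'(\{\mathrm{QoS}_i(\cdot,\mu')<c_i\})=0$ if and only if $c_i\le K_i$. Hence the assumption $G(\mu',\boldsymbol\tau_{\mathbf c})(\mathbf W)=0$ is equivalent to $c_i\le K_i$ for \emph{all} $i\in\four$. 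Under this, the asserted condition ``$\exists i:c_i=K_i$ and $\nexists j:c_j<K_j$'' collapses to ``$c_j=K_j$ for all $j$'', while its negation is ``$\exists j:c_j<K_j$''. So I must show that the infimum vanishes in the first situation and is strictly positive in the second.

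For the implication that the infimum is $0$ when $c_j=K_j$ for every $j$, I would test the infimum against the family $\nu_\varepsilon=(1+\varepsilon)\mu'$, $\varepsilon>0$. By the scaling identity established in the fourth case of Section \ref{kalsszikus}, namely $\{\SIR(\cdot,\cdot,(1+\varepsilon)\nu)<c\}=\{\SIR(\cdot,\cdot,\nu)<(1+\varepsilon)c\}$ together with the monotonicity of $g$, one gets $G((1+\varepsilon)\mu',\tau_{c_j},m_j)(\mathbf W)=(1+\varepsilon)\,G(\mu',\tau_{(1+\varepsilon)K_j},m_j)(\mathbf W)$; and since $K_j=c_j>0$ is the $\mu'$-essential infimum of $\mathrm{QoS}_j$, the set $\{\mathrm{QoS}_j(\cdot,\mu')<(1+\varepsilon)K_j\}$ has strictly positive $\mu'$-measure, whence every coordinate of $G((1+\varepsilon)\mu',\boldsymbol\tau_{\mathbf c})(\mathbf W)$ is positive. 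As $h((1+\varepsilon)\mu'\vert\mu')=((1+\varepsilon)\log(1+\varepsilon)-\varepsilon)\mu(W)$ by Lemma \ref{3.11} (using $\mu'(\mathbf W)=\mu(W)$) tends to $0$ as $\varepsilon\downarrow0$, the infimum is bounded above by $\inf_{\varepsilon>0}h((1+\varepsilon)\mu'\vert\mu')=0$, and non-negativity of $h$ forces it to equal $0$.

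For the converse I argue by contraposition: assume some $c_j<K_j$. Since $\{G(\nu,\boldsymbol\tau_{\mathbf c})(\mathbf W)>0\}\subseteq\{G(\nu,\tau_{c_j},m_j)(\mathbf W)>0\}$, the infimum defining the claim dominates the single-coordinate infimum $\inf\{h(\nu\vert\mu'):G(\nu,\tau_{c_j},m_j)(\mathbf W)>0\}$, and it suffices to show the latter is strictly positive. Choosing $\varepsilon>0$ small enough that $(1+\varepsilon)c_j\le K_j$ (possible as $K_j>c_j>0$) and using the same scaling identity gives $G((1+\varepsilon)\mu',\tau_{c_j},m_j)(\mathbf W)=(1+\varepsilon)G(\mu',\tau_{(1+\varepsilon)c_j},m_j)(\mathbf W)=0$, because $(1+\varepsilon)c_j$ is still at most the essential infimum $K_j$. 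This is exactly the hypothesis of the single-coordinate version of Corollary \ref{1.3} (the reduction to one coordinate via the footnote to Proposition \ref{Nagy Béla}, the remaining coordinates imposing no constraint), which yields exponential decay of $\mathbb P_2(G(\mathbf L_\lambda,\tau_{c_j},m_j)(\mathbf W)>0)$; by Corollary \ref{1.2} this decay rate equals the single-coordinate infimum, which is therefore strictly positive, and hence so is the full infimum.

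The main obstacle I anticipate is the clean justification of the single-coordinate use of Corollary \ref{1.3}: as literally stated that corollary requires the vector inequality $G((1+\varepsilon)\mu',\boldsymbol\tau_{\mathbf c})(\mathbf W)\le\mathbf b$, which cannot hold in the coordinates $k\ne j$ where $G\ge0$ while one would want $b_k<0$. I would handle this by invoking the one-coordinate reduction used throughout Section \ref{kalsszikus} (set $b_j=0$ and the remaining entries of $\mathbf b$ negative, so that they impose no constraint), which is precisely how the sixth case of that section already applies Corollary \ref{1.3}; the coordinate-wise structure of the proof of Corollary \ref{1.3} makes this specialization legitimate. A secondary point requiring care is the scaling and essential-infimum argument for the relayed means of communication, where $\mathrm{QoS}$ involves an essential supremum over relay locations: there I must check that passing from $\mu'$ to $(1+\varepsilon)\mu'$ leaves the zero set over which that supremum is taken unchanged, so that the SIR scaling propagates through $\Gamma$ and $R$ exactly as in parts (i)--(iv) of Lemma \ref{3.1}.
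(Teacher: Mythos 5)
Your proof is correct and, modulo packaging, takes the same route as the paper: the paper's own proof is a two-line citation of Corollary \ref{új1.4}, and what you write is essentially an unpacking of the fifth case of Section \ref{kalsszikus}, from which that corollary was assembled --- the reduction of $G(\mu',\boldsymbol\tau_{\mathbf c})(\mathbf W)=0$ to $c_i\le K_i$ for all $i\in\four$, the test family $(1+\varepsilon)\mu'$ with $h((1+\varepsilon)\mu'\,\vert\,\mu')=((1+\varepsilon)\log(1+\varepsilon)-\varepsilon)\mu(W)\to 0$ (Lemma \ref{3.11}), the $\SIR$ scaling identity, and the single-coordinate use of Corollaries \ref{1.2} and \ref{1.3}. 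Three remarks. First, the paper's printed proof has the two conclusions interchanged (it asserts the infimum is positive when $c_j=K_j$ for all $j$ and zero when some $c_j<K_j$); your directions are the correct ones and agree with the claim as stated, with the interpretive remark following it, and with the later use of the claim in Section \ref{bélus}. Second, the obstacle you flag about Corollary \ref{1.3} --- that the vector inequality $G((1+\varepsilon)\mu',\boldsymbol\tau_{\mathbf c})(\mathbf W)\le\mathbf b$ cannot hold when some entries $b_k$ are negative --- is genuine, but it is equally present in the paper's own case 5; the repair via the footnote to Proposition \ref{Nagy Béla} (one coordinate active, the remaining ones unconstrained) is exactly what the paper intends, so your handling is adequate. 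Third, a cosmetic point: the scaling identity should be applied at the level of $\SIR$ rather than of $g(\SIR)$ when $g$ is nonlinear (what one actually has is $\lbrace \SIR(\cdot,\cdot,(1+\varepsilon)\nu)<s\rbrace=\lbrace \SIR(\cdot,\cdot,\nu)<(1+\varepsilon)s\rbrace$, after which one invokes strict monotonicity of $g$ below $\tilde{\varrho}_+$); the paper is equally loose in case 4 of Section \ref{kalsszikus}, and the conclusion that the frustrated set has positive, respectively zero, $\mu'$-mass is unaffected.
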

\begin{proof}
First, let us assume that $c_i=K_i$ for some $i$ but $c_j \geq K_j$ for all $j \in \four$. Then, by Corollary \ref{új1.4} we have $\inf \lbrace h(\nu | \mu') | ~ \nu \in\mathcal{M}(\mathbf W), ~G(\nu,~\boldsymbol \tau_{\mathbf c})(\mathbf W)>0 \rbrace>0$. Second, if $c_i < K_i$ for some $i \in \four$, then, again by Corollary \ref{új1.4}, $\inf \lbrace h(\nu | \mu') | ~ \nu \in\mathcal{M}(\mathbf W), ~G(\nu,~\boldsymbol \tau_{\mathbf c})(\mathbf W)>0 \rbrace=0$.
\end{proof}
The claim means that even if the a priori measure $\mu'$ has no users with QoS vector strictly less than $\mathbf c$, the number of users in the marked Poisson point process under QoS level $\mathbf c$ will not decay exponentially if $\mu'$ has a positive quantity of users at QoS level arbitrarily close to $c_i$ for at least one $i \in \four$.

Knowing these, let us let us reconsider the specific case of Section \ref{ráadás}, where for $b>0$, $c \in (0,\tilde{c}_+)$, the density of the minimizer $\nu_{b,c}$ of $\lbrace h(\nu \vert \mu') \vert~ G(\nu, ~\tau_c,~\text{up-dir})(\mathbf W) \geq b \rbrace$ is given by \eqref{ojlerlagrandzs}. Let us fix this $c$ and denote the density \eqref{ojlerlagrandzs} by $h_{b}(s,u)$ instead of $h(s,u)$, and also let us emphasize the dependence of the Lagrange multipliers on $b$, i.e.
\begin{equation} \label{ojlerkelagrandzska} h_{b}(s,u)= f(u)~q(s)~\exp\left({\beta_{\alpha_{\min}}(b) \ell(s) u  +  {\delta_{\alpha_{\min}}}(b) \mathds 1 {\lbrace \ell(s)u <\alpha_{\min} \rbrace}(s,u)}\right). \end{equation}
Now we compare this to the case $b=0$ and then conclude for the limit $b \downarrow 0$. If we want to find the minimizer $\nu_{0,c}$ of $\lbrace h(\nu \vert \mu') \vert~ G(\nu, ~\tau_c,~\text{up-dir})(\mathbf W)>0 \rbrace$, i.e. the minimal rate for finite measures that exhibit at least a single frustrated user, then the SIR level has to decrease to $c$ at least at the outer boundary $\partial B_r(o) \times \lbrace \Fmin \rbrace$ of $\mathbf W$. This way, the minimizer $\nu_{0,c}$ is the extremal point of the relative entropy function
\[ \alpha \mapsto \iint_{\mathbf W} h^\alpha(s,u) \log \left[ \frac{h^\alpha(s,u)}{q(s) f(u)}-1 \right] \d s \d u\]
under the constraint
\[ \iint_{\mathbf W} \ell(s) u h^\alpha(s,u) \d s \d u = \frac{\alpha}{c}.~ \footnote{ Here $\alpha$ is not a power, it just indicates that the dependence on $\alpha$ here is not the same as the dependence on $b$ in $h_b$ in \eqref{ojlerkelagrandzska} and also that $h_{\alpha}$ is not the same as $h_\alpha$ in Section \ref{ráadás}.}\]\index{relative entropy!minimizers}
Hence, again by \cite[Section 12, Theorem 1]{gelfand}\index{variational calculus!Lagrange multiplier}, we conclude that there exists a constant $\gamma_0$ such that the minimizer density is given by
\begin{equation} \label{ojler0lagrandzs0} h_0 (s,u)=q(s) f(u) \exp(\gamma_0 \ell(s) u), \end{equation}
i.e. the form of this minimizer density is analogous to the first term of the minimizer density for $b>0$, but the second term is missing.

Now, the argumentation of Claim \ref{nincsenpathloss} applied for only the direct uplink coordinate implies that if $c=K_2=\frac{\ell_{\min} \Fmin}{\int_{\mathbf W} q(s) \d s \mathbb E[F_0]}$ is the minimal direct uplink SIR level, then $\inf \lbrace h(\nu \vert \mu') \vert G(\nu, ~\tau_c,~\text{up-dir})(\mathbf W)>0 \rbrace$ is zero, in particular it equals $h(\mu' \vert \mu')$. This implies that in this case, $h_0$ has to equal the density of $\mu'$, i.e. it follows that $\gamma_0=0$. If $c<K_2$, then the infimum is greater than 0, which together with the fact that there exist no users $(x,u) \in \mathbf W$ with $\SIR((x,u),~(o,F_o),~\mu') \leq c$ implies that $\gamma_0>0$. We also note that if $c>K_2$, then $G(\mu', ~\tau_c,~\text{up-dir})(\mathbf W)>0$.

Hence, using Proposition \ref{Nagy Béla}, we conclude that if $c \in (0,\tilde{c}_+)$ is such that $G(\mu', ~\tau_c,~\text{up-dir})(\mathbf W)=0$, then, using that for all $b \geq 0$, $\inf \lbrace h(\nu \vert \mu') \vert~ G(\nu, ~\tau_c,~\text{up-dir})(\mathbf W) \geq b \rbrace$ is attained by $\nu_{b,c}$, we have
\begin{multline} \label{taláneltünökhirtelen} \lim_{b \downarrow 0} \iint_{\mathbf W} h_b(s,u) \log \left[ \frac{h_b(s,u)}{q(s) f(u)}-1 \right] \d s \d u=\lim_{b \downarrow 0} \inf_{\nu \in \mathcal{M}(\mathbf W):~ G(\nu,~\tau_{c},~\text{up-dir})(\mathbf W) \geq b} h(\nu \vert \mu') \\ = \inf_{\nu \in \mathcal{M}(\mathbf W): ~ G(\nu,~\tau_{c},~\text{up-dir})(\mathbf W)>0} h(\nu \vert \mu')=\iint_{\mathbf W} h_0(s,u) \log \left[ \frac{h_0(s,u)}{q(s) f(u)}-1 \right] \d s \d u. \end{multline}
In particular, this implies the following for the Lagrange multipliers
\[ \lim_{b \downarrow 0} \beta_{\alpha_{\min}}(b)=\gamma_0 \geq 0 \quad \text{and} \quad \lim_{b \downarrow 0} \delta_{\alpha_{\min}}(b) =0, \]
because \eqref{taláneltünökhirtelen} implies that in the limit $\beta \downarrow 0$ the second term in the exponential of \eqref{ojlerkelagrandzska} has to vanish and the whole exponential tilting has to converge to the one in \eqref{ojler0lagrandzs0}.

Finally, if $c \in (0,\tilde{c}_+)$ is such that $G(\mu', ~\tau_c,~\text{up-dir})(\mathbf W)=b_0>0$, then, analogously the proof of Proposition \ref{Nagy Béla}, we have that for $b \in [0,b_0]$, $\nu_{b,c}$ equals $\mu'$. In particular, already for some positive values $b$, the corresponding constants $\beta_{\alpha_{\min}}$ and $\delta_{\alpha_{\min}}$ equal 0.

The overall conclusion for the setting of \eqref{ráadás} is that for fixed $c$, not only the infima of the rates $\inf \lbrace h(\nu \vert \mu') \vert~ G(\mu', ~\tau_c,~\text{up-dir})(\mathbf W)>b \rbrace$ converge to $\inf \lbrace h(\nu \vert \mu') \vert~ G(\mu', ~\tau_c,~\text{up-dir})(\mathbf W)>0 \rbrace$, but also the minimizer densities $h_b$ converge to $h_0$ on $\mathbf W$. By the boundedness of $\ell$ and $[\Fmin, \Fmax]$, this convergence must not be only pointwise but also uniform on $\mathbf W$.

\section{Loudness depending on space} \label{kernel}\index{fading!depending on space}
In this section, we extend the model defined in Section \ref{Anfang} to the case when the distribution of the fading of a user also depends on the spatial position of the user. After describing the new model in general, we provide two examples how to construct it in such a way that the marked Poisson point process of user-fading pairs takes values in a separable space. Using a special case of the second example, we conclude that in the case of space-dependent fading, spatial effects and fading effects in the system cannot be separated any more.

In Chapter \ref{eleje}, under Assumption \ref{szamár}, our marked Poisson point process $\mathbf X^\lambda=(X_i, F_{X_i})_{X_i \in X^\lambda}$ had identically distributed marks. The conditional distribution of the fading $F_{X_i}$ knowing the position of the user $X_i=x \in W$ was given by
\[ p(x,\d u)=\mathbb P(F_0 \in \d u)=\zeta(\d u),\quad x \in W,~ u \in [\Fmin, \Fmax]. \]
Using the Marking Theorem (Theorem \ref{marking})\index{Marking Theorem}, one can also consider different probability kernels $p(x,\d u)$ which also depend on the spatial position $x$ of the users, and define the conditional distribution of the fading of the user at spatial position $x$ as $p(x,\cdot)$. This dependence can have the interpretation that e.g. in some areas the users have less modern devices, which have lower fading values than the ones of users in other regions. Also it can happen for some geographical or meteorological reasons that the signal coming from users of certain areas is partially reflected or absorbed. This will result an additional decay in the signal strength, which is not explained by the path-loss function $\ell$. In these latter cases, we actually face spatial effects which in principle do not depend on the fading properties of the users. But since the path-loss function $\ell$ is assumed to depend only on distance, we cannot explain these effects in the fading-free model, where fadings of the users are constant equal to 1.

By the Marking Theorem\index{Poisson point process!marked!separable construction}, for any probability kernel $p(\cdot,\centerdot)$ describing the distribution of the fading $F_{X_i}$ of the user $X_i$ at spatial position $x$, $\mathbf X^\lambda=\lbrace (X_i, F_{X_i})\rbrace_{X_i \in X^\lambda}$ will be a (marked) Poisson point process in the product space $\mathbf W$, and the fadings of two different users $X_i$ and $X_j$ will remain independent. However, in order to be able to do large deviation analysis, it is convenient to ensure that this marked Poisson point process takes values in a Polish space. We have already seen in Section \ref{kingmanke} that this holds in the case of i.i.d. fadings $p(\cdot, \centerdot)=\zeta(\centerdot)$, and that by the Ionescu-Tulcea construction it is sufficient to ensure that the product probability space is separable. Then, as before, the marked Poisson process takes values in the Polish product space $ \mathbf W^{\mathbb N}$. 

In the following, we present two general ways to construct models with space-dependent fadings. One can easily show that in these cases the process $\mathbf X^\lambda$ also takes values in a separable space.
\vspace{3pt} 

\emph{1. Countably many areas.}\index{fading!depending on space!countably many areas} Let us consider a countable partition \[ \lbrace W^1, W^2, \ldots \rbrace \] of $W$, such that $W^i$ is a measurable subset of $W$ for all $i \in \mathbb N$, $W^i$ is disjoint from $W^j$ for all $i \neq j$, and $\cup_{i \in \mathbb N} W^i=W$. Let $F_0^1, F_0^2, \ldots$ be fading distributions such that there exists $\Finite$ such that $\inf_{n \in \mathbb N} \essinf F_0^i > 0$ and $\sup_{n \in \mathbb N} \esssup F_0^i < \infty$. Writing \[ \zeta^i(\d u)=\mathbb P(F_0^i \in \d u), \quad u \in [\Fmin, \Fmax], \]
we define the probability kernel
\[ p(x,\d u)=\sum_{i=1}^{\infty} \zeta^i(\d u) \mathds 1 {\lbrace x \in W^i \rbrace}. \] 
Then the marked Poisson point process $\mathbf X^\lambda$ has intensity \begin{equation} \label{újmértékernel} \mu'(\d x, \d u)=\mu(\d x) p(x,\d u), \quad x \in W,~ u \in [\Fmin, \Fmax], \end{equation} and it can be defined on a Polish probability space, and the fadings of all users are bounded between $\Fmin$ and $\Fmax$. Therefore all the estimations from Chapter \ref{eleje} involving the essential bounds of fading variables remain true. Our sprinkling construction from Section \ref{sprinkle} did not use the i.i.d. nature of the fadings, only the fact that the empirical measures of users
\[ \mathbf L_{\lambda}^{\varrho'} ((x,u))=\frac{1}{\lambda} \sum_{X_i \in X^\lambda} \mathds 1 {\lbrace \varrho_1(X_i)=x \rbrace} \mathds 1 {\lbrace \varrho_2(F_{X_i})=u \rbrace} \] are independent and Poisson distributed random variables. For the same reason, Proposition \ref{2.2} also stays true, and this leads us to a generalization of Theorem \ref{1.1} and Corollaries \ref{1.2} and \ref{1.3} to this case. 

Note that in this case the distributions of the fading variables $F^1, F^2, \ldots$ can be both discrete and continuous, even it is possible to have some regions where it is continuous and some where it is discrete (e.g. constant).
\vspace{3pt} 

\emph{2. Continuously varying fading distribution.}\index{fading!depending on space!continuously varying fading} In these examples, the fading distribution will depend continuously on the spatial position. Here we focus on the case when the fading variables are absolutely continuous. There are various ways to construct a similar model with discrete fadings as well.

Let $\Finite$ and \[ p(x,\d u)=f(x, u) \d u, \quad x \in W,~ u \in [\Fmin, \Fmax],  \]
where $f: W \times [\Fmin, \Fmax] \to [\Fmin, \Fmax]$ is a map such that for all $x$, we have that $u \mapsto f(x,u)$ is a continuous probability density supported on $[\Fmin, \Fmax]$, and $x \mapsto f(x,\cdot)$ is continuous in the uniform norm, i.e.
\[ \forall \varepsilon>0~\exists \delta>0~ \forall x,~y \in W: ~\vert x-y \vert < \delta ~\Rightarrow~\sup_{u \in [\Fmin, \Fmax]} \vert f(x,u)-f(y,u) \vert < \varepsilon. \]
By continuity, it suffices to define $f(x,u)$ for rational points $(x,u) \in (W \cap \mathbb Q^d) \times ([\Fmin, \Fmax] \cap \mathbb Q)$ to determine the law of the marked Poisson point process uniquely. Therefore the process can be again defined on a Polish probability space. Similarly to the previous case, the results of Chapter \ref{eleje} can also be generalized to this setting.\footnote{ The setting described in Section \ref{Anfang} corresponds to the case $f(x,u)=h(u)$, where $h$ is the density of the fading variable $F_0$, under the assumptions that $\Finite$ and $F_0$ has a continuous density $h$ on $[\Fmin, \Fmax]$.}

To see an example for such $f$, let $k: W \to (0,\infty)$ be continuous. Then the compactness of $W$ implies that $k$ is also bounded and bounded away from 0. Let $F_0^0$ be a fading variable distributed on $[a,b] \subset (0,\infty)$, with continuous density $f_0^0$. Then for $x \in W$, let us define the law of the space-dependent fading variable $F_0^x$, as follows \begin{equation} \label{szépia} F_0^x\overset{d}{=}k(x)F_0^0, \end{equation} 
i.e. with density
\[ f_0^x(v)=f_0^0(v/k(x)),~v \in [k(x) a, k(x) b]. \]
The probability kernel is given as
\[ p(x, \d u)=f(x,u) \d u=f_0^x(u)\d u, \quad x \in W, u \in [k(x) a, k(x) b], \]
and we have that \[ a \min_{x \in W} k(x) \Fmin=\inf_{x \in W} \essinf F_0^x>0, \quad b \max_{x \in [a,b]} k(x)=\sup_{x \in W} \esssup F_0^x<\infty,  \]
and our continuity assumptions are also satisfied.

Finally we show a simple example of $k$ where it is mathematically clear that in the case of space-dependent loudness, spatial effects and fading effects cannot be separated any more. Let $k(x)=\ell(\vert x \vert)$ be our path-loss function defined in Section \ref{explanation}; this is continuous. In the case of i.i.d. fadings $(F^0_{X_i})_{X_i \in X^\lambda}$, the SIR for e.g. direct uplink communication is given by
\[ \SIR_{\lambda}((X_i,F_{X_i}),~o,~\mathbf L_\lambda)=\frac{\ell(\vert X_i \vert) F^0_{X_i}}{\frac{1}{\lambda} \sum_{X_j \in X^\lambda} \ell(\vert X_j \vert) F^0_{X_j}}. \]
On the other hand, using the trivial path-loss function $\ell_0(r)=1$\index{path-loss!-free setting} and defining the distribution of $F_0^x$ according to \eqref{szépia}, we obtain the same SIR value for each direct uplink transmission as in the previous model with i.i.d. fadings, since for all $i$ we have
\[ \frac{\ell_0(\vert X_i \vert) F^{X_i}_{X_i}}{\frac{1}{\lambda} \sum_{X_j \in X^\lambda} \ell_0(\vert X_j \vert) F^{X_j}_{X_j}}=\frac{\ell(\vert X_i \vert) F^0_{X_i}}{\frac{1}{\lambda} \sum_{X_j \in X^\lambda} \ell(\vert X_j \vert) F^0_{X_j}}. \] The same holds for relayed uplink, and after transforming $F_o$ analogously to \eqref{szépia}, also for downlink communication.

This means that one can consider our model with i.i.d. fadings from Chapter \ref{eleje} from two different perspectives. On the one hand, one can see the fadings as identically distributed ones, and the differences among the average behaviour of signal strengths of users as a consequence of an effect caused entirely by spatial properties. On the other hand, one can say that there is no decay in signal strength caused by path-loss, but the loudness of users decays as their distance from the origin increases. This latter interpretation is also not unnatural. E.g., the following may hold on a larger scale. The vicinity of the base station is the most developed part of the communication area, users located here have better devices, which have therefore higher fading values, and the fading value of the devices of users decays over distance from $o$.
\section{Random fading at the base station}\index{random fading!of the base station} \label{Fernsehturm}
In this section, we extend our model from Section \ref{Anfang} via a random fading value at $o$. First, we give heuristics about the how the choice of the \emph{deterministic} fading value $F_o$ of the origin effects the behaviour of the system.\index{fading!of the base station} Then, using these observations, we define the new model with a random fading at the base station, we compare it to the setting of Chapter \ref{eleje}, and finally we derive a necessary and sufficient condition for the exponential decay of frustration probabilities in this extended setting.

\emph{1. The effects of the deterministic value $F_o$.} Although changing the value of the positive $F_o$ only multiplies $(x,u) \mapsto \SIR((o,F_o)~,(x,u),~\nu)$ by a constant for fixed $\nu \in \mathcal{M}(\mathbf W)$, it may make a substantial change in the path-loss landscape if $g$ is not just the identity truncated at a large value $K$. Here largeness would mean that for all reasonable choices of $F_o$, $g$ acts as the identity on all direct downlink SIR values. If $F_o$ is much larger than the fading variables $\lbrace F_{X_i} \vert~X_i \in X^\lambda \rbrace$ with high probability, and the latter ones are typically small, the QoS for the direct downlink may be equal to its maximal value $\tilde{c}_+$ with very high probability, while all transmissions where the transmitter is not $o$ have a lower QoS. Hence in the vicinity of the origin, where the QoS of direct downlink communication is sufficiently high to make the transmission useful, we may only see direct downlink communication. On the other hand, if $F_o$ is substantially less then the $F_{X_i}$'s with high probability, let us consider a user who is spatially in a large distance from $o$. Then, under the realistic assumption that the path-loss function $\ell: ~[0,\infty) \mapsto (0,\infty)$ is monotone decreasing\footnote{ This assumption is not explicitly written in the model definition of the fading-free setting in \cite[Section 1.1]{cikk}, and therefore we also have not taken it for granted so far, but in concrete examples of this chapter we regularly use it, such as \cite[Section 7]{cikk} does.}, this user as receiver experiences a much lower QoS when receiving a message directly from $o$ than when receiving a message from a $X_i \in X^\lambda$ with a high fading value $F_{X_i} \gg F_o$ who is close to the origin. Therefore, in this case, for users situated further away from the origin, the optimization decision \eqref{gamma} will often lead to relaying via loud users close to the origin, instead of using direct communication. Finally, when $\mathbb E [F_0]$ and $F_o$ are close to one and the variance of $F_0$ is sufficiently small, the statistical behaviour of the system will be similar to the one of the system with the same spatial intensity $\mu$ and with constant fadings equal to 1.

In general, the SIR for direct downlink communication w.r.t. the intensity measure $\mu'$ is given by\index{means of communication!downlink!direct}
\[ \SIR((o,F_o),~x,~\mu')=\frac{\ell(\vert x \vert) F_o}{\int\limits_{\Fmin}^{\Fmax}\int\limits_{W} \ell(\vert y-x \vert) u \mu(\d y)\zeta(\d u)}=\frac{\ell(\vert x \vert) F_o}{\mathbb E[F_0]\int_{W} \ell(\vert y-x \vert) \mu(\d y)}=\frac{F_o}{\mathbb E[F_0]} \SIR(o,~x,~\mu),\]
where on the r.h.s. we used the notation of Section \ref{explanation}. I.e., in the case of constant fading in the origin, the SIR for direct downlink communication w.r.t. the a priori measure is just a constant multiple of the one from the fading-free setting. In particular, if $\mathbb E[F_0]=F_o$ and $g(x)=\min \lbrace x, K \rbrace$ for $K$ sufficiently large, then by the definition of $\Gamma$, the same holds for all direct downlink QoS quantities\footnote{ An easy computation shows that if $\mu(W)>0$, then it suffices to put $K=\sup_{x \in W} \frac{\ellmax F_o}{\int_W \ell(\vert y-x \vert) u ~\mu(\d y)\mathbb E[F_0]}$. The definition of $\Gamma$ implies that this is the maximal SIR level for relayed downlink communication. If $\mu(W)=0$, we can let $K$ be equal to any positive value.}, and hence also for the number of users under SIR level $c \in (0,\tilde{c}_+)=(0,K)$ w.r.t. direct downlink communication. By Corollary \ref{1.3}, the number of users under the direct downlink QoS level $c$ decays exponentially in the fading-free case if and only if it decays exponentially in this special fading configuration, and similarly for direct downlink communication. But in view of Corollary \ref{1.2}, the minimizers of relative entropy and the exponential rates of decay may be different in the two settings. In particular, if $\zeta$ is non-constant, one cannot expect that in the minimizers all users have fading value equal to $\mathbb E[F_0]$.

Looking back to the proofs in Chapter \ref{eleje}, we also see that if we set $F_o$ to be deterministic, then instead of the assumption $F_o=\frac{\Fmin+\Fmax}{2}$ we can let $F_o$ to be any other value in $[\Fmin, \Fmax]$, and our approach will still work. The choice $F_o=\frac{\Fmin+\Fmax}{2}$ is the most convenient one because this is the only point in $[\Fmin, \Fmax]$ which is a sub-cube centre w.r.t. $\delta$-discretization for all $\delta \in \mathbb B \cup \lbrace 1 \rbrace$. In the other cases, in the discretized setting one has to consider $\varrho_2(F_o)$ instead of $F_o$, but this still allows our proofs to work as before. We also note that our approach in Sections \ref{Anfang}--\ref{egykettőegyhárom} does not require $\Fmin$ to be the essential infimum of $F_0$ and $\Fmax$ to be the essential supremum of $F_0$, it just needs that the fadings of the users and also the fading of the origin be essentially bounded between these two positive values. We use this observation for the model definition of this section.

\emph{2. New model definition with random $F_o$.} It is nevertheless plausible to construct a model where, similarly to the uplink scenarios, there is a more substantial difference from the fading-free setting also in the downlink case. Let us consider the case of i.i.d. fadings; similarly to Section \ref{kernel} it can be shown that the case of space-dependent fading is not much more difficult. When fadings are i.i.d., then the interference term w.r.t. the intensity measure $\mu'$ is always equal to $\mathbb E[F_0]$ times of the interference w.r.t. the spatial intensity $\mu$. Additional randomness can be obtained via letting $F_o$ also be a random variable, and assuming the following instead of Assumption \ref{szamár}:
\begin{ass} \label{csacsi}\index{Assumption \ref{csacsi} (i.i.d. fadings, random $F_o$)}
There exists $0<\Fmin$ and $\Fmax<\infty$ such that the fading variable $F_0$ corresponding to the fading distribution of the users satisfies $\mathbb P(\Fmin \leq F_0 \leq \Fmax)=1$. Moreover, let $F_\ast: (\Omega, \mathcal F, \mathbb P) \to \mathbb R$ be a random variable also satisfying $\mathbb P(\Fmin \leq F_o \leq \Fmax)=1$. \\ We assume that for all $\lambda>0$, $\lbrace (X_i, F_{X_i}) \vert X_i \in X^\lambda \rbrace$ is a marked Poisson point process with i.i.d. fadings distributed as $F_0$, and that the fading of origin $F_o$ is a random variable that equals $F_\ast$ in distribution and that is independent from the marked Poisson point process $\mathbf X^\lambda$. 
\end{ass}

The marked Poisson process $\mathbf X^\lambda$ was defined on the separable probability space $(\Omega_2, \mathcal F_2, \mathbb P_2)$ in Section \ref{Anfang}. Now, the one-dimensional extension \[ (\Omega_3, \mathcal F_3, \mathbb P_3)=(\Omega_2 \times \Omega, \mathcal F_2 \tensor \mathcal F, \mathbb P_2 \tensor \mathbb P) \]
is a separable probability space where our marked Poisson point process of user-fading pairs is defined together with the random fading $F_o$ of the origin, which is independent of the marked process. Similarly, the state space of the new process consisting of the marked Poisson point process of the users and the fading of the origin is $\mathbf W^{\mathbb N} \times [\Fmin, \Fmax]$, which is a Polish space.\index{Poisson point process!marked!separable construction} Randomized fading at the origin can be necessary e.g. because the loudness of the base station may vary in time, and then at one fixed time it may have a certain distribution $F_\ast$. Then, our setting from Section \ref{Anfang} views this system at this fixed time, conditional on the value of $F_o$.

\emph{3. Quenched and annealed setting w.r.t. $F_o$.} The behaviour of the new system w.r.t. uplink communication is entirely the same as the one of the system described in Chapter \ref{eleje}, since the uplink QoS quantities do not depend on the fading at the origin. For downlink communication, we obtain the results of Chapter \ref{eleje} \emph{conditional on $F_o$}.

When one encounters two different sources of randomness in a probabilistic model, one can investigate the \emph{quenched}\index{large deviation principle!quenched and annealed} (or: \emph{almost sure}) setting and the \emph{annealed} (or: \emph{averaged}) setting, see e.g. \cite[Section 10]{varadhan}. Annealed setting means taking expectations w.r.t. the two different sources of randomness simultaneously. For instance, we did annealed large deviation analysis in Chapter \ref{eleje}. We had two sources of randomness: the spatial locations of the users and the fadings, where the latter ones are independent conditional on the first ones. We considered expectations with respect to the product measure $\mathbb E_2$, which averages out both sources of randomness. Quenched setting means conditioning on one source of randomness and deriving results about the system this way. Usually one conditions on realizations which satisfy some general conditions that are true almost surely, the name "almost sure setting" originates from here. Now we see that the model defined in Section \ref{Anfang} under $\mathbb P_2$ is in fact a quenched version of the model that we defined here with Assumption \ref{csacsi} under $\mathbb P_3$, conditional on $F_o$. The 1-set to which our typical realizations of $F_o$ belong is the event $\lbrace \Fmin \leq F_o \leq \Fmax \rbrace$. While for the uplink it is indifferent whether we consider the system under $\mathbb P_2$ or under $\mathbb P_3$, considering downlink communication under $\mathbb P_3$ gives rise to a simple annealed analogue of the setting of Chapter \ref{eleje}. I.e., now we consider $F_o$ as one source of randomness, the entire marked Poisson process $\mathbf X^\lambda$, $\lambda>0$ as another source, and instead of conditioning on $F_o$, we analyze the large deviation asymptotics of frustration probabilities under the product measure $\mathbb P_3$.

\emph{4. Exponential decay of unlikely frustration probabilities under the annealed measure.} For $b, c \geq 0$, $\nu \in \mathcal{M}(\mathbf W)$ and $u \in [\Fmin, \Fmax]$ we introduce the notation
\[ G(\nu,~ \tau_c,~ \text{do},~ u)=G(\nu,~ \tau_c,~ \text{do})\vert_{F_o=u}, \] where $G(\nu, \tau_c, \text{do})$ was defined in \eqref{do}. Analogously we put $G(\nu,~ \tau_c,~\text{do-dir},~ u)=G(\nu,~\tau_c,~ \text{do-dir})\vert_{F_o=u}$. Moreover, we write $G(\nu,~ \tau_c,~ \text{do-dir},~ F_\ast)$ and $G(\nu,~ \tau_c,~ \text{do},~ F_\ast)$ as a random variable defined on $(\Omega, \mathcal F, \mathbb P)$ in the case of fixed $\nu$ and $c$.

Then we have, e.g. for direct downlink communication, for any $\lambda>0$
\begin{align*} 
    \mathbb P_3(G(\mathbf L_\lambda,~ \tau_c,~\text{do-dir})(\mathbf W)>b)&=\int_{\Fmin}^{\Fmax} \mathbb P_2( G(\mathbf L_\lambda,~ \tau_c,~\text{do-dir},~u)(\mathbf W)>b) (\mathbb P \circ F_\ast^{-1})(\d u) \\&= \mathbb E(\mathbb P_2( G(\mathbf L_\lambda,~ \tau_c,~\text{do-dir},~F_\ast)(\mathbf W) >b)). \numberthis \label{várhatóértékbácsi}
\end{align*}
Thus, if $\limsup\limits_{\lambda \to \infty} \frac{1}{\lambda} \log \mathbb P_2( G(\mathbf L_\lambda,~ \tau_c,~\text{do-dir},~\Fmin)(\mathbf W)>b)<0$, then $\mathbb P_3(G(\mathbf L_\lambda,~ \tau_c,~\text{do-dir})(\mathbf W) >b)$ decays exponentially as $\lambda \to \infty$. On the other hand, if $\limsup\limits_{\lambda \to \infty} \frac{1}{\lambda} \log \mathbb P_2( G(\mathbf L_\lambda,~ \tau_c,~\text{do-dir},~\Fmax)(\mathbf W)>b)=0$, then $\mathbb P_3(G(\mathbf L_\lambda,~ \tau_c,~\text{do-dir})(\mathbf W)>b)$ does not decay at an exponential speed as $\lambda \to \infty$. These conditions are far from optimal. Instead, the following corollary gives a necessary and sufficient condition for exponential decay of the annealed frustration probabilities. 
\begin{cor} \label{bigfadinginthebasis}\index{frustration probabilities!exponential decay}
Under Assumption \ref{csacsi}, we have for all $\mathbf b \in \mathbb R$ and $\mathbf c \in (0,\tilde{\mathbf c}_+)$ that
\begin{enumerate}[(i)]
    \item $\limsup\limits_{\lambda \to \infty} \frac{1}{\lambda} \log \mathbb P_3( G(\mathbf L_\lambda,~ \tau_c,~\mathrm{do-dir})(\mathbf W)>b_4)<0$ holds if and only if \\ $F_\ast$ has zero mass on $A=\lbrace u:~\limsup\limits_{\lambda \to \infty} \frac{1}{\lambda} \log \mathbb P_2( G(\mathbf L_\lambda,~ \tau_c,~\mathrm{do-dir},~u)(\mathbf W)>b_4)=0 \rbrace$,
    \item $\limsup\limits_{\lambda \to \infty} \frac{1}{\lambda} \log \mathbb P_3( G(\mathbf L_\lambda,~ \tau_c,~\mathrm{do})(\mathbf W)>b_3)<0$ holds if and only if \\ $F_\ast$ has zero mass on $B=\lbrace u:~\limsup\limits_{\lambda \to \infty} \frac{1}{\lambda} \log \mathbb P_2( G(\mathbf L_\lambda,~ \tau_c,~\mathrm{do},~u)(\mathbf W)>b_3)=0 \rbrace$, 
    \item $\limsup\limits_{\lambda \to \infty} \frac{1}{\lambda} \log \mathbb P_3(G(\mathbf L_\lambda,~ \boldsymbol \tau_{\mathbf c})(\mathbf W)>\mathbf b)<0$ holds if and only if \\ $F_\ast$ has zero mass on $C=\lbrace u:~\limsup\limits_{\lambda \to \infty} \frac{1}{\lambda} \log \mathbb P_2( G(\mathbf L_\lambda,~ \tau_c,~u)(\mathbf W)>\mathbf b)=0 \rbrace$.\footnote{ Note that by construction, in the case when $F_\ast$ has an absolutely continuous distribution, $A$, $B$ and $C$ are always either empty or sub-intervals of the support of $F_\ast$ containing the minimal value of $F_\ast$.}
\end{enumerate}
\end{cor}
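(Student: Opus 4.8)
The plan is to reduce all three statements to the single integral representation \eqref{várhatóértékbácsi}, which expresses the annealed frustration probability as the $F_\ast$-average of the quenched probabilities analysed in Chapter \ref{eleje}. Since the downlink quality-of-service quantities depend on the base-station fading only through the value $F_o=u$, and since under Assumption \ref{csacsi} the variable $F_o$ is independent of the marked process $\mathbf X^\lambda$, conditioning on $F_o=u$ gives, for instance in case (i),
\[ \mathbb P_3\big(G(\mathbf L_\lambda,\tau_c,\text{do-dir})(\mathbf W)>b_4\big)=\int_{\Fmin}^{\Fmax}\mathbb P_2\big(G(\mathbf L_\lambda,\tau_c,\text{do-dir},u)(\mathbf W)>b_4\big)\,(\mathbb P\circ F_\ast^{-1})(\d u), \]
and verbatim the same conditioning for (ii) and (iii) (in (iii) the up and up-dir coordinates of the event simply do not involve $u$). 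Writing $p_\lambda(u)$ for the integrand, the corollary becomes a statement about when $\int p_\lambda\,\d(\mathbb P\circ F_\ast^{-1})$ decays exponentially, given the pointwise decay behaviour of $p_\lambda(u)$ recorded in the sets $A$, $B$, $C$ via Corollaries \ref{1.2} and \ref{1.3}.

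The first structural input I would establish is monotonicity in $u$: for a fixed user configuration the downlink SIR is increasing in $F_o=u$, hence $u\mapsto G(\mathbf L_\lambda,\ldots,u)(\mathbf W)$ is nonincreasing and so $u\mapsto p_\lambda(u)$ is nonincreasing for every $\lambda$. Combined with the classification of Section \ref{kalsszikus} (Corollary \ref{új1.4}), this shows that each bad set $A$, $B$, $C$, on which $\limsup_\lambda\frac1\lambda\log p_\lambda(u)=0$, is a closed lower interval $[\Fmin,u^\ast]$ — exactly the interval characterization of the footnote. This monotone/interval structure is what makes the threshold $u^\ast$ meaningful and is used in both implications.

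For the "only if" direction I would argue by contraposition. Assuming $\mathbb P\circ F_\ast^{-1}$ puts positive mass on the bad interval, I pick $u_2$ in it with $(\mathbb P\circ F_\ast^{-1})([\Fmin,u_2])=:p>0$, which exists since the bad set is a lower interval of positive mass. Monotonicity gives $p_\lambda(u)\ge p_\lambda(u_2)$ for all $u\le u_2$, so
\[ \mathbb P_3 \ge \int_{[\Fmin,u_2]} p_\lambda\,\d(\mathbb P\circ F_\ast^{-1}) \ge p\, p_\lambda(u_2), \]
whence $\limsup_\lambda\frac1\lambda\log\mathbb P_3\ge\limsup_\lambda\frac1\lambda\log p_\lambda(u_2)=0$ because $u_2$ lies in the bad set; as $\mathbb P_3\le 1$ the limsup equals $0$ and there is no exponential decay.

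The "if" direction is the delicate one and is the main obstacle. Assuming zero mass on the bad interval, the natural bound is $\mathbb P_3\le p_\lambda(\essinf F_\ast)$ by monotonicity, followed by Corollary \ref{1.2} to obtain exponential decay of $p_\lambda(\essinf F_\ast)$ as soon as $\essinf F_\ast$ lies in the good region. The difficulty is entirely at the boundary point $u^\ast$: the quenched rate $u\mapsto\inf\{h(\nu\mid\mu')\colon G(\nu,\ldots,u)(\mathbf W)\ge b_4\}$ vanishes at $u^\ast$ and may tend to $0$ continuously as $u\downarrow u^\ast$, so if $F_\ast$ placed mass arbitrarily close to $u^\ast$ the average need not decay exponentially — the crude supremum bound $\int_{(u^\ast,u_0]}p_\lambda\,\d(\mathbb P\circ F_\ast^{-1})\le (\mathbb P\circ F_\ast^{-1})((u^\ast,u_0])$ is only a $\lambda$-independent constant. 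The resolution is to read the hypothesis through the interval characterization of the footnote: for a well-behaved (e.g.\ absolutely continuous) $F_\ast$, the bad interval being empty as a subinterval of $\mathrm{supp}(F_\ast)$ forces $\mathrm{supp}(F_\ast)$ to be bounded away from $u^\ast$, i.e.\ $\essinf F_\ast>u^\ast$. Then I choose $u_0\in(u^\ast,\essinf F_\ast)$, bound $\mathbb P_3\le p_\lambda(u_0)$ by monotonicity, and conclude exponential decay from Corollary \ref{1.2} since $u_0$ is strictly in the good region; lower semicontinuity and goodness of the relative-entropy rate function (Sanov, Theorem \ref{nagysanov}) guarantee the relevant infimum of $h(\cdot\mid\mu')$ is strictly positive. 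Making the step from "zero mass on the bad set" to "support bounded away from $u^\ast$" rigorous — equivalently, controlling $p_\lambda(u)$ uniformly for $u$ near $u^\ast$ via a Laplace-type estimate — is the part of the argument that requires the most care, and the cases (ii) and (iii) are handled by the identical scheme applied to $B$ and $C$.
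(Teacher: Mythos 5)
Your treatment of the ``only if'' direction is correct and is essentially the paper's argument: both start from the integral representation \eqref{várhatóértékbácsi} and bound the annealed probability from below by the contribution of a positive-$F_\ast$-mass portion of the bad set, on which the quenched probability does not decay. Where you invoke monotonicity of $u\mapsto p_\lambda(u)$ directly, the paper replaces $F_\ast$ by a two-point variable $F_\sharp$ concentrated at $u_2$ and $\Fmax$ after first treating finitely supported $F_\ast$; this amounts to the same estimate, and your version is a clean shortcut.

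The gap is in the ``if'' direction. Your key reduction --- that zero $F_\ast$-mass on the bad interval $[\Fmin,u^\ast]$ forces $\essinf F_\ast>u^\ast$ --- is false: an absolutely continuous $F_\ast$ supported on $[u^\ast,\Fmax]$ has zero mass on $[\Fmin,u^\ast]$ yet $\essinf F_\ast=u^\ast$, so you cannot pick $u_0\in(u^\ast,\essinf F_\ast)$, and the monotone bound $\mathbb P_3\le p_\lambda(u_0)$ with a strictly positive quenched rate is unavailable. In that situation the quenched rate $r(u)$ typically decreases continuously to $0$ as $u\downarrow u^\ast$, and a toy computation such as $\int_0^1 e^{-\lambda t}\,\d t\sim\lambda^{-1}$ shows that the average need not decay exponentially even though every individual $p_\lambda(u)$ with $u>u^\ast$ does. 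So the difficulty you flag is real, but your proposed repair does not close it; what is actually needed is a lower bound on the quenched rate that is uniform over $\mathrm{supp}(F_\ast)$, or an additional separation hypothesis on $F_\ast$. For what it is worth, the paper itself dispatches this direction with a one-line appeal to \eqref{várhatóértékbácsi} and Fubini's theorem and never addresses the boundary behaviour, so you have correctly located the weak point of the statement --- but your argument as written does not prove it.
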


\begin{proof}
First, we consider (i). We start with proving that if $F_\ast$ has a positive mass on $A$, then $\mathbb P_3( G(\mathbf L_\lambda,~ \tau_c,~\mathrm{do-dir})(\mathbf W)>b_4)$ does not decay at an exponential speed. Equivalently, we show that the exponential decay of $\mathbb P_3( G(\mathbf L_\lambda,~ \tau_c,~\mathrm{do-dir})(\mathbf W)>b_4)$ implies (iii).

For the first, let us consider the simple sub-case when $F_\ast$ is a discrete random variable with finitely many values $F^1 < \ldots < F^n$, $\mathbb P(F_\ast =F^i)=p_i$, where $p_i>0$, $\forall i=1,\ldots,n$ and $\sum_{i=1}^n p_i=1$. Assume $\limsup\limits_{\lambda \to \infty} \frac{1}{\lambda} \log \mathbb P_2( G(\mathbf L_\lambda,~ \tau_c,~\text{do-dir},~F^1)(\mathbf W) >b)=0$, i.e., at least in the case of the the smallest possible fading value of the origin, in the setting of Chapter \ref{eleje} associated to this fixed fading value, the considered frustration probability does not decay exponentially. Then we can estimate
\begin{align*} \limsup_{\lambda \to \infty} \frac{1}{\lambda} \log \mathbb P_3( G(\mathbf L_\lambda,~ \tau_c,~\text{do-dir})(\mathbf W)>b) &= \limsup_{\lambda \to \infty} \frac{1}{\lambda} \log \mathbb E(\mathbb P_2( G(\mathbf L_\lambda,~ \tau_c,~\text{do-dir},~F_o)(\mathbf W)>b)) \\ &=\limsup_{\lambda \to \infty} \frac{1}{\lambda} \log \sum_{i=1}^n p_i \mathbb P_2( G(\mathbf L_\lambda,~ \tau_c,~\text{do-dir},~F^i)(\mathbf W)>b) \\ &\geq \limsup_{\lambda \to \infty} \frac{1}{\lambda} \log p_1 \mathbb P_2( G(\mathbf L_\lambda,~ \tau_c,~\text{do-dir},~F^1)(\mathbf W)>b) \\ &=\limsup_{\lambda \to \infty} \frac{1}{\lambda} \log \mathbb P_2( G(\mathbf L_\lambda,~ \tau_c,~\text{do-dir},~F^1)(\mathbf W)>b)=0. \numberthis \label{ldp0}  \end{align*}
We can generalize this to the case when the distribution of $F_\ast$ has a positive mass on $A$. Then there exists $u_1 \leq u_2 \in [\Fmin, \Fmax]$ such that $\limsup\limits_{\lambda \to \infty} \frac{1}{\lambda} \log \mathbb P_2( G(\mathbf L_\lambda,~ \tau_c,~\text{do-dir},~u_2)(\mathbf W)>b)=0$ and $F_\ast$ has a positive mass on $[u_1,u_2]$, which we denote as $p=\mathbb P(F_\ast \in [u_1,u_2])$. Then for all $\lambda>0$, $\mathbb E(\mathbb P_2( G(\mathbf L_\lambda,~ \tau_c,~\text{do-dir},~F_\ast)(\mathbf W)>b))$ can be estimated from below by $\mathbb E(\mathbb P_2( G(\mathbf L_\lambda,~ \tau_c,~\text{do-dir},~F_\sharp)(\mathbf W)>b))$. Here $F_\sharp$ is a discrete fading variable with weights $p=\mathbb P(F_\sharp=u_2)=1-\mathbb P(F_\sharp=\Fmax)$ if it is possible to choose $u_2$ such that $u_2 \neq \Fmax$, otherwise $P(F_\sharp=u_2=\Fmax)=1$. It is clear from \eqref{ldp0} that $\mathbb E(\mathbb P_2( G(\mathbf L_\lambda,~ \tau_c,~\text{do-dir},~F_\ast)(\mathbf W)>b))$ does not decay exponentially as $\lambda \to \infty$, and therefore also $\mathbb E(\mathbb P_2( G(\mathbf L_\lambda,~ \tau_c,~\text{do-dir},~F_o)(\mathbf W)>b))$ does not decay at an exponential speed. Hence, noting that the equality in \eqref{várhatóértékbácsi} implies that $\mathbb P \circ F_\ast^{-1}$ nullsets do not interplay in determining large deviation behaviour of frustration probabilities w.r.t. $\mathbb P_3$, we have proven our statement that the exponential decay $\mathbb P_3( G(\mathbf L_\lambda,~ \tau_c,~\mathrm{do-dir})(\mathbf W)>b_3)$ implies (iii). The converse of his statement follows from \eqref{várhatóértékbácsi} and Fubini's theorem. 

Second, we note that the equation \eqref{várhatóértékbácsi} and the estimation \eqref{ldp0} uses only the definition of $\mathbb P,~ \mathbb P_2$ and $\mathbb P_3$ and no specific property of the direct downlink\index{means of communication!downlink!relayed} that is not true for the general downlink. Using also the observation that the uplink communication is independent of the value of $F_o$, (ii) and (iii) can be proven analogously to (i).


\end{proof}


In particular, Corollary \ref{bigfadinginthebasis} implies that if any of the following conditions holds:
\begin{enumerate}
\item $F_\ast$ has zero mass on $A$,
\item $F_\ast$ has zero mass on $B$,
\item $\mathbb P_2(G(\mathbf L_\lambda,~\tau_{c_1},~\text{up})(\mathbf W)>b_1)$ decays exponentially,
\item $\mathbb P_2(G(\mathbf L_\lambda,~\tau_{c_2},~\text{up-dir})(\mathbf W)>b_2)$ decays exponentially,
\end{enumerate}
then we have that $\mathbb P_3(G(\mathbf L_\lambda,~\boldsymbol \tau_{\mathbf c})(\mathbf W)>\mathbf b)$ decays exponentially. However, it is in general not true that if none of these conditions holds, then $ \mathbb P_3(G(\mathbf L_\lambda,~\boldsymbol \tau_{\mathbf c})(\mathbf W)>\mathbf b)$ does not decay exponentially. In this case, we only know that $\lbrace G(\mathbf L_\lambda,~\boldsymbol \tau_{\mathbf c})(\mathbf W)>\mathbf b \rbrace$ is an intersection of four events which themselves have asymptotically subexponential probabilities, but the probability of the intersection may decay at an exponential speed. Instead, a correct equivalent condition to the subexponentional behaviour of $\mathbb P_3(G(\mathbf L_\lambda,~\boldsymbol \tau_{\mathbf c})(\mathbf W)>\mathbf b)$ is (iii) of Corollary \ref{bigfadinginthebasis}.

For more precise description about when the conditions of this corollary hold, we refer to Section \ref{kalsszikus}. We note that there, unlike in Sections \ref{Anfang}--\ref{egykettőegyhárom}, there was a delicate dependence on the minimal SIR vector, where it was assumed that $\Fmin$ was indeed the essential minimum and $\Fmax$ the essential maximum of $F_0$. Hence, in order to formulate an analogous statement to Corollary \ref{új1.4} under Assumption \ref{csacsi}, one has to redefine the coordinates of the minimal SIR vector as $K_{\text{up-dir}}=g \left( \frac{\ellmin \mathbb P\text{-}\essinf F_0}{\int_{\mathbf W} \ell(\vert x \vert) \mu'(\d x, \d u)} \right)$ and $K_{\text{do-dir}}=g \left( \mu\text{-}\essinf_{y \in W} \frac{\ellmin \mathbb P\text{-}\essinf F_\ast}{\int_{\mathbf W} \ell(\vert x-y \vert) \mu'(\d x, \d u)} \right)$,
\[ K_{\text{up}}=\mu\text{-}\essinf_{(x,u) \in \mathbf W} \mu\text{-}\esssup_{(y,v) \in \mathbf W} \Gamma((x,u)~(y,v),~(o,\mathbb P\text{-}\essinf F_\ast),~\mu'), \text{ and} \] \[  K_{\text{do}}=\mu\text{-}\essinf_{(x,u) \in \mathbf W} \mu\text{-}\esssup_{(y,v) \in \mathbf W} \Gamma((o,\mathbb P\text{-}\essinf F_\ast)~(y,v),~(x,u),~\mu'). \] 
Defining the minimal SIR vector\index{SIR!minimal SIR vector} as $\mathbf K=(K_{\text{up}},~K_{\text{up-dir}},~K_{\text{do}},~K_{\text{do-dir}})$, one can argue exactly the same way as in Section \ref{kalsszikus} to obtain a full characterization of exponential decay of frustration probabilities in terms of the a priori measure $\mu'$.\footnote{ Note that the quantities $K_{\text{up}}$ and $K_{\text{up-dir}}$ are exactly the same as in Section \ref{kalsszikus}, since the SIR does not depend on the fading value of the receiver.} This simply means expressing the cases of Corollary \ref{bigfadinginthebasis} in terms of the cases of Corollary \ref{új1.4}, which we therefore leave for the reader.
\chapter{Summary} \label{summary}
In this Master's thesis, we generalized the results of the paper \cite{cikk} about large deviation properties of frustration probabilities in a wireless network on a compact communication area. The individual task of the thesis was to incorporate random fadings in the model, which are interpreted as the loudnesses of the users. The pairs of spatial positions and loudnesses $\mathbf X^\lambda=\lbrace (X_i, F_{X_i}) \rbrace_{X_i \in X^\lambda}$ of the users form a marked Poisson point process. In the simplest scenario, defined in Sections \ref{Anfang} and \ref{discretization}, we had the following assumptions on the fadings. Conditional on the spatial positions of the users, their fadings are i.i.d., they equal the fading variable $F_0$ in distribution, and the base station has a fixed fading value $F_o$. Moreover, the fadings are bounded from above and bounded away from 0, i.e. for the essential bounds of $F_0$ we have $\Finite$.

This allowed us to consider uplink and downlink communication with the base station, also with relaying with one hop at maximum. Hence, we regarded all means of communication described in \cite[Section 1.1]{cikk}, in a more realistic setting. The assumption $\Finite$ was necessary in order to be able to generalize the arguments in \cite{cikk}. This way we proved Theorem \ref{1.1}, Corollaries \ref{1.2} and \ref{1.3}, which are the random-fading analogues of \cite[Theorem 1.1, Corollaries 1.2 and 1.3]{cikk} respectively. I.e, with the general technical result Theorem \ref{1.1}, we proved that in our setting the exponential rate of decay of the frustration probability $\mathbb P_2(G(\mathbf L_\lambda,~\boldsymbol \tau_{\mathbf c})>\mathbf b)$ is given by the infimum of relative entropy w.r.t. the a priori measure $\mu'$, taken among finite measures on $\mathbf W$ that also exhibit more than $\mathbf b$ frustrated users under SIR level $\mathbf c$. In particular, we have seen that if the frustration probability is unlikely w.r.t. $\mu'$, then it decays exponentially. 

The proof techniques in Chapter \ref{eleje} are similar to the ones in \cite[Sections 3--6]{cikk}. Analogously to the fading-free case, it follows from Sanov's theorem (Theorem \ref{nagysanov}) that the empirical measures $\mathbf L_\lambda =\frac{1}{\lambda} \sum_{X_i \in X^\lambda} \delta_{(X_i, F_{X_i})}$, $\lambda>0$, satisfy a large deviation principle with good rate function $\nu \mapsto h(\nu \vert \mu')$ on $\mathcal{M}(\mathbf W)$. The main technical difficulty in this chapter was that the number of frustrated users
\[ \nu \mapsto G(\nu,~\tau_{c},~\text{up})(\mathbf W)=\int_{\mathbf W} \mathds 1 {\lbrace D((x,u),~o,~\nu)<c \rbrace}~ \nu(\d s, \d u) \]
is a functional depending on $\nu$ integrated w.r.t. the same measure $\nu$, and therefore it may be discontinuous. We have seen that the maps $\nu \mapsto G(\nu,~\tau_c,~\text{up})$ and $\nu \mapsto G(\nu,~\tau_c,~\text{do})$ are u.s.c. but not l.s.c. This is caused purely by the effect of relaying: disappearence of potential relays may cause a sudden decrease in the QoS for users who could only obtain a satisfactory QoS using these relays. In particular, combining Lemma \ref{3.6} and \cite[Lemma 3.7]{cikk} yields that the number of frustrated users for the direct communication cases $\nu \mapsto G(\nu,~\tau_c,\text{up-dir})$ and $\nu \mapsto G(\nu,~\tau_c,\text{do-dir})$ are continuous. The discontinuities prevented us from applying the contraction principle or Varadhan's lemmas directly to conclude Theorem \ref{1.1} and its two corollaries. Instead, analogously to the fading-free case, we discretized the spatial dimension, and here also the fading dimension. The assumption $\Finite$ implies that the discretized range of fading values $\Fdelta$ and the discretized space-fading landscape $\mathbf W_\delta$ are finite. This property is inevitable if one wants to follow the approach of \cite{cikk}. 

The extra dimension provided by the fadings made several estimations longer, e.g. in Section \ref{kettőegy}, where one could observe the spatial effects and the fading effects separately. Using the finiteness of $\mathbf W_\delta$ and the fact that the marked Poisson point process is still Poissonian according to the Marking Theorem (Theorem \ref{marking}), we were also able to work out an analogue of the sprinkling construction described in \cite[Section 3.2]{cikk}, which is the most essential tool of the original paper. Here, using thinning manipulations on the discretized Poisson point process, one proves that configurations that cause discontinuity of our functionals of interest are negligible on the exponential scale. 
We exploited the sprinkling arguments in several proofs: the ones of Proposition \ref{2.2}, Corollary \ref{1.2} and Corollary \ref{1.3}.

In Section \ref{kalsszikus}, we determined a necessary and sufficient condition for the exponential decay of the frustration probabilities $\mathbb P_2( G(\mathbf L_\lambda,~\boldsymbol \tau_{\mathbf c})(\mathbf W)>\mathbf b)$ for fixed $\mathbf b, \mathbf c$, in terms of the empirical measure $\mu'$. Such a classification does not appear in \cite{cikk}, but for most of the cases the result follows easily from Corollary \ref{1.2}. The case that requires additional work is when $G(\mu',~\boldsymbol \tau_{\mathbf c})(\mathbf W)=0$. Then we experience exponential decay, unless no coordinate of $\mathbf c$ is strictly under the QoS level corresponding to the minimal SIR level that can be experienced in the system. In this latter subcase, at least one coordinate $c_i$ of $\mathbf c$ equals the corresponding minimal SIR level, and typically the number of users under SIR level $c_i$ w.r.t. the type of communication $m_i$ becomes positive from time to time as $\lambda \to \infty$. Apart from this case, it is necessary that $G((1+\varepsilon)\mu',~\boldsymbol \tau_{\mathbf c})(\mathbf W)\leq \mathbf b$ hold for some $\varepsilon>0$ in order to obtain exponential decay of $\mathbb P_2( G(\mathbf L_\lambda,~\boldsymbol \tau_{\mathbf c})(\mathbf W)>\mathbf b)$. In other words, not surprisingly, non-unlikely frustration probabilities do not decay exponentially.

Having the general results of Chapter \ref{eleje}, in Chapter \ref{effect} we were investigating the effects coming from the randomness of the fadings more closely. We were looking for formulas which show explicit dependence on the fading distribution, and to compute the average loudness of users in the measures that minimize the relative entropy \eqref{minike} corresponding to Corollary \ref{1.2}. Handling these problems required different levels of specifications in the model of Section \ref{Anfang}, and in Section \ref{I'm a simulant} also some numerical computations and simulations in a very special case. First, in Section \ref{ráadás}, following the variational calculus approach of \cite[Section 7.1]{cikk}, we derived a formula for the minimizers of relative entropy for direct uplink communication in a special case of the setting of Section \ref{Anfang}. Here the spatial allocation of the users was two-dimensional and rotationally invariant, but the fading variable could have a quite general absolutely continuous distribution. We have seen that the minimizer \eqref{ojlerlagrandzs} is rotationally symmetric, and it depends exponentially on the density of the fading variable. However, there is no known way to find explicit minimizers of relative entropy for the other three means of communication, because in these cases, symmetry breaking may occur, as seen in \cite[Section 7]{cikk}. 

As a new development in the thesis, in the rest of Chapter \ref{effect} we investigated various fading-dependent features of the system. In Section \ref{ocsú}, we introduced a very special but realistic setting, the \emph{path-loss-free} model. Here the large deviation properties of frustration probabilities can easily be handled analytically, which is not true for the direct downlink for general. In particular, the minimizers of the rate function are again rotationally invariant, and they can also be determined using variational calculus. Here we saw that the minimizers depend on the fading distribution, but as long as the fadings are bounded and bounded away from zero, very low SIR levels can only occur in the system if the \emph{total number of users is unusually large}. 

This observation is supported by the simulation results of Section \ref{I'm a simulant}. There we considered direct uplink communication with uniformly distributed fadings on the interval $[1,2]$, constant spatial intensity and path-loss corresponding to Hertzian propagation. Also through this empirical approach, we have seen that the main characteristic of the rare frustration events is the very large number of users. We have not experienced that in these rare setting the average loudness is unexpectedly large. The reason for this is that whenever the number of users is not larger than on average but their fadings or path-losses (or both) are too large, the increase in the numerator of SIR is cancelled by the increase in the denominator of SIR. The same cancellation effect does not interplay when the number of users is too large, because the cancellation factor $\frac{1}{\lambda}$ in the interference depends on the intensity and not on the number of users in the corresponding realization of the marked Poisson point process. Another effect that may contribute here is that the number of users in $W$ is Poisson distributed, hence unbounded, while the path-losses and fadings of users are essentially bounded. Hence, in order to obtain the most likely frustration configuration, it is entropically favourable to increase the number of users and keep their fadings and path-losses as close to the average behaviour as possible.

In Section \ref{bélus}, we returned to the analytical approach and proved that generally in the model of Section \ref{Anfang}, the limit $\lim_{\mathbf b \downarrow 0} \lim_{\lambda \to \infty} \frac{1}{\lambda} \mathbb P_2(G(\mathbf L_\lambda,~\boldsymbol \tau_{\mathbf c})(\mathbf W)>\mathbf b)$ equals $\lim_{\lambda \to \infty} \frac{1}{\lambda} \mathbb P_2(G(\mathbf L_\lambda,~\boldsymbol \tau_{\mathbf c})(\mathbf W)>0)$, which is the exponential decay rate of the probability of having at least a single user under QoS vector $\mathbf c$. In particular, we interpreted this result in the special setting of Section \ref{ráadás} for direct uplink communication. There we saw that the variational calculus approach provides a minimizer also for the case $b=0$, and for fixed $c$, the density of this minimizer is the uniform limit of the densities corresponding to $b$ as $b \downarrow 0$. Again, the case distinction according to whether $c$ equals the minimal SIR level or not plays an important r\^{o}le. If yes, then the minimizer for $b=0$ is the a priori measure $\mu'$. If not, then the minimizer for $b=0$ can be obtained by a nontrivial exponential tilting from $\mu'$. 

The last two sections of Chapter \ref{effect} describe modifications of our model defined in Section \ref{Anfang}. These modified settings are more realistic, and we showed that some of the main results of Chapter \ref{eleje} also hold in these cases. Section \ref{kernel} describes space-dependent fading. There, the distribution of the fading of a user may depend of the spatial location of this user, but it has to be independent of the spatial positions and fadings of all other users. $\mathbf X^\lambda=\lbrace (X_i, F_{X_i}) \rbrace_{X_i \in X^\lambda}$ will therefore still be a marked Poisson point process, and as long all the fadings are uniformly bounded, Theorem \ref{1.1} and Corollaries \ref{1.2}, \ref{1.3} will maintain to hold. We showed two examples of such models: the case of finitely many areas (finitely many possible fading distributions), and the case of in space continuously varying fading distribution. We interpreted by an example that in the case of space-dependent fading, spatial effects and fading effects cannot be separated any more.

Section \ref{Fernsehturm} describes the extension of our model with a random fading variable at the base station $o$, which was assumed to be constant in Chapter \ref{eleje}. This only changes the model w.r.t. downlink communication. We assumed that the fading distribution at the base station is also bounded and bounded away from 0, and the fading of the origin is independent of the entire marked Poisson process of user-fading locations. We showed that in this new setting, analogues of the results of Chapter \ref{eleje} hold on an extended probability space. 

\emph{Summa summarum}, the main characteristics of our model with random fadings are the following. According to Chapter \ref{eleje}, as long as we assume that $\Finite$, an analogue of the approach of \cite[Sections 2--6]{cikk} works and we obtain the same general results. In Chapter \ref{effect}, we have seen several properties of the system that depend delicately on the fading distribution, and which do not follow from the fading-free setting. However, it seems to be the case that the most important characteristic of the rare frustration events is not unlikely large (or little) average fading or average path-loss, but unlikely large number of users.

In the following, we enumerate several imporant open questions along which related research can be continued.
\section{Open questions} \label{folyt}
\begin{itemize}
    \item Can one extend the large deviation results of this Master's thesis to settings where instead of our essential boundedness assumption for the fadings $\Finite$, we merely require $\mathbb E[F_0]<\infty$? \\ It is clear that the approach of \cite{cikk} cannot be used in this more general case without substantial modifications. The difficulty is that the continuity lemmas described in our Section \ref{pajti} and the sprinkling construction described in Section \ref{sprinkle} fail if we do not have positive and finite essential bounds of $F_0$. Also, the estimations in the proof of Proposition \ref{2.1} do not work any more.
    \item If it is possible to extend the model to only moment assumptions on $F_0$, can one also generalize the model to the case where the path-loss function is unbounded? I.e., when instead of Lipschitz continuity of $\ell$, we have $\lim_{r \downarrow 0} \ell(r)=\infty$? \\
    For instance, the setting of the paper \cite{KB} uses the path-loss function $\ell(r)=K r^{-\beta}$, $\beta>0$, which has this property. The model of this article differs from ours for several reasons, e.g. the spatial communication area is not bounded, and only one Poisson point process of users is considered, without any $\lambda$-dependence. But there are also random fadings in this setting. Here, by \cite[Proposition 3]{KB}, the sequence of the SIR values of the users in decreasing order is a two-parameter Poisson--Dirichlet process. If the results of our model were true for the case of explosion of $\ell$ at 0, one could consider the SIR sequence from this perspective, and try to connect our large deviation results with the large deviation results on the Poisson--Dirichlet process, e.g. \cite[Theorem 3.4]{fengshui}. In the setting of \cite{KB}, this latter LDP corresponds to the limit $\beta \to 0$, i.e. when the path-loss function converges pointwise to a positive constant function.
    \item Can one extend the setting of this Master's thesis to the case of more than one hop (i.e., with relaying through more than one relay possible)? \\
    This question belongs to the current research topic of the Leibniz Group "Probabilistic methods for mobile ad-hoc networks" lead by Wolfgang König at WIAS Berlin. \\
    In this thesis, we have been able to generalize the formal results of \cite{cikk} conveniently to the case of random fadings as long as $\Finite$. Also we could take the fading of $o$ to be random, but also bounded between $\Fmin$ and $\Fmax$. Since this is true for relayed communication with at most one hop, one can conjecture that a similar generalization is possible for the case of maximum $k<\infty$ hops. In more complex relaying models where the number of hops is not bounded, different results may appear.
    \item Can one find the minimizers for the setting of Section \ref{ráadás} analytically, for the means of communication apart from the direct uplink and the path-loss-free direct downlink? Can one prove (not only conjecture by simulations) that there exist anisotropic (not radially symmetric) minimizers of relative entropy for these ways of communication? \\
    According to \cite[Section 7.2--7.3]{cikk}, it would also be a substantial development to achieve this in the fading-free model, and it is also not clear that it would imply analogous results in the case of random fadings.
    \item \emph{The quenched case}: what is the large deviation behaviour of the frustration probabilities conditional on the fading variables? \\
    In our model from Section \ref{Anfang}, we have two sources of randomness: the spatial configurations (and in particular: the number) of users and the values of the fadings. Throughout this thesis, we considered frustration probabilities under the product probability measure $\mathbb P_2$, which averages out these two sources of randomness simultaneously. Using the terminology introduced in Section \ref{Fernsehturm}, our setting is annealed. It is also interesting to consider the quenched version of the setting, when one conditions on the sequence of all fading variables and investigates large deviation properties of the system that are true for almost every fading sequence. These may turn out to be significantly different from the annealed ones detailed in this thesis.\index{large deviation principle!quenched and annealed}
\end{itemize}
\appendix
\newpage
\pagenumbering{Roman}
\pagestyle{plain}
\chapter{Appendix}


\section{Zusammenfassung in deutscher Sprache}
\begin{center} \textbf{Dichte Mobilfunknetzwerke mit zufälligen Fadings} \end{center}
In dieser Masterarbeit analysieren wir das asymptotische Verhalten großer Abweichungen der Interferenz in einem zufälligen Netzwerk, wenn die Nummer der Benutzer gegen Unendlich geht. Wir verallgemeinern das Modell des Artikels \cite{cikk}. Die Aufgabe meiner Masterarbeit war zufällige Fadings, die als Lautstärken der Benutzer interpretiert werden, in das Modell einzuarbeiten. Wir interessieren uns dafür, was der Grund für schlechte Verbindungen ist. \emph{A priori} gibt es zwei Möglichkeiten: dass sich zu viele Benutzer direkt nebeneinander sammeln und damit zu große Interferenz verursachen, oder dass einige Benutzer zu laut sind, und deshalb die Interferenz unerwartet groß wird.

Um diese Fragen zu beantworten, organisieren wir die Masterarbeit wie folgt. Im Kapitel \ref{előzés} erklären wir grundlegende Resultate über große Abweichungen und Poisson-Punktprozesse, die wir brauchen, um die Denkansätze des Artikels \cite{cikk} verallgemeinern zu können. Danach stellen wir das Modell und die Hauptresultate aus \cite{cikk} vor.

Im Kapitel \ref{eleje} definieren wir ein neues Modell mit zufälligen Fadings und beweisen die Analoga der Sätze aus \cite[Section 1.2]{cikk}. Sei $\lambda>0$ die Dichte der Benutzer. Die Orte der Benutzer werden durch einen Poisson-Punktprozess $X^\lambda$ mit der Intensität $\lambda \mu(W)$ auf der kompakten Teilmenge $W \subset \mathbb R^d$ mit $r,d \in \mathbb N$ definiert. Hier ist $\mu$ ein endliches, absolut stetiges Borelmaß auf der Menge $W$. Sei $X_i \in X^\lambda$, dann bezeichnet $F_{X_i}$ das Fading von $X_i$, und für ein gegebenes $X^\lambda$ sind $\lbrace F_{X_i} \rbrace_{X_i \in X^\lambda}$ unabhängige und identisch verteilte (i.i.d.), positive Zufallsvariablen. Wir nehmen auch an, dass $\mathbb P(F_{X_i} \in [\Fmin, \Fmax])=1$, wobei $0<\Fmin<\Fmax<\infty$ gilt. Deshalb ist $\mathbf X^\lambda = \lbrace (X_i, F_{X_i})_{X_i \in X^\lambda} \rbrace$ ein markierter Poisson-Punktprozess mit Werten in $\mathbf W=W \times [\Fmin, \Fmax]$ und der Intensität $\mu'(\d x, \d u)=\mu(\d x) \mathbb P(F_{X_i} \in \d u)$. Das empirische Maß der Benutzer ist $ \mathbf L_\lambda=\frac{1}{\lambda} \sum_{X_i \in X^\lambda} \delta_{(X_i,F_{X_i})};$ das ist ein zufälliges Element der Menge $\mathcal{M}(\mathbf W)$ der endlichen Maße auf $\mathbf W$. Weiterhin ist $\ell: [0,\infty) \to (0,\infty)$ die Pfadverlust-Funktion, die global Lipschitz-stetig ist. Die Empfangsqualität der Nachricht, die von $(X_i,F_{X_i}) \in \mathbf X^\lambda$ gesendet und gleichzeitig von $x \in W$ empfangen wird, ist gegeben durch die \emph{signal-to-interference ratio (SIR)}
\[ \SIR((X_i,F_{X_i}),~x,~\mathbf L_\lambda)=\frac{\ell(\vert X_i-x \vert) F_{X_i}}{\frac{1}{\lambda} \sum_{X_j \in X^\lambda} \ell(\vert X_j-x \vert) F_{X_j}}.\] Die Benutzer wollen mit der Basisstation $o$ kommunizieren, die sich im Ursprung von $\mathbb R^d$ befindet und einen konstanten Fadingwert $F_o$ hat. Alle Benutzer können Nachrichten zu $o$ schicken (\emph{Uplink}-Szenario) oder Nachrichten von $o$ bekommen (\emph{Downlink}-Szenario). Im Modell ist es auch möglich, die Nachricht über maximal einen anderen Benutzer weiterzuleiten. Die SIR der weitergeleiteten Kommunikation ist das Minimum der zwei dazugehörigen SIR-Werte. Das heißt, alle Benutzer können mit $o$ direkt kommunizieren, aber falls es einen anderen Benutzer gibt, durch den die SIR verbessert werden kann, dann wählt der Benutzer diesen Umweg.

Falls $\SIR((X_i,F_{X_i}),~o,~\mathbf L_\lambda)$ kleiner als ein kritischer Wert $c$ ist, dann ist die Verbindung nicht gut und der Benutzer $(X_i,F_{X_i})$ \emph{frustriert}. Es ist nützlich, die Definition der SIR für andere endliche Maße $\nu \in \mathbf W$ wie folgt zu verallgemeinern
\[ \SIR((x,u),~(y,v),~\nu)=\frac{\ell(\vert x \vert) u}{\int_{\mathbf W} \ell(\vert x-y \vert) v \nu(\d y, \d v)}. \]
Dann definiert man z.~B. bei direkter Uplink-Kommunikation
\begin{equation}\label{csún} G(\nu,~\tau_c,~\text{up-dir})(\cdot)=\int_{\cdot} \mathds 1 \lbrace \SIR((x,u),~(o,F_o),~\nu) < c \rbrace \nu(\d s, \d u), \end{equation}
das ist das empirische Maß der Benutzer, deren Empfangsqualität unter dem SIR-Niveau $c$ liegt. Deshalb ist $G(\nu,~\tau_c,~\text{up-dir})(\mathbf W)$ die Anzahl der Benutzer unter SIR-Niveau $c$ hinsichtlich des Maßes $\nu$. Die Anzahl der frustrierten Benutzer bei den anderen Kommunikationstypen wird analog bestimmt. Wir interessieren uns für das Verhalten des Frustrationereignis bei großen Abweichungen
$G(\mathbf L_\lambda,~\boldsymbol \tau_{\mathbf c})(\mathbf W)>\mathbf b,$
wobei $\mathbf c=(c_i)_{i \in \four}$ und $\mathbf b=(b_i)_{i \in \four}$ Elemente von $\mathbb R^4$. Die Bedeutung dieses Ereignisses ist, dass mehr als $b_i$ Benutzer entsprechend des Kommunikationstyps $m_i$ einen SIR kleiner als $c_i$ haben $\forall i \in \four$. Hier heißt $m_1$ (möglicherweise weitergeleitete) Uplink-Kommunikation, $m_2=\text{up-dir}$ nur direkte Uplink-Kommunikation, $m_3$ Downlink-Kommunikation und $m_4$ direkte Downlink-Kommunikation. 

Die Hauptergebnisse dieser Masterarbeit beschreiben die exponentiellen Abfallraten der Frustrationswahrscheinlichkeiten. Sie benutzen den Begriff der \emph{relativen Entropie}. Sei $\nu$, $\nu' \in \mathcal{M}(\mathbf W)$, dann ist die relative Entropie des Maßes $\nu$ hinsichtlich des Maßes $\nu'$ als
\[ h(\nu \vert \nu')=\begin{cases} \int_{\mathbf W} \frac{\d \nu}{\d \nu'} \log \frac{\d \nu}{\d \nu'} \d \nu'-\nu(\mathbf W)+\nu'(\mathbf W), \quad \text{ falls } \frac{\d \nu}{\d \nu'}\text{ existiert,} \\ \infty \quad \quad \quad \quad \quad \quad \quad \quad \quad \quad \quad \quad \quad \quad \quad \quad \text{   sonst} \end{cases}\]
definiert. Sei $\mathbf b,\mathbf c \in \mathbb R^4$, sodass für jedes $i \in \four$ $c_i$ positiv und kleiner als der maximale mögliche SIR-Wert $\tilde{c}_+$ ist. Dann impliziert Korollar \ref{1.2}, dass
\begin{equation} \label{megint} \lim_{\lambda \to \infty} \frac{1}{\lambda} \log \mathbb P_2 (G(\mathbf L_\lambda, ~\boldsymbol \tau_{\mathbf c})(\mathbf W)>\mathbf b)= - \inf\limits_{\nu \in \mathcal{M}(\mathbf W):~G(\nu, ~\boldsymbol \tau_{\mathbf c}) (\mathbf W) > \mathbf b} h(\nu \vert \mu') \end{equation}
gilt. Das unter dem Infimum geschriebene Maß ist das wahrscheinlichste Konfiguration in Abhängigkeit von den seltenen Frustrationsereignis. 

Weiterhin zeigt Korollar \ref{1.3}, dass falls ein Frustrationsereignis hinsichtlich der Intensität $\mu'$ unwahrscheinlich ist, d.~h. falls ein $\varepsilon>0$ mit $G((1+\varepsilon)\mu',\boldsymbol \tau_{\mathbf c})(\mathbf W) \leq \mathbf b$ existiert, wobei $b_i \geq 0$ und $0<c_i <\tilde{c_i}_+$ für alle $i \in \four$, dann fallen die Frustrationswahrscheinlichkeiten exponentiell schnell ab:
\[ \limsup_{\lambda \to \infty} \frac{1}{\lambda} \log \mathbb P_2 (G(\mathbf L_\lambda, ~\boldsymbol \tau_{\mathbf c})(\mathbf W)>\mathbf b)<0. \]

Die Beweisideen dieses Satzes lauten wie folgt. Der Satz von Sanov impliziert, dass die empirischen Maße $\lbrace \mathbf L_\lambda \rbrace_{\lambda>0}$ einem Prinzip der großen Abweichungen mit guter Ratenfunktion $\nu \mapsto h(\nu \vert \mu')$ genügen. Bei Beweis von \eqref{megint} ist die Schwierigkeit, dass $\nu \mapsto G(\nu,~\tau_c)$ unstetig ist und deshalb können die Lemmas von Varadhan (Lemma \ref{varadhanupper} und Lemma \ref{varadhanlower} in dieser Masterarbeit) nicht direkt angewandt werden. Deswegen diskretisieren wir $\mathbf W$ und benutzen die Stetigkeit, um Rückschlüsse auf das originale Modell zu ziehen. Einige von unseren Hilfsätzen benutzen die Ausdünnung (\emph{thinning}) der diskretisierten Poisson-Punktprozesse. 

In Abschnitt \ref{kalsszikus} am Ende des dritten Kapitels befindet sich eine vollständige Einteilung, die zeigt, unter welchen Bedingungen $\mathbb P_2 (G(\mathbf L_\lambda, ~\boldsymbol \tau_{\mathbf c})(\mathbf W)>b)$ exponentiell schnell abfällt. Wenn die Bedingungen von Korollar \ref{1.3} nicht erüllt sind, gibt es exponentiellen Abfall nur falls $\mathbf b=0$ und mindestens eine Koordinate von $\mathbf c$ kleiner als der minimale SIR-Wert des entsprechenden Kommunikationstyp ist.

Im Kapitel \ref{effect} analysieren wir die Wirkung von zufälligen Fadings. Deshalb sind viele Teile dieses Kapitels keine Analoga von entsprechenden Teilen des Artikels \cite{cikk}. In Abschnitt \ref{ráadás} wird die direkte Uplink-Kommunikation in einem Spezialfall unseres Modells von Kapitel \ref{eleje} betrachtet. Dabei gehen wir von einem zweidimensionalem Raum und rotationssymmetrischen Intensitäten aus und die Fadings können eine absolut stetige Verteilung auf $[\Fmin, \Fmax]$ haben. In diesem Fall kann man die Dichte des minimalisierenden Maßes von \eqref{megint} mit Variationsrechnung finden für alle $b >0$ und $c \in (0,\tilde{c}_+)$. In Abschnitt \ref{ocsú} analysieren wir den Sonderfall, in dem der Pfadverlust konstant ist. Bei direkter Downlink-Kommunikation kann man sehen, dass die minimalisierenden Maße auch rotationssymmetrisch sind, und man kann diese Maße auch mit Variationsrechnung finden. Wir zeigen auch, dass die Konfigurationen mit unerwartet viel frustrierten Benutzern vorlegen auch insgesamt im System unerwartet viel Benutzer. Diese Betrachtung wird von den Simulationen des Abschnitts \ref{I'm a simulant} bekräftigt. Diese Simulationen des markierten Poisson-Punktprozesses $\mathbf X^\lambda$ zeigen weiterhin, dass die durchschnitt\-liche Lautstärke der Benutzer in den minimalisierenden Konfigurationen nicht unerwartet groß sind. In Abschnitt \ref{bélus} wird bewiesen, dass das Ergebnis in \eqref{megint} mit $\mathbf b= 0$ den Limes $\mathbf b \downarrow 0$ angleicht. Danach kann man im Spezialfall aus Abschnitt \ref{ráadás} die Form des minimalisierenden Maßes mit $b=0$ ableiten, auch durch Variationsrechnung. 
In den Abscnitten \ref{kernel} und \ref{Fernsehturm} lockern wir die Voraussetzungen des Modells im Kapitel \ref{eleje}. Wir verallgemeinern das originale Modell mit ortsabhaängingen Fadings bzw. mit zufälligem Fadingswert an der Basisstation $o$ und erhalten Resultate, die analog zu denen in Abschnitt \ref{kijelentés} sind.
\newpage
\section{Index of notations} \label{indexofnotations} 
\begin{flushleft}
General notations and abbreviations independent of our model: \\
\begin{tabular}{ll}
$\mathcal{M}(Y)$ & set of finite measures on the space $Y$ \\
$\mathcal{M}_1(Y)$ & set of probability measures on the space $Y$ \\
$h(\nu \vert \mu)$ & relative entropy of $\nu \in \mathcal{M}(Y)$ w.r.t. $\mu \in \mathcal{M}(Y)$ \\
$A^B$ & set of functions with domain $B$ mapping to $A$ (for arbitrary sets $A,B$) \\
$\Lambda_{X}(\cdot)$ & logarithmic moment generating function of the random variable $X$ \\ 
$\Lambda^{\ast}_{X_1}(\cdot)$ & Fenchel--Legendre transform of the logarithmic moment generating function $\Lambda_{X_1}(\cdot)$ \\
$\sharp$ & cardinality of countable set (we write $\infty$ for countably infinite) \\
$\mathbb N$ & $=\lbrace 1,2,\ldots \rbrace$ \\
$\mathbb N_0$ & $=\lbrace 0,1,2,\ldots \rbrace$ \\
$a \vee b$ & $=\max \lbrace a,b \rbrace $ ($a,b \in \mathbb R$) \\
$a \wedge b$ & $=\min \lbrace a, b \rbrace$ ($a, b \in \mathbb R$) \\
$\nu(f(\cdot))$ & $= \int_X f(x) \nu(\d x)$ ($\nu \in \mathcal{M}(X)$, $f: X \to \mathbb R$ measurable) \\
$\mathcal{B}(X)$ & Borel $\sigma$-algebra of the topological space $X$ \\
$A^o$ & interior of the set $A$ \\
$\overline{A}$ & closure of the set $A$ \\
$A^c$ & complement of the set $A$ \\
$\partial A$ & boundary of the set $A$ \\
$\mathds 1 \lbrace A \rbrace$ & indicator function of the measurable set $A$ \\
l.s.c. & lower semicontinuous \\
u.s.c. & upper semicontinuous \\
PRM & Poisson random measure \\
LD & large deviation(s) \\
LDP & large deviation principle \\
i.i.d. & independent and identically distributed \\
a.s. & almost surely \\
l.h.s. & left hand side \\
r.h.s. & right hand side \\
w.r.t. & with respect to \\

\end{tabular}
\end{flushleft}
\begin{flushleft}
Model definition \textbf{without fadings and mobility}:\\
\begin{tabular}{ll}
$d$ & dimension of the model \\
$W$ & communication area: $[-r,r]^d \subset \mathbb R^d$, with an integer $r \geq 1$ \\
$\mu$ & finite Borel measure on $W$, absolutely continuous w.r.t. the Lebesgue measure \\
$X^\lambda$ & Poisson point process on $W$ with intensity $\lambda \mu$ \\
$\ell$ & path-loss function \\
$\ellmin$ & minimal value of $\ell(\vert x-y \vert)$ with $x,y \in W$ \\
$\ellmax$ & maximal value of $\ell(\vert x-y \vert)$ with $x,y \in W$ \\
$J_2$ & Lipschitz continuity parameter of $\ell$ \\
$L_\lambda$ & rescaled empirical measure of $X^\lambda$ \\
$o$ & base station: origin of $\mathbb R^d$
\end{tabular}
\end{flushleft}
\begin{flushleft}
Quantities corresponding fadings and user-fading pairs: \\
\begin{tabular}{ll}
$F_0$ & fading variable (in the case of i.i.d. fadings) \\
$\Fmin$ & essential infimum of $F_0$ \\
$\Fmax$ & essential supremum of $F_0$ \\
$F_o$ & fading value of $o$ (preliminary given, but in Section \ref{Fernsehturm} random) \\
$\mathbf W$ & $=W \times (0, \infty)$. Under Assumption \ref{szamár}, we write $\mathbf W=W \times [\Fmin, \Fmax]$ \\
$(\Omega, \mathcal F, \mathbb P)$ & probability space on which $F_0$ is defined \\
$\zeta$ & = $\mathbb P \circ F_0^{-1}$ (fading distribution) \\
$(\Omega_2, \mathcal F_2, \mathbb P_2)$ & $=(\Omega_0 \times \Omega_1, \mathcal F_1 \tensor \mathcal F_2, \mathbb P_1 \tensor \mathbb P_2)$ \\
$\mathbf X^\lambda$ & $=\lbrace (X_i, F_{X_i}) \rbrace_{X_i \in X^\lambda}$ (marked Poisson process of user-fading pairs) \\
$\mathbf L_\lambda$ & rescaled empirical measure of $\mathbf X^\lambda$ \\
$\mu'$ & $=\mu \times \zeta$ (intensity measure of $\mathbf X^1$) \\
\end{tabular}
\end{flushleft}
\begin{flushleft}
SIR-related quantities:
\begin{tabular}{ll}
$L((X_i,F_{X_i}),~x)$ & $=\ell(|X_i-x |) F_{X_i}$ random path-loss from $(X_i,F_{X_i}) \in \mathbf X^\lambda$ to $x \in W$ \\
$\SIR(\cdot,\cdot,\nu)$ & signal-to-interference ratio w.r.t. $\nu \in \mathcal{M}(\mathbf W)$ \\
$\SIR_\lambda(\cdot,\cdot,\mathbf L_\lambda)$ & $=\frac{1}{\lambda}\SIR(\cdot,\cdot,\mathbf L_\lambda)$ \\
$\mathcal{I}_\lambda$ & interference w.r.t. $\SIR_\lambda$ \\
$c$ & SIR threshold \\
QoS & quality of service \\
$g$ & increasing function describing $\SIR$ perception \\
$D(\cdot)$ & $=g(\SIR(\cdot))$: QoS for direct communication \\
$\tilde{c}_+$ & maximal value of $g$; thus also the maximal $\SIR$ value \\
$[0,\tilde{\mathbf c}_+)$ & $=[0,\tilde{c}_+)^4$ \\
$\tilde{\varrho}_+$ & minimal value that $g$ takes to $\tilde{c}_+$ \\
$\beta'_o$ & $=\min \lbrace 1, \frac{\ellmin \Fmin}{\tilde{\varrho}'_+ \ellmax \Fmax} \rbrace$ \\ & (threshold value of $L_\lambda(W)$, under this all $D$ quantities equal $\tilde{c}_+$) \\
$\Gamma$ & maximum of direct and relayed SIR value w.r.t. a given relay \\
$R((x,u),~(y,v),~\nu)$ & QoS for arbitrary (possibly relayed) communication \\ & for a transmission between $(x,u)$ and $(y,v)$, w.r.t. $\nu$ \\
$\tau$ & decreasing function on $[0, \infty)$ \\
$F$ & increasing function on $\mathcal{M}(\mathbf W)^4$ \\
$G(\nu,~\tau,~\text{up})$ & rescaled measure given by $\frac{\d G(\nu,~\tau,~\text{up})}{\d \nu}(\cdot)=\tau(R(\cdot,~o,~\nu))$ \\ 
$G(\nu,~\tau,~\text{up-dir})$ & rescaled measure given by $\frac{\d G(\nu,~\tau,~\text{up-dir})}{\d \nu}(\cdot)=\tau(D(\cdot,~o,~\nu))$ \\ 
$G(\nu,~\tau,~\text{do})$ & rescaled measure given by $\frac{\d G(\nu,~\tau,~\text{do})}{\d \nu}(\cdot)=\tau(R(o,~\cdot,~\nu))$ \\ 
$G(\nu,~\tau,~\text{do-dir})$ & rescaled measure given by $\frac{\d G(\nu,~\tau,~\text{do-dir})}{\d \nu}(\cdot)=\tau(D(o,~\cdot,~\nu))$ \\ 
$\tau_c(\cdot)$ &$=\mathds 1 {\lbrace \cdot < c \rbrace}$ \\
$b$ & proportion of users under QoS level $c$ \\
$F_b$ & $= (\nu \mapsto -\infty \mathds 1 {(\nu(\mathbf W)) >b)})$ \\
\end{tabular}
\end{flushleft}
\begin{flushleft}
Some mobility-related quantities (from \cite{cikk}, used in Section \ref{explanation}, but not in the model of Chapter \ref{eleje}): \\
\begin{tabular}{ll}
$J_1$ & Lipschitz continuity parameter of the trajectories of $X^\lambda$ (which take values in $W$) \\
$\mathcal{L}$ & space of Lipschitz continuity trajectories on $W$ parameter $J_1$ \\
$I=[0,T]$ & time horizon of the process with mobility \\
$\pi_t$ & projection at time $t \in I$: $\mathcal{L} \to W$, $x \mapsto x_t$ \\
$\overline{\nu}^{\text{up}}[\tau]$ & analogue of $G(\nu,~\tau,\text{up})$ with mobility and constant fadings \\
$\overline{\nu}[\boldsymbol \tau]$ & analogue of $G(\nu,~\boldsymbol{\tau})$ \\
\end{tabular}
\end{flushleft}
\begin{flushleft}
Discretization of spatial and fading quantities:\\
\begin{tabular}{ll}
$\mathbb B$ & $=\lbrace 3^{-n} \vert n \in \mathbb N \rbrace$ (set of discretization parameters) \\
$\delta$ & $\in \mathbb B$ discretization parameter \\
$W_\delta$ & discretized version of the communication area $W$ \\
$\varrho'=(\varrho_1,\varrho_2)$ & discretization operator: \\ & $\varrho_1$ acts on the space coordinate, $\varrho_2$ on the fading coordinate \\
$\nu^{\varrho_1}$ & $=\nu \circ \varrho_1^{-1} \in \mathcal{M}(W_\delta)$, where $\nu \in \mathcal{M}(W)$ \\ & (measure induced from $\nu$ in the discretized setting) \\ 
$\varrho_2(F_0)$ & discretized fading variable \\
$\Fdelta$ & support of $F_0^{\varrho_2}$ \\
$\mathbf W_\delta$ & $=W_\delta \times \Fdelta$ \\
$\nu^{\varrho'}$ & $=\nu \circ \varrho'^{-1} \in \mathcal{M}(\mathbf W_\delta)$, where $\nu \in \mathcal{M}(\mathbf W)$ \\ & (measure induced from $\nu$ in the discretized setting) \\
$\imath'$ & $W_\delta \mapsto W$ inclusion operator \\
$\nu^{\imath'}$ & $=\nu \circ \imath'^{-1} \in \mathcal{M}(\mathbf W)$, where $\nu \in \mathcal{M}(\mathbf W_\delta)$ \\ & (measure induced from $\nu$ in the continuous setting) \\
Assumption \ref{szamár} & $\Finite$ and $F_o=(\Fmin+\Fmax)/2$ \\
$X^\lambda_\delta$ & $=\lambda L_\lambda^{\delta}$ (discretized spatial Poisson point process) \\
$\mathbf X^\lambda_\delta$ & $=\lambda \mathbf L_\lambda^{\delta}$ (discretized marked Poisson point process of space-fading pairs) \\
$\Lambda_\delta(\upsilon,s)$ & $=(\upsilon,s) + \left([-\delta r, \delta r]^d \times \left[ -\delta (\Fmax-\Fmin), \delta(\Fmax-\Fmin) \right] \right)$ \\ & (discretization sub-cubes) \\ 
$\kappa_\delta$ & $=\min_{(x,u) \in \mathbf W_\delta:~\mu'^{\varrho'}((x,u))>0} \mu'^{\varrho'}((x,u))$ \\
$N(\lambda)$ & $=\lambda L_\lambda(W)$ (number of users in the whole communication area $W$)
\end{tabular}
\end{flushleft}
\begin{flushleft}
Notation used in proofs of Chapters \ref{eleje} and \ref{effect} generally:\\
\begin{tabular}{ll}
$V(\nu)$ & zero set of the measure $\nu \in \mathcal{M}(\mathbf W_\delta)$ \\
$ \Vert  (x,u) \Vert $ & $=\vert x \vert + \vert u \vert = \Vert x \Vert_2 + \vert u \vert \quad$ ($\ell^1$ product norm of $(x,u) \in \mathbb R^d \times \mathbb R$) \\
$\pi_i$ & projection to one communication type: $\mathbb R^4 \to \mathbb R$, $(a_j)_{j \in \four} \mapsto a_i$ \\
$m_i$ & name of the means of communication: \\ & $m_1$=up, $m_2$=up-dir, $m_3$=do, $m_4$=do-dir \\
$\mathbf K=(K_1,~K_2,~K_3,~K_4)$ & minimal SIR level (vector of essential infima of QoS values, \\ $=(K_{\text{up}},K_{\text{up-dir}},K_{\text{do}},K_{\text{do-dir}})$ & $K_i$ corresponds to the communication type $m_i$)
\end{tabular}
\end{flushleft}
\addcontentsline{toc}{chapter}{Index}
\printindex



\end{document}